\newif\ifnotesw\noteswtrue
\newcommand{\edit}[1]{\ifnotesw \marginpar%
 [{\scriptsize\it\begin{minipage}[t]{\marginparwidth}\raggedleft#1\end{minipage}}]%
 {\scriptsize\it\begin{minipage}[t]{\marginparwidth}\raggedright#1\end{minipage}} \fi}
\newtheorem{theorem}{Theorem}[section]
\newtheorem{lemma}[theorem]{Lemma}
\newtheorem{corollary}[theorem]{Corollary}
\newtheorem{example}[theorem]{Example}
\newtheorem{definition}[theorem]{Definition}
\newenvironment{bfenumerate}%
{\begin{enumerate}}%
{\end{enumerate}}
\newenvironment{proofof}[1]{\par\smallbreak\noindent{\it Proof~of~#1.}}%
{\unskip\nobreak\hfill \qed \par\medbreak}
\newenvironment{sketch}{\par\smallbreak\noindent{\it Sketch of Proof.~}}
{\unskip\nobreak\hfill \qed \par\medbreak}
\newcounter{claim}
\renewcommand{\theclaim}{\Alph{claim}}
\newenvironment{claim}{\refstepcounter{claim}%
\par\medskip\par\noindent{\it Claim~\theclaim.~}~\rm}%
{\par\smallskip\par}
\newenvironment{subproof}{\par\smallskip\par\noindent{\sl Proof of Claim~\theclaim.~}}%
{$\,\triangleleft$}
\renewcommand{\paragraph}[1]{\smallskip\noindent{\it #1}~}
\newcommand{\refeq}[1]{(\ref{#1})}
\newcommand{\setdef}[2]{\left\{ \hspace{0.5mm} #1 : \hspace{0.5mm} #2 \right\}}
\newcommand{\msetdef}[2]{\left\{\!\!\left\{ \hspace{0.5mm} #1 : \hspace{0.5mm} #2 \right\}\!\!\right\}}
\newcommand{\function}[2]{:#1 \rightarrow #2}
\newcommand{\of}[1]{\left( #1 \right)}
\newcommand{\ofc}[1]{\left\{ #1 \right\}}
\newcommand{\feq}{\stackrel{\mbox{\tiny def}}{=}}
\newcommand{\und}{\wedge}
\newcommand{\Or}{\vee}
\newcommand{\A}{\forall}
\newcommand{\E}{\exists}
\newcommand{\compl}[1]{\overline{#1}}
\newcommand{\cd}[2]{D_{\#}^{#1}(#2)}
\newcommand{\cw}[1]{W_{\#}(#1)}
\newcommand{\game}{\mbox{\sc Ehr}}
\newcommand{\ga}{\alpha}
\newcommand{\gb}{\beta}
\newcommand{\keq}{\mathbin{{\equiv}_k}}
\newcommand{\oneeq}{\mathbin{{\equiv}_1}}
\newcommand{\kkeq}{\mathbin{{\equiv}_{k,k}}}
\newcommand{\kseq}{\mathbin{{\equiv}_{k,s}}}
\newcommand{\notkseq}{\mathbin{{\not\equiv}_{k,s}}}
\newcommand{\sseq}{\mathbin{{\equiv}_{k,0}}}
\newcommand{\ksseq}{\mathbin{{\equiv}_{k,s+1}}}
\newcommand{\baru}{{\bar u}}
\newcommand{\barv}{{\bar v}}
\newcommand{\isotype}[1]{{\textup{tp}(#1)}}
\newcommand{\wl}[3]{C^{#1,#2}(#3)}
\newcommand{\diagwl}[3]{\mathit{diag\hspace{0.6pt}C}^{#1,#2}(#3)}
\newcommand{\wwl}[2]{C^{#1,#2}}
\newcommand{\stabi}[2]{\mathit{Stab}^{#1}(#2)}
\newcommand{\WL}[1]{$#1$-dim WL}
\newcommand{\tlt}[1]{\par{\it #1}}
\newcommand{\mso}{\{\!\!\{}
\newcommand{\msc}{\}\!\!\}}
\newcommand{\tc}[1]{\mbox{\rm TC$^{#1}$}}
\newcommand{\nc}[1]{\mbox{\rm NC$^{#1}$}}
\newcommand{\ac}[1]{\mbox{\rm AC$^{#1}$}}
\newcommand{\xeq}{\mathbin{{\equiv}_X}}
\newcommand{\calC}{\mathcal{C}}
\newcommand{\sieve}{\mathcal{S}}
\newcommand{\dist}{\mathit{dist}}
\newcommand{\diam}{\mathit{diam}}
\newcommand{\rgraph}{G_{n,\scriptscriptstyle 1/2}}
\newcommand{\rpgraph}{G_{n,p}}
\newcommand{\e}{\mathrm{e}}
\newcommand{\prob}[1]{\mathbb{P}\left[\, #1\, \right]}
\newcommand{\expect}[1]{\mathbb{E}\left[\, #1\, \right]}
\newcommand{\condexpect}[2]{\mathbb{E}\left[ #1 \mid #2 \right]}
\newcommand{\tower}{\mathit{Tower}}
\newcommand{\prenex}{{\mbox{\scriptsize\it prenex}}}
\newcommand{\sshift}{\mbox{\qquad\qquad\qquad\qquad\quad}}
\newcommand{\bs}[1]{\mathit{BS}(#1)}
\newcommand{\bflex}{\sl}
\title{Logical complexity of graphs: a survey}
\author{Oleg Pikhurko\,$^*$}\thanks{$^*$\,%
Department of Mathematical Sciences,
Carnegie Mellon University, Pittsburgh, PA 15213, USA.
This work done under the support of the
National Science Foundation (Grant DMS-0758057) and the Alexander von Humboldt Foundation.}
\author{Oleg Verbitsky\,$^\dag$}\thanks{$^\dag$\,%
Institute for Applied Problems of Mechanics and Mathematics, 
79060 Lviv, Ukraine.
This work was done under the support of the Alexander von Humboldt Foundation.}
\date{}
\begin{document}

\sloppy

\begin{abstract}
We discuss the definability of finite graphs in
first-order logic with two relation symbols
for adjacency and equality of vertices.
The \emph{logical depth} $D(G)$ of a graph $G$ is equal to
the minimum quantifier depth of a sentence defining $G$
up to isomorphism. The \emph{logical width} $W(G)$
is the minimum number of variables occurring in
such a sentence. The \emph{logical length} $L(G)$
is the length of a shortest defining sentence.
We survey known estimates for these
graph parameters and discuss their relations
to other topics (such as the efficiency of the
Weisfeiler-Lehman algorithm in isomorphism testing,
the evolution of a random graph, 
quantitative characteristics of the zero-one law,
or the contribution of 
Frank Ramsey to the research on Hilbert's
\emph{Entscheidungsproblem}).
Also, we trace 
the behavior of the descriptive complexity
of a graph as the logic becomes more restrictive
(for example, only definitions with a bounded number of variables or 
quantifier alternations are allowed) or more expressible
(after powering with counting quantifiers).
\end{abstract}

\maketitle
\markleft{\sc OLEG PIKHURKO and OLEG VERBITSKY}

\break

\tableofcontents

\break

\section{Introduction}

\subsection{Basic notions and examples}

We consider the first-order language of graph theory whose vocabulary contains
two relation symbols $\sim$ and $=$, respectively for adjacency and equality of vertices.
The term \emph{first-order} imposes the condition that
the variables represent vertices and hence the quantifiers apply to vertices only.
Without quantification over sets of vertices, we are unable to express
by a single formula
some basic properties of graphs, such as being bipartite, being connected, etc.
(see, e.g.,~\cite[Theorems~2.4.1 and~2.4.2]{Spe}).
However, first-order logic is powerful enough to define any \emph{individual}
graph. How succinctly this can be done is the subject of this article.

As a starting example,
let us say in the first-order language that vertices $x$ and $y$ are at
distance at most $n$ from one another. A possible formula $\Delta_n(x,y)$ can look
as follows:
\begin{eqnarray}
\Delta_1(x,y)&\feq&x\sim y\Or x=y,\nonumber\\
\Delta_n(x,y)&\feq&\exists z_1\ldots\exists z_{n-1}
\Bigl(
\Delta_1(x,z_1)\und\bigwedge_{i=1}^{n-2}\Delta_1(z_i,z_{i+1})\und
\Delta_1(z_{n-1},y)
\Bigr).\label{ex:dist_n}
\end{eqnarray}

By a \emph{sentence} we mean a first-order formula where every variable is 
bound by a quantifier.
If we specify a graph $G$, a sentence $\Phi$ is either \emph{true} or 
\emph{false} on it. If $H$ is a graph isomorphic to $G$, then $\Phi$
is either true or false on $G$ and $H$ simultaneously. In other words,
first-order logic cannot distinguish between isomorphic graphs.
In general, we say that a sentence $\Phi$ \emph{distinguishes} a graph $G$
from another graph $H$ if $\Phi$ is true on $G$ but false on~$H$.

For example, 
sentence $\A x\A y\,\Delta_1(x,y)$ distinguishes a complete graph $K_n$
from any graph $H$ that is not complete.
The sentence $\A x\A y\,\Delta_{n-1}(x,y)$ distinguishes $P_n$, the path with $n$ vertices,
from any longer path $P_m$, $m>n$.

Throughout this survey we consider only graphs whose vertex set is
finite and non-empty. 
We say that a sentence $\Phi$ {\em defines} a graph $G$ (up to isomorphism) 
if $\Phi$ distinguishes $G$ from every non-isomorphic graph $H$.

For example, the single-vertex graph $P_1$ is defined 
by sentence $\A x\A y\, (x=y)$.
If $n\ge2$, then the path $P_n$ is defined by
\begin{equation}\label{ex:path}
\mbox{}\hspace{-5mm}%
\begin{array}{rcl}
&&\forall x\forall y \Delta_{n-1}(x,y)\und\neg\forall x\forall y \Delta_{n-2}(x,y)\\[2.5mm]
&&\sshift\mbox{\tt to say that the diameter equals $n-1$}\\[1.5mm]
&&\displaystyle{}\und\forall x\neg\exists y_1\exists y_2\exists y_3
\of{\bigwedge_{i=1,2,3}x\sim y_i\und
\bigwedge_{i\ne j}\neg(y_i=y_j)}\\[5.5mm]
&&\sshift\mbox{\tt to say that the maximum degree $\le2$}\\[2mm]
&&\displaystyle{}\und\exists x\neg\exists y_1\exists y_2
\of{\bigwedge_{i=1,2}x\sim y_i\und\neg(y_1=y_2)}\\[3mm]
&&\sshift\mbox{\tt to say that the minimum degree $\le1$ (thereby}\\
&&\sshift\mbox{\tt distinguishing from cycles $C_{2n-2}$ and $C_{2n-1}$)}
\end{array}
\end{equation}

We have already mentioned the following basic fact: {\it Every finite graph $G$ is definable.}%
\footnote{This fact, though very simple, highlights a fundamental difference
between the finite and the infinite: There are non-isomorphic countable
graphs satisfying precisely the same first-order sentences
(see, e.g., \cite[Theorem 3.3.2]{Spe}).}
Indeed, let $V(G)=\{v_1,\dots,v_n\}$ be the vertex set of $G$ and $E(G)$
be its edge set. A sentence defining $G$ could read:
 \renewcommand{\arraystretch}{1.4}
 \begin{equation}\label{eq:def}
 \begin{array}{l}
\exists x_1\dots \exists x_n\ \left(\,\mathrm{Distinct}(x_1,\dots,x_n)\wedge
 \mathrm{Adj}(x_1,\dots,x_n)\,\right)\\
\qquad\wedge\ \forall x_1\dots\forall x_{n+1}\ \neg\,
 \mathrm{Distinct}(x_1,\dots,x_{n+1}),\end{array}
 \end{equation}
 where, for the notational convenience, we use the following shorthands
 \begin{eqnarray*}
\mathrm{Distinct}(x_1,\dots,x_k)&\feq&\bigwedge_{1\le i<j\le k} \neg\, (x_i=x_j),\\[2mm]
 \mathrm{Adj}(x_1,\dots,x_n)&\feq&\bigwedge_{\{v_i,v_j\}\in E(G)} x_i\sim
 x_j\ \wedge\bigwedge_{\{v_i,v_j\}\not\in E(G)} \neg\, (x_i\sim  x_j).
 \end{eqnarray*}
 In other words, we first specify that there are $n$ distinct vertices,
list the adjacencies and the non-adjacencies between them, and then state that 
we cannot find $n+1$ distinct vertices.

The sentence \refeq{eq:def} is an exhaustive description of $G$ and seems rather
wasteful. We want to know if there is a more succinct way of defining a graph
on $n$ vertices. The following natural succinctness measures of a first-order formula $\Phi$
are of interest: 
 \begin{itemize}
 \item the {\em length\/} $L(\Phi)$ which is the total number of
symbols in $\Phi$ (each variable symbol contributes 1);
 \item the {\em quantifier depth\/} $D(\Phi)$ which is the maximum
length of a chain of nested quantifiers in $\Phi$;
 \item the {\em width\/} $W(\Phi)$ which is the 
number of variables used in $\Phi$ (different occurrences of the same variable 
are not counted).\footnote{%
Gr\"adel \cite{Gra} defines the width of a formula $\Phi$
as the maximum number of free variables in a subformula of $\Phi$.
Denote this version by $W'(\Phi)$. Clearly, $W'(\Phi)\le W(\Phi)$
and the inequality can be strict. Nevertheless, the two parameters
are closely related: $\Phi$ can be rewritten by renaming bound variables
in an equivalent form $\Phi'$ so that $W(\Phi')=W'(\Phi)$; see \cite[Lemma 3.1.4]{Gra}.}
 \end{itemize}

Formula $\Delta_n$ in \refeq{ex:dist_n} was intentionally written
in a non-optimal way. Note that
$L(\Delta_n)=\Theta(n)$, $D(\Delta_n)=n-1$, and $W(\Delta_n)=n+1$.
The same distance restriction can be expressed more succinctly with respect to
the latter two parameters, namely
\begin{equation}\label{ex:deltaa}
\begin{array}{rcl}
\Delta'_1(x,y)&\feq&\Delta_1(x,y),\\
\Delta'_n(x,y)&\feq&\exists z\of{\Delta'_{\lfloor n/2\rfloor}(x,z)\und
\Delta'_{\lceil n/2\rceil}(z,y)},
\end{array}
\end{equation}
 where $\lceil x\rceil$ (resp.\ $\lfloor x\rfloor$) 
stands for the integer nearest
to $x$ from above (resp.\ from below).
Now $D(\Delta'_n)=\lceil\log_2n\rceil$, giving an exponential gain for the quantifier depth!
The width can be reduced even more drastically: by recycling variables
we can write $\Delta'_n$ with only $3$ variables in total, achieving 
$W(\Delta'_n)=3$.

We now come to the central concepts of our survey.
Let us define $L(G)$ (resp.\ $D(G)$, $W(G)$) to be the minimum of $L(\Phi)$ 
(resp.\ $D(\Phi)$, $W(\Phi)$) over all sentences $\Phi$ defining a graph $G$.
We will call these graph invariants, respectively,
the \emph{logical length, depth}, and \emph{width} of~$G$.

\begin{example}\label{ex:LDW}\mbox{}\rm

\begin{bfenumerate}
\item
Using $\Delta'_n$ in place of $\Delta_n$ in \refeq{ex:path},
we see that $D(P_n)<\log_2n+3$ and $W(P_n)\le 4$. The reader
is encouraged to improve the latter to $W(P_n)\le 3$.
\item
The generic defining sentence \refeq{eq:def} shows that $L(G)=O(n^2)$ and $D(G)\le n+1$
for every graph $G$ on $n$ vertices.
\item
The {\em complement\/} of $G$, denoted by $\compl G$, is
the graph on the same vertex set $V(G)$ whose edges are those pairs
that are
not in $E(G)$. One can easily prove that $D(\compl G)=D(G)$ and $W(\compl G)=W(G)$.
\end{bfenumerate}
\end{example}

The logical length, depth, and width of a graph satisfy the following
inequalities: 
$$
W(G)\le D(G) < L(G).
$$
The latter relation follows from an obvious fact that $D(\Phi)<L(\Phi)$ for
any first-order formula $\Phi$. The former follows from a bit less obvious fact that
for any first-order formula $\Phi$ there is a logically equivalent formula
$\Psi$ with $W(\Psi)\le D(\Phi)$.

\subsection{Variations of logic}
\subsubsection{Fragments}

Suppose that we put some restrictions on the structure of a defining sentence.
This may cause an increase in the resources (length, depth, width)
that we need
in order to define a graph in the straitened circumstances. These 
effects will be one of our main
concerns in this survey. We will deal with restrictions of the
following
two sorts. We may be allowed to make only a small (constant) number of quantifier alternations or
to use only a bounded number of variables. The former is commonly
used in logic and complexity theory to obtain hierarchical classifications
of various problems. The latter is in the focus of
\emph{finite-variable logics} (see, e.g, Grohe~\cite{Gro:survey}).
Moreover, the number of variables has relevance to the computational
complexity of the graph isomorphism problem, see Section~\ref{s:wl}.

\paragraph{Bounded number of quantifier alternations.}
A first-order formula $\Phi$ with connectives $\{\neg,\wedge,\vee\}$ is in a
\emph{negation normal form} if all negations apply only to relations
(one can think that we now do not have negation at all but  introduce  instead
two new relation symbols, for inequality and non-adjacency).
It is well known that this structural restriction actually does not make
first-order logic weaker: We can always move negations in front of
relation symbols without increasing the formula's length more than twice
and without changing the quantifier depth and the width.

Given such a formula $\Phi$ and a sequence of nested quantifiers
in it, we count the number of \emph{quantifier alternations},
that is, the number of successive pairs $\A\E$ and $\E\A$ in the sequence.
The \emph{alternation number} of $\Phi$ is the maximum number
of quantifier alternations over all such sequences.
The \emph{$a$-alternation logic} consists of all first-order
formulas in the negation normal form whose alternation number does not exceed $a$.
We will adhere to the following notational convention:
a subscript $a$ will always indicate that at most $a$ quantifier
alternations are allowed. For example, $D_a(G)$ is the minimum quantifier depth
of a sentence in the $a$-alternation logic that defines a graph~$G$.

For any graph $G$ on $n$ vertices we have
$$
D(G)\le\ldots\le D_{a+1}(G)\le D_a(G)\le\ldots\le D_1(G)\le D_0(G)\le n+1,
$$
where the last bound is due to the defining sentence~\refeq{eq:def}.

\paragraph{Bounded number of variables.}
The \emph{$k$-variable logic} is the fragment of first-order logic where
only $k$ variable symbols are available, that is, the formula width
is bounded by $k$. 
The restriction of defining sentences to the $k$-variable logic
will be always indicated by a superscript $k$.
To make this notation always applicable, we set
$D^k(G)=\infty$ if the $k$-variable logic is too
weak to define $G$.
If $k\ge W(G)$ for a graph $G$ of order $n$, then we have
$$
D(G)\le D^{k+1}(G)\le D^k(G) < n^{k-1}+k,
$$
where the last bound will be established in Theorem \ref{thm:Dk} below.
Note that the bounds in Example \ref{ex:LDW}.1 can be strengthened to $D^3(P_n)<\log_2n+3$.

\subsubsection{An extension with counting quantifiers}

We will also enrich first-order logic by allowing one to use expressions of the type
$\exists^m\Psi$ in order to say that there are at least $m$ vertices with property 
$\Psi$. Those are called \emph{counting quantifiers} and the extended logic 
will be referred to as \emph{counting logic}.
A counting quantifier $\exists^m$ contributes 1 in the quantifier depth irrespectively 
of the value of $m$. For the counting logic we will use the ``sharp-notation'',
thus denoting the logical depth and width of a graph $G$ in this logic, 
respectively, by $\cd{}G$ and $\cw G$. Clearly,
$\cd{}G\le D(G)$ and $\cw G\le W(G)$. The counting quantifiers often allow
us to define a graph much more succinctly. For example, $\cd{}{K_n}=\cw{K_n}=2$ as
this graph is defined by 
$$
\A x\A y\, (x\sim y\vee x=y)\und \E^n x\,(x=x)\und\neg \E^{n+1} x\,(x=x).
$$
This is in sharp contrast with the fact that $D(K_n)=W(K_n)=n+1$,
where the lower bound follows from 
the simple observation that $n$ variables are not enough to
distinguish between $K_n$ and~$K_{n+1}$.

\subsection{Outline of the survey}

Section \ref{s:prelim} specifies notation
and proves a couple of basic facts about first-order
sentences. The latter are applied to
establish an upper bound on the logical length $L(G)$
of a graph in terms of its logical depth $D(G)$
and to estimate from above the number of graphs
whose logical depth is bounded by a given parameter~$k$.
The existence of such bounds is more important than
the bounds themselves that are huge, involving
the tower function. Furthermore, we define $D(G,H)$ to
be the smallest quantifier depth sufficient to distinguish
between non-isomorphic graphs $G$ and $H$. We will observe that
the obvious inequality $D(G,H)\le D(G)$ 
gives the sharp lower bound on
$D(G)$. Thus estimating $D(G)$ reduces to estimating~$D(G,H)$ for all
$H\not\cong G$

The value of $D(G,H)$ is characterized in Section \ref{s:games}
as the length of the \emph{Ehrenfeucht game} on $G$ and $H$.
Moreover, the logical width admits a characterization in terms of another
parameter of the game. Thus, the determination of the logical depth
and width of a graph reduces to designing optimal strategies
in the Ehrenfeucht game.

In Section \ref{s:wl}, the logical width and the logical depth 
are also characterized, respectively,
as the minimum \emph{dimension} and the minimum \emph{number of rounds}
such that the so-called \emph{Weisfeiler-Lehman algorithm} returns the
correct answer. The algorithm tries
to decide whether two input graphs are isomorphic; its one-dimensional
version is just the well-known color-refining procedure.
Thus, an analysis of the algorithm  can give us information
on the logical complexity of the input graphs. This
relationship is even more advantageous  in the other direction:
Once we prove that all graphs in some class $C$ have low logical
complexity, we immediately obtain an efficient isomorphism test for~$C$.

This paradigm is successful for
graphs with bounded treewidth and planar graphs, with good prospects
for covering all classes of graphs with an excluded minor.
In Section \ref{ss:classes} we report strong upper bounds
for the logical depth/width of graphs in these classes.
In Section \ref{s:general} we survey the bounds known in the general case.
In particular, if a graph $G$ on $n$ vertices has no \emph{twins}, 
i.e., no two vertices have the same adjacency to the rest of the graph, 
then $D(G)<\frac12n+3$. The factor of $\frac12$ can be improved for
graphs with bounded vertex degrees. Here we have to content ourselves
with linear bounds in view of a linear lower bound by Cai, F\"urer,
and Immerman \cite{CFI}. They constructed examples of graphs with
maximum degree~3 such that $\cw G > c\,n$ for a positive constant~$c$.

Section \ref{s:average} discusses the logical complexity of a random
graph. We obtain rather close lower and upper bounds for \emph{almost all}
graphs. Furthermore, we trace the behavior of the logical depth
in the evolutional random graph model~$\rpgraph$ 
where $p$ is a function of $n$. 

While in Sections \ref{s:worstcase} and \ref{s:average} we deal with,
respectively, worst case and average case bounds, Section \ref{s:best}
is devoted to the \emph{best case}.  More specifically, we define
\emph{succinctness function} $q(n)$ to be equal to the minimum of $D(G)$
over all $G$ on $n$ vertices.  Since only finitely many graphs are
definable with a fixed quantifier depth, $q(n)$ goes to infinity
as $n$ increases.  It turns out that its growth is inconceivably slow:
We show a superrecursive gap between the values of $q(n)$ and $n$.
This phenomenon disappears if we ``smoothen'' $q(n)$ by considering
the least monotonic upper bound for this function: 
the smoothed succinctness function is very close to the log-star
function. Furthermore, the succinctness function can be considered in
any logic. Let $q_0(n)$ be its variant for the logic with no
quantifier alternation. We can determine $q_0(n)$ with rather high
precision: It is also related to the log-star function.  The
lower bound for $q_0(n)$ implies
a superrecursive gap between the graph parameters $D(G)$ and $D_0(G)$,
yet another evidence of the weakness of the 0-alternation logic.
The tight upper bound for $q_0(n)$ shows that, nevertheless,
there are graphs whose definitions, even if
quantifiers are not allowed to alternate, can have surprisingly low
quantifier depth. We give several methods of explicit constructions of
such graphs. These constructions have another interesting aspect.
They allow us to show that the previously mentioned
tower-function bounds from Section \ref{s:prelim} cannot be improved
substantially.

Some of the most interesting open questions are collected in
Section~\ref{s:open}.

\subsection{Other structures}

Some of the results presented in the survey generalize 
to relational structures over a fixed vocabulary.
Such generalizations are often straightforward. For example,
the upper bounds on succinctness functions hold true if
the vocabulary contains at least one
relation symbol of arity more than 1 (since any graph can be trivially
represented as a structure over this vocabulary).
Extension of the worst case bounds to general structures is also possible
but requires essential additional efforts; see~\cite{PVe}.

Various definability parameters were investigated also for
special structures:
colored graphs (Immerman and Lander \cite{ILa}, 
Cai, F\"urer, and Immerman \cite{CFI}),
digraphs and hypergraphs (Pikhurko, Veith, and Verbitsky \cite{PVVarxiv}),
bit strings and ordered trees (Spencer and St.~John \cite{SJo}),
linear orders (Grohe and Schweikardt~\cite{GSc}).

\section{Preliminaries}\label{s:prelim}

\subsection{Notation: Arithmetic and graphs}

We define the {\em tower function\/} by $\tower(0)=1$ and
$\tower(i)=2^{\tower(i-1)}$ for each subsequent integer $i$.
Given a function $f$, by $f^{(i)}(x)$ 
we will denote the $i$-fold composition of $f$. 
In particular, $f^{(0)}(x)=x$.
By $\log n$ we always mean the logarithm base 2. The ``inverse'' of the tower
function, the {\em log-star\/} function $\log^*n$, is defined
by $\log^*n=\min\setdef{i}{\tower(i)\ge n}$.
We use the standard asymptotic notation. For example, 
$f(n)=\Omega(g(n))$ means that there is a constant $c>0$
such that $f(n)\ge c\,g(n)$ for all sufficiently large~$n$.

The number of vertices in a graph $G$ is called the \emph{order}
of $G$ and is denoted by $v(G)$. The \emph{neighborhood} $N(v)$ of a vertex $v$
consists of all vertices adjacent to $v$. The \emph{degree} of $v$
is defined by $\deg v=|N(v)|$. The \emph{maximum degree} of a graph $G$
is defined by $\Delta(G)=\max_{v\in V(G)}\deg v$.

The \emph{distance} between vertices $u$ and $v$ in a graph $G$ is 
defined to be the minimum length of a path from $u$ to $v$ and denoted
by $\dist(u,v)$. 
If $u$ and $v$ are in different connectivity components, then 
we set $\dist(u,v)=\infty$.
The {\em eccentricity\/} of a vertex $v$ is defined
by $e(v)=\max_{u\in V(G)}\dist(v,u)$. 

Let $X\subset V(G)$. The subgraph induced by $G$ on $X$ is denoted by $G[X]$.
We denote $G\setminus X=G[V(G)\setminus X]$, which is the result of
the removal of all vertices in $X$ from $G$. 
If a single vertex $v$ is removed, we write $G-v=G\setminus\{v\}$.
A set of vertices $X$ is called \emph{homogeneous} if $G[X]$
is a complete or an empty graph.

A graph is \emph{$k$-connected} if it has at least $k+1$ vertices and 
remains connected after removal of any $k-1$ vertices.
2-connected graphs are also called \emph{biconnected}.

A graph is \emph{asymmetric} if it admits no non-trivial automorphism.

\subsection{A length-depth relation}

We have already mentioned the trivial relation $D(G)<L(G)$.
Now we aim at bounding $L(G)$ from above in terms of $D(G)$.
We write $G\keq H$ to say that graphs $G$ and $H$
cannot be distinguished by any sentence with quantifier depth $k$.
As it is easy to see, $\keq$ is an equivalence relation.
Its equivalence classes will be referred to as \emph{$\keq$-classes}.
We say that a sentence $\Phi$ \emph{defines} a $\keq$-class $\alpha$
if $\Phi$ is true on all graphs in $\alpha$ and false on all other graphs.

\begin{lemma}\label{lem:keq}\mbox{}
\begin{bfenumerate}
\item
The number of $\keq$-classes is finite and does not exceed
$\tower(k+\log^*k+2)$. 
\item
Every $\keq$-class is definable by a sentence $\Phi$ with
$D(\Phi)=k$ and $L(\Phi)<\tower(k+\log^*k+2)$.
\end{bfenumerate}
\end{lemma}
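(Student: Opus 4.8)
The plan is to prove both parts by induction on $k$, building up a finite system of canonical sentences that describe $\keq$-classes. First I would set up the base case $k=0$: a depth-$0$ sentence is a Boolean combination of atomic sentences, and over the vocabulary $\{\sim,=\}$ with no free variables there are essentially no atomic sentences of interest, so there is exactly one $\equiv_0$-class (all graphs), definable by a trivial sentence such as $\forall x\,(x=x)$ of constant length. This anchors the tower estimate with huge room to spare.

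For the inductive step, the key observation is the standard Ehrenfeucht-style characterization: two graphs satisfy $G\equiv_{k+1}H$ iff for every vertex $a$ of $G$ there is a vertex $b$ of $H$ with $(G,a)\equiv_k(H,b)$ and vice versa, where $\equiv_k$ on one-pointed graphs is the analogous $k$-round indistinguishability relation for formulas with one free variable. So I would run the induction on the parallel family of relations $\equiv_k^{(r)}$ for structures with $r$ distinguished points, with $r$ ranging up to $k$; the number of such classes for fixed $r$ is what feeds the tower. Let $N_r(k)$ bound the number of $\equiv_k^{(r)}$-classes. The recursion is roughly $N_r(k+1)\le 2^{N_{r+1}(k)}$: an $\equiv_{k+1}^{(r)}$-class is determined by the set of $\equiv_k^{(r+1)}$-classes realized by the one-point extensions, i.e.\ a subset of an $N_{r+1}(k)$-element set, together with the (bounded, in fact at most $2^{O(r^2)}$) atomic data on the $r$ points. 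Unwinding this recursion from level $r=k$ (where after $k$ quantifiers are used up the relation $\equiv_0^{(k)}$ has only $2^{O(k^2)}$ classes) down to $r=0$ gives an iterated exponential of height about $k$, and the careful bookkeeping of the additive $O(k^2)$ and $O(r^2)$ terms is exactly where the $+\log^* k + 2$ correction in $\tower(k+\log^* k+2)$ comes from — one absorbs the polynomial-in-$k$ slack into $\log^* k$ extra tower levels and two more for safety.

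For part 2, the same induction simultaneously produces, for each $\equiv_k^{(r)}$-class $\alpha$, a defining formula $\Phi_\alpha$ with $r$ free variables, quantifier depth exactly $k$, and length bounded by the same tower quantity. The construction mirrors the counting: $\Phi_\alpha$ says, for each $\equiv_k^{(r+1)}$-class $\beta$ that is realized as a one-point extension of $\alpha$, that $\exists x_{r+1}\,\Phi_\beta$ holds, and for each $\beta$ that is not realized, that $\neg\exists x_{r+1}\,\Phi_\beta$ holds, conjoined with the atomic type of the $r$ points. One checks this is true exactly on the members of $\alpha$, has depth one more than the subformulas (hence depth $k$ at level $r$, tracking the reversed indexing), and has length at most (number of subclasses)$\times$(max subformula length)$+O(r^2)$, which by the same arithmetic as in part 1 stays below $\tower(k+\log^*k+2)$. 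To get a formula of depth \emph{exactly} $k$ rather than at most $k$, pad with a dummy block of nested quantifiers like $\exists z_1\cdots\exists z_j\,(z_1=z_1)$, which costs only constant length per level.

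The main obstacle I expect is the length bookkeeping in part 2, not the counting in part 1. The naive recursion $L(k+1)\le N\cdot L(k)+O(r^2)$ with $N$ itself tower-sized threatens to push an extra factor into the exponent at every level, which would blow the bound up to $\tower(k+O(k))$ or worse rather than $\tower(k+\log^*k+2)$. The fix is to be slightly cleverer: observe that $L(k)$ is already dominated by $N_0(k)$-type quantities, so that multiplying by another tower-of-height-$k$ factor only adds one to the tower height, and iterating this $k$ times adds $k$ — but $k\le \log^* \tower(k)$ is far too lossy, so instead one argues that the length of $\Phi_\alpha$ is bounded by a fixed polynomial in the \emph{number} of $\equiv$-classes at all levels combined, which is itself at most $\tower(k+O(\log^* k))$, and a polynomial of a tower value is still the same tower value with an additive constant. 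Making this last step rigorous — that $\mathrm{poly}(\tower(m))<\tower(m+1)$ and that the accumulated constants sum to something absorbed by $\log^*k+2$ — is the delicate part, but it is elementary once set up correctly.
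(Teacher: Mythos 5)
Your proposal follows essentially the same route as the paper's proof: a backward induction over the number of free (pebbled) variables, the characterization of a class by the set of classes realized by its one-point extensions (giving the recursion $f(k,s)\le 2^{f(k,s+1)}$), Hintikka-style defining formulas built from $\exists x_{s+1}\Phi_\beta$ and $\forall x_{s+1}\bigwedge\neg\Phi_\beta$, and the observation that the initial $O(k^2)$ atomic data accounts for the $\log^* k$ term in the tower height. The delicate length-bookkeeping issue you correctly flag --- that iterating the recurrence $l(k,s)\le f(k,s+1)(l(k,s+1)+9)$ must not add a tower level per step --- is resolved in the paper by maintaining the invariant $g^{(i)}(x)<\tower(i+1,x)/2$ for $g(x)=2^x(x+9)$, which is precisely the rigorous form of the absorption argument you sketch.
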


\begin{proof} 
The case of $k=1$ is easy: There is only one $\oneeq$-class
(consisting of all graphs), which is
definable by $\A x(x=x)$.

Let $k\ge2$ and $0\le s\le k$.
When we write $\bar z$, we will mean an $s$-tuple $(z_1,\ldots,z_s)$
(if $s=0$, the sequence is empty).
If $\bar u\in V(G)^s$ and $\Phi$ is a formula with $s$ free variables
$x_1,\dots,x_s$, then 
notation $G,\bar u\models\Phi(\bar x)$ will mean that
$\Phi(\bar x)$ is true on $G$ with each $x_i$ being assigned the
respective~$u_i$ as its value.

A formula $\Phi(x_1,\ldots,x_s)$ of quantifier depth $k-s$ is {\em normal\/} if
$\Phi$ is built from variables $x_1,\ldots,x_k$ and
every maximal sequence of nested quantifiers in $\Phi$ has length $k-s$ and
quantifies the variables $x_{s+1},\ldots,x_k$ exactly in this order.
A simple inductive syntactic argument shows that any
$\Phi(x_1,\ldots,x_s)$ has an equivalent normal formula $\Phi'(x_1,\ldots,x_s)$
of the same quantifier depth as $\Phi$.

We write $G,\bar u\kseq H,\bar v$ to say that
$G,\bar u\models\Phi(\bar x)$ exactly when $H,\bar v\models\Phi(\bar x)$
for every normal formula $\Phi$ of quantifier depth $k-s$.
A normal formula $\Phi(\bar x)$ \emph{defines} a $\kseq$-class $\alpha$ if
$G,\bar u\models\Phi(\bar x)$ exactly when $G,\bar u$ belongs to $\alpha$.
The $\kseq$-equivalence class of $G,\bar u$ will be denoted by
$[G,\bar u]_{k,s}$. 

Let $f(k,s)$ denote the number of all $\kseq$-classes and $l(k,s)$ denote
the minimum $l$ such that every $\kseq$-class is definable by
a normal formula of depth at most $k-s$ and length at most $l$. 
Note that relations $\keq$ and $\sseq$ coincide.
Thus, our goal is to estimate the numbers $f(k,0)$ and $l(k,0)$ from above.

We use the backward induction on $s$.
A $\kkeq$-class can be determined by specifying, for each pair of the $k$
elements, whether they are equal and, if not, whether they are
adjacent or non-adjacent. There are at most three choices per pair.  It easily follows that
$f(k,k)\le 3^{{k\choose 2}}$ and $l(k,k)<9k^2$. 
We are now going to estimate $f(k,s)$ and $l(k,s)$
in terms of $f(k,s+1)$ and $l(k,s+1)$. Suppose that each $\ksseq$-class $\beta$
is defined by a formula $\Phi_\beta(x_1,\ldots,x_s,x_{s+1})$ whose length
is bounded by~$l(k,s+1)$.

Define $S(G,\bar u)=\setdef{[G,\bar u,u]_{k,s+1}}{u\in V(G)}$, the set of $\ksseq$-classes
obtainable from $G,\bar u$ by specifying one extra vertex. Note that
$$
G,\bar u\kseq H,\bar v\mbox{\ \ if and only if\ \ }S(G,\bar u)=S(H,\bar v).
$$
Indeed, suppose that $S(G,\bar u)\ne S(H,\bar v)$, say, 
$\beta=[G,\bar u,u]_{k,s+1}$ is not in $S(H,\bar v)$ for some $u\in V(G)$.
Then $G,\bar u\notkseq H,\bar v$ because formula $\E x_{s+1}\Phi_\beta$
is true for $G,\bar u$ but false for $H,\bar v$.
Suppose now that $G,\bar u$ and $H,\bar v$ are distinguishable
by a normal formula of quantifier depth $k-s$.
As it is easily seen, they are distinguishable by such a formula
of the form $\E x_{s+1}\Phi$. Without loss of generality, assume that
the formula $\E x_{s+1}\Phi$ is true for $G,\bar u$ but false for $H,\bar v$.
Let $u\in V(G)$ be such that $G,\bar u,u\models\Phi$. Since $\Phi$
distinguishes $G,\bar u,u$ from all $H,\bar v,v$ with $v\in V(H)$, the class
$[G,\bar u,u]_{k,s+1}$ is not in $S(H,\bar v)$ and, hence, $S(G,\bar u)\ne S(H,\bar v)$.

Thus, for a $\kseq$-class $\alpha$ we can correctly define the set of
$\ksseq$-classes accessible from $\alpha$ by $S(\alpha)=S(G,\bar u)$
for some (in fact, arbitrary) $G,\bar u$ in $\alpha$. It follows from
what we have proved that for arbitrary $\kseq$-classes $\alpha$ and
$\alpha'$, we have
$$
\alpha=\alpha'\mbox{\ \ if and only if\ \ }S(\alpha)=S(\alpha').
$$
As an immediate consequence,
$$
f(k,s)\le 2^{f(k,s+1)}.
$$
 Since $2\cdot {k\choose 2}\le 2^k$ for every integer $k\ge1$, we have
$f(k,k)\le 2^{2^k}\le \tower(\log^*k+2)$. By the above recursion, we
conclude that $f(k,0)\le\tower(k+\log^*k+2)$, which proves Part~1 of the lemma.

Another conclusion is that any $\kseq$-class $\alpha$ can be defined
by a normal formula\footnote{This is a variant of Hintikka's formula, cf.~\cite[Definition 2.2.5]{EFl}.}
$$
\Phi_\alpha(\bar x)\feq
\bigwedge_{\beta\in S(\alpha)}
\exists x_{s+1}\, \Phi_\beta(\bar x,x_{s+1})
\ \wedge\ \forall x_{s+1}
\bigwedge_{\beta\not\in S(\alpha)} \neg\, \Phi_\beta(\bar x,x_{s+1}).
$$
Looking at the length of $\Phi_\alpha(\bar x)$, we obtain the recurrence
 \begin{equation}\label{rec_l}
 l(k,s)\le f(k,s+1)(l(k,s+1)+9).
 \end{equation}
 Set $g(x)=2^x(x+9)$. A simple inductive argument shows that 
 $$
 f(k,s)\le 2^{g^{(k-s)}(9k^2)}\quad \mbox{and}\quad l(k,s)\le
g^{(k-s)}(9k^2).
 $$
 Define the two-parameter function 
$\tower(i,x)$ inductively on $i$ 
by $\tower(0,x)=x$ and $\tower(i+1,x)=2^{\tower(i,x)}$ for $i\ge 0$.
This is a generalization of the old function:
$\tower(i,1)=\tower(i)$. One can prove by induction on $i$ that for any
$x\ge 5$ and $i\ge 1$ we have 
 \begin{equation}\label{g}
 g^{(i)}(x)< \tower(i+1,x)/2.
 \end{equation} 
 Indeed, it is easy to check the validity of \refeq{g} for $i=1$, while for
$i\ge 2$ we have
 \begin{equation}\label{indg}
 g^{(i)}(x)< g(\tower(i,x)/2) < 2^{\tower(i,x)-1} = \tower(i+1,x)/2.
 \end{equation}
We have for all $k\ge 5$ that $9k^2< \tower(\log^*k+1)$. This follows
from $9k^2<2^k$ for $k\ge 10$ and can be checked by hand for $5\le k\le
9$.
 Thus, for $k\ge 5$, we have by~\refeq{g} that
 $$
 l(k,0)\le g^{(k)}(9k^2) < \tower(k+1,9k^2)/2 < \tower(k+1,9k^2)
< \tower(k+\log^*k+2).
 $$
 Routine calculations (omitted) based on~\refeq{rec_l} and the
exact initial values $f(2,2)=3$, $f(3,3)=15$, and $f(4,4)=127$ give Part~2 of
the lemma for $2\le k\le 4$.
\end{proof}

Lemma \ref{lem:keq}.2 gives us a bound for the logical length
of a graph in terms of its logical depth. It suffices to notice that each single
graph $G$ constitutes a $\keq$-class for $k=D(G)$.

\begin{theorem}[Pikhurko, Spencer, and Verbitsky~\cite{PSV}]\label{thm:dvsl}
 $$
 L(G)<\tower(D(G)+\log^* D(G)+2).$$
\end{theorem}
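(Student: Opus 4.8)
The plan is to deduce Theorem~\ref{thm:dvsl} directly from Lemma~\ref{lem:keq}.2. First I would observe that for a fixed graph $G$, if we set $k=D(G)$, then the singleton $\{G\}$ — more precisely the isomorphism class of $G$ — is exactly a $\keq$-class. Indeed, any graph $H$ with $H\keq G$ cannot be distinguished from $G$ by any sentence of quantifier depth $k$; but by definition of $D(G)$ there is a sentence $\Phi_0$ of quantifier depth $D(G)=k$ defining $G$ up to isomorphism, so $\Phi_0$ distinguishes $G$ from every non-isomorphic $H$, forcing $H\cong G$. Conversely isomorphic graphs satisfy the same sentences. Hence the $\keq$-class of $G$ consists precisely of the graphs isomorphic to $G$.

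Next I would invoke Lemma~\ref{lem:keq}.2 with this $k=D(G)$: the $\keq$-class of $G$ is definable by a sentence $\Phi$ with $D(\Phi)=k$ and $L(\Phi)<\tower(k+\log^*k+2)$. Since this $\keq$-class is the isomorphism class of $G$, the sentence $\Phi$ defines $G$ up to isomorphism. Therefore $L(G)\le L(\Phi)<\tower(D(G)+\log^*D(G)+2)$, which is exactly the claimed bound. (One should note the bound is strict because the length of the defining sentence is strictly less than the tower value, by Lemma~\ref{lem:keq}.2.)

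The only point requiring a word of care is the degenerate small cases. If $D(G)=1$, then $G$ lies in the unique $\oneeq$-class (all graphs), so $G$ is defined by $\A x(x=x)$ only if $G$ is the single-vertex graph $P_1$; for $P_1$ one checks $L(P_1)$ is a small constant, well below $\tower(1+\log^*1+2)$, and no other graph has $D(G)=1$. For $D(G)=2,3,4$ the relevant initial values of $l(k,0)$ recorded at the end of the proof of Lemma~\ref{lem:keq} give the bound directly. So the deduction is essentially a one-line specialization of the lemma. The main (and really the only) obstacle has already been absorbed into Lemma~\ref{lem:keq}; no further obstacle remains at this step.
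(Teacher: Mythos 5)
Your proposal is correct and is exactly the paper's argument: the paper deduces the theorem from Lemma~\ref{lem:keq}.2 by the single observation that the isomorphism class of $G$ is a $\keq$-class for $k=D(G)$. Your worry about $D(G)=1$ is vacuous (no graph is definable with quantifier depth $1$, since every depth-$1$ sentence is true on all graphs or on none), so no case analysis is needed.
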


In fact, \cite[Theorem~10.1]{PSV} states only that
$L(G)<\tower(D(G)+\log^* D(G)+O(1))$. Here we went into the trouble of
estimating the error term more precisely so that Lemma \ref{lem:keq}.2
and some of its consequences can be stated more neatly.

Lemma \ref{lem:keq}.1 gives the following result.

\begin{theorem}\label{thm:inequi}
The number of graphs with logical depth at most $k$ does not exceed $\tower(k+\log^*k+2)$.
\end{theorem}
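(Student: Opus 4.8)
The plan is to deduce Theorem~\ref{thm:inequi} essentially for free from Lemma~\ref{lem:keq}.1, by showing that pairwise non-isomorphic graphs of logical depth at most $k$ must lie in pairwise distinct $\keq$-classes. The engine is the elementary fact already hinted at in the excerpt: a single graph $G$ occupies its own $\equiv_{D(G)}$-class, since if $\Phi$ is a depth-$D(G)$ sentence defining $G$, then every $H$ with $H\equiv_{D(G)}G$ agrees with $G$ on $\Phi$ and is therefore isomorphic to~$G$.

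First I would fix $G$ with $d:=D(G)\le k$ and record the observation above, so that the $\equiv_d$-class of $G$ contains no graph non-isomorphic to~$G$. Next I would verify that $\keq$ refines $\equiv_d$ whenever $d\le k$: any sentence of quantifier depth $d$ can be padded by prepending $k-d$ dummy quantifiers (say over a fresh variable used only in a trivial equality) without changing its truth value on any graph, so two graphs distinguished at depth $d$ are also distinguished at depth~$k$. Consequently the $\keq$-class of $G$ is contained in its $\equiv_d$-class, and hence it too contains no graph non-isomorphic to~$G$.

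It follows that the map sending the isomorphism type of a graph $G$ with $D(G)\le k$ to its $\keq$-class is injective on isomorphism types. Since, by Lemma~\ref{lem:keq}.1, there are at most $\tower(k+\log^*k+2)$ distinct $\keq$-classes, the number of such isomorphism types is at most $\tower(k+\log^*k+2)$, which is exactly the claimed bound.

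There is no genuine obstacle here once Lemma~\ref{lem:keq}.1 is in hand; the only point that deserves a moment's care is the monotonicity claim, namely that increasing the permitted quantifier depth only refines $\keq$, so that a depth-$d$ definition with $d\le k$ still isolates $G$ at level~$k$. Everything else is bookkeeping.
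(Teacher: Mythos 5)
Your proposal is correct and follows essentially the same route as the paper, which derives Theorem~\ref{thm:inequi} directly from Lemma~\ref{lem:keq}.1 together with the observation (made just before Theorem~\ref{thm:dvsl}) that each graph $G$ constitutes its own $\keq$-class for $k=D(G)$. Your extra care about monotonicity in the quantifier depth is a harmless and correct elaboration of a point the paper leaves implicit.
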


Notice two further consequences of Lemma~\ref{lem:keq}.

\break

\begin{theorem}\label{thm:inequi2}\mbox{}
\begin{bfenumerate}
\item
There are at most $\tower(k+\log^*k+3)$ pairwise inequivalent sentences 
about graphs of quantifier depth $k$.
\item
Every 
sentence $\Phi$ about graphs of quantifier depth $k$ has an equivalent
sentence $\Phi'$ with the same quantifier depth and length less than
$3\,\tower(k+\log^*k+2)^2$.
\end{bfenumerate}
\end{theorem}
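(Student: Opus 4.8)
The plan is to deduce both parts directly from Lemma~\ref{lem:keq}, using the fact that a sentence of quantifier depth $k$ is, semantically, a union of $\keq$-classes. First I would observe that each sentence $\Phi$ of quantifier depth $k$ is logically equivalent to the disjunction $\bigvee_{\alpha\in A}\Phi_\alpha$, where $A$ ranges over those $\keq$-classes on which $\Phi$ is true and $\Phi_\alpha$ is the defining sentence from Lemma~\ref{lem:keq}.2 (of depth $k$ and length less than $\tower(k+\log^*k+2)$). Indeed, two depth-$k$ sentences are logically equivalent iff they are true on exactly the same $\keq$-classes, so $\Phi$ is determined up to equivalence by the set $A$.

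For Part~1, this correspondence $\Phi\mapsto A$ is injective on equivalence classes of depth-$k$ sentences and maps into the power set of the set of all $\keq$-classes. By Lemma~\ref{lem:keq}.1 there are at most $N:=\tower(k+\log^*k+2)$ many $\keq$-classes, so the number of pairwise inequivalent depth-$k$ sentences is at most $2^N=\tower(k+\log^*k+3)$, as claimed. (One should note the edge case where the empty disjunction, i.e.\ an unsatisfiable sentence, is included; this only affects the count by~$1$ and is absorbed into the bound.)

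For Part~2, given $\Phi$ of quantifier depth $k$, I would take $\Phi':=\bigvee_{\alpha\in A}\Phi_\alpha$ as above. Taking a disjunction does not increase quantifier depth, so $D(\Phi')=D(\Phi)=k$ (or $\le k$, which is fine). For the length: there are at most $N$ disjuncts, each of length less than $N$, and forming the disjunction adds at most $N-1$ connective symbols plus parentheses, so $L(\Phi')<N\cdot N + 2N < 3N^2 = 3\,\tower(k+\log^*k+2)^2$. The bookkeeping on parentheses and the $\vee$-symbols is the only place requiring a little care, but it is entirely routine and the constant~$3$ leaves ample slack.

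The main (very mild) obstacle is purely cosmetic: making precise that ``a depth-$k$ sentence is a union of $\keq$-classes,'' i.e.\ that equivalence of depth-$k$ sentences is exactly agreement on every $\keq$-class. This is immediate from the definition of $\keq$ (sentences of depth $\le k$ cannot distinguish two graphs in the same $\keq$-class, so each such sentence is constant on each class, and conversely a set of classes is cut out by the corresponding disjunction of Hintikka-style sentences $\Phi_\alpha$). Once this is said, both parts are one-line consequences of Lemma~\ref{lem:keq}.
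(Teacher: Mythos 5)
Your argument is correct and is essentially identical to the paper's own proof: both parts are deduced from Lemma~\ref{lem:keq} by identifying a depth-$k$ sentence, up to equivalence, with the set of $\keq$-classes on which it holds, and replacing it by the disjunction of the corresponding defining sentences. The length bookkeeping also matches (the paper bounds the disjunction by $(f(k)+3)\,\tower(k+\log^*k+2)$, which gives the same $3\,\tower(k+\log^*k+2)^2$ estimate).
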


\begin{proof}
Note that, if a sentence $\Phi$ has quantifier depth $k$,
then the set of all graphs on which $\Phi$ is true is the union
of some $\keq$-classes. 
Therefore, there are $2^{f(k)}$ and no more pairwise inequivalent sentences 
of quantifier depth $k$, where $f(k)$ is the number of $\keq$-classes.
Part 1 now follows from Lemma \ref{lem:keq}.1.
By the same reason every sentence $\Phi$ of quantifier depth $k$
is equivalent to the disjunction of sentences defining some $\keq$-classes.
By Lemma \ref{lem:keq}.2, such disjunction does not need to be longer than
$(f(k)+3)\tower(k+\log^*k+2)$. This proves Part~2.
\end{proof}

\subsection{Distinguishability vs.\ definability}

Given two non-isomorphic graphs $G$ and $H$,
we define $D(G,H)$ (resp.\ $W(G,H)$) to be the minimum of $D(\Phi)$ 
(resp.\ $W(\Phi)$) over all sentences $\Phi$ distinguishing $G$ from $H$.
Thus, $D(G,H)>k$ if and only if $G\keq H$.
Obviously, $D(G,H)=D(H,G)$. Also, $D(G,H)\le D(G)$ and $W(G,H)\le W(G)$.
It turns out that these inequalities are tight in the following sense. 

\begin{lemma}\label{lem:ddd}\mbox{}

\begin{bfenumerate}
\item
$D(G)=\max_{H\not\cong G}D(G,H)$.
\item
$W(G)=\max_{H\not\cong G}W(G,H)$.
\end{bfenumerate}
\end{lemma}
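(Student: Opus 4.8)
The plan is to prove both parts by establishing the two inequalities $\le$ and $\ge$ for each; the inequality $\max_{H\not\cong G}D(G,H)\le D(G)$ (and similarly for $W$) was already observed in the text, since any sentence defining $G$ in particular distinguishes $G$ from each non-isomorphic $H$. So the substance is the reverse inequality: if every $H\not\cong G$ is distinguished from $G$ by \emph{some} sentence of quantifier depth (resp.\ width) at most $k$, then there is a \emph{single} sentence of quantifier depth (resp.\ width) at most $k$ that defines $G$. The key finiteness input is Lemma~\ref{lem:keq}.1: up to logical equivalence there are only finitely many sentences of quantifier depth $k$, hence only finitely many $\keq$-classes. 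This is what lets us package infinitely many pairwise distinctions into one formula.

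For Part~1, set $k=\max_{H\not\cong G}D(G,H)$; this maximum is finite because $D(G,H)\le D(G)\le v(G)+1$ for all $H$. By hypothesis, for every $H\not\cong G$ we have $G\notkeq H$, i.e.\ $G$ and $H$ lie in different $\keq$-classes. Equivalently, the $\keq$-class of $G$ contains only graphs isomorphic to $G$. Now take the sentence $\Phi$ that defines this $\keq$-class: by Lemma~\ref{lem:keq}.2 such a $\Phi$ exists with $D(\Phi)=k$. Then $\Phi$ is true exactly on the graphs in the $\keq$-class of $G$, which are precisely the graphs isomorphic to $G$, so $\Phi$ defines $G$ and $D(G)\le k$. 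Combined with the reverse inequality this gives $D(G)=k$.

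For Part~2, the argument is parallel but with $\keq$ replaced by the relation ``indistinguishable by any sentence of width $\le k$''. Writing $\xeq$ for this relation and calling it the $W^k$-equivalence, one checks it is an equivalence relation with finitely many classes: indeed $k$-variable sentences of quantifier depth $k$ already suffice to capture all of $k$-variable logic up to equivalence on finite graphs of a given size — more simply, over graphs of order $\le v(G)$ the number of inequivalent $k$-variable sentences is finite. Setting $k=\max_{H\not\cong G}W(G,H)$ (finite since $W(G,H)\le W(G)\le v(G)+1$), the hypothesis says the $W^k$-equivalence class of $G$ consists only of graphs isomorphic to $G$; one then needs a $k$-variable sentence $\Phi$ defining this class, with $W(\Phi)\le k$, and this $\Phi$ defines $G$.

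The main obstacle, and the only nontrivial point beyond invoking Lemma~\ref{lem:keq}, is supplying for Part~2 the width analogue of Lemma~\ref{lem:keq}.2: that each $W^k$-equivalence class of graphs is \emph{definable} by a single sentence using only $k$ variables. The subtlety is that conjoining or disjoining $k$-variable formulas must not blow up the variable count, and the Hintikka-style formula used in the proof of Lemma~\ref{lem:keq} must be adapted so that the bound variable recycled at each quantifier stays within the pool of $k$ variables rather than introducing $x_{s+1}$ as a genuinely new symbol. This is handled by the same renaming-of-bound-variables device mentioned in the footnote about Gr\"adel's $W'$ versus $W$, together with the standard fact (used implicitly throughout finite-variable logic) that the $k$-variable type of a tuple is itself expressible by a $k$-variable formula; once that is in place, the construction mirrors Part~1 verbatim. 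I would state this adaptation explicitly as the one lemma that needs checking, then the rest is a short formal argument.
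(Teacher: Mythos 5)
Your Part 1 is correct and is essentially equivalent to the paper's argument: the paper forms the conjunction $\bigwedge_{H\not\cong G}\Phi_H$ of minimal-depth distinguishing sentences and prunes it to a finite conjunction using the finiteness of inequivalent sentences of depth $k$ (Theorem \ref{thm:inequi2}.1), whereas you go directly through the Hintikka-style sentence defining the $\keq$-class of $G$ from Lemma \ref{lem:keq}.2. Both rest on the same finiteness input, Lemma \ref{lem:keq}.1, and either route is fine.

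Part 2, however, has a genuine gap, and you have misdiagnosed where it lies. You correctly isolate the missing ingredient as a width analogue of Lemma \ref{lem:keq}.2 (definability of the ``indistinguishable in width $\le k$'' class of $G$ by a single $k$-variable sentence), but you then attribute the difficulty to variable bookkeeping --- recycling bound variables in the Hintikka construction and renaming as in Gr\"adel's $W'$ versus $W$. That is not the obstruction. The obstruction is that, unlike $\keq$, the relation ``indistinguishable by sentences of width $\le k$'' has \emph{infinitely} many equivalence classes over all finite graphs (already $3$-variable logic separates all the paths $P_n$), so your assertion that ``one checks it is an equivalence relation with finitely many classes'' is false, and the recursion behind Lemma \ref{lem:keq} --- which terminates because it is driven by the quantifier depth $k-s$ bottoming out at $s=k$ --- has no a priori termination in the width setting. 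What is actually needed is a bound on the quantifier depth sufficient to realize any width-$k$ distinction from $G$; this is exactly Theorem \ref{thm:Dk}.1, which the paper invokes: $D^{\ell}(G,H)\le n^{\ell-1}+\ell-2$ whenever $G$ and $H$ are distinguishable with $\ell$ variables, proved via the stabilization of the $(\ell-1)$-dimensional Weisfeiler--Lehman coloring after at most $n^{\ell-1}-1$ rounds. With both width and depth bounded one recovers finiteness up to equivalence and can prune the conjunction (or, in your formulation, build a terminating Hintikka-type formula). Your sketch never supplies, or even identifies, this depth bound, so as written the proof of Part 2 is incomplete.
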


\begin{proof}
{\bflex 1.}
For each $H$
non-isomorphic to $G$ fix a sentence $\Phi_H$ that distinguishes $G$ from $H$
and has the minimum possible quantifier depth, i.e., $D(\Phi_H)=D(G,H)$.
Consider the sentence $\Phi\feq\bigwedge_{H\not\cong G}\Phi_H$. It distinguishes $G$
from each non-isomorphic $H$ and has quantifier depth $\max_H D(\Phi_H)$.
Therefore, $D(G)\le\max_H D(G,H)$ as wanted. An obvious drawback of this argument
is that the above conjunction over $H$ in $\Phi$ is actually infinite. However,
we have $D(\Phi_H)\le D(G)$ and there are only finitely many pairwise
inequivalent first-order sentences about graphs 
of bounded quantifier depth, see Theorem \ref{thm:inequi2} above. Thus
we can obtain
a legitimate finite sentence defining~$G$ by
removing from $\Phi$ duplicates up to logical equivalence.

{\bflex 2.}
Running the same argument, we have to ``prune''
the infinite conjunction $\bigwedge_{H\not\cong G}\Phi_H$, where $W(\Phi_H)=W(G,H)$.
Here we encounter a complication because there are infinitely many
inequivalent sentences of the same width. (Consider e.g.\ the
sentences  from  Example \ref{ex:LDW}.1.) 
However, Theorem \ref{thm:Dk}.1
in Section~\ref{s:wl} implies that for every $H$ we can additionally
require that the depth of $\Phi_H$ is at most, for example, 
$n^n+n$, where $n$ is the order of $G$. 
Now we can proceed as in Part 1 of the lemma.
\end{proof}

Lemma \ref{lem:ddd} stays true in any finite-variable logic,
any logic with bounded number of quantifier alternations, 
the logic with counting quantifiers, and any hybrid thereof.
We set $D^k(G,H)=\infty$ if $k$ variables do not suffice to
distinguish $G$ from~$H$.

\section{Ehrenfeucht games}\label{s:games}

Let $G$ and $H$ be graphs with disjoint vertex sets.
The \emph{$r$-round $k$-pebble Ehrenfeucht game on $G$ and $H$},
denoted by $\game_r^k(G,H)$, is played by
two players, Spoiler and Duplicator, to whom we may refer as he and
she respectively.
The players have at their disposal $k$ pairwise distinct
pebbles $p_1,\ldots,p_k$, each given in duplicate.
A {\em round\/} consists of a move of Spoiler followed by a move of
Duplicator. At each move Spoiler takes a pebble, say $p_i$, selects one of
the graphs $G$ or $H$, and places $p_i$ on a vertex of this graph.
In response Duplicator should place the other copy of $p_i$ on a vertex
of the other graph. It is allowed to move previously placed pebbles
to other vertices and place more than one pebble on the same vertex.

After each round of the game, for $1\le i\le k$ let $x_i$ (resp.\ $y_i$)
denote the vertex of $G$ (resp.\ $H$) occupied by $p_i$, irrespectively
of who of the players placed the pebble on this vertex. If $p_i$ is
off the board at this moment, $x_i$ and $y_i$ are undefined.
If after every of $r$ rounds the component-wise correspondence $(x_1,\ldots,x_k)$ to
$(y_1,\ldots,y_k)$ is a partial isomorphism from $G$ to $H$, this is
a win for Duplicator.  Otherwise the winner is Spoiler.
The following example should provide the reader with
a hint for the solution of the exercise suggested in Example~\ref{ex:LDW}.1.

\begin{example}\label{ex:game43}\rm
{\it Spoiler wins $\game_4^3(P_n,H)$ if $\Delta(H)\ge3$.}
Assume that $H$ contains no triangle because otherwise Spoiler wins
by pebbling its vertices.
Let $v$ be a vertex in $H$ of degree at least 3. Spoiler pebbles 3 neighbors
of $v$. Duplicator should pebble 3 distinct pairwise
non-adjacent vertices in $P_n$ for otherwise she loses the game. 
The distance between any two vertices pebbled
in $H$ is equal to 2. Unlike to this, some two vertices pebbled in $P_n$
(say, by pebbles $p_1$ and $p_2$) are at a larger distance. Spoiler moves $p_3$
to $v$. Duplicator is forced to violate the adjacency relation.
\end{example}

The particular case of $\game_r^k(G,H)$
in which the number of pebbles is the same as the number
of rounds, i.e., $k=r$, deserves a special attention. In this case,
the outcome of the game will not be affected if we prohibit moving pebbles 
from one vertex to another, that is, if we allow the players to play with each $p_i$
exactly once, say, in the $i$-th round.
We denote this variant of $\game_r^r(G,H)$ by $\game_r(G,H)$
and will mean it whenever the term \emph{Ehrenfeucht game} is used
with no specification.

\begin{lemma}\label{lem:distance}
Suppose that in the 3-pebble Ehrenfeucht game on $(G,H)$ some two
vertices $x,y\in V(G)$ at distance $n$ were selected so that their
counterparts $x',y'\in V(H)$ are at a strictly larger distance
(possibly infinity).
Then Spoiler can win in at most $\lceil\log n\rceil$ extra moves.
\end{lemma}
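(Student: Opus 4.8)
The plan is to formalize the obvious "bisection" strategy for Spoiler: the pebbles $p_1,p_2$ sit on $x,y$ with $\dist_G(x,y)=n$ and $\dist_H(x',y')>n$, and Spoiler uses the third pebble $p_3$ to repeatedly halve the distance to be maintained, driving a contradiction. Concretely, first I would reduce to the case where the two occupied pebbles are $p_1$ and $p_2$ and the third pebble $p_3$ is free; any extra pebbles currently in play are irrelevant to the argument (Spoiler will simply ignore them), so we may as well think of this as a play of $\game^3$. I would set up the argument as an induction on $n$, proving the sharper bookkeeping statement: if $\dist_G(x,y)=n$ and $\dist_H(x',y')>n$ with $x,y,x',y'$ currently pebbled (by $p_1,p_2$ in both graphs), then Spoiler wins in at most $\lceil\log_2 n\rceil$ further moves.

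For the base case $n=1$ (and implicitly $n=0$): if $\dist_G(x,y)=1$ then $x\sim y$ in $G$ but $\dist_H(x',y')\ge 2$ forces $x'\not\sim y'$, so the current position is already not a partial isomorphism and Spoiler has in fact already won — zero extra moves, consistent with $\lceil\log_2 1\rceil=0$. (If $n=0$, i.e. $x=y$, then $x'\ne y'$ and again the position already fails.) For the inductive step with $n\ge 2$, Spoiler picks a vertex $z\in V(G)$ lying on a shortest $x$–$y$ path with $\dist_G(x,z)=\lfloor n/2\rfloor$ and $\dist_G(z,y)=\lceil n/2\rceil$, and places $p_3$ on $z$. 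Whatever vertex $z'\in V(H)$ Duplicator answers with, by the triangle inequality we have $\dist_H(x',z')+\dist_H(z',y')\ge\dist_H(x',y')>n=\lfloor n/2\rfloor+\lceil n/2\rceil$, so at least one of $\dist_H(x',z')>\lfloor n/2\rfloor$ or $\dist_H(z',y')>\lceil n/2\rceil$ must hold. In the first case Spoiler now forgets the pebble on $y$ (it is free again) and continues with the pair $p_1$ on $x$, $p_3$ on $z$, at distance $\lfloor n/2\rfloor$ in $G$ versus strictly more in $H$; in the second case he forgets $x$ and continues with $p_3$ on $z$, $p_2$ on $y$ at distance $\lceil n/2\rceil$ versus strictly more. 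Either way the induction hypothesis applies, and since $1+\lceil\log_2\lceil n/2\rceil\rceil=\lceil\log_2 n\rceil$, the move count works out.

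I do not anticipate a serious obstacle here; the only point needing a little care is the arithmetic identity $1+\lceil\log_2\lceil n/2\rceil\rceil=\lceil\log_2 n\rceil$ for $n\ge 2$ — which reduces $\lceil n/2\rceil$ (the larger of the two halves, so the worse case) and is a standard fact — together with the bookkeeping that Spoiler always has a free pebble to reuse because, by construction, only two of the three pebbles carry the relevant "distance witness" at any stage. Note also that the statement is insensitive to whether the larger distance in $H$ is finite or $\infty$: the triangle-inequality step uses only $\dist_H(x',y')>n$, and if Duplicator ever answers $p_3$ with a $z'$ separated from both $x'$ and $y'$ (distance $\infty$) then a fortiori one of the two required inequalities holds. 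This completes the plan; filling in the one arithmetic lemma and writing the induction cleanly is routine.
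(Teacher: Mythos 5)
Your proposal is correct and is essentially the paper's own argument: Spoiler pebbles the middle vertex of a shortest path with the third pebble, the triangle inequality forces one of the two half-distances to grow strictly in $H$, and the freed pebble is recycled; the only cosmetic difference is that you phrase the move count as an induction via $1+\lceil\log_2\lceil n/2\rceil\rceil=\lceil\log_2 n\rceil$, while the paper iterates the bound $f(\alpha)=(\alpha+1)/2$ and inverts $f^{(i)}$. No gaps.
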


\begin{proof}
Spoiler sets $u_1=x$, $u_2=y$, $v_1=x'$, $v_2=y'$, and places
a pebble on the middle vertex $u$ in a shortest path from $u_1$ to $u_2$
(or either of the two middle vertices if $d(u_1,u_2)$ is odd).
Let $v\in V(H)$ be selected by Duplicator in response to $u$.
By the triangle inequality, we have $d(u,u_m)<d(v,v_m)$
for $m=1$ or $m=2$. For such $m$ Spoiler resets $u_1=u$, $u_2=u_m$,
$v_1=v$, $v_2=v_m$ and applies
the same strategy once again. In this way Spoiler ensures that
$d(u_1,u_2)<d(v_1,v_2)$ in each round. Eventually, unless Duplicator loses earlier,
$d(u_1,u_2)=1$ while $d(v_1,v_2)>1$, that is, Duplicator fails to
preserve adjacency.

To estimate the number of moves made, notice that initially
$d(u_1,u_2)=n$ and for each subsequent $u_1,u_2$ this distance
becomes at most $f(d(u_1,u_2))$, where $f(\ga)=(\ga+1)/2$.
Therefore the number of moves does not exceed the minimum $i$
such that $f^{(i)}(n)<2$. As $(f^{(i)})^{-1}(\gb)=2^i\gb-2^i+1$,
the latter inequality is equivalent to
$2^i\ge n$, which proves the bound.
\end{proof}

There is a rather clear connection between Spoiler's strategy
designed in the proof of Lemma \ref{lem:distance} and first-order
formula $\Delta'_n(x,y)$ in \refeq{ex:deltaa}.
We will see that, in some strong sense, $\game_r(G,H)$
corresponds to first-order logic, while $\game_r^k(G,H)$ corresponds
to its $k$-variable fragment.
In fact, every logic has its own corresponding game.

In the {\em $k$-alternation\/} variant of $\game_r(G,H)$
Spoiler is allowed to switch from one graph
to another at most $k$ times during the game, i.e., in at most $k$
rounds he can choose the graph other than that in the preceding round.

In the \emph{counting version} of the game $\game_r^k(G,H)$
Spoiler can make a \emph{counting move} consisting of two acts.
First, he specifies a set of vertices
$A$ in one of the graphs. Duplicator has to respond with a set of vertices $B$
in the other graph so that $|B|=|A|$ (if this is impossible,
she immediately loses). Second, Spoiler places a pebble $p_i$
on a vertex $b\in B$. In response Duplicator has to place the other copy
of $p_i$ on a vertex $a\in A$. It is clear that, any round with $|A|=1$
is virtually the same as a round of the standard game.

There is a general analogy between strategies allowing Spoiler to win
a game on $G$ and $H$ and first-order sentences distinguishing these graphs:
the former can be converted into the latter and vice versa so that
the duration of a game will be in correspondence to the quantifier
depth and the number of pebbles will be in correspondence to the number 
of variables.

\begin{theorem}[The Ehrenfeucht theorem and its variations]\label{thm:games}
Let $G$ and $H$ be non-isomorphic graphs.
\begin{bfenumerate}
\item {\rm(Ehrenfeucht \cite{Ehr}, Fra\"\i{}ss\'{e} \cite{Fra}%
\footnote{It was Ehrenfeucht who formally introduced the game.
Prior to Ehrenfeucht, Fra\"\i{}ss\'{e} obtained virtually the same result
using an equivalent language of partial isomorphisms.})}
$D(G,H)$ equals the minimum $r$ such that Spoiler has a winning
strategy in $\game_r(G,H)$.
\item {\rm(Pezzoli \cite{Pez})}
$D_k(G,H)$ equals the minimum $r$ such that Spoiler has a winning
strategy in the $k$-alternation game $\game_r(G,H)$.
\item {\rm(Immerman \cite{Imm0}, Poizat \cite{Poi})}
$W(G,H)$ equals the minimum $k$ such that Spoiler has a winning
strategy in $\game_r^k(G,H)$ for some $r$.
\item {\rm(Immerman \cite{Imm0}, Poizat \cite{Poi})}
$D^k(G,H)$ equals the minimum $r$ such that Spoiler has a winning
strategy in $\game_r^k(G,H)$.
\item {\rm(Immerman and Lander~\cite{ILa})}
$\cw{G,H}$ equals the minimum $k$ such that Spoiler has a winning
strategy in the counting version of $\game_r^k(G,H)$ for some $r$. Furthermore,
if $k\ge\cw{G,H}$, then $\cd k{G,H}$ equals the minimum $r$ such that Spoiler has 
a winning strategy in the counting version of $\game_r^k(G,H)$.
\end{bfenumerate}
\end{theorem}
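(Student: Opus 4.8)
The plan is to prove all five parts by the same underlying method, which is the standard back-and-forth correspondence between winning strategies in pebble games and distinguishing formulas, carried out by induction on the number of rounds. For the classical Part 1, I would first observe that it suffices to prove, by induction on $r$, the finer statement about positions: for graphs $G,H$ with partial tuples $\bar u\in V(G)^s$ and $\bar v\in V(H)^s$ already pebbled, Spoiler wins the remaining $r$ rounds of $\game_{s+r}(G,H)$ from this position if and only if there is a formula $\Phi(x_1,\dots,x_s)$ of quantifier depth $r$ with $G,\bar u\models\Phi$ but $H,\bar v\not\models\Phi$. The base case $r=0$ is exactly the definition: the position is a partial isomorphism if and only if no quantifier-free formula (a Boolean combination of the atoms $x_i=x_j$, $x_i\sim x_j$) separates $\bar u$ from $\bar v$. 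For the inductive step, Spoiler's first move in one of the graphs corresponds to an existential quantifier; if Spoiler places a pebble on $w\in V(G)$ witnessing a depth-$(r-1)$ distinguishing formula for every Duplicator reply, then $\E x_{s+1}$ applied to the conjunction (over the finitely many inequivalent such formulas) of those formulas is a depth-$r$ distinguishing formula, and conversely a depth-$r$ distinguishing formula $\E x_{s+1}\Psi$ or $\A x_{s+1}\Psi$ tells Spoiler which graph to play in and which vertex to pick. Here I would invoke Theorem \ref{thm:inequi2}.1 (equivalently Lemma \ref{lem:keq}) to guarantee the conjunctions and disjunctions are effectively finite, so the resulting formula is legitimate. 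Taking $r=D(G,H)$ and the empty position then gives Part 1.

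Parts 2--5 are obtained by decorating this induction with the appropriate bookkeeping, and I would present them as remarks on how the proof of Part 1 changes rather than repeating it. For Part 3 and Part 4 ($k$-variable logic), one reuses pebbles: the inductive invariant must track which of the $k$ pebbles are currently on the board, and Spoiler's move with pebble $p_i$ corresponds to requantifying the variable $x_i$ (possibly after it was already used), so the formulas built never exceed $k$ variables; conversely a $k$-variable distinguishing formula, read from the outside in, dictates a strategy using only $k$ pebbles. For Part 4 the number of rounds is tracked exactly as in Part 1; for Part 3 one simply drops the round count and takes the minimum $k$ over all $r$. For Part 2 ($k$-alternation logic) the extra parameter in the induction is the number of remaining alternations and the identity of the graph Spoiler played in last: switching graphs costs one alternation and corresponds to a $\E$ following a $\A$ or vice versa in the negation-normal-form formula, so one carries the pair (rounds left, alternations left) through the recursion. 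For Part 5 (counting logic) the quantifier-free base case is unchanged, but the inductive step now also allows counting moves: Spoiler specifying a set $A$ with $|A|=m$ and then pebbling inside Duplicator's response $B$ corresponds exactly to a counting quantifier $\E^m x_{s+1}$, and the finiteness needed to form the (now possibly larger) conjunctions is again supplied by the analogue of Lemma \ref{lem:keq} for counting logic; the ``furthermore'' clause for $\cd k{G,H}$ combines the counting bookkeeping of this paragraph with the $k$-variable bookkeeping of Parts 3--4.

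I expect the main obstacle to be purely organisational rather than mathematical: stating a single inductive invariant flexible enough to specialise to all five games without five nearly-identical proofs, while being careful about the finiteness issue. The one genuinely non-trivial point is that, in passing from "for each Duplicator reply there is a distinguishing formula" to "there is one distinguishing formula", the indexing set (all possible replies, i.e.\ all vertices of one graph) is finite but the set of formulas produced need not be syntactically finite; the fix is to note that up to logical equivalence there are only finitely many formulas of the bounded quantifier depth (and, for Parts 3--5, bounded width) in question, by Theorem \ref{thm:inequi2} and its finite-variable and counting analogues, so one may take the conjunction over a finite set of representatives. Everything else is a routine unfolding of the game definitions, and I would relegate the verification that the constructed formula indeed has the claimed quantifier depth, width, and alternation number to a one-line check in each case.
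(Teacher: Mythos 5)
Your proposal is correct, but it is worth noting that the paper does not actually prove this theorem: it refers to Immerman's book for Parts 3--5, attributes Part 2 to Pezzoli without proof, and obtains Part 1 as a corollary of Part 4 via the two observations that $D(G,H)=\min_k D^k(G,H)$ and that any sentence of quantifier depth $d$ can be rewritten equivalently with at most $d$ variables. You instead prove Part 1 directly by the standard round-by-round induction on positions and then describe the bookkeeping changes for the other parts; this is essentially the argument in the cited sources, and it closely parallels the induction the paper does carry out in full for Lemma \ref{lem:game} (the correspondence between Weisfeiler--Lehman colorings and the counting game), so your route is entirely sound and arguably more self-contained. One small simplification you are entitled to: the finiteness worry you flag as the ``one genuinely non-trivial point'' does not actually arise here, because the conjunction in the inductive step ranges over Duplicator's possible replies, i.e.\ over the vertices of one of the two \emph{fixed finite} graphs, and is therefore syntactically finite without any appeal to Theorem \ref{thm:inequi2}; that lemma is needed only in situations like Lemma \ref{lem:ddd}, where one conjoins over infinitely many graphs $H$, or if one wanted a single Hintikka-style formula defining an equivalence class uniformly. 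The only place where some care beyond your sketch is genuinely required is Part 2, where the inductive invariant must record which graph Spoiler moved in last so that the quantifier alternation count of the constructed negation-normal-form formula matches the number of graph switches; you identify this correctly, and carrying the pair (rounds left, alternations left) through the recursion does work.
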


\noindent
We refer the reader to \cite[Theorem 6.10]{Imm} for the proof of
Parts~3--5. Part~1 follows from Part~4 in view of the facts that
$D(G,H)=\min_kD^k(G,H)$ and that any sentence $\Phi$ can be equivalently rewritten 
with the same quantifier depth $D(\Phi)$ and with 
use of at most $D(\Phi)$ variables. 

In view of Lemma \ref{lem:ddd},
the Ehrenfeucht theorem provides us with a powerful tool for
estimating the logical depth and width of graphs.
Consider, for instance, a path $P_n$.
Example \ref{ex:game43} and Lemma \ref{lem:distance}
are immediately translated into the upper bound $D^3(P_n)<\log n+3$.
On the other hand, a lower bound $D(P_n)\ge\log n-2$ follows from
the existence of a winning strategy for Duplicator in 
$\game_r(P_n,P_{n+1})$ whenever $r\le\lfloor\log n\rfloor-1$ 
(all details can be found in \cite[Theorem 2.1.3]{Spe}).

\section{The Weisfeiler-Lehman algorithm}\label{s:wl}

\emph{Graph Isomorphism} is the problem of
recognizing if two given graphs are isomorphic. The best
known algorithm (Babai, Luks, and Zemlyachenko \cite{babluk83}) 
takes time $2^{O(\sqrt{n\log n})}$, where $n$ denotes
the number of vertices in the input graphs. 
Particular classes of graphs for which Graph Isomorphism 
is solvable more efficiently are 
therefore of considerable interest. Somewhat surprisingly, a number of
important tractable cases are solvable by a combinatorially simple,
uniform approach, namely the multidimensional Weisfeiler-Lehman algorithm. 
The efficiency of this method depends much on the logical complexity of input graphs.

For the history of this approach to the graph isomorphism problem 
we refer the reader to \cite{Bab,CFI}.  
We will abbreviate \emph{$k$-dimensional Weisfeiler-Lehman algorithm}
by \emph{\WL k}. The \WL1\ is commonly known as
\emph{canonical labeling} or \emph{color refinement algorithm}. 
It proceeds in
rounds; in each round a coloring of the vertices of input graphs $G$ and
$H$ is defined, which refines the coloring of the previous round. The initial
coloring $C^0$ is uniform, say, $C^0(u)=1$ for all vertices $u\in V(G)\cup
V(H)$. In the $(i+1)$st round, the color $C^{i+1}(u)$ is defined to be a pair
consisting of the preceding color $C^{i-1}(u)$ and the multiset of colors
$C^{i-1}(w)$ for all $w$ adjacent to $u$. For example, $C^1(u)=C^1(v)$ iff $u$
and $v$ have the same degree.  To keep the color encoding short, after each
round the colors are renamed (we never need more than $2n$ color names\footnote{%
We do not need even more than $n$ because
appearance of the $(n+1)$th color indicates non-isomorphism.%
}). As the
coloring is refined in each round, it stabilizes after at most $2n$ rounds,
that is, no further refinement occurs. The algorithm stops once this
happens. If the multiset of colors of the vertices of $G$ is distinct from the multiset
of colors of the vertices of $H$, the algorithms reports that the graphs are
not isomorphic; otherwise, it declares them to be isomorphic. 
Disappointingly, the output is not always correct. 
The algorithm may report false positives, for example, if both
input graphs are regular with the same vertex degree.

Following the same idea, the $k$-dimensional version iteratively refines a
coloring of $V(G)^k\cup V(H)^k$. The initial coloring of a $k$-tuple $\baru$
is the isomorphism type of the subgraph induced by the vertices in $\baru$
(viewed as a labeled graph where each vertex is labeled by the positions in the
tuple where it occurs). Loosely speaking, the refinement step takes into account 
the colors of all neighbors of $\baru$ in the Hamming metric.  
Color stabilization is surely reached in
$
r<2n^k
$
rounds and, thus, the algorithm terminates in polynomial time for
fixed $k$.

Let us give a careful description of the \WL k for $k\ge2$.
Given an ordered $k$-tuple of vertices $\baru=(u_1,\ldots,u_k)\in V(G)^k$, we
define the \emph{isomorphism type} of $\bar u$ to be the pair
\begin{equation}\label{eq:isotype}
\isotype\baru=\Big(\setdef{(i,j)\in[k]^2}{u_i=u_j},\setdef{(i,j)\in[k]^2}
{\{u_i,u_j\}\in E(G)}\Big),
\end{equation}
where $[k]$ denotes the set $\{1,\ldots,k\}$.
If $w\in V(G)$ and $i\le k$, we let $\baru^{i,w}$ denote the result
of substituting $w$ in place of $u_i$ in $\baru$. 

The \emph{$r$-round \WL k}
takes as an input two graphs $G$ and $H$ and purports to decide
if $G\cong H$. The algorithm performs 
the following operations with the set $V(G)^k\cup V(H)^k$.

\tlt{Initial coloring.}
The algorithm assigns each $\baru\in V(G)^k\cup V(H)^k$ color
$\wl k0\baru=\isotype\baru$ (in a suitable encoding).

\tlt{Color refinement step.}
In the $i$-th round each $\baru\in V(G)^k$ is assigned color
$$
\wl ki\baru=\Big(\wl k{i-1}\baru,
\msetdef{\left(\wl k{i-1}{\baru^{1,w}},\ldots,\wl k{i-1}{\baru^{k,w}}\right)}{w\in V(G)}\Big)
$$
and similarly with each $\baru\in V(H)^k$.

Here $\mso\ldots\msc$ denotes a multiset. In a weaker \emph{count-free} 
version of the algorithm, this notation will be interpreted as a set.
Let
$$
\wl krG=\msetdef{\wl kr{\baru}}{\baru\in V(G)^k}.
$$

\tlt{Computing an output.}
The algorithm reports that
$G\not\cong H$ if
\begin{equation}\label{eq:decision}
\wl krG\ne\wl krH
\end{equation}
and that $G\cong H$ otherwise.

In the above description we skipped an important implementation detail.
In order 
to prevent increasing the length of $\wl ki\baru$ at the exponential rate,
we arrange colors of all $k$-tuples 
of $V(G)^k\cup V(H)^k$ in the lexicographic order and replace each color with 
its number before every refinement step.

Furthermore, let
$$
\diagwl krG=\msetdef{\wl kr{u^k}}{u\in V(G)},
$$
where $u^k$ denotes the $k$-tuple $(u,\ldots,u)$.

\begin{lemma}\label{lem:diagineq}
In both the standard and the count-free versions of the \WL k, inequality
\begin{equation}\label{eq:ineq1}
\diagwl krG\ne\diagwl krH
\end{equation}
implies \refeq{eq:decision}, which in its turn implies
\begin{equation}\label{eq:ineq3}
\diagwl k{r+k-1}G\ne\diagwl k{r+k-1}H.
\end{equation}
\end{lemma}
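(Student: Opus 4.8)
The plan is to prove the two implications separately, in each case by exhibiting how a distinction at one level of refinement propagates to the other. Both implications are ultimately statements about the relative strength of colorings on the diagonal $\{u^k : u\in V(G)\}$ versus colorings on all of $V(G)^k$, and I would phrase the argument as a comparison of partitions refined by the WL update rule.

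\smallskip\noindent\textit{First implication: \refeq{eq:ineq1} $\Rightarrow$ \refeq{eq:decision}.} This direction is essentially trivial and I would dispatch it first. The multiset $\diagwl krG$ is obtained from $\wl krG$ by restricting attention to the sub-multiset of colors carried by the diagonal tuples $u^k$; equivalently, $\diagwl krG$ is a ``projection'' of the data $\wl krG$ together with the (isomorphism-invariant) information of which color classes meet the diagonal and with what multiplicity. Since $\isotype{u^k}$ is the same fixed type for every $u$ and every $G$, the diagonal is recognizable from the coloring alone: a tuple is diagonal iff its color refines that of $\wl k0{u^k}$. Hence $\wl krG=\wl krH$ forces the sub-multiset of colors appearing on diagonal tuples to coincide as well, i.e. $\diagwl krG=\diagwl krH$. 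Contrapositively, \refeq{eq:ineq1} implies \refeq{eq:decision}. I would remark that this works verbatim in the count-free version, since there the outer multiset is read as a set and the same projection argument applies.

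\smallskip\noindent\textit{Second implication: \refeq{eq:decision} $\Rightarrow$ \refeq{eq:ineq3}.} This is the substantive part, and I expect it to be the main obstacle. The point is that a difference in the global color multisets $\wl krG\ne\wl krH$ must be made ``visible on the diagonal'' within $k-1$ further rounds. The natural mechanism, analogous to Lemma~\ref{lem:diagineq}'s companion results and to the Ehrenfeucht-game picture of Section~\ref{s:games}, is this: if $\wl kr{\baru}$ is a color realized, say, with different multiplicities in $G$ and $H$, pick a witnessing tuple $\baru=(u_1,\dots,u_k)$ and ``collapse'' it to a diagonal tuple one coordinate at a time, tracking how the round-$(r+1), (r+2), \dots$ colors of the intermediate tuples $\baru^{i,u_1}$ (substituting a fixed vertex $u_1$ into more and more coordinates) encode the original color $\wl kr\baru$ via the refinement rule. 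After $k-1$ such substitutions one reaches $u_1^k$, and its color at round $r+k-1$ has absorbed enough of the multiset data from round $r$ to detect the discrepancy. Concretely, I would prove by induction on $t=0,1,\dots,k-1$ that the round-$(r+t)$ color of a tuple all of whose coordinates after the first $k-t$ have been set equal to $u_1$ determines the round-$r$ color of $\baru$, so that any $G$/$H$ discrepancy at round $r$ survives as a discrepancy among such ``$t$-collapsed'' colors at round $r+t$; at $t=k-1$ these are exactly diagonal tuples and we get \refeq{eq:ineq3}. The delicate point — and where I would spend the most care — is making the substitution order canonical and checking that the multiset appearing in the refinement step at each round genuinely records (not merely is influenced by) the previous-round color of the relevant off-diagonal neighbor, so the induction hypothesis is strong enough to push through; one also has to handle the count-free case, where only set-equality of color collections is available, by arguing that a color \emph{present} in one graph and \emph{absent} in the other (which is what \refeq{eq:decision} gives when multisets are read as sets) likewise propagates to a present/absent discrepancy among $t$-collapsed colors.

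\smallskip\noindent I would close by noting that both implications are monotone in $r$ in the expected way and that the constant $k-1$ in the exponent shift is exactly the number of coordinates one must overwrite to pass from an arbitrary $k$-tuple to a diagonal one, which is why it appears in \refeq{eq:ineq3}; this also explains why no shift is needed in the first implication, where we only \emph{restrict} to the diagonal rather than \emph{reach} it.
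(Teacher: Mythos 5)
Your first implication is handled exactly as in the paper: the equality type of a $k$-tuple is encoded in its color from round $0$ onward, so the diagonal sub-multiset $\diagwl krG$ is a function of $\wl krG$, and the contrapositive gives \refeq{eq:ineq1}~$\Rightarrow$~\refeq{eq:decision}. No issues there, in either version of the algorithm.

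The second implication contains a genuine gap, located precisely at the step you yourself flag as delicate. Your proposed induction hypothesis --- that the round-$(r+t)$ color of the $t$-collapsed tuple \emph{determines} the round-$r$ color of $\baru$ --- is false. The refinement step records only a multiset over all vertices $w$ of tuples of previous-round colors, with $w$ anonymized; from the color of the collapsed tuple one cannot recover which member of that multiset corresponds to the substitution $w=u_{k-t+1}$ that undoes the collapse. The color of $\baru$ thus appears inside the collapsed tuple's color only as one unlabeled entry among $v(G)$ many, so a single diagonal tuple does not ``absorb'' the discrepancy and the induction as stated does not close. What makes the implication work in the paper is an aggregate counting argument that your sketch never reaches: fixing an equality type $E$ on which $\wl krG$ and $\wl krH$ differ, each $\baru\in E$ leaves a ``trace'' of its round-$r$ color inside $\wl k{r+k-1}{a^k}$ a certain number of times, and the \emph{sum of these contributions over all vertices $a$} is the same for every $\baru\in E$. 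Consequently a color with different multiplicities in $\wl krG$ and $\wl krH$ produces different total trace counts in $\diagwl k{r+k-1}G$ and $\diagwl k{r+k-1}H$, and these counts are functions of the respective multisets, which therefore differ. Your remark about the count-free case is fine --- presence versus absence of a color does propagate to presence versus absence of its trace on the diagonal, which is why that case is easier --- but for the standard version the summed-multiplicity invariance is the missing ingredient, and without it the argument does not go through.
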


\begin{proof}
Consider the standard version; the analysis of the count-free case
is similar (and even simpler). 
By the \emph{equality type} of a $k$-tuple $\baru$ we mean the first
component of \refeq{eq:isotype}.
Note that $k$-tuples with different equality types never have
the same color. Therefore, $\wl krG$ and $\wl krH$ are different 
iff they are different on some class of $k$-tuples with the same equality type.
This proves the first implication.

On the other hand, suppose that \refeq{eq:decision} holds. 
Let $E$ be an equality type
on which $\wl krG$ and $\wl krH$ differ. Note that each $\baru$ in $E$
contributes color $\wl kr{\baru}$ (a certain number of times)
to color $\wl k{r+k-1}{a^k}$. Moreover, the sum of the contributions over
all vertices $a$ is the same for every $\baru\in E$.
It follows that, if a color has different multiplicities in $\wl krG$ and $\wl krH$,
its ``traces'' occur different number of times in $\diagwl k{r+k-1}G$
and $\diagwl k{r+k-1}H$, and hence these multisets are distinct.
\end{proof}

As it is easily seen, if $\phi$ is an isomorphism from $G$ to $H$, then for all $k$, $i$,
and $\baru\in V(G)^k$ we have $\wl ki\baru=\wl ki{\phi(\baru)}$.
This shows that for isomorphic input graphs the output is always correct.
If input graphs are non-isomorphic and the dimension $k$ is not big enough,
the algorithm can erroneously report isomorphism. 
A criterion for the optimal choice of the dimension is obtained by
Cai, F\"urer, and Immerman \cite{CFI}, who discovered a connection between
the Weisfeiler-Lehman algorithm and the logical complexity of graphs
via the Ehrenfeucht game (for the color refinement algorithm this
was done by Immerman and Lander \cite{ILa}). 
The success of the standard version of the algorithm depends
on distinguishability of the input graphs in the logic with counting
quantifiers, while the count-free version is in the same way related
to the standard first-order logic.

Referring to the \WL k below, we will \textbf{always} assume $k\ge1$
for the standard version of the algorithm and $k\ge2$
for its count-free version (we can exclude the case of $k=1$, whose
analysis differs by some details, as the count-free \WL 1 is of no interest:
note that it is unable to distinguish between two graphs of order $n$
without isolated and universal vertices).

Given numbers $r$, $l$, and $k\le l$, graphs $G$, $H$, and
$k$-tuples $\baru\in V(G)^k$,
$\barv\in V(H)^k$, we use notation $\game^l_r(G,\baru,H,\barv)$
to denote the $r$-round $l$-pebble Ehrenfeucht game on $G$ and $H$
with initial configuration $(\baru,\barv)$, that is, the game starts
on the board with  $k$ already pebbled pairs $(u_i,v_i)$.
If the initial configurations is not a partial isomorphism,
Duplicator loses $\game^l_r(G,\baru,H,\barv)$ whatever $r\ge0$.
The following lemma is a key element of our analysis.

\begin{lemma}[Cai, F\"urer, and Immerman \cite{CFI}]\label{lem:game}
Let $\baru\in V(G)^k$ and $\barv\in V(H)^k$.
\begin{bfenumerate}
\item
Equality
\begin{equation}\label{eq:coleq}
\wl kr\baru=\wl kr\barv
\end{equation}
holds for (the standard version of) the \WL k iff Duplicator has a winning
strategy in the counting version of $\game_r^{k+1}(G,\baru,H,\barv)$.
\item
Equality \refeq{eq:coleq}
holds for the count-free version of the \WL k iff Duplicator has a winning
strategy in (the standard version of) $\game_r^{k+1}(G,\baru,H,\barv)$.
\end{bfenumerate}
\end{lemma}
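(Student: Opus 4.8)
The plan is to prove both parts simultaneously by induction on the number of rounds $r$, exhibiting the standard correspondence between Weisfeiler--Lehman color equality and Duplicator's winning strategies in the Ehrenfeucht game with $k+1$ pebbles. First I would set up the base case $r=0$: by construction $\wl k0\baru=\isotype\baru$, and two $k$-tuples have the same isomorphism type precisely when the map $u_i\mapsto v_i$ is a partial isomorphism, which is exactly the condition for Duplicator not to have already lost $\game_0^{k+1}(G,\baru,H,\barv)$. (The extra $(k+1)$st pebble plays no role at round $0$.)

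For the inductive step, I would fix $r\ge1$ and assume the equivalence for $r-1$. The key observation is how a single round of $\game_r^{k+1}$ interacts with the $k$-tuples: when Spoiler holds $k$ pebbles fixed on $\baru$ (resp.\ $\barv$) and uses the $(k+1)$st pebble to probe a new vertex $w$, then after Duplicator responds and (possibly) Spoiler rearranges, the relevant configuration is one of the $k$ tuples $\baru^{i,w}$. So one round of the $(k+1)$-pebble game corresponds exactly to the refinement step, which records, for each $w\in V(G)$, the tuple of colors $(\wl k{r-1}{\baru^{1,w}},\dots,\wl k{r-1}{\baru^{k,w}})$. For Part~2 (count-free, standard game): Duplicator wins the $r$-round game iff for every Spoiler choice of a vertex $w$ in one graph there is a response $w'$ in the other such that Duplicator then wins the $(r-1)$-round game from the new configuration; by the induction hypothesis this is equivalent to $\wl k{r-1}{\baru^{i,w}}=\wl k{r-1}{\barv^{i,w'}}$ for all $i$, i.e.\ to the equality of the \emph{sets} $\setdef{(\wl k{r-1}{\baru^{1,w}},\dots,\wl k{r-1}{\baru^{k,w}})}{w\in V(G)}$ for the two graphs — which, combined with $\wl k{r-1}\baru=\wl k{r-1}\barv$, is exactly $\wl kr\baru=\wl kr\barv$ in the count-free interpretation. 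For Part~1 (standard WL, counting game): a counting move lets Spoiler pick a set $A$ and force Duplicator to match its cardinality before Spoiler pebbles within it; this upgrades the ``for every $w$ exists $w'$'' quantifier structure to a bijective correspondence, so the sets above get replaced by multisets, matching the $\msetdef{\cdots}{\cdots}$ in the definition of the standard refinement step.

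I would handle the two parts in parallel, pointing out that the only difference is set versus multiset, which tracks precisely the difference between an ordinary move (``$\exists w'$ matching'') and a counting move (``a cardinality-preserving bijection''). A small technical point worth spelling out: one must argue that in the $(k+1)$-pebble game it suffices for Spoiler to use a strategy in which, after each round, only the original $k$ pebbles ``matter'' for the configuration being tested — equivalently, that the recursion on configurations stays within $k$-tuples modulo relabeling the $(k+1)$st pebble onto one of the first $k$ slots. This is a standard normalization of Spoiler's strategy (he can always move the freshly placed pebble onto the slot he wants to overwrite), and I would invoke it explicitly.

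The main obstacle, I expect, is the bookkeeping in the counting case: one has to verify carefully that Duplicator's requirement to answer a set $A$ with a set $B$ of equal cardinality, followed by Spoiler pebbling inside $B$ and Duplicator answering inside $A$, corresponds exactly to a multiset equality of the vectors $(\wl k{r-1}{\baru^{1,w}},\dots)$ rather than merely a set equality. The cleanest way to see this is to note that the multiset equality is equivalent to the existence of a bijection $\sigma:V(G)\to V(H)$ with $\wl k{r-1}{\baru^{i,w}}=\wl k{r-1}{\barv^{i,\sigma(w)}}$ for all $i$ and $w$, and then to check that such a $\sigma$ is exactly what a counting-move strategy for Duplicator provides (restricting $\sigma$ to the color class Spoiler selects gives the set $B$, and Duplicator's within-set response is $\sigma^{-1}$). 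Everything else is the routine unwinding of one round, so I would not belabor it.
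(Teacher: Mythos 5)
Your proposal is correct and follows essentially the same round-by-round induction as the paper, which writes out only Part~2 in detail (citing Cai--F\"urer--Immerman for Part~1) and likewise reduces one round of the $(k+1)$-pebble game to the color-refinement step via the configurations $\baru^{i,w}$. The only point the paper makes explicit that you gloss over is the check that Duplicator's intermediate $(k+1)$-pebble configuration is itself a partial isomorphism, i.e.\ that $\wl k{r-1}{\baru^{i,a}}=\wl k{r-1}{\barv^{i,b}}$ for all $i$ together with $\isotype\baru=\isotype\barv$ yields $\isotype{\baru,a}=\isotype{\barv,b}$; this is immediate but worth stating.
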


\begin{proof}
We prove only Part 2 (Part 1 is proved in detail in \cite[Theorem 5.2]{CFI}).
We proceed by induction on $r$. The base case $r=0$ is straightforward
by the definitions of the initial coloring and the game. Assume that
the proposition is true for $r-1$ rounds.

Let $x_i$ and $y_i$ denote the vertices in $G$ and $H$ respectively
marked by the $i$-th pebble pair. Assume \refeq{eq:coleq}
and consider the Ehrenfeucht game on $G$, $H$
with initial configuration $(x_1,\ldots,x_k)=\baru$ and
$(y_1,\ldots,y_k)=\barv$. First of all, this configuration is non-losing
for Duplicator since \refeq{eq:coleq} implies that
$\isotype\baru=\isotype\barv$. Further, Duplicator can survive in
the first round. Indeed, assume that Spoiler in this round selects
a vertex $a$ in one of the graphs, say in $G$. Then Duplicator selects
a vertex $b$ in the other graph $H$ so that
$\wl k{r-1}{\baru^{i,a}}=\wl k{r-1}{\barv^{i,b}}$ for all $i\le k$.
In particular,
$\isotype{\baru^{i,a}}=\isotype{\barv^{i,b}}$ for all $i\le k$.
Along with $\isotype\baru=\isotype\barv$, this implies that
$\isotype{\baru,a}=\isotype{\barv,b}$.
Assume now that in the second round Spoiler removes $j$-th pebble,
$j\le k$. Then Duplicator's task in the rest of the game is
essentially to win $\game^{k+1}_{r-1}(G,\baru^{j,a},H,\barv^{j,b})$.
Since $\wl k{r-1}{\baru^{j,a}}=\wl k{r-1}{\barv^{j,a}}$,
Duplicator succeeds by the induction assumption.

Assume now that \refeq{eq:coleq} is false. It follows that
$\wl k{r-1}\baru\ne\wl k{r-1}\barv$ (then Spoiler has a winning
strategy by the induction assumption) or there is a vertex $a$ in one
of the graphs, say in $G$, such that for every $b$ in the other graph 
$H$ we have $\wl k{r-1}{\baru^{j,a}}\ne\wl k{r-1}{\baru^{j,b}}$
for some $j=j(b)$. In the latter case Spoiler in his first move
places the $(k+1)$-th pebble on $a$. Let $b$ be the vertex selected
in response by Duplicator. In the second move Spoiler will remove
the $j(b)$-th pebble, which implies that
the players essentially play
$\game_{r-1}^{k+1}(G,\baru^{j,a},H,\barv^{j,b})$ from now on.
By the induction assumption, Spoiler wins.
\end{proof}

\begin{lemma}\label{lem:diagD}
Equality $\diagwl krG=\diagwl krH$ is true for the standard (resp.\ count-free)
version of the \WL k iff Duplicator has a winning strategy in
the counting (resp.\ standard) version of $\game_{r+1}^{k+1}(G,H)$.
\end{lemma}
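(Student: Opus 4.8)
The plan is to deduce this lemma from Lemma \ref{lem:game} together with the connection between the ``diagonal'' coloring $\diagwl krG$ and the full coloring $\wl krG$. First I would observe that $\diagwl krG=\diagwl krH$ is, by definition, a statement about the multiset of colors $\wl kr{u^k}$ for $u$ ranging over $V(G)$ (resp.\ $V(H)$). So the natural strategy is: in the first round of $\game_{r+1}^{k+1}(G,H)$, Spoiler (or Duplicator) plays a single vertex, thereby committing to a particular diagonal $k$-tuple, after which the remaining $r$ rounds are exactly a game with an initial configuration of the form $(u^k,v^k)$, to which Lemma \ref{lem:game} applies directly.

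The key steps, in order, are as follows. \emph{(Duplicator wins $\Rightarrow$ equality.)} Suppose Duplicator wins the counting (resp.\ standard) $\game_{r+1}^{k+1}(G,H)$. I would first argue that, by a standard pebble-counting/padding argument, Duplicator also wins $\game_{r+1}^{k+1}$ when we further require that in the first round $k$ pebble pairs are placed simultaneously on a single vertex of each graph --- equivalently, think of the $(k{+}1)$-pebble game as able to simulate placing $\baru=u^k$ in one round by reusing the extra pebble, or simply invoke that a winning strategy for Duplicator in the $(k{+}1)$-pebble $(r{+}1)$-round game yields, for every $u\in V(G)$, a response $v\in V(H)$ with $u^k\mapsto v^k$ a partial isomorphism and Duplicator winning the residual $r$-round $(k{+}1)$-pebble game from $(u^k,v^k)$. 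By Lemma \ref{lem:game}, that residual condition is exactly $\wl kr{u^k}=\wl kr{v^k}$. This gives an injection (in fact a multiset equality, after checking the map is degree-preserving in the usual Ehrenfeucht bookkeeping sense) showing $\diagwl krG=\diagwl krH$. \emph{(Equality $\Rightarrow$ Duplicator wins.)} Conversely, if $\diagwl krG=\diagwl krH$, then for every vertex $a$ chosen by Spoiler in the first round of $\game_{r+1}^{k+1}$ — in either graph, and in the counting version for a chosen set $A$ — Duplicator can respond so that the resulting diagonal $k$-tuples have equal $\wl kr{\cdot}$-colors (using the multiset equality to match up colors, and $|A|=|B|$ in the counting case since equal colors occur with equal multiplicity); then Lemma \ref{lem:game} supplies a winning Duplicator strategy for the remaining $r$ rounds from that configuration. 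A minor point to handle cleanly: the first pebbled pair is a partial isomorphism because $\wl kr{\cdot}$ refines $\wl k0{\cdot}=\isotype{\cdot}$, so equal colors force $\isotype{a^k}=\isotype{b^k}$, i.e.\ the singleton configuration is non-losing.

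The main obstacle I anticipate is the precise bookkeeping in passing between ``one move of the $(k{+}1)$-pebble game'' and ``an initial configuration $u^k$'' — i.e.\ making rigorous that committing to a single vertex $u$ in the game corresponds to the diagonal tuple $u^k$, and that the residual game really is $\game_r^{k+1}(G,u^k,H,v^k)$ in the sense of Lemma \ref{lem:game}. This is where one must be careful that the $(k{+}1)$-st pebble, not $k$ of them, is what is used in the first round, and that afterwards all of $p_1,\dots,p_{k+1}$ are available exactly as Lemma \ref{lem:game} requires; the counting version needs the additional remark that Duplicator's set-response has the correct cardinality precisely because of multiset (not merely set) equality of the diagonal colorings. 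Everything else is a direct invocation of Lemma \ref{lem:game} with $r$ in place of $r$ and the initial configuration $(u^k,v^k)$, so I would keep the write-up short and lean entirely on that lemma.
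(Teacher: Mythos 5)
Your proposal is correct and follows essentially the same route as the paper: the first round of $\game_{r+1}^{k+1}(G,H)$ commits the play to a diagonal configuration $(a^k,b^k)$, after which Lemma \ref{lem:game} characterizes the residual $r$-round game by the equality $\wl kr{a^k}=\wl kr{b^k}$, with the multiset (in)equality of $\diagwl krG$ and $\diagwl krH$ governing who controls that first round (the paper phrases your forward direction contrapositively, via Spoiler's counting move on a color class of unequal multiplicity, which is the cleaner way to carry out the "degree-preserving bookkeeping" you flag).
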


\begin{proof}
We consider the standard version of the algorithm; 
the proof for the count-free version is very similar.
If the multisets $\diagwl krG$ and $\diagwl krH$ are not equal,
Spoiler has a winning strategy in the counting game
$\game_{r+1}^{k+1}(G,H)$.
In the first round he makes a counting move that forces pebbling
$a\in V(G)$ and $b\in V(H)$ so that $\wl kr{a^k}\ne\wl kr{b^k}$.
The remainder 
of the game is equivalent to the counting game $\game_{r}^{k+1}(G,a^k,H,b^k)$,
where Spoiler has a winning strategy by Lemma~\ref{lem:game}.

If the multisets $\diagwl krG$ and $\diagwl krH$ are equal, Duplicator
is able to play the first round so that $\wl kr{a^k}=\wl kr{b^k}$
for the pebbled vertices $a$ and $b$. She wins the remaining game
again by Lemma~\ref{lem:game}.
\end{proof}

We say that the $r$-round \WL k\ \emph{works correctly for a graph $G$}
if its output is correct on all input pairs $(G,H)$ (here $H$ may have
any order, not necessary the same as $G$).

\begin{theorem}\label{thm:WLD}\mbox{}
The $r$-round \WL k\ works correctly for $G$ if
$$
k\ge \cw G-1\ \ \mbox{and}\ \ r \ge \cd{k+1}{G}-1
$$
and only if
$$
k\ge \cw G-1\ \ \mbox{and}\ \ r \ge \cd{k+1}{G}-k.
$$

The same holds true for the count-free $r$-round \WL k\
and the standard logic (without counting).
\end{theorem}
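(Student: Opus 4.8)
The plan is to reduce the statement to the machinery just assembled: Lemmas~\ref{lem:diagineq} and~\ref{lem:diagD}, the variants of the Ehrenfeucht theorem in Theorem~\ref{thm:games}, and Lemma~\ref{lem:ddd}. First I would restate correctness in a usable form: since isomorphic input graphs always receive identical colorings, the $r$-round \WL k\ works correctly for $G$ precisely when $\wl krG\ne\wl krH$ --- i.e.\ \refeq{eq:decision} holds --- for every $H\not\cong G$. Lemma~\ref{lem:diagineq} sandwiches this event between $\diagwl krG\ne\diagwl krH$ and $\diagwl k{r+k-1}G\ne\diagwl k{r+k-1}H$, and Lemma~\ref{lem:diagD} rewrites the latter two in game terms: $\diagwl ksG=\diagwl ksH$ iff Duplicator wins the counting (resp.\ standard, for the count-free version) version of $\game_{s+1}^{k+1}(G,H)$. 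Finally, Theorem~\ref{thm:games}.5 (resp.\ Parts~3--4) together with the counting (resp.\ plain) form of Lemma~\ref{lem:ddd} turn statements about Spoiler winning the $(k+1)$-pebble game on $(G,H)$ for all $H\not\cong G$ into statements about $\cw G$ and $\cd{k+1}{G}$ (resp.\ $W(G)$ and $D^{k+1}(G)$). I would carry out the counting case; the count-free case (for $k\ge2$) is then verbatim with these substitutions.

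For the ``if'' direction, assume $k\ge\cw G-1$ and $r\ge\cd{k+1}{G}-1$ and fix $H\not\cong G$. The counting form of Lemma~\ref{lem:ddd} gives $\cw{G,H}\le\cw G\le k+1$, so $\cd{k+1}{G,H}$ is finite and satisfies $\cd{k+1}{G,H}\le\cd{k+1}{G}\le r+1$. By Theorem~\ref{thm:games}.5 Spoiler wins the counting version of $\game_{r+1}^{k+1}(G,H)$, as he already wins within $\cd{k+1}{G,H}\le r+1$ rounds. Lemma~\ref{lem:diagD} then gives $\diagwl krG\ne\diagwl krH$, and the first implication of Lemma~\ref{lem:diagineq} yields \refeq{eq:decision}. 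Since $H$ was arbitrary, the algorithm works correctly for $G$.

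For the ``only if'' direction, suppose the $r$-round \WL k\ works correctly for $G$. If $\cw{G,H}>k+1$ for some $H\not\cong G$, then by Theorem~\ref{thm:games}.5 Duplicator wins the counting version of $\game_s^{k+1}(G,H)$ for \emph{every} $s$, in particular for $s=r+k$; Lemma~\ref{lem:diagD} then gives $\diagwl k{r+k-1}G=\diagwl k{r+k-1}H$, and the contrapositive of the second implication of Lemma~\ref{lem:diagineq} forces $\wl krG=\wl krH$, so the algorithm wrongly declares $G$ and $H$ isomorphic --- a contradiction. Hence $\cw{G,H}\le k+1$ for all $H\not\cong G$, i.e.\ $k\ge\cw G-1$ by Lemma~\ref{lem:ddd}. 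Knowing this, $\cd{k+1}{G,H}$ is finite for every $H\not\cong G$; if $\cd{k+1}{G,H}>r+k$ for some such $H$, then Duplicator wins the counting version of $\game_{r+k}^{k+1}(G,H)$, and the same chain ($\diagwl k{r+k-1}G=\diagwl k{r+k-1}H$, hence $\wl krG=\wl krH$) once more contradicts correctness. Therefore $\cd{k+1}{G,H}\le r+k$ for all $H\not\cong G$, i.e.\ $r\ge\cd{k+1}{G}-k$ by Lemma~\ref{lem:ddd}.

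I expect the only subtle point to be the bookkeeping of round shifts, which is precisely where the asymmetry between the two conditions originates: in the ``if'' part the good implication of Lemma~\ref{lem:diagineq} ($\diagwl krG\ne\diagwl krH$ implies \refeq{eq:decision}) costs just one round, so the sufficient bound loses only $1$ from $\cd{k+1}{G}$, whereas in the ``only if'' part the reverse implication (\refeq{eq:decision} implies $\diagwl k{r+k-1}G\ne\diagwl k{r+k-1}H$) has to climb $k-1$ further rounds, producing the extra loss in $r\ge\cd{k+1}{G}-k$. I also want to be careful that passing to $\max_{H\not\cong G}$ through Lemma~\ref{lem:ddd} is legitimate: $\cd{k+1}{G}$ is a finite maximum only once $k\ge\cw G-1$ is known, which is why that inequality is established first in the ``only if'' part and is part of the hypothesis in the ``if'' part. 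No other step should present a real obstacle.
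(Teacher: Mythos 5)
Your proof is correct and follows exactly the route the paper takes: its own proof is just a two-sentence invocation of Lemma~\ref{lem:diagineq}, Lemma~\ref{lem:diagD}, Theorem~\ref{thm:games}.4--5, and Lemma~\ref{lem:ddd}, and you have spelled out precisely that chain, including the correct one-round versus $k$-round bookkeeping that accounts for the asymmetry between the two conditions.
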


\begin{proof}
If $G\cong H$, the output is correct in any case.
Suppose that $G\not\cong H$. 
By Lemma \ref{lem:diagineq}, inequality \refeq{eq:ineq1} is
a sufficient condition for the output being correct while
\refeq{eq:ineq3} is a necessary condition for this.
The theorem now follows from Lemma \ref{lem:diagD},
the Ehrenfeucht theorem (Theorem \ref{thm:games}.4,5),
and Lemma~\ref{lem:ddd}.1 along with its counting version.
\end{proof}

By Theorem \ref{thm:WLD}, $k\ge \cw G-1$ is both a sufficient and a necessary
condition for a successful work of the \WL k\ on all inputs $(G,H)$.
As we already discussed, the number of rounds can be taken $r=O(n^k)$.
Therefore, Graph Isomorphism is solvable in polynomial time for any
class of graphs $C$ with $\cw G=O(1)$ for all $G\in C$.
This applies to any class of graphs embeddable into a fixed surface
and any class of graphs with bounded treewidth (see Section~\ref{ss:classes}).

Sometimes the Weisfeiler-Lehman algorithm gives us even better
result, namely the solvability of the isomorphism problem by a parallel algorithm 
in polylogarithmic time. The concept of polylogarithmic
parallel time is captured by the complexity class NC and its refinements:
$$
\nc{}=\textstyle\bigcup_i\nc i\textrm{ and }
\nc i\subseteq\ac i\subseteq\tc i\subseteq\nc{i+1},
$$
where \nc i\ consists of functions computable by circuits of polynomial
size and depth $O(\log^in)$, \ac i\ is an analog for circuits with unbounded
fan-in, and \tc i\ is an extension of \ac i\ allowing threshold gates.  As
it is 
well known \cite{KRa}, \ac i\ consists of exactly those functions computable
by a CRCW PRAM with polynomially many processors in time $O(\log^in)$. 
Grohe and Verbitsky \cite{GVe} point out that the $r$-round \WL k\
(resp.\ its count-free version) is implementable in \tc1\ (resp.\ \ac1)
as long as $k=O(1)$ and $r=O(\log n)$. If combined with Theorem \ref{thm:WLD},
this gives us the following result.
\begin{theorem}\label{thm:GVe}
Let $k\ge2$ be a constant.
\begin{bfenumerate}
  \item
    Let $C$ be a class of graphs $G$ with
    $\cd {k}G=O(\log n)$. Then Graph Isomorphism for $C$ is solvable in \tc1.
  \item
    Let $C$ be a class of graphs $G$ with $D^{k}(G)=O(\log n)$.
    Then Graph Isomorphism for $C$ is solvable in \ac1.
  \end{bfenumerate}
\end{theorem}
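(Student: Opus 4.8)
The plan is to derive Theorem \ref{thm:GVe} by combining Theorem \ref{thm:WLD} with the circuit-complexity bound attributed to Grohe and Verbitsky~\cite{GVe}, namely that for constant $k$ the $r$-round \WL k\ is computable in \tc1\ and its count-free variant in \ac1\ whenever $r=O(\log n)$. The two parts are symmetric, so I would present Part~1 in detail and remark that Part~2 follows verbatim with ``counting'' replaced by ``count-free'' and \tc1\ by \ac1.

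First I would fix a class $C$ with $\cd kG=O(\log n)$ for all $G\in C$ of order $n$, and note that $\cd kG<\infty$ forces $\cw G\le k$, so the hypotheses $k-1\ge\cw G-1$ of Theorem \ref{thm:WLD} are met with the dimension $k-1$ (note that $\cd kG$ in the theorem statement refers to the dimension-$(k-1)$ algorithm via the index $k$, matching the ``$\cd{k+1}{G}$'' in Theorem \ref{thm:WLD} with $k\mapsto k-1$). By Theorem \ref{thm:WLD}, the $r$-round $(k-1)$-dimensional WL algorithm works correctly for every $G\in C$ as soon as $r\ge\cd kG-1=O(\log n)$; so we may choose $r=c\log n$ for a suitable constant $c$ depending only on the class $C$. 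Then I would invoke the Grohe--Verbitsky implementation: with $k-1=O(1)$ fixed and $r=O(\log n)$, running the algorithm on an input pair $(G,H)$ and comparing the stabilized color multisets is a \tc1\ computation. Since this computation outputs the correct isomorphism decision for all pairs with $G\in C$, Graph Isomorphism restricted to $C$ lies in \tc1. For Part~2 the same argument with $D^{k}(G)=O(\log n)$ uses the count-free version and Theorem \ref{thm:WLD}'s count-free clause, landing in \ac1\ by the count-free implementation bound.

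The one point that needs a little care — and which I would flag as the main obstacle — is bookkeeping the indexing shift between the ``dimension'' of the WL algorithm and the subscript/superscript on $\cd{}{}$ and $D^{}$: Theorem \ref{thm:WLD} relates the \emph{$k$-dimensional} algorithm to $\cw G$ and $\cd{k+1}{G}$, whereas Theorem \ref{thm:GVe} is stated with a parameter $k\ge2$ attached directly to $\cd kG$ and $D^k(G)$. One must check that setting the WL dimension to $k-1$ is legitimate (this needs $k-1\ge1$, i.e.\ $k\ge2$, explaining the hypothesis) and that the count-free version indeed requires dimension $\ge2$, which is why the count-free part also demands $k\ge2$. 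Once this is lined up, the proof is just: finiteness of $\cd kG$ gives the width bound, $\cd kG=O(\log n)$ gives a round bound $r=O(\log n)$, Theorem \ref{thm:WLD} gives correctness, and the Grohe--Verbitsky circuit bound gives the complexity class.

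A clean way to write it would be a short paragraph per part. For Part~1: ``Let $G\in C$ have order $n$. Since $\cd kG=O(\log n)<\infty$, we have $\cw G\le k$, so Theorem \ref{thm:WLD} (applied with dimension $k-1$) guarantees that the $r$-round \WL{k-1}\ works correctly for $G$ whenever $r\ge\cd kG-1$. Fix $r=c\log n$ with $c$ large enough that this holds for all $G\in C$. By \cite{GVe}, with $k-1$ constant and $r=O(\log n)$ the $r$-round \WL{k-1}\ is computable in \tc1; as its output correctly decides isomorphism on every pair $(G,H)$ with $G\in C$, Graph Isomorphism for $C$ is in \tc1.'' Part~2 is the identical paragraph with \WL{k-1}\ replaced by its count-free version, $\cd kG$ by $D^k(G)$, and \tc1\ by \ac1, using the count-free clauses of Theorem \ref{thm:WLD} and of \cite{GVe}.
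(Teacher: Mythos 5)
Your proposal follows exactly the route the paper intends: Theorem \ref{thm:WLD} gives correctness of the $r$-round WL algorithm once the dimension is at least the width minus one and $r$ is at least the relevant depth minus one, and the Grohe--Verbitsky observation that the $r$-round \WL k\ (resp.\ its count-free version) is implementable in \tc1\ (resp.\ \ac1) for constant dimension and $r=O(\log n)$ finishes the argument; the paper offers no more detail than this. The one slip is in your dimension bookkeeping for Part~2: you run the count-free algorithm in dimension $k-1$, but the paper's standing convention (used in Lemma \ref{lem:game} and Theorem \ref{thm:WLD}) only covers the count-free \WL k\ for dimension at least $2$, so your argument as written does not handle $k=2$ in Part~2, where the dimension would be $1$; moreover your remark that this requirement ``is why the count-free part also demands $k\ge2$'' is off by one, since with dimension $k-1$ it would force $k\ge3$. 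The fix is immediate: because $D^{k+1}(G)\le D^k(G)$ and $\cd{k+1}G\le\cd kG$, you may apply Theorem \ref{thm:WLD} with dimension $k$ instead of $k-1$, after which both parts go through for all $k\ge2$.
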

We will see applications of Theorem \ref{thm:GVe} in Section~\ref{ss:classes}.

Suppose that $k\ge W(G)-1$ and that we do not know a priori any bounds
for $D^{k+1}(G)$. How large has $r$ to be taken in order to ensure that
the $r$-round \WL k\ works correctly for $G$?
An answer is given by an important concept of \emph{color stabilization}
that was already discussed in the beginning of this section.
We will regard $\wwl kr$ as a partition of $V(G)^k\cup V(H)^k$. Let $R$ be the minimum
number for which $\wwl kR=\wwl k{R-1}$. Of course, it is enough to check the
condition \refeq{eq:decision} for $r=R$; it cannot change for bigger $r$.
Since each $\wwl kr$ is a refinement of $\wwl k{r-1}$, we have $R\le v(G)^k+v(H)^k$.
In fact, we are able to prove a bit more delicate claim: 
The Weisfeiler-Lehman algorithm can be 
terminated as soon as $\wwl kr$ stabilizes at least within $V(G)^k$.

To make this more precise, we introduce some notation.
Denote the restriction of the partition $\wwl kr$ to $V(G)^k$
by $\wwl kr_G$. Let $\stabi kG$ be the smallest number $s$
such that $\wwl ks_G=\wwl k{s-1}_G$. 
Note that $\stabi kG$ is an individual combinatorial parameter of
a graph $G$, not depending on $H$ (we may think that the \WL k is run
on a single graph $G$, which is actually a quite
meaningful \emph{canonization mode} of the algorithm).

We now state practical termination rules for the \WL k.

\begin{description}
\item[\it Rule 1]
Once $\wl krG\ne\wl krH$, terminate and report non-isomorphism.
\item[\it Rule 2]
Once $r=\stabi kG$ and $\wl krG=\wl krH$, terminate and report isomorphism.
\end{description}

Let us argue that these rules are sound for both
versions of the algorithm. Suppose that Rule~2 is invoked. Thus $\wwl
kr_G=\wwl k{r-1}_G$ and $\wl krG=\wl krH$. By the latter equality we also have $\wl k{r-1}G=\wl
k{r-1}H$. It follows that in the $r$-th round the algorithm achieves
a proper color refinement on neither $G$ nor $H$. Thus,
the partition $\wwl kr$ has been stabilized on $V(G)^k\cup V(H)^k$ and
the soundness of Rule~2 follows.

\begin{theorem}\label{thm:WLS}
\mbox{}

\begin{bfenumerate}
\item
The $r$-round \WL k\ recognizes non-isomorphism of $G$ and $H$ if 
$$k\ge\cw{G,H}-1\ \ \mbox{and}\ \ r \ge \stabi kG.$$
\item
The $r$-round \WL k\ works correctly for $G$ if 
$$k\ge\cw G-1\ \ \mbox{and}\ \ r \ge \stabi kG.$$
\item
Both claims hold true for the count-free version of the algorithm 
and the standard logic (with no counting).
\end{bfenumerate}
\end{theorem}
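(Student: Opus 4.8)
The plan is to combine the two lemmas that relate the Weisfeiler-Lehman partition to the Ehrenfeucht game with the termination rules we have just shown to be sound. The key observation is that $\stabi kG$ is an \emph{upper bound} for the number of rounds the algorithm genuinely needs: once $\wwl ks_G=\wwl k{s-1}_G$, no further refinement can occur \emph{within} $V(G)^k$, and we have already argued (in the discussion preceding the theorem) that this forces global stabilization of $\wwl kr$ on $V(G)^k\cup V(H)^k$ in the round where Rule~2 fires. Thus for $r\ge\stabi kG$, either Rule~1 has already terminated the algorithm with a correct report of non-isomorphism, or the partition is stable and the $r$-round output equals the output that would be obtained by running the algorithm to full stabilization.

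For Part~1, suppose $k\ge\cw{G,H}-1$ and $r\ge\stabi kG$. If $G\cong H$ there is nothing to prove, so assume $G\not\cong H$. By Lemma~\ref{lem:ddd}.1 (its counting version) and the assumption $k+1\ge\cw{G,H}$, Spoiler wins the counting version of $\game^{k+1}_{r'}(G,H)$ for some number of rounds $r'$; equivalently, by Lemma~\ref{lem:diagD}, $\diagwl k{r'-1}G\ne\diagwl k{r'-1}H$ for that $r'$. By Lemma~\ref{lem:diagineq}, this diagonal inequality propagates up to give $\wl k{r''}G\ne\wl k{r''}H$ for a suitable $r''$, and hence — since the coloring only refines and the multiset $\wwl kr$ is stable on $V(G)^k\cup V(H)^k$ for $r\ge\stabi kG$ — the inequality $\wl krG\ne\wl krH$ holds at $r=\stabi kG$ and thereafter. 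Therefore Rule~1 fires at or before round $\stabi kG$ and the algorithm correctly reports non-isomorphism. The count-free statement is identical with ``counting version'' replaced by ``standard version'' throughout and Lemma~\ref{lem:ddd}.1 in place of its counting analogue.

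Part~2 is an immediate corollary: ``works correctly for $G$'' means the output is correct on \emph{every} pair $(G,H)$. For isomorphic $H$ the output is always correct. For $H\not\cong G$ we have $\cw{G,H}\le\cw G$ by Lemma~\ref{lem:ddd}.2 (counting version), so the hypothesis $k\ge\cw G-1$ gives $k\ge\cw{G,H}-1$, and Part~1 applies uniformly to all such $H$. Part~3 just records that every step above has a count-free twin. I expect the only delicate point to be the bookkeeping in the second paragraph — making sure that the round index produced by the chain Lemma~\ref{lem:ddd} $\to$ Lemma~\ref{lem:diagD} $\to$ Lemma~\ref{lem:diagineq} is genuinely dominated once we have passed $\stabi kG$ rounds; but this is exactly what soundness of Rule~1 together with monotonicity of refinement buys us, so no new combinatorial idea is required.
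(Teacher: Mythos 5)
Your proposal is correct and follows essentially the same route the paper intends: the theorem is stated there without a separate proof precisely because it is the combination of the soundness argument for Rules 1 and 2 (the conditional stabilization dichotomy you spell out in your first paragraph) with the game characterizations via Lemmas \ref{lem:diagineq} and \ref{lem:diagD}. The only cosmetic slip is that for Part 1 the pairwise statement needs only Theorem \ref{thm:games}.5 (the definition of $\cw{G,H}$ via the counting game), not Lemma \ref{lem:ddd}.1, which concerns passing from all pairs to definability and is the right tool for Part 2 rather than Part 1.
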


We have seen that good bounds for the logical complexity of graphs
imply efficiency of the Weisfeiler-Lehman algorithm on these graphs.
Now we will get a couple of noteworthy facts on the logical complexity
as a consequence of our analysis of the algorithm.

\begin{theorem}\label{thm:Dk}
Let $G$ be a graph of order $n$.

\begin{bfenumerate}
\item
If $G$ is distinguishable from another graph $H$ in the $\ell$-variable logic,
then $D^{\ell}(G,H)\le n^{\ell-1}+\ell-2$.
\item
If $G$ is definable in the $\ell$-variable logic,
then $D^{\ell}(G)\le n^{\ell-1}+\ell-2$.
\end{bfenumerate}
\end{theorem}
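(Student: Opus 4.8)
The plan is to derive both parts from the analysis of the count-free \WL k\ via color stabilization. Part~1 is the substantive claim; Part~2 follows from it by Lemma~\ref{lem:ddd}.2 together with the obvious fact that a graph $G$ is definable in the $\ell$-variable logic iff it is distinguishable in that logic from every non-isomorphic $H$, and one takes the maximum of $D^\ell(G,H)$ over all such $H$ (finitely many up to equivalence once a depth bound is in hand). So I would concentrate on Part~1.

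First I would set $k=\ell-1$ and run the count-free \WL k\ on $G$ and $H$. By Theorem~\ref{thm:games}.4 (the Immerman--Poizat characterization) together with Lemma~\ref{lem:ddd}, distinguishability of $G$ and $H$ in the $\ell$-variable logic means Spoiler wins $\game^{\ell}_r(G,H)$ for some $r$, and $D^{\ell}(G,H)$ is the least such $r$. Via Lemma~\ref{lem:diagD} and Lemma~\ref{lem:diagineq}, the count-free $(r{-}1)$-round \WL k\ separating $G$ from $H$ (in the sense $\wl k{r-1}G\ne\wl k{r-1}H$) is tied to Spoiler winning $\game^{k+1}_{r}(G,H)=\game^{\ell}_r(G,H)$. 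Concretely, once the coloring $\wwl kr$ has stabilized it will distinguish $G$ from $H$ if they are distinguishable at all in the count-free \WL k; and by Lemma~\ref{lem:diagD}, if $\diagwl krG\ne\diagwl krH$ then Duplicator loses $\game^{k+1}_{r+1}(G,H)$, giving $D^{\ell}(G,H)\le r+1$.

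The heart of the argument is the bound $\stabi kG\le n^{\ell-1}+\ell-?$ on the stabilization time, and here is where one has to be careful to get the constant right. The naive bound is $R\le v(G)^k+v(H)^k$, which is useless since $H$ can be arbitrarily large. This is exactly the point of Theorem~\ref{thm:WLS}: by Rule~2, we only need $\wwl kr$ to stabilize \emph{within} $V(G)^k$, i.e. $r\ge\stabi kG$, and $\stabi kG$ depends on $G$ alone. Since $\wwl kr_G$ is a partition of the $n^k$ tuples in $V(G)^k$ that strictly refines at each non-stable step, and the first round $C^{k,0}$ already produces at least one class (or we start the count from the refinement of the initial partition), we get $\stabi kG\le n^k$; chasing the off-by-one conventions in the round indexing (the initial coloring is round $0$, stabilization detected when $\wwl kr=\wwl k{r-1}$) yields $\stabi kG\le n^k - (\text{number of initial classes}) + 1$ or similar, and one combines this with the ``$+k-1$'' slack from the $\diagwl k{}$ reductions in Lemmas~\ref{lem:diagineq} and~\ref{lem:diagD}. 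With $k=\ell-1$, $n^k=n^{\ell-1}$, and the additive terms from the two lemmas ($+(k-1)$ from Lemma~\ref{lem:diagineq}, $+1$ from Lemma~\ref{lem:diagD}) add up to give $D^{\ell}(G,H)\le n^{\ell-1}+\ell-2$.

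The main obstacle I anticipate is bookkeeping the round-index offsets consistently: the statement's constant $\ell-2$ is tight enough that one cannot be cavalier about whether stabilization takes $n^k$ or $n^k-1$ steps, whether Rule~2 fires at $r=\stabi kG$ or $r=\stabi kG-1$, and exactly how many extra rounds the passage through $\diagwl k{}$ (Lemma~\ref{lem:diagineq}'s ``$+k-1$'') and the game-to-algorithm translation (Lemma~\ref{lem:diagD}'s ``$+1$'' from $r$ to $r+1$) cost. Everything else is a routine assembly of results already proved: the game characterizations in Theorem~\ref{thm:games}, the sandwich in Lemma~\ref{lem:diagineq}, the diagonal-coloring/game equivalence in Lemma~\ref{lem:diagD}, and the reduction of definability to pairwise distinguishability in Lemma~\ref{lem:ddd}.
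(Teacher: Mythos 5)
Your proposal is correct and follows essentially the same route as the paper, which simply plays the sufficient condition of Theorem \ref{thm:WLS} (run the count-free \WL k\ for $r=\stabi kG$ rounds) against the necessary condition extracted from the proof of Theorem \ref{thm:WLD} to obtain $D^{k+1}(G,H)\le\stabi kG+k$ with $k=\ell-1$, and then notes $\stabi kG\le n^k-1$; your accounting of ``$+(k-1)$ from Lemma \ref{lem:diagineq} plus $+1$ from Lemma \ref{lem:diagD}'' is exactly how that $+k$ arises, and the initial partition having at least two classes is what gives the $-1$ in the stabilization bound. The only nit is that for Part 2 you should invoke the $\ell$-variable analog of Lemma \ref{lem:ddd}.\textbf{1} rather than \ref{lem:ddd}.2 (the width statement), which is non-circular precisely because Part 1 supplies the depth bound that makes the conjunction over $H$ finite.
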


\begin{proof}
 Let $k=\ell-1$. 
Comparing the sufficient conditions for the correctness
of the $r$-round \WL k\ given by Theorem \ref{thm:WLS} and the necessary
conditions given by Theorem \ref{thm:WLD}, we have
$D^{k+1}(G,H)\le\stabi kG+k$ provided $k\ge W(G,H)-1$ and
$D^{k+1}(G)\le\stabi kG+k$ provided $k\ge W(G)-1$.
For the former claim we need also the fact, actually
established in the proof of Theorem \ref{thm:WLD}, that the count-free $r$-round \WL k\ 
is able to recognize non-isomorphism of $G$ and $H$ only if
$k\ge W(G,H)-1$ and $r \ge D^{k+1}(G,H)-k$.
It remains to notice that $\stabi kG\le n^k-1$.
\end{proof}

A somewhat weaker bound $D^{\ell}(G)\le n^{\ell}+\ell+1$ follows from the work
of Dawar, Lindell, and Weinstein \cite[Corollary 4]{DLW}.

\section{Worst case bounds}\label{s:worstcase}

\subsection{Classes of graphs}\label{ss:classes}

Here we overview known bounds for the logical depth and width
for natural classes of graphs. Several interesting definability effects
can be observed even when we focus on so simple graphs as trees.
This class is considered at the beginning of this section (and will
be further discussed in Sections \ref{s:average}
and \ref{s:best}). We will see that many results about trees admit generalization
to graphs with bounded treewidth. 
We further consider
planar graphs. Then we briefly discuss more general cases of graphs
embeddable into a fixed surface and graphs
with an excluded minor, as well as a few sporadic results on other classes.

\subsubsection{Trees} 

The following result is based on Edmonds' algorithm, that dates back to the sixties
(see, e.g., \cite{CBo}), and its logical interpretation is due to 
Immerman and Lander~\cite{ILa}.

\begin{theorem}\label{thm:cwtrees}\mbox{}
\begin{bfenumerate}
\item
The color refinement algorithm succeeds in recognizing isomorphism of trees.
Consequently, 
$\cw{T,T'}\le2$ for every two non-isomorphic trees $T$ and~$T'$.
\item
$\cw{T}\le2$ for every tree~$T$.
\end{bfenumerate}
\end{theorem}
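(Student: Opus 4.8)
The plan is to prove Part~1 (the color-refinement algorithm recognizes isomorphism of trees) and then derive Part~2 as a quick consequence. Part~1, by Lemma~\ref{lem:diagD} and the Ehrenfeucht-game characterizations (Theorem~\ref{thm:games}.5 together with Lemma~\ref{lem:ddd}), is equivalent to the statement that for any two non-isomorphic trees $T$ and $T'$, Spoiler wins the counting version of $\game_r^{3}(T,T')$ for some $r$; equivalently, $\cw{T,T'}\le 2$. So the real work is a game argument: I will describe a winning strategy for Spoiler in the $3$-pebble counting Ehrenfeucht game on two non-isomorphic trees.

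First I would set up the standard rooting trick. If $T$ and $T'$ have different numbers of vertices, Spoiler wins trivially with a counting move on the whole vertex set, so assume $|V(T)|=|V(T')|=n$. The idea is to reduce to \emph{rooted} trees and argue by induction on $n$. Recall the classical fact (underlying the isomorphism test for trees) that one can canonically compute, for each vertex $v$, an ``isomorphism code'' of the subtree hanging below $v$; two rooted trees are isomorphic iff their roots receive the same code, and the code of a vertex is determined by the multiset of codes of its children. Spoiler's strategy: using a counting move he can, for any color class in the color-refinement partition, force Duplicator to answer a chosen vertex with a vertex \emph{of the same final color}, and conversely if the color multisets differ he wins outright. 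So assume $C^{k,r}(T)=C^{k,r}(T')$ at stabilization (otherwise done). Then I claim Spoiler can still win if $T\not\cong T'$: he picks a pair of vertices $x\in V(T)$, $y\in V(T')$ with $C(x)=C(y)$ but such that the rooted subtrees $(T,x)$ and $(T',y)$ — rooted at these vertices, and with the rest of the tree attached as a single extra ``pendant direction'' — are non-isomorphic. Such a pair exists: since the whole trees are non-isomorphic but have the same color multiset, there must be a color class where the local structure around same-colored vertices genuinely differs. This is the step I expect to require the most care — making precise the right notion of ``rooted piece'' so that (a) same color forces same piece-isomorphism-type up to the refinement, yet (b) global non-isomorphism is witnessed at some vertex, and (c) the induction measure (size of the rooted piece on Spoiler's side) strictly decreases.

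Once such $x,y$ are fixed with Spoiler having pebbled them (pebbles $p_1$ on $x$, matched by $p_1$ on $y$), he looks at the children of $x$ in $(T,x)$ versus the children of $y$ in $(T',y)$: these induce multisets of codes, and since the rooted pieces are non-isomorphic, either the multisets have different sizes — then a counting move on the child-set ends the game — or they differ in some code, so there is a child $x'$ of $x$ whose code has a strictly smaller multiplicity among $x$'s children than it does among $y$'s children (or vice versa). Spoiler makes a counting move on the set of children of $x$ having that code: Duplicator must produce an equal-size set among the children of $y$, but that set cannot consist solely of children with the matching code, so it contains some $y'$ whose rooted code differs from that of $x'$. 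Spoiler places $p_2$ on $x'$ and $p_2$ on $y'$, and now the configuration is a winning configuration for Spoiler in the smaller instance $(T,x')$ vs.\ $(T',y')$ — he has one free pebble $p_3$, and recurses. Because the child's rooted subtree is strictly smaller than the parent's, the recursion terminates; the total number of rounds is $O(n)$, which is all Part~1 needs. Finally, Part~2 follows from Part~1 via Lemma~\ref{lem:ddd}.2: for every $T'\not\cong T$ we have $\cw{T,T'}\le 2$, hence $\cw T=\max_{T'\not\cong T}\cw{T,T'}\le 2$, and trivially $\cw T\ge 2$ since one vertex cannot distinguish $T$ from a disjoint union of edges of the same order, so in fact $\cw T\le 2$ as claimed (the one-vertex case $n=1$ being immediate).
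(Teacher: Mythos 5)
Your reduction at the very first step is off by one pebble, and this undermines Part~1. By Theorem~\ref{thm:games}.5, a winning strategy for Spoiler in the counting version of $\game_r^{3}(T,T')$ yields only $\cw{T,T'}\le 3$, and by Lemma~\ref{lem:game} the $3$-pebble counting game corresponds to the \emph{2-dimensional} Weisfeiler-Lehman algorithm, not to color refinement; so even if your strategy is correct it proves neither the claim about color refinement nor the bound $\cw{T,T'}\le 2$. Moreover, the parent--child recursion you describe genuinely needs three pebbles: one pair must stay on the current root $(x,y)$ (otherwise ``children of $x'$'' is undefined in an unrooted tree), a second marks the chosen child $(x',y')$, and a third is needed for the next counting move before the root pebble can be recycled. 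Dropping the root pebble is not harmless --- Duplicator can then smuggle the former parent $y$ into her answer set $B$ and absorb exactly the multiplicity discrepancy you are trying to exploit. The paper avoids all of this by arguing about the colors themselves: $C^r(v)$ determines the ball of radius $r$ around $v$ up to isomorphism, so once $r$ exceeds the smaller of the two eccentricities the colors of any $v\in V(T)$ and $v'\in V(T')$ must differ; the bound $\cw{T,T'}\le2$ then falls out of the general correspondence between the $1$-dimensional algorithm and the $2$-pebble counting game (Lemma~\ref{lem:game}.1 together with Theorem~\ref{thm:games}.5). A direct $2$-pebble game proof must carry the ``parent'' information inside the stable colors rather than on a pebble.

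Part~2 has a separate gap: $\cw T=\max_{H\not\cong T}\cw{T,H}$, where $H$ ranges over \emph{all} graphs non-isomorphic to $T$, not only over trees, so Part~1 does not by itself give Part~2. You must also distinguish $T$, with two variables in counting logic, from every non-tree $H$ of the same order $n$. The paper does this in two cases: if some connected component of $H$ is a tree, that component has fewer than $n$ vertices and after $n$ refinement rounds its vertices receive colors occurring nowhere in $T$; if no component of $H$ is a tree, then $H$ has at least $n$ edges while $T$ has $n-1$, so the degree multisets already differ. Your closing observation that $\cw T\ge2$ is irrelevant to the upper bound being claimed.
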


\begin{proof}
{\bflex 1.}
As in Section \ref{s:wl}, let $C^r$ denote the coloring appearing 
after the $r$-th refinement. Let $N_r(v)$ denote the set of all
vertices at the distance at most $r$ from a vertex $v$.
It is not hard to see that, if $v$ is an arbitrary vertex in
a tree $T$, then the subtree spanned by $N_r(v)$ is,
up to isomorphism, reconstructible from $C^r(v)$.
Let $v$ and $v'$ be arbitrary vertices in trees $T$ and $T'$.
If $T\not\cong T'$, we have $C^r(v)\ne C^r(v')$ at latest
for $r$ one greater than the smaller of the eccentricities of $v$ and $v'$.
Therefore, the color refinement algorithm distinguishes between any two
non-isomorphic trees. The second statement of Part~1 follows 
by Lemma \ref{lem:game}.1
and Theorem~\ref{thm:games}.5.

{\bflex 2.}
To obtain the desired definability result, we use the equality 
$
\cw T=\max_{H\not\cong T}\cw{T,H},
$
which is an analog of Lemma \ref{lem:ddd}.2
(with a much simpler proof as graphs of different orders are
distinguishable with a single counting quantification).
Thus, it suffices to prove that $\cw{T,H}\le2$ whenever $H\not\cong T$.
Suppose that $H$ is not a tree for otherwise we are done by Part~1. 
Also, as it was just mentioned, we can suppose 
that both $T$ and $H$ have $n$ vertices.

Assume first that $H$ has a connected component $T'$ which is a tree.
Note that $T'\not\cong T$ because $T'$ has less than $n$ vertices.
Let $v\in V(T)$ and $v'\in V(T')$. Run the color refinement algorithm
on input $(T,H)$.
As in the proof of Part 1 we have $C^n(v)\ne C^n(v')$
because the coloring $C^n$ on $T'$ is the same as if the algorithm was run
on $T'$ instead of $H$. Therefore, $T$ and $H$ are distinguishable with 2
variables in the counting logic.

If none of the connected components of $H$ is a tree,
then $H$ has at least $n$ edges. Since $T$ has exactly $n-1$
edges, $H$ and $T$ have distinct multisets of vertex degrees
and, hence, are distinguishable by a sentence with 2 counting
quantifiers.
\end{proof}

The proof of Theorem \ref{thm:cwtrees}
gives us only a linear upper bound $\cd2{T}=O(n)$
for a tree of order $n$.
We can get a speed-up if we allow more variables.

\begin{theorem}\label{thm:cdtrees}
For every tree $T$ on $n$ vertices we have
$$
\cd3T< 3\,\log n.
$$
\end{theorem}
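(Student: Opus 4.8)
The plan is to use the characterization $\cd{3}{T}=\max_{H\not\cong T}\cd{3}{T,H}$ (the counting-logic analog of Lemma~\ref{lem:ddd}.1) and the Ehrenfeucht-game characterization of $\cd{3}{T,H}$ (Theorem~\ref{thm:games}.5): it suffices to exhibit a strategy for Spoiler winning the counting version of $\game_r^3(T,H)$ in $r<3\log n$ rounds whenever $H\not\cong T$. First I would dispose of the easy cases just as in the proof of Theorem~\ref{thm:cwtrees}: if $H$ has a different number of vertices, or $H$ is not a tree (in which case either $H$ has a tree component of order $<n$, or $H$ has at least $n$ edges and hence a different degree multiset), then $O(1)$ counting moves suffice. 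So the substantive case is $H$ a tree on $n$ vertices with $H\not\cong T$.

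For two non-isomorphic trees the main tool is a \emph{centroid} (or centre) argument combined with a divide-and-conquer strategy exploiting the 3 pebbles. Recall that every $n$-vertex tree has a centroid vertex (or a centroid edge) whose removal splits the tree into components each of size at most $n/2$. Spoiler's strategy: using a counting move, he pins down the centroid $c$ of $T$ (the counting quantifier lets him single it out by the multiset of component sizes around each vertex, which is canonically computable, so Duplicator is forced to answer with the corresponding centroid $c'$ of $H$, or lose immediately because the relevant counting invariant differs). Now $T-c$ and $H-c'$ are forests whose components each have at most $n/2$ vertices; since $T\not\cong H$, there is a mismatch between the multisets of rooted-component-isomorphism-types hanging off $c$ and off $c'$. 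Spoiler uses a second counting move to select a component $S$ of $T-c$ (attached at neighbour $s$ of $c$) whose rooted isomorphism type occurs with different multiplicity than the types available at $c'$; Duplicator must answer with some neighbour $s'$ of $c'$ whose rooted component $S'$ satisfies $(S,s)\not\cong(S',s')$. Spoiler then recurses inside the pair $(S,s)$ versus $(S',s')$ — keeping the pebble on $s,s'$ as the "root anchor", reusing the third pebble — in a subgame on trees of half the size.

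The key technical point to nail down is the round accounting. One descent step costs a bounded number of rounds: identifying the centroid is one (counting) move; identifying the mismatched subtree is another; re-anchoring is at most one more pebble placement — call it $c_0$ rounds per level, with $c_0\le 3$. Since the tree size halves at each level, after at most $\log n$ levels we reach trees of size $O(1)$, where $O(1)$ further moves finish. This gives $r\le c_0\log n+O(1)$, and one has to be a little careful to get the constant down to $3$: this likely requires observing that the "re-anchoring" move and the "pick a subtree" move can be amortized or merged (e.g.\ the vertex $s$ is both the attachment point singled out by the counting move and the new anchor, so it costs one move, not two), and that the centroid of a rooted subtree can be found relative to the already-placed anchor without an extra pebble because only $3$ pebbles circulate. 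The honest bookkeeping — showing each halving step consumes strictly fewer than $3$ rounds on average so that the total stays below $3\log n$ rather than, say, $4\log n$ — is where the main work lies; the existence of a logarithmic-depth strategy is routine, but squeezing the leading constant to exactly $3$ is the delicate part.

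A cleaner alternative, which I would try first, bypasses the game entirely: directly build a defining formula. Write recursively, for each rooted subtree, a counting-logic formula of quantifier depth $\le 3\log(\text{size})$ that pins down its isomorphism type, using the recursion "root $x$; there exist (with exact counts, via counting quantifiers) vertices $y$ realizing each child-subtree type"; splitting at the centroid so the recursion depth on the \emph{size} parameter is $\lceil\log_2 n\rceil$, and each level of the recursion contributes at most $3$ to the quantifier depth (one variable to name the child, reusing the three-variable budget as in $\Delta'_n$ of~\refeq{ex:deltaa}). Adding the global clauses "the graph is a tree" and "it has exactly $n$ vertices" (each of bounded depth) yields a sentence of depth $<3\log n$ defining $T$. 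The obstacle here is the same: controlling that each recursion level adds at most $3$, not $4$, to the depth, which forces a careful choice of how the centroid decomposition interacts with variable recycling — essentially the formula-side incarnation of the same amortization issue.
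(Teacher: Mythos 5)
Your skeleton is the same as the paper's: reduce to the game via the counting analog of Lemma~\ref{lem:ddd}.1 and Theorem~\ref{thm:games}.5, dispose of non-tree opponents, then run a separator-based halving recursion with a counting move to locate a mismatched branch at each level. However, there are two concrete gaps. First, your opening step claims that a counting move lets Spoiler ``pin down'' the centroid so that Duplicator is \emph{forced} to answer with the centroid of $H$ ``or lose immediately.'' A counting move does not work that way: Spoiler names a set $A$, Duplicator names an equinumerous set $B$, and then one pebble pair is placed; if $A=\{c\}$ Duplicator may answer with any single vertex, and punishing a non-centroid reply costs further rounds. The paper avoids this entirely: Spoiler pebbles a separator $v$ of $T$, lets Duplicator answer with an \emph{arbitrary} $v'$, and only then uses a counting move over $N(v)$ versus $N(v')$ to force non-isomorphic rooted branches $T_{vu}\not\cong T'_{v'u'}$.

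Second, and more seriously, the recursive step you sketch handles only the easy case. Once you are anchored at $(S,s)$ versus $(S',s')$ with $(S,s)\not\cong(S',s')$, you place a new separator $w$ inside $S$ and look for a mismatched branch of $S-w$ of size at most $v(S)/2$. But the mismatch may live only in the branch of $S-w$ that contains the anchor $s$ (pointing ``back toward the root''), which is not one of the small child branches and carries no canonical attachment point you can select by a counting move over $N(w)$. This is precisely where the paper's proof does its real work: Rule~1 covers your easy case, while Rules~2 and~3 handle the back-branch case by pebbling a carefully chosen vertex $y$ on the path from $u$ to $w$ (reusing the pebble released from $v$), arguing via distances and branch orders that the resulting $T_{yz}\not\cong T'_{y'z'}$ and that inequality \refeq{eq:halforder} still halves the size. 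Without an argument for this case, the invariant ``non-isomorphic anchored subtrees of at most half the size, reachable in at most 3 rounds per level'' is not established, and that invariant is exactly what yields the constant $3$ in the bound. Your formula-building alternative inherits the same difficulty. So the approach is right but the central recursive case analysis is missing.
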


\begin{proof}
By an analog of Lemma \ref{lem:ddd}.1 for the counting logic and 
Theorem \ref{thm:games}.5, we have to show that Spoiler is able
to win the counting game $\game_r^3(T,T')$ with some $r<3\,\log n$
for any graph $T'$ non-isomorphic to $T$.
Suppose that $T'$ has the same order $n$.
If $T'$ is disconnected, Spoiler wins (even without counting moves)
by Lemma \ref{lem:distance}. If $T'$ is connected and has a cycle,
then $T$ and $T'$ have distinct multisets of vertex degrees.
Therefore, we will suppose that $T'$ is a tree too.

Every tree $T$ has a single-vertex \emph{separator}, that is, a vertex $v$ such that
no branch of $T-v$ has more than $n/2$ vertices;
see, e.g., Ore \cite[Chapter 4.2]{Ore}.
The idea of Spoiler's strategy is to pebble such a vertex and
to force further play on some non-isomorphic branches of $T$ and $T'$,
where the same strategy can be applied recursively.

\begin{figure}
\centerline{\scalebox{0.65}{\includegraphics{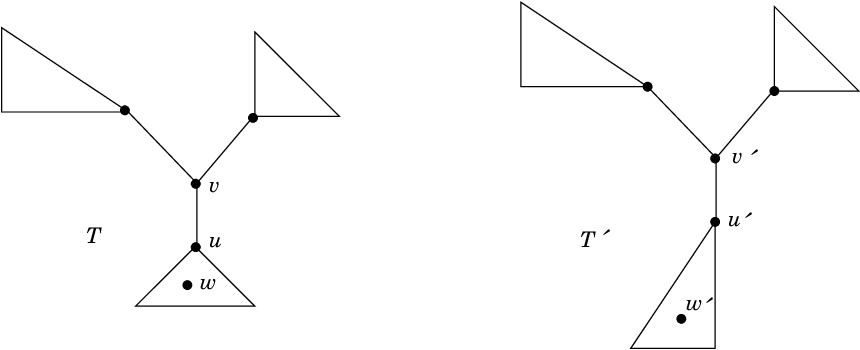}}}
\caption{A separator strategy of Spoiler.}
\label{fig:tree1}
\end{figure}

Thus, in the first round Spoiler pebbles a separator $v$ in $T$
and Duplicator responds with a vertex $v'$ somewhere in $T'$.
The component of $T-v$ containing a neighbor $u$ of $v$ will be
denoted by $T_{vu}$ and considered a rooted tree with the root at $u$.
A similar notation will apply also to $T'$.
In the second round Spoiler makes a counting move and ensures that
$u\in N(v)$ and $u'\in N(v')$ are pebbled so that the
rooted trees $T_{vu}$ and $T'_{v'u'}$ are non-isomorphic,
see Fig.~\ref{fig:tree1}.
The next goal of Spoiler is to force pebbling adjacent vertices $v_1$ and $u_1$
in $T_{vu}$ and adjacent vertices $v'_1$ and $u'_1$ in $T'_{v'u'}$ so that
$T_{v_1u_1}\not\cong T'_{v'_1u'_1}$, $V(T_{v_1u_1})\subset V(T_{vu})$, and $v(T_{v_1u_1})\le v(T_{vu})/2$.
Once this is done, the same will be repeated recursively.

To make the transition from $T_{vu}$ to $T_{v_1u_1}$, Spoiler follows three rules.

{\it Rule 1.}
If $T_{vu}$ has a branch $T_{ux}$ for some $x\in N(u)\setminus\{v\}$ 
such that $v(T_{ux})\le v(T_{vu})/2$
and the number of branches isomorphic to $T_{ux}$ 
is different for $T_{vu}$ and $T'_{v'u'}$,
then Spoiler makes a counting move and forces pebbling 
such $x$ and $x'\in N(u')\setminus\{v'\}$
so that $T_{ux}\not\cong T'_{u'x'}$. 
The latter two branches will serve as $T_{v_1u_1}$ and $T'_{v'_1u'_1}$.
If no such branch is available,
Spoiler pebbles a separator $w$ of $T_{vu}$. Note that Duplicator
is forced to respond with a vertex $w'$ in $T'_{v'u'}$.
Otherwise we would have $\dist(w,u)=\dist(w,v)-1$ while $\dist(w',u')=\dist(w',v')+1$.
Therefore, some distances among the three pebbled vertices
would be different in $T$ and in $T'$ and Spoiler could win
in less than $\log v(T_{vu})+1$ moves by Lemma \ref{lem:distance}.

{\it Rule 2.}
If $T$ differs from $T'$
by some branch $T_{wx}$ (having a different number of
occurrences in $T'-w'$) that does not contain $u$, Spoiler
makes a counting move with the pebble released from $v$ and
forces pebbling such $x$ in $T$ and some $x'$ in $T'$
so that $T_{wx}\not\cong T'_{w'x'}$. These branches will
serve as $T_{v_1u_1}$ and $T'_{v'_1u'_1}$. (It is possible that
$T'_{w'x'}$ contains $u'$ or $T_{wx}$ contains $u$ but then the distances among $u,w,x$ 
are not all equal to the distances among $u',w',x'$ and Spoiler
quickly wins.)

\begin{figure}
\centerline{\scalebox{0.65}{\includegraphics{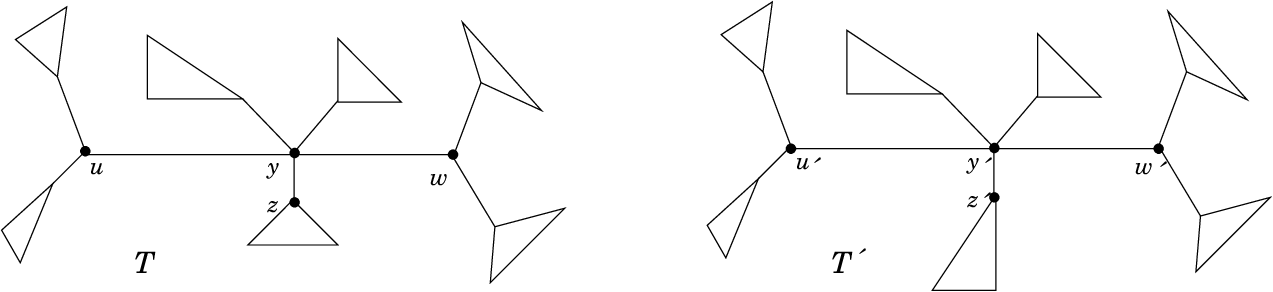}}}
\caption{Rule 3 invoked.}
\label{fig:tree2}
\end{figure}

{\it Rule 3.}
Denote the branch of $T_{vu}-w$ containing $u$ by $T_{w,u}$
(and similarly for $T'$). If Rule 2 is not applicable, then
$T_{w,u}$ and $T'_{w',u'}$ are non-isomorphic
(where an isomorphism would need to respect two pairs of designated vertices,
namely $u$ and $u'$ as well as the neighbors of $w$ and $w'$).
Assume the harder subcase that $\dist(u,w)=\dist(u',w')$. 
When Spoiler pebbles a vertex $y$
on the path from $u$ to $w$ by moving the pebble from 
$v$, Duplicator is forced to
pebble the corresponding vertex $y'$ on the path from $u'$ to $w'$.
It is easy to see that a vertex $y\ne w$ can be chosen so that
$T-y$ and $T'-y'$ differ by branches containing neither $u$ and $u'$
nor $w$ and $w'$. 
Let Spoiler pebble such $y$ as close to $w$ as possible.
Note that $y\ne u$ because otherwise Rule 1 was applicable.
Now Spoiler can make a counting move with the pebble released from $w$
to force pebbling $z$ and $z'$ for which 
\begin{equation}\label{eq:Tnoniso}
T_{yz}\not\cong T'_{y'z'},
\end{equation}
see Fig.~\ref{fig:tree2}. This complies with the goal of finding
new $T_{v_1u_1}$ and $T'_{v'_1u'_1}$ because
\begin{equation}\label{eq:halforder}
v(T_{yz})<v(T_{w,u})\le v(T_{vu})/2.
\end{equation}

In fact, Duplicator could try to prevent the fulfillment of \refeq{eq:Tnoniso}
by forcing a choice of $z'$ such that $T'_{y'z'}$ would contain either $u'$
or $w'$. In the former case Spoiler could win by using differences
between the distances among $u,y,z$ and among $u',y',z'$.
In the latter case \refeq{eq:Tnoniso} would anyway be true because
$T_{yz}$ and $T'_{y'z'}$ would have different orders.
Indeed, since Rule 2 was not applicable, the choice of $y$ ensures
that the branches of $T-y$ and $T'-y'$ containing $w$ and $w'$
are isomorphic. Thus, we would have $v(T'_{y'z'})=v(T_{y,w})\ge v(T_{vu})/2$
(where $T_{y,w}$ denotes the branch of $T-y$ containing $w$)
while $v(T_{yz})$ is strictly smaller by~\refeq{eq:halforder}.

 Given $T_{vu}$, Spoiler finds a new distinguishing branch $T_{v_1u_1}$
in 3 rounds in the worst case. Also, 2 rounds suffice to win the game
once the current subtree $T_{vu}$ has at most 4 vertices. 
The number of transitions from the initial branch of order at most
$\lceil n/2\rceil$ to one with at most $4$ vertices is bounded by 
$\log \lceil n/2\rceil-2$ 
because $v(T_{vu})$
becomes twice smaller each time. Routine calculations (and
Lemma~\ref{lem:distance}) imply the desired bound
on the length of the game.
\end{proof}

The definability of trees in a finite-variable counting logic
within logarithmic quantifier depth can also be derived from 
a work by Etessami and Immerman \cite{EIm}, which also implies that
counting quantifiers are here not needed as long as the maximum
vertex degree is bounded by a constant.

Curiously, Theorem \ref{thm:cdtrees} 
sheds some new light on the history of isomorphism testing for
trees. The first record of this history was made by Edmonds, who showed that
the problem is solvable in linear time (see Theorem \ref{thm:cwtrees}). 
Ruzzo \cite{Ruz} found an \ac1 algorithm under the condition that
the vertex degrees of input trees are at most logarithmic in the number
of vertices.
Miller and Reif \cite{MRe} established an \ac1 upper bound unconditionally.
They wrote \cite[page 1128]{MRe}: 
``No polylogarithmic parallel algorithm was previously known for
isomorphism of unbounded-degree trees.''
However, the 2-dimensional Weisfeiler-Lehman algorithm has been discussed
in the literature at least since 1968 (e.g., \cite{WLe}) and, as we now
see by combining Theorem \ref{thm:cdtrees} with Theorem \ref{thm:GVe}.1, 
this algorithm does the job for arbitrary trees in \nc2, i.e., in parallel time $O(\log^2n)$ !

To complete this historical overview, we have to mention a result by
Lindell \cite{Lin} who showed that isomorphism of trees is recognizable
in logarithmic space. 
Though Lindell's result is best possible (see Jenner et al.\ \cite{JKMT}),
the solvability of the problem by so simple and natural procedure 
as the Weisfeiler-Lehman algorithm still remains a noteworthy fact.

Note that $\cd2{P_n}=\frac n2-O(1)$
(it is not hard to see that the color refinement algorithm
requires at least $\frac n2-O(1)$ rounds to distinguish between
$P_n$ and the disjoint union of $P_{n-3}$ and $C_3$).
Thus, Theorem \ref{thm:cdtrees} shows a jump from
linear to logarithmic quantifier depth 
when the number of variables is increased just by 1.
Such width-depth
trade-offs were observed and studied by F\"urer~\cite{Fue}.

Theorem \ref{thm:cwtrees} says that 2 variables and counting quantifiers
suffice to define any tree.
Moreover, we could well manage without counting quantifiers but
then we would need to have $\Delta(T)+1$ variables.
A simple example of a star, where $W(K_{1,m})=m+1$, shows
that a smaller number is not enough.
The following bound is a variant of a result by Immerman
and Kozen \cite{IKo}, who consider definability of trees
represented by an asymmetric child-parent relation between vertices.

\begin{theorem}\label{thm:wtrees}
$W(T)\le\Delta(T)+1$ for any tree $T$ with the exceptions of $T\in\{P_1,P_2\}$.
\end{theorem}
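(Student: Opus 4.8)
The plan is to argue through the Ehrenfeucht game. By Lemma~\ref{lem:ddd}.2 we have $W(T)=\max_{H\not\cong T}W(T,H)$, and by Theorem~\ref{thm:games}.3 the number $W(T,H)$ is the least $k$ for which Spoiler wins $\game_r^k(T,H)$ for \emph{some} $r$. So it suffices to exhibit, for each $H\not\cong T$, a winning Spoiler strategy in $\game_r^{\Delta(T)+1}(T,H)$ with $r$ arbitrary; in particular, unlike in Theorem~\ref{thm:cdtrees}, we need not control the quantifier depth, so no separator/centroid balancing is required. Write $k=\Delta(T)+1$. Since $T\notin\{P_1,P_2\}$ we have $\Delta(T)\ge2$, hence $k\ge3$, which is exactly the number of pebbles that Lemma~\ref{lem:distance} needs — this is where the two exceptions come from.

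First I would dispose of the graphs $H$ that are not trees. If $H$ is disconnected, Spoiler plays two vertices of $H$ in distinct components; as $T$ is connected Duplicator answers with two vertices of $T$ at some finite distance, and Lemma~\ref{lem:distance} lets Spoiler finish with $3\le k$ pebbles. If $H$ is connected but has a cycle, Spoiler forces a distance discrepancy (playing two far-apart vertices on a geodesic of $H$: whatever Duplicator answers, either the distances already disagree in the direction needed for Lemma~\ref{lem:distance}, or Spoiler exploits the uniqueness of paths in the tree $T$ to manufacture such a discrepancy in boundedly many further moves). If $H$ is a tree with $\Delta(H)\ne\Delta(T)$ I would use ``flooding'': when $\Delta(H)<\Delta(T)$, Spoiler pebbles a maximum-degree vertex of $T$ together with all of its neighbours and Duplicator cannot match the degree; when $\Delta(H)>\Delta(T)$, Spoiler pebbles $\Delta(T)+1$ neighbours of a maximum-degree vertex of $H$ (pairwise non-adjacent, since $H$ is a tree), so Duplicator needs an independent set of size $\Delta(T)+1$ in $T$, which fails e.g.\ when $T$ has too few ``trivial'' branches, and otherwise one falls back on a long-path argument and Lemma~\ref{lem:distance}.

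The heart of the matter is the case that $H$ is a tree with $\Delta(H)=\Delta(T)$, where Spoiler uses a recursive ``descent into a non-isomorphic rooted subtree'', in the spirit of Immerman and Kozen~\cite{IKo}. Root $T$ at a vertex $\rho$ of maximum degree; Spoiler pebbles $\rho$ together with all $\Delta(T)$ of its neighbours, which uses all $k$ pebbles. Duplicator is forced to map $\rho$ to a vertex $\rho'$ of degree $\Delta(T)$, so the two local stars match; but since $T\not\cong H$ the multisets of rooted-isomorphism types of the branches at $\rho$ and at $\rho'$ differ, hence, whatever bijection between the neighbours of $\rho$ and those of $\rho'$ Duplicator has realised, some pebbled pair $(c,c')$ roots non-isomorphic branches. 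Spoiler then forgets the irrelevant pebbles, retains a pair marking $(c,c')$ (and, when needed, a pair marking the parent), and recurses inside these smaller branches, repeatedly using a ``flood the children'' move: Duplicator cannot match more distinct children of a high-degree vertex than a low-degree vertex has neighbours. Since the order of the current branch strictly decreases at each descent, the process terminates in a win.

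The main obstacle is the accounting that keeps the number of simultaneously used pebbles equal to $\Delta(T)+1$ throughout the recursion — the budget is genuinely tight, as $W(K_{1,m})=m+1$ shows. At the moments when the current branch root has degree close to $\Delta(T)$, the naive scheme ``flood all children, mark the root, mark the parent'' costs $\Delta(T)+2$ pebbles, so one must deal with Duplicator's ``escape upward to the parent'' differently: either re-target the attack at that parent — an ancestor, hence closer to $\rho$, so the process still terminates — or first reduce to the case where the offending degree excess surfaces at a leaf-ward vertex. Making all of these sub-cases fit inside $\Delta(T)+1$ is the crux; everything else is the routine translation of Spoiler's strategy back through Theorem~\ref{thm:games}.3 and Lemma~\ref{lem:ddd}.2.
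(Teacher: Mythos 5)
The paper never actually proves this theorem: it is stated as ``a variant of a result by Immerman and Kozen \cite{IKo}'' and the reconstruction of its proof is explicitly left to the reader (see the remark at the start of the proof of Lemma~\ref{lem:treeDr}), so I am judging your argument on its own. Your skeleton is the intended one: reduce via Lemma~\ref{lem:ddd}.2 and Theorem~\ref{thm:games}.3, dispose of non-trees and of trees with a different maximum degree by flooding, and in the main case descend recursively into a non-isomorphic rooted branch while keeping the parent and the current root pebbled. The one genuine defect is that you declare the pebble accounting in that main case an unresolved ``crux'', whereas it in fact closes exactly and your feared $\Delta(T)+2$ configuration never occurs: a current root $c$ that has a pebbled parent $p$ has degree at most $\Delta(T)$ \emph{including} $p$, hence at most $\Delta(T)-1$ children, so parent, root and all children together occupy at most $\Delta(T)+1$ pebbles (and at the initial step $\rho$ has no parent, so $\rho$ plus its $\Delta(T)$ neighbours again costs $\Delta(T)+1$). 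Moreover the pebble on $p'$ is precisely what forecloses the ``escape upward'': Duplicator cannot answer a fresh child of $c$ with $p'$ without mapping two distinct vertices to one, and since $T$ and $H$ are triangle-free every neighbour of $c'$ other than $p'$ lies in the correct branch. If equal numbers of children are matched bijectively with all matched sub-branches isomorphic, the two branches would be isomorphic; so either a degree mismatch kills Duplicator outright or a non-isomorphic matched pair exists and the recursion continues on a strictly smaller branch. Since you leave all of this unverified, the proof is incomplete as written, although the repair is only a careful count rather than a new idea.

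Two secondary points. In the case $\Delta(H)>\Delta(T)$ your proposed finish is off: almost every tree \emph{does} contain an independent set of size $\Delta(T)+1$, so Duplicator usually survives the flood itself. The correct continuation generalizes Example~\ref{ex:game43}: $\Delta(T)+1$ pairwise non-adjacent vertices of a tree cannot be pairwise at distance $2$ (they would share a common neighbour of degree $\Delta(T)+1$), so some responded pair is at distance at least $3$, and Spoiler relocates a third pebble onto the hub of $H$, which is a common neighbour of the corresponding pair there. Similarly, the connected non-tree case needs actual content rather than ``manufacture a discrepancy'': e.g.\ for girth $3$ pebble the triangle; for an odd shortest cycle pebble an edge $xy$ and a vertex $z$ with $\dist(z,x)=\dist(z,y)$, which no tree can match since distances to adjacent vertices differ by exactly $1$ there; for even girth $g$ pebble antipodal $u,v$ on a shortest cycle and both neighbours of $u$ on it, which are two distinct neighbours of $u$ equally closer to $v$, again impossible in a tree --- all within $3$ or $4$ pebbles, the fourth being available exactly when $\Delta(T)\ge3$ (and when $\Delta(T)=2$ the graph $H$ is a path or cycle, or Example~\ref{ex:game43} applies). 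These are fixable, but as stated those steps would not compile into a proof.
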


The logical depth of a tree can be bounded in terms of the maximum
degree and the order. 

\begin{theorem}[Bohman et al.\ \cite{BFL*}]\label{thm:dtrees}\mbox{}
\begin{bfenumerate}
\item
For every tree $T$ of order $n$ with maximum vertex degree $\Delta(T)\ge9$ we have
$$
D(T)\le\of{\frac{\Delta(T)}{2\log(\Delta(T)/2)}+3}\log n+\frac{3\Delta(T)}2+O(1).
$$
\item
Let $D(n,d)$ be the maximum of $D(T)$ over all
trees with $n$ vertices and maximum degree at most $d=d(n)$.
If both $d$ and $\log n/\log d$ tend to infinity, then
 $$
 D(n,d)=\left(\frac12+o(1)\right)\, d\,\frac{\log n}{\log d}.
 $$
\end{bfenumerate}
\end{theorem}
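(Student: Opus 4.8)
\textbf{Proof plan for Theorem~\ref{thm:dtrees}.}
The plan is to establish both parts via the Ehrenfeucht game (Theorem~\ref{thm:games}.1 and Lemma~\ref{lem:ddd}.1): it suffices to design, for every tree $T$ of the stated form and every non-isomorphic $H$, a strategy for Spoiler in $\game_r(T,H)$ with $r$ meeting the claimed bound. As in the proof of Theorem~\ref{thm:cdtrees}, I would first dispose of the easy cases: if $H$ has a different number of vertices, different degree sequence, or is disconnected or contains a cycle while $T$ is a tree, Spoiler wins quickly (using Lemma~\ref{lem:distance} in the disconnected case), so the core case is $H$ a tree of the same order $n$. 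The governing idea is again a \emph{recursive separator strategy}: Spoiler pebbles a centroid-type separator $v$ of $T$, forces play onto a pair of non-isomorphic rooted branches $T_{vu}\not\cong H_{v'u'}$ of at most half the size, and recurses. After about $\log n$ such halvings the branch has bounded order and Spoiler finishes.

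The quantitative heart of Part~1 is bounding the \emph{number of rounds per halving step}. Unlike the $3$-variable counting game of Theorem~\ref{thm:cdtrees}, here we have all of first-order logic but no counting quantifiers, so to locate a distinguishing branch among the children of the current root Spoiler cannot simply "count" — he must pebble witnesses one by one. The key estimate is: to detect that two multisets of rooted subtrees hanging off $u$ (resp.\ $u'$) differ, Spoiler needs at most $O(\deg u)$ pebbling moves, and more carefully, by first pebbling a separator of the current branch and then working within the $\le \Delta/2$ children on the heavy side, one gets roughly $\Delta/(2\log(\Delta/2))$ rounds amortized per unit of $\log n$ progress — this is where the factor $\frac{\Delta}{2\log(\Delta/2)}$ comes from, since each pebble placed refines the "address" of the target child by a factor related to how many children share a given color, and an information-theoretic accounting over $\log n$ levels gives the $\bigl(\tfrac{\Delta}{2\log(\Delta/2)}+3\bigr)\log n$ term. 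The additive $\tfrac{3\Delta}{2}+O(1)$ absorbs the cost of the final constant-size branch and the lower-order overhead per level. The hypothesis $\Delta(T)\ge 9$ is exactly what makes $\log(\Delta/2)$ safely bounded away from the degenerate small cases.

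For Part~2, the lower bound (the matching $\bigl(\tfrac12-o(1)\bigr)d\tfrac{\log n}{\log d}$) is the part I expect to be the real obstacle and would treat separately: one must \emph{construct} a tree $T$ of order $n$ and max degree $\le d$ together with a non-isomorphic $H$ on which Duplicator survives for $\bigl(\tfrac12-o(1)\bigr)d\tfrac{\log n}{\log d}$ rounds. The natural construction is a balanced $d$-ary-ish tree of depth $\approx \log n/\log d$ where at each level the multiset of child-subtrees is chosen so that $T$ and $H$ agree up to a high "level of refinement": a CFI-style or counting-argument obstruction showing that without counting quantifiers Spoiler must spend $\approx d/2$ probes at some level to separate the two local configurations. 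One then invokes a Duplicator strategy that is preserved under Spoiler's moves until that probe budget is exhausted at every level, and the rounds multiply. Optimizing the constant to $\tfrac12$ requires balancing the $\tfrac{\Delta}{2\log(\Delta/2)}$ from the upper-bound analysis against the depth $\tfrac{\log n}{\log d}$, which forces $d=\Delta$ and the logarithmic factors to cancel in the limit $d,\log n/\log d\to\infty$. The bookkeeping in both the upper and lower bounds — tracking how a separator interacts with the already-pebbled vertices, and ensuring distances stay consistent so Lemma~\ref{lem:distance} can be used as a finishing move — is routine in spirit but delicate, so I would structure the argument around a single "halving lemma" quantifying rounds-per-level and then iterate it.
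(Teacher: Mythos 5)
Your plan follows the same route the paper itself indicates: the survey does not prove Theorem~\ref{thm:dtrees} but only sketches it (the full proof is in \cite{BFL*}), and the sketch is exactly your separator recursion --- Spoiler pebbles a separator $v$, spends up to about $d/2$ moves pinning down a branch of $T-v$ whose isomorphism type has different multiplicities in the two trees, and recurses; the lower bound comes from a recursive construction gluing about $d/2$ branches of two isomorphism types at a common root, which matches your ``balanced $d$-ary tree with carefully chosen child multisets'' (the obstruction is the elementary fact that Duplicator survives $\min(m,m')$ rounds on $m$ versus $m'$ disjoint copies of a rooted tree, not anything CFI-like). The one place where your accounting should be tightened is the per-step gain: as written, each step lands in a branch ``of at most half the size'' at a cost of $O(\deg u)$ moves, which would only give $(\Delta/2)\log n$. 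The correct bookkeeping, and the point of the paper's heuristic, is that if the distinguishing branch type occurs with multiplicity $m$ then each copy has order at most about $n/m$, so the $\approx m$ moves spent buy a reduction of the order by a factor of $m$ (i.e.\ $\log m$ ``halvings'' at once); the worst case $m\approx d/2$ then yields $\log n/\log(d/2)$ levels at $d/2$ moves each, which is where $\frac{\Delta}{2\log(\Delta/2)}\log n$ really comes from. Your ``each pebble refines the address by a factor'' phrasing gestures at this but does not pin down the multiplicity-versus-branch-order trade-off that makes the amortization work.
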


The upper bounds on $D(T)$ comes from Spoiler's strategy similar to
that of the proof of Theorem~\ref{thm:cdtrees}, that is, Spoiler
pebbles a separator $v$ of the given tree $T$ and then tries to
restrict the game to one of the components of $T-v$. Informally
speaking, the worst case scenario for Spoiler is when $T-v$ has $d$
components of order about $n/d$ of two different isomorphism types,
each occurring half of the time. Then Spoiler may need around $d/2$
extra moves to restrict game to a component of $T-v$ (if the
components of the counterpart $T'-v'$ are of these two types but with
different multiplicities). Thus, roughly, Spoiler ``reduces'' the order
by factor $d$ using $d/2$ moves, which gives the heuristic for the
bound of Theorem~\ref{thm:dtrees}.2. The optimality of this bound is given by a
recursive construction of a tree $T$ (and another tree $T'\not\cong
T$), where at each recursion step we glue together about $d/2$ trees
of two different isomorphism types at a common root.

\subsubsection{Graphs of bounded treewidth}

Informally speaking, the \emph{treewidth} of a graph tells us
to which extent the graph is representable as a tree-like structure.
This concept appeared in the Robertson-Seymour theory and, aside of
its theoretical importance, found a lot of applications in design of
algorithms on graphs. We do not go into any detail here, referring instead
to the books \cite{Die} and \cite{DFe} that may serve as introductions to, 
respectively, the structural theory of graphs and the algorithmic applications.

It happens quite often that techniques applicable to trees can be
extended to graphs whose treewidth is bounded by a constant.
In particular, this is true for the definability parameters.

\begin{theorem}\label{thm:btw}\mbox{}
\begin{bfenumerate}
\item
{\rm (Grohe and Mari\~no \cite{GMa})}
If a graph $G$ has treewidth $k$, then $\cw G\le k+2$.
\item
{\rm (Grohe and Verbitsky \cite{GVe})}
If a graph $G$ on $n$ vertices has treewidth $k$, then 
$$
\cd{4k+4}G<2(k+1)\log n+8k+9.
$$
Consequently, isomorphism of graphs whose treewidth does not exceed $k$ 
is recognizable by the $(4k+3)$-dimensional
Weisfeiler-Lehman algorithm in $\tc1$.
\end{bfenumerate}
\end{theorem}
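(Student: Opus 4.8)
The plan is to reduce both parts to the Ehrenfeucht-game characterizations from Theorem~\ref{thm:games}.5 and the game-analysis technique from the proof of Theorem~\ref{thm:cdtrees}, using a tree decomposition of width $k$ in place of a single-vertex separator. Recall the standard fact that a graph $G$ of treewidth $k$ has a tree decomposition in which every bag has at most $k+1$ vertices and, moreover, there is a bag $X$ (a ``balanced separator'') such that every component of $G\setminus X$ has at most $v(G)/2$ vertices. So $X$ plays the role that the separator vertex $v$ played for trees, except that it now consists of up to $k+1$ vertices, which is why we budget $k+2$ variables for Part~1 (we need one extra pebble to probe the components) and a constant times $k$ for Part~2.

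For Part~1, by the analog of Lemma~\ref{lem:ddd}.2 for counting logic it suffices to show $\cw{G,H}\le k+2$ for every $H\not\cong G$; equivalently, by Lemma~\ref{lem:game}.1 and Theorem~\ref{thm:games}.5, Duplicator loses the counting version of $\game_r^{k+2}(G,H)$ for some $r$. First I would dispose of the case $v(G)\ne v(H)$ by a single counting move, and reduce to the case that $H$ also has treewidth $\le k$ — if it does not, some local obstruction (an induced subgraph on $\le k+2$ vertices, or a difference already detectable after pebbling a balanced separator) lets Spoiler win with $k+2$ pebbles. In the main case, Spoiler pebbles a balanced separator $X$ of $G$ with $|X|\le k+1$; Duplicator must answer with $k+1$ vertices $X'$ in $H$ preserving the induced configuration, or lose. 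Then, using the $(k+2)$-nd pebble and counting moves, Spoiler forces play into a pair of components $C$ of $G\setminus X$ and $C'$ of $H\setminus X'$ that are non-isomorphic \emph{as graphs with the distinguished attachment to $X$ resp.\ $X'$} (this is where counting is essential: if every isomorphism type of attached component occurs with the same multiplicity on both sides, one assembles a global isomorphism, contradiction). Since $C$ again has treewidth $\le k$ and at most half the vertices, and it comes equipped with a separator of $C$ one may again pebble while keeping $X$ pebbled, the strategy recurses; the $k+2$ pebbles are recycled exactly as in the tree case. Because the order halves at each recursion, Spoiler wins in finitely many rounds, giving $\cw G\le k+2$.

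For Part~2, I would run the same separator strategy but now count rounds carefully with $4k+4$ pebbles, the extra slack over $k+2$ being spent (as in the three rules of the proof of Theorem~\ref{thm:cdtrees}) on descending inside the chosen component to its own balanced separator, on the auxiliary $\Delta'$-style distance arguments via Lemma~\ref{lem:distance}, and on simultaneously holding the current separator, the previous separator, and the ``probe'' pebbles. Each halving of the order should cost $O(k)$ rounds: pebbling a $(k{+}1)$-vertex separator of the current piece, then a bounded number of counting moves and distance-bisection moves (each of the latter costing $O(\log n)$, but only $O(1)$ many are needed per level, or one folds the Lemma~\ref{lem:distance} bound into the final count) to localize into a non-isomorphic sub-piece of at most half the size. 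After $\lceil\log n\rceil$ levels the piece is trivial, yielding a round bound of the shape $2(k+1)\log n + O(k)$; matching the constants $2(k+1)$ and $8k+9$ is a matter of being slightly more economical than the crude ``$O(k)$ per level'' estimate — e.g.\ amortizing the two distances needed to pin down a separator across consecutive levels. The consequence about $\tc1$ then follows immediately from Theorem~\ref{thm:GVe}.1 with the constant dimension $4k+3$.

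The main obstacle, as in Theorem~\ref{thm:cdtrees}, is not the high-level recursion but the bookkeeping in the component-localization step: when Duplicator tries to respond to a probe pebble with a vertex whose component "wraps around" to touch the pebbled separator $X'$ (or to touch the previously pebbled separator), one must argue either that this already creates a distance or adjacency discrepancy Spoiler can exploit quickly (via Lemma~\ref{lem:distance}), or that it forces an order mismatch between the two candidate sub-pieces — exactly the role played by Rules~2 and~3 and by \refeq{eq:Tnoniso}--\refeq{eq:halforder} in the tree proof. Making this airtight for tree decompositions (where a "branch" is now a union of components hanging off a bag, and the attachment data is a map into a $(k{+}1)$-set rather than a single root) is where the real work lies; I would handle it by always keeping the two most recent separators pebbled and choosing the probe target "as close to the current separator as possible," verbatim the device used in Rule~3, so that any wrap-around forces a detectable distance difference among the already-pebbled vertices.
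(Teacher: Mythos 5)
First, note that the survey does not actually prove Theorem~\ref{thm:btw}: both parts are cited from \cite{GMa} and \cite{GVe}, and the only indication given of the method is the remark that Part~2 uses separator techniques together with the fact that treewidth-$k$ graphs have balanced separators of size $k+1$, plus the observation that the \tc1 consequence follows from Theorem~\ref{thm:GVe}.1. Measured against that, your sketch of Part~2 is in the right spirit: balanced separators of size $k+1$, order halving per level, $O(k)$ rounds per level, and the extra pebbles over $k+2$ spent on holding two consecutive separators plus probes is indeed why the dimension is $4k+4$ rather than $k+2$; the \tc1 conclusion is exactly as the paper derives it. The constants $2(k+1)$ and $8k+9$ of course require the bookkeeping you defer, but the architecture is consistent with what \cite{GVe} does.

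Part~1, however, has two genuine gaps. First, your reduction to the case that $H$ also has treewidth at most $k$ via ``a local obstruction on at most $k+2$ vertices'' cannot work: treewidth greater than $k$ is not witnessed by any bounded-size induced subgraph (large grids have large treewidth while every bounded-radius ball looks like a bounded-treewidth graph), and the alternative you offer is too vague to carry the argument. The actual proof does not need this reduction at all -- it shows directly that a Duplicator win in the $(k+2)$-pebble counting game forces $H\cong G$. Second, and more seriously, the balanced-separator recursion does not fit into $k+2$ pebbles. After pebbling a separator $X$ with $|X|=k+1$ and probing into non-isomorphic components $C$, $C'$ with the last pebble, Spoiler has no free pebbles left to place on a balanced separator $X_1$ of $C$; the moment he lifts pebbles from $X$, the components of $G\setminus X$ merge and the invariant that Duplicator is confined to $C'$ is destroyed. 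This is precisely why your Part~2 budget holds \emph{two} separators simultaneously. The Grohe--Mari\~no argument avoids the problem by working along the bags of a (smooth) tree decomposition rather than recursing on balanced separators: consecutive bags of size $k+1$ share $k$ vertices, so with $k+2$ pebbles Spoiler can always keep an entire bag pebbled while sliding one pebble to the next bag, and at every intermediate moment the pebbled set still contains a separating bag intersection. That one-pebble-at-a-time traversal of the decomposition is the idea your sketch is missing, and without it the claimed bound $\cw G\le k+2$ does not follow from your strategy.
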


The last claim in the theorem follows a general paradigm provided by
Theorem \ref{thm:GVe}.1: A low quantifier depth
implies solvability of the isomorphism problem in NC.
Prior to \cite{GVe}, for graphs with bounded treewidth only 
polynomial-time isomorphism test of Bodlaender \cite{Bod} was known.
Very recently Das, Tor\'an, and Wagner \cite{DTW} put the problem in the
complexity class LOGCFL.

Like Theorem \ref{thm:cdtrees}, the proof of Theorem \ref{thm:btw}.2 
is based on separator techniques. In general, a set $X\subset V(G)$
will be called a \emph{separator} for graph $G$ if any component of $G\setminus X$
has at most $n/2$ vertices. It is well known \cite{RSe} that all graphs
of treewidth $k$ have separators of size $k+1$.

\subsubsection{Planar graphs}\label{sss:planar}

The separator techniques in the study of logical complexity of graphs 
were introduced by Cai, F\"urer, and Immerman \cite{CFI}, who derived
a bound $\cw G=O(\sqrt n)$ for planar graphs from the known fact
\cite{LTa} that every planar graph of order $n$
has a separator of size $O(\sqrt n)$. 
In fact, this result is a particular case of Theorem \ref{thm:btw}.1
because planar graphs have treewidth bounded by $5\sqrt5\,n$; see \cite[Proposition 4.5]{AST}.
Later Grohe \cite{Gro1}
proved that $\cw G$ for all planar $G$ is actually bounded
by a constant. 

Without counting quantifiers we cannot have 
any nontrivial upper bound for the logical depth in terms of the order
of a graph
as long as a class under consideration contains all trees.
However, some natural classes of planar graphs admit such bounds.
A plane drawing of a graph is called \emph{outerplanar}
if all the vertices lie on the boundary of the outer face.
\emph{Outerplanar graphs} are those planar graphs having an outerplanar drawing.
The treewidth of any outerplanar graph is at most 2.
As it is well known (see, e.g., \cite{Har}), any outerplanar graph is representable 
as a tree of its biconnected components. Note also that an outerplanar graph
is biconnected iff it has a Hamiltonian cycle and that such a graph can
be geometrically viewed as a dissection of a convex polygon.

\begin{theorem}[Verbitsky \cite{Ver,Ver_planar}]\label{thm:planar}\mbox{}
\begin{bfenumerate}
\item 
If $G$ is a biconnected outerplanar graph of order $n$, then
$
D(G) < 22\,\log n + 9.
$
\item
For a 3-connected planar graph $G$ of order $n$ we have 
$
D^{15}(G)<11\,\log n+45
$.
\end{bfenumerate}
\end{theorem}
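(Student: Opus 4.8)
The plan is to prove both parts by designing a winning strategy for Spoiler in the appropriate Ehrenfeucht game, invoking the Ehrenfeucht theorem (Theorem \ref{thm:games}.1 and \ref{thm:games}.4) together with Lemma \ref{lem:ddd}. For Part~1, given a biconnected outerplanar graph $G$ and any non-isomorphic $H$, I would first dispose of the easy cases: if $H$ is not biconnected outerplanar, Spoiler can exploit a cheap structural invariant (presence of a vertex of degree, a $K_4$ or $K_{2,3}$ minor witness, disconnectedness handled by Lemma \ref{lem:distance}, etc.) to win in $O(\log n)$ moves. The interesting case is when $H$ is also a biconnected outerplanar graph on $n$ vertices. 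Here I would use the fact that such a $G$ is a dissection of a convex polygon, so its Hamiltonian cycle $C$ is canonical (the outer face boundary), and the chords form a non-crossing ``dissection tree''. Spoiler pebbles a balanced chord --- a chord splitting $C$ into two arcs each of size roughly $n/2$, which corresponds to a centroid of the dissection tree --- and forces the play onto a non-isomorphic half. Since the dissection on each side is again a smaller convex-polygon dissection with two marked boundary vertices, the same strategy recurses, reducing the relevant size by a constant factor each time; after $O(\log n)$ rounds the subinstance is trivial and Lemma \ref{lem:distance} finishes off any remaining distance discrepancy. Bookkeeping the pebbles carefully (one for the current chord, two for the arc endpoints) keeps the depth linear in $\log n$ with the explicit constant $22$.

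For Part~2, the structure to exploit is that a $3$-connected planar graph has, by Whitney's theorem, an essentially unique embedding in the sphere, hence a combinatorially canonical set of faces. I would run a separator-style recursion as in the proof of Theorem \ref{thm:btw}.2 and Theorem \ref{thm:cdtrees}: Spoiler pebbles a small set of vertices (a face, or a small separator coming from the planar separator theorem of \cite{LTa} combined with the $3$-connectivity structure) so that Duplicator is forced to respond with the ``same'' face in $H$, then recurses into a piece of at most half the size. The key points are (i) that $3$-connectivity lets Spoiler \emph{pin down} the embedding combinatorics using only a bounded number of pebbles, so that the responses of Duplicator are essentially forced up to the rigidity of the embedding, and (ii) that each recursion step costs only $O(1)$ rounds and halves the instance, giving depth $O(\log n)$ with width bounded by an absolute constant --- the $15$ in the statement is the number of pebbles the strategy simultaneously needs (current separator plus the constantly many ``anchor'' vertices that keep the embedding rigid across rounds). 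As in the tree case, discrepancies in distances are cashed in via Lemma \ref{lem:distance}, which costs only a further $O(\log n)$ rounds, and the explicit additive constant $45$ absorbs the base cases and these finishing moves.

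The main obstacle in both parts is the rigidity argument: verifying that the handful of pebbles Spoiler maintains genuinely forces Duplicator's responses to respect the (Hamiltonian-cycle / sphere) embedding, so that ``the two halves are non-isomorphic'' is actually a statement the game can detect. In Part~1 this rests on the canonicity of the outer cycle, which is clean; the delicate point is handling dissections whose symmetry group is nontrivial, so that the chosen balanced chord may have several images --- one must argue Spoiler can still steer into a genuinely non-isomorphic half (this is exactly the kind of multiplicity argument handled by the ``different number of occurrences'' bookkeeping in Rules~1--3 of Theorem~\ref{thm:cdtrees}'s proof). In Part~2 the obstacle is sharper: one must show that a \emph{constant} number of pebbles suffices to keep the planar embedding rigid throughout an $O(\log n)$-round recursion, i.e., that the anchor set can be ``handed off'' from one recursion level to the next without ever needing more than $15$ pebbles at once. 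Getting the constants down to $22$ and $15$ respectively requires a careful accounting of which pebbles can be released and reused, but the asymptotic shape $O(\log n)$ follows from the halving recursion plus Lemma \ref{lem:distance} in a routine way.
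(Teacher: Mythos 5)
The survey does not actually reprove Theorem \ref{thm:planar}; Section \ref{sss:planar} only sketches the ideas of \cite{Ver,Ver_planar}. Measured against that sketch, your Part~1 is essentially on target: the proof in \cite{Ver} is a halving recursion driven by balanced 2-vertex separators of outerplanar graphs, combined with the rigidity coming from the uniqueness of the outerplanar embedding. (One cosmetic correction: your ``balanced chord'' should be a balanced 2-vertex separator, since a biconnected outerplanar graph --- a cycle, say --- need not contain any chord.)

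Your Part~2, however, is built on the one idea the paper explicitly rules out. You propose a recursion on ``a small separator coming from the planar separator theorem of \cite{LTa}.'' That theorem only yields separators of size $\Theta(\sqrt n)$, and the survey stresses that the \emph{smallest} separators of 3-connected planar graphs really can have about $\sqrt n$ vertices (grids $P_m\times P_m$ with a few extra edges); pebbling such a set costs $\Omega(\sqrt n)$ rounds and pebbles, which is incompatible with both the bound $D^{15}(G)$ and the depth $O(\log n)$. Pebbling a face is no better, since faces of 3-connected planar graphs can be arbitrarily large. The actual argument of \cite{Ver_planar} is not a separator decomposition at all: it uses Whitney's theorem to pass to the unique rotation system of $G$, shows that rotation systems admit a coordinatization under which an analog of the halving strategy of Lemma \ref{lem:distance} becomes available to Spoiler, and then --- this is the main technical ingredient --- shows how Spoiler can simulate the Ehrenfeucht game on rotation systems inside the Ehrenfeucht game on the graphs themselves using a bounded number of pebbles. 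Your ``anchor vertices keeping the embedding rigid'' gestures at this rigidity, but without the rotation-system coordinatization your strategy has no mechanism that halves anything in $O(1)$ rounds, so the proposed recursion does not get off the ground.
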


Part 2 shows another case when Theorem \ref{thm:GVe} is applicable.
It gives an \ac1 isomorphism test for 3-connected planar graphs and,
by a known reduction of  Miller and Reif \cite{MRe}, for the whole class of planar graphs.
This complexity bound for the planar graph isomorphism is not new; 
it follows from the \ac1 isomorphism test
for embeddings designed in \cite{MRe} and the \ac1 embedding algorithm
in \cite{RRe}. As a possible advantage of the Weisfeiler-Lehman approach, note that it
is combinatorially much simpler and more direct. In particular, we
do not need any embedding procedure here.
The best possible complexity bound for the planar graph isomorphism
is recently obtained by Datta et al.\ \cite{DLNTW} who design a logarithmic-space
algorithm for this problem.

Theorem \ref{thm:planar}.1 is proved in \cite{Ver} and is based on the existence
of a 2-vertex separator in any outerplanar graph. The possibility to avoid counting
quantifiers relies on certain rigidity of biconnected outerplanar graphs.
The latter is related to the following geometric fact: Any such graph 
has a unique, up to homeomorphism, outerplanar drawing.

The case of 3-connected planar graphs is much more complicated because
the smallest separators in such graphs can have about $\sqrt n$ vertices
(such examples can be obtained by adding a few edges to the grid graph $P_m\times P_m$).
The proof of Theorem \ref{thm:planar}.2 in \cite{Ver_planar}
exploits a strong rigidity property of 3-connected planar graphs:
By the Whitney theorem (see, e.g., \cite{MTh}), they have a unique, up to homeomorphism,
embedding into the sphere. An embedding can be represented
as a purely combinatorial structure, called a \emph{rotation system}
(see \cite{MTh}), to which one can extend the concepts of definability,
isomorphism, the Ehrenfeucht game etc. Defining rotation systems
is a simpler business because they admit a kind of coordinatization
and hence an analog of the halving strategy from Lemma \ref{lem:distance} 
is available for Spoiler. The most essential ingredient of the proof
of Theorem \ref{thm:planar}.2 is a strategy for Spoiler in the Ehrenfeucht
game on graphs allowing him to simulate the Ehrenfeucht game on 
the corresponding rotation systems.

\subsubsection{Graphs with an excluded minor}\label{sss:exclminor}

No graph with treewidth $h$ has $K_{h+2}$ as a minor.
The class of graphs embeddable into a closed 2-dimensional surface $S$
is closed under minors and, as follows from the Robertson-Seymour 
Graph Minor Theorem, no graph from this class contains a minor of $K_h$ 
for some $h=h(S)$. Extending his earlier work on graphs embeddable
into a fixed surface \cite{Gro2}, Grohe \cite{Gro4} recently announced a proof
that, if a graph $G$ does not contain $K_h$ as a minor, then
$\cw G$ is bounded by a constant $c=c(h)$. The case of $h=5$ is treated
in detail in~\cite{Gro3}.

Alon, Seymour, and Thomas \cite{AST} proved that, if a graph $G$ of order $n$
does not contain a $K_h$ as a minor, then it has a separator of size 
at most $h^{3/2}\sqrt n$. Using this result, for all connected graphs
with this property one can prove \cite{Ver} that
$D(G)=O(h^{3/2}\sqrt n)+O(\Delta(G)\log n)$.

\subsubsection{Other classes of graphs}\label{sss:others}

A graph is \emph{strongly regular} if all its vertices have equal degrees
and, for some $\lambda$ and $\mu$, each pair of adjacent vertices
has exactly $\lambda$ common neighbors and each pair of non-adjacent vertices
has exactly $\mu$ common neighbors. Non-isomorphic graphs with the same
order, degree, and parameters $\lambda$ and $\mu$ are standard examples
of a failure of the \WL 2\ algorithm. Babai studies the isomorphism problem for this class
in \cite{Bab2}. His individualization-and-refinement technique translates
into a bound $\cw G\le2\sqrt n\log n$ for all strongly regular graphs
of a sufficiently large order $n$ with the exception for the disjoint
unions of complete graphs and their complements (for which we have $\cd{}G\le3$).
Further improvements are obtained by Spielman~\cite{Spi}.

Evdokimov, Ponomarenko, and Tinhofer \cite{EPT00} undertake an analysis
of the 3-dimensional WL algorithm on the classes of cographs,
interval graphs, and even directed path graphs (the latter class
extends the class of interval graphs and contains also all ptolemaic
graphs, in particular, trees). It follows from \cite{EPT00} that
$\cw G \le 4$ for all $G$ in any of these classes.
The boundedness of $\cw G$ for interval graphs follows also from 
the paper of Laubner \cite{Lau},
who uses purely logical methods (while Evdokimov et al. develop
an algebraic approach that, in fact, originates from the seminal work by 
Weisfeiler and Lehman).

Grohe \cite{Gro5} proves that $\cw G=O(1)$ for all chordal line graphs. 
On the other hand, he shows that there are chordal graphs
with $\cw G=\Omega(n)$ and the same holds true for line graphs.
The latter result is obtained by a reduction to the graphs with $\cw G=\Omega(n)$
constructed by Cai, F\"urer, and Immerman \cite{CFI} (cf.\ Theorem \ref{thm:CFI} below).
Note that the Cai-F\"urer-Immerman graphs are regular of degree 3, where the regularity
can be traded for the bipartiteness after a slight modification.

\subsection{General case}\label{s:general}

\subsubsection{Identification problem}\label{sss:identif}

Recall that 
$$
\cw{G,H}\le W(G,H)\le D(G,H)\textrm{ and }\cw{G,H}\le \cd{}{G,H}\le D(G,H).
$$
If we are motivated by the graph isomorphism problem, it is quite natural
to focus on these parameters under the assumption that 
$G$ and $H$ have the same order 
(even without saying that $\cd{}{G,H}=1$ otherwise).
Distinguishing a graph $G$ from all non-isomorphic $H$ of \emph{the same order}
is sometimes called \emph{identification problem}.
In particular, we would like to determine or estimate
the maximum of $D(G,H)$ (resp.\ $\cd{}{G,H}$) as a function 
of $n=v(G)=v(H)$. 
Equivalently, what is the minimum $r=r(n)$ such that
Spoiler has a winning strategy in $\game_r(G,H)$ for all non-isomorphic $G$ and $H$
of order~$n$?

By taking disjoint unions of complete and empty graphs,
it is easy to find $G$ and $H$ with $D(G,H)\ge(n+1)/2$.
Bounding $\cd{}{G,H}$ from below is much more subtle issue.
Using a nice nontrivial argument, Cai, F\"urer, and Immerman \cite{CFI}
came up with a linear lower bound.

\begin{theorem}[Cai, F\"urer, and Immerman \cite{CFI}]\label{thm:CFI}
For infinitely many $n$ there are non-isomorphic graphs $G$ and $H$
both of order $n$ such that $\cw{G,H}\ge c\,n$, where $c$ is a positive constant.
\end{theorem}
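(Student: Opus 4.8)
The goal is to exhibit, for infinitely many $n$, non-isomorphic $G,H$ of order $n$ with $\cw{G,H}=\Omega(n)$. By Theorem~\ref{thm:games}.5 it suffices to construct such a pair on which Duplicator wins the counting version of $\game_r^{k}(G,H)$ for all $r$ as long as $k=\Omega(n)$; equivalently (Lemma~\ref{lem:game}.1), on which the $(k-1)$-dimensional Weisfeiler--Lehman colorings agree. The plan is to use the now-classical \emph{gadget construction}: start from a base graph $X$ that is itself highly connected in the sense of having large treewidth (or, more precisely, such that deleting few vertices leaves it connected with a long cycle), and replace each edge and each vertex of $X$ by a small fixed gadget. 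The vertex gadget will have an internal ``parity'' degree of freedom, and twisting the gadgets along a single edge produces a graph $H$ not isomorphic to $G$; the point is that the twist can be ``slid around'' $X$ freely, so that it is invisible to any bounded-width pebble game.

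\textbf{Key steps, in order.} First I would fix the gadget: for the CFI construction one replaces each vertex $v$ of degree $d$ in $X$ by a set of $2^{d-1}$ ``middle'' vertices indexed by the even-weight subsets of the edges at $v$, together with two ``edge'' vertices for each incident edge; the middle vertex indexed by $S$ is joined to the edge-vertex $a_e$ or $b_e$ according to whether $e\in S$. Then for each edge of $X$ one chooses a matching between the two edge-vertex pairs at its endpoints; $G$ uses the ``straight'' matchings everywhere, and $H$ flips exactly one of them. Second, I would prove the two structural facts about this construction: (a) $G\not\cong H$ --- this is a parity computation showing that the total number of flipped edges mod $2$ is an isomorphism invariant, so a single flip cannot be undone; and (b) the ``twist is mobile'' --- flipping the matchings on any edge-cut of $X$ that separates off a single vertex, or more generally along any cycle, yields a graph isomorphic to $G$; hence the location of the lone twist in $H$ can be routed along any path in $X$. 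Third, and this is the heart, I would give Duplicator's strategy in $\game_r^{k}(G,H)$: at each position the $\le k$ pebbles in $G$ touch gadgets over some set $P$ of at most $k$ vertices of $X$; since $X$ is chosen so that $X\setminus P'$ stays connected for every $|P'|<$ (some linear bound) --- e.g. $X$ an expander-like graph of linear treewidth, or simply a graph where every small vertex set can be avoided by a path between any two components --- Duplicator can always ``push'' the twist in $H$ out of the currently pebbled region, and then locally match the gadgets by an isomorphism. Formalizing ``push the twist away'' uses fact (b): the twist on one edge is equivalent to a twist along any path, and a path avoiding $P$ exists. Fourth, I would count: the order $n$ of $G,H$ is $\Theta\big(\sum_v 2^{\deg v}\big)$, so if $X$ has bounded degree (degree $3$ suffices, as noted after Theorem~\ref{thm:CFI}) then $n=\Theta(v(X))$, and the linear connectivity bound on $X$ gives $k=\Omega(v(X))=\Omega(n)$.

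\textbf{Main obstacle.} The delicate point is step three: making Duplicator's strategy genuinely robust against the \emph{counting} moves, not just ordinary pebble moves. With a counting move Spoiler can force Duplicator to respond to an entire vertex class, so Duplicator must maintain not merely a partial isomorphism on the pebbled vertices but a \emph{bijection respecting multiplicities} between the relevant gadget classes --- and she must do so while the twist is parked somewhere outside the $\le k$ pebbled gadgets. The resolution is that the gadgets are designed so that $G$ and $H$ are locally (on any region induced by $<k$ vertices of $X$) \emph{isomorphic with the same multiset of local colors}; the global twist changes only a parity invariant that no bounded set of pebbles can detect. Pinning this down rigorously --- defining the right notion of ``local equivalence modulo the twist'' and verifying it is preserved under the counting refinement --- is exactly the nontrivial argument; the cleanest route is to invoke Lemma~\ref{lem:game}.1 and instead prove directly that the $(k-1)$-dimensional WL colorings of $G$ and $H$ coincide, which repackages the pebble-game bookkeeping as a fixed-point computation on $k$-tuples and makes the ``twist mobility = colour invariance'' statement checkable by induction on the number of refinement rounds. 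I expect to choose $X$ concretely (a bounded-degree graph all of whose separators of order $<\epsilon n$ leave a large component, e.g. a suitable expander or a long ``theta-like'' graph of linear girth-cut), so that the linear lower bound on $k$ drops out of the connectivity of $X$.
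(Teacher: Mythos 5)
The survey does not prove this theorem: it is quoted from Cai, F\"urer, and Immerman \cite{CFI} with only a pointer to \cite[Section 7.5]{PVVarxiv} for the explicit constant, so there is no in-paper argument to compare against. Your outline is a faithful reconstruction of the original CFI proof: the even-subset vertex gadgets, the straight versus twisted matchings on one edge, the parity invariant showing $G\not\cong H$, the mobility of the twist along paths and cycles, and Duplicator's strategy of parking the twist outside the pebbled region are exactly the ingredients of \cite{CFI}, and your final reduction to a bounded-degree base graph so that $n=\Theta(v(X))$ is how the linear bound is obtained. You also correctly identify where the real work lies, namely making Duplicator's strategy withstand counting moves, and the repackaging via Lemma~\ref{lem:game}.1 as invariance of the $(k-1)$-dimensional WL coloring is a legitimate way to organize that bookkeeping.

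One point to tighten: the condition you want on the base graph $X$ is not that $X\setminus P'$ \emph{stays connected} after deleting a linear number of vertices --- a degree-$3$ graph is at most $3$-connected, so that reading is vacuous. The correct requirement, which you do gesture at with ``linear treewidth'' and ``expander-like,'' is a separator or cops-and-robbers condition: for every set $P$ of fewer than $k$ vertices, the twist can be routed into an edge of a component of $X\setminus P$ from which Spoiler cannot evict it in the next round, and this is guaranteed when $X$ has treewidth (equivalently, minimum balanced-separator size) $\Omega(v(X))$, as degree-$3$ expanders do. With that correction the plan is the standard one and, modulo the admitted verification of Duplicator's strategy against counting moves, it is the proof given in \cite{CFI}.
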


The calculation of Pikhurko et al.\ 
\cite[Section 7.5]{PVVarxiv} shows that one can take $c = 0.00465$.

Let us turn to upper bounds. Suppose that $G\not\cong H$ and $v(G)=v(H)=n$.
Before reading further, the reader might try
to improve the trivial bound $D(G,H)\le n$ at least somewhat.
It may be seen as a curious observation that $D(G,H)\le n-1$ 
follows from the Harary version of the Ulam 
Reconstruction Conjecture, open for a long time, 
claiming that non-isomorphic graphs of equal orders
have different sets of vertex-deleted subgraphs.

One solution of this exercise, giving $D(G,H)<n-\frac14\log n$, is to
apply the Erd\H os-Szekeres bound on Ramsey numbers. It implies that 
every graph $G$ of large order $n$ contains a homogeneous set of more than
$\frac12\log n+\frac14\log\log n$ vertices.
 Spoiler pebbles the complement of such a set $S$ in $G$.
Suppose that the unpebbled set is independent (otherwise we can play
on the complementary graphs). If Duplicator is lucky, she manages
to pebble the complement to an independent set $S'$ in $H$
so that $G\setminus S\cong H\setminus S'$. Identifying the pebbled
parts, Spoiler compares the number of vertices in $S$ and in $S'$
with the same neighborhood. These numbers
cannot be identical for $G$ and $H$ and, by $v(G)=v(H)$, Spoiler can
demonstrate this using at most $(|S|+1)/2$ further moves in one of the
graphs.

After this warm-up, we can state an almost optimal bound.

\begin{theorem}[Pikhurko, Veith, and Verbitsky \cite{PVV}]\label{thm:PVV}
For every two non-iso\-mor\-phic graphs $G$ and $H$ of the same order $n$ we have
$D(G,H)\le (n+3)/2$.
\end{theorem}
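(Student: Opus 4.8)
By the Ehrenfeucht theorem (Theorem~\ref{thm:games}.1) and Lemma~\ref{lem:ddd}.1, it suffices to produce, for any two non-isomorphic $G$ and $H$ of order $n$, a winning strategy for Spoiler in $\game_r(G,H)$ with $r=\lfloor(n+3)/2\rfloor$. The plan is to have Spoiler build a distinguished vertex set by a refinement process inside one graph and then finish by a counting argument, while monitoring at every round for a mismatch that wins at once. After a given round, partition the unpebbled vertices of $G$ into \emph{cells}, where two of them lie in the same cell when they have identical adjacency to each pebbled vertex, and do the same in $H$. A move of Spoiler in $G$ only refines the $G$-partition; Duplicator must refine the $H$-partition compatibly or lose the round; and the cells get matched by their adjacency patterns. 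If at any round the matching breaks down --- an ``extra'' cell type in one graph, or two matched cells joined differently to a third cell or differently to a pebbled vertex --- Spoiler pebbles the offending vertices and wins within two more moves; so I may assume the cell structures of $G$ and $H$ stay in step.

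In the \emph{refinement stage} Spoiler repeatedly pebbles a vertex of $G$ witnessing a non-homogeneity, i.e.\ a vertex $w$ of a cell $C$ adjacent to some but not all other vertices of $C$, or a vertex $w$ of a cell $C_1$ adjacent to some but not all of another cell $C_2$. Each such move properly refines the partition, so after pebbling some set $S$ the cells $C_1,\dots,C_t$ of $G$, together with the vertices of $S$ as singletons, form a \emph{homogeneous partition}: every part is a clique or an independent set and any two parts are completely joined or completely non-adjacent. (The partition of $V(G)$ into twin classes is of this kind, so the refinement need never go past that level; Spoiler should do it as economically as he can.) Now $H$ is forced to carry a homogeneous partition with the same quotient graph, the same clique/independent labels, and the same pebbled part, so --- since $G\not\cong H$ --- some cell $C_i$ of $G$ differs in size from the matched cell $C_i'$ of $H$, while $|S|+\sum_j|C_j|=n=|S|+\sum_j|C_j'|$. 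In the \emph{counting stage} Spoiler certifies such a discrepancy exactly in the style of the warm-up argument preceding the theorem: replacing $G,H$ by their complements if necessary (harmless by Example~\ref{ex:LDW}.3), he may assume a discrepant cell $C_i$ can be ``isolated'', pebbles all of the smaller of $C_i,C_i'$, forces Duplicator to pebble inside the matched cell of the other graph, and then wins in one more move, one graph still having an unpebbled vertex ``in that cell'' while the other does not. Since Spoiler chooses the discrepant cell and at least two of the cells must be discrepant, this costs only about the size of a fairly small cell.

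The proof then comes down to the estimate
$$
|S|+(\text{cost of the counting stage})\le\frac{n+3}{2},
$$
and proving this is the main obstacle. A crude bound does not suffice: when the refinement stage is cheap ($|S|$ small) the count-off can still take about $n/2$ moves (two disjoint cliques of nearly equal size), and when the graph is rigid the refinement stage can be forced to use close to $n/2$ pebbles (a half-graph), so the two costs must be played off against one another. Getting the constant right requires an amortized accounting that charges refinement moves and counting moves jointly against the $n$ vertices, plus a careful analysis of the situations where naive refinement would be expensive --- in those one shows that the cell structures of $G$ and $H$ must already have fallen out of step, so the game was won earlier and Spoiler in fact never completes an expensive refinement. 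Small $n$ is handled by inspection. (An alternative to the two-stage strategy is an induction on $n$ peeling off a twin class when $G$ has a large one and exploiting near-rigidity otherwise; the delicate point, winning the factor $\tfrac12$, is the same either way.)
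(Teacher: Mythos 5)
There is a genuine gap, and you have named it yourself: the inequality ``$|S|+(\mbox{cost of the counting stage})\le(n+3)/2$'' is exactly the content of the theorem, and the proposal replaces its proof with the remark that it ``requires an amortized accounting'' and ``a careful analysis.'' Concretely, two things are missing. First, your refinement rule (pebble any vertex witnessing a non-homogeneity) comes with no invariant relating the number of pebbles placed to the number of cells produced; without such an invariant the arithmetic fails, since $s$ refinement pebbles followed by counting off the smaller of two discrepant cells costs up to $s+(n-s)/2+O(1)=(n+s)/2+O(1)$, which matches $(n+3)/2$ only for $s=O(1)$, while your own half-graph example forces $s\approx n/2$. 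The survey's proof of the related Theorem~\ref{thm:irred} shows what is needed here: the greedy construction in the weak-sieve lemma is designed precisely to maintain $|\mathcal{C}(X)|\ge|X|+1$, i.e.\ every pebble is certified to create a new cell, and that invariant is what yields $|X|\le(n-1)/2$. Your sketch has no analogue of this. Second, even granting such an invariant, the additive constant $3/2$ leaves essentially no slack for the several ``wins within two more moves'' punishments you invoke (for mismatched cell structures, for a non-homogeneous image partition in $H$, for Duplicator escaping the matched cell in the counting stage); the assertion that in every expensive configuration ``the game was won earlier'' is exactly the case analysis that constitutes the proof, and it is not carried out.

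For what it is worth, the survey does not prove Theorem~\ref{thm:PVV} either --- it cites \cite{PVV} and proves only the twin-free bound $D_1(G)\le(n+5)/2$ of Theorem~\ref{thm:irred} via Lemma~\ref{lem:weaksieve}. So the standard against which to measure your sketch is the weak-sieve argument: there the strategic part (Lemma~\ref{lem:weaksieve}, pebble the weak sieve and win in $|X|+3$ moves) is easy, and the whole weight falls on the combinatorial lemma that a weak sieve of size $\le(n-1)/2$ exists. Your proposal reproduces the easy strategic shell but omits the combinatorial core, and in the general (non-twin-free) setting it must additionally balance the sieve size against the size of the twin class to be counted --- the trade-off your final parenthetical gestures at but does not establish.
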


\subsubsection{General bounds for the logical depth and width}\label{ss:generaldw}

In the case of the counting logic, 
Theorem \ref{thm:CFI} provides us with infinitely many graphs $G$
for which $\cd{}G\ge\cw G > 0.00465\,n$. 
As usually, $n$ denotes the order of a graph.
An upper bound easily follows from Theorem \ref{thm:PVV}: 
we have $\cd{}G\le0.5\,n+1.5$ for all $G$.
Though this bound does not use the power of counting quantifiers at all,
we are not aware of any better bound.

Consider the standard first-order logic (without counting).  At the
first sight, everything is clear here. Indeed, the general upper bound
$W(G)\le D(G)\le n+1$ is attained, even for the width, by the complete
graph $K_n$ and by the empty graph $\compl{K_n}$.  However, these are
the only two extremal graphs. In other words, $D(G)\le n$ for all $G$
with exception of $G\in\{K_n,\compl{K_n}\}$.  As $K_n$ and
$\compl{K_n}$ are the most symmetric graphs, this observation suggests
two problems. The first one is to prove a better bound for a
class of graphs with restrictions on the automorphism group. The
second is to obtain, for as small as possible $l=l(n)$, an explicit or
algorithmic description of all order-$n$ graphs whose logical depth
(resp.\ width) exceeds $l$. We start with the first problem.

\begin{definition}\label{def:twins}\rm
Let $u$, $v$, and $s$ be three vertices and $s\notin\{u,v\}$.
We say that $s$ \emph{separates} $u$ and $v$ if
$s$ is adjacent to exactly one of the two vertices.
Furthermore, we call $u$ and $v$ \emph{twins} if 
no $s$ separates $u$ and $v$ (or, equivalently, if the transposition of $u$ and $v$
is an automorphism of the graph). A graph is called \emph{twin-free}
if it has no twins.
\end{definition}

\begin{theorem}[Pikhurko, Veith, and Verbitsky~\cite{PVV}]\label{thm:irred}
If $G$ is twin-free, then $D_1(G)\le(n+5)/2$.
\end{theorem}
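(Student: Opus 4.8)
My plan is to reduce, via the game characterization, to exhibiting short strategies for Spoiler. By Theorem~\ref{thm:games}.2 and the bounded-alternation version of Lemma~\ref{lem:ddd} (valid, as noted after that lemma, in the $1$-alternation logic), it suffices to show that for every $H\not\cong G$ Spoiler wins the $1$-alternation Ehrenfeucht game $\game_r(G,H)$ for some $r\le(n+5)/2$; equivalently, for each such $H$ I will produce a $1$-alternation sentence of quantifier depth at most $(n+5)/2$ that is true on $G$ and false on $H$. The construction rests on one genuine ingredient, a combinatorial lemma about twin-free graphs, plus two easy regimes according to whether $v(H)\ge n$ or $v(H)<n$. (We may assume $n$ is not tiny, since $D_1(G)\le D_0(G)\le n+1$ anyway.)

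\smallskip
\noindent\emph{Combinatorial lemma (the crux).} If $G$ is twin-free of order $n$, there is a set $S\subseteq V(G)$ with $|S|\le(n+1)/2$ such that the map $v\mapsto N(v)\cap S$ is injective on $V(G)\setminus S$. Granting this, write $p=|S|$ and let $\pi_1,\dots,\pi_{n-p}$ be the distinct adjacency patterns to $S$ realized by the vertices $w_1,\dots,w_{n-p}$ of $V(G)\setminus S$. Consider
$$
\Psi\feq\exists x_1\cdots\exists x_p\Bigl(\mathrm{Distinct}(x_1,\dots,x_p)\wedge\mathrm{Adj}_{G[S]}(x_1,\dots,x_p)\wedge\forall y\,\forall y'\;\Theta(x_1,\dots,x_p,y,y')\Bigr),
$$
where $\mathrm{Adj}_{G[S]}$ records the adjacencies inside $S$ and $\Theta$ is the quantifier-free formula asserting: (i) if $y$ differs from all $x_i$, its adjacency pattern to $(x_1,\dots,x_p)$ is one of $\pi_1,\dots,\pi_{n-p}$; (ii) if $y\ne y'$ both differ from all $x_i$, they do not realize the same $\pi_j$; (iii) if $y,y'$ differ from all $x_i$ and realize $\pi_j,\pi_{j'}$ with $j\ne j'$, then $y\sim y'$ iff $w_j\sim w_{j'}$ in $G$. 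The only chain of nested quantifiers in $\Psi$ is $\exists x_1\cdots\exists x_p\forall y\forall y'$, so $\Psi$ lies in the $1$-alternation logic and $D(\Psi)=p+2\le(n+5)/2$. It is true on $G$. If $v(H)=n$ and $\Psi$ holds on $H$, then the $\exists$-witnesses realize $G[S]$, clauses (i)--(ii) make $y\mapsto(\text{its }\pi_j)$ a bijection of $V(H)\setminus\{x_i\}$ onto $\{w_1,\dots,w_{n-p}\}$, and this bijection together with $x_i\mapsto s_i$ is an isomorphism $H\to G$ (adjacencies inside $S$ by $\mathrm{Adj}_{G[S]}$, across $S$ by the patterns, inside $V(H)\setminus\{x_i\}$ by clause (iii)); hence $H\cong G$. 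If $v(H)>n$, then $\Psi$ is already false on $H$ by pigeonhole: the $v(H)-p>n-p$ vertices outside the pebbled copy of $S$ cannot all acquire distinct patterns from the $n-p$ available ones. So $\Psi$ distinguishes $G$ from every non-isomorphic $H$ with $v(H)\ge n$.

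\smallskip
The remaining case $v(H)<n$ is handled directly in the game (since $\Psi$ may hold on a proper induced subgraph of $G$). Spoiler plays entirely in $G$ (zero alternations): he pebbles $S$; unless Duplicator has already broken partial isomorphism, her answers $a_1,\dots,a_p$ induce a copy of $G[S]$ in $H$. Since $V(H)\setminus\{a_1,\dots,a_p\}$ has fewer than $n-p$ vertices, some $\pi_{j_0}$ is realized there by no vertex; Spoiler now pebbles $w_{j_0}$. Any answer of Duplicator either coincides with some $a_i$ (impossible, as $w_{j_0}\notin S$) or has a pattern to $(a_1,\dots,a_p)$ different from $\pi_{j_0}$, violating a pebble pair. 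Thus Spoiler wins in $p+1\le(n+3)/2$ moves. Combining the regimes gives $D_1(G)=\max_{H\not\cong G}D_1(G,H)\le(n+5)/2$.

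\smallskip
The main obstacle is the combinatorial lemma. The plan for it is a greedy/potential argument: build $S$ by repeatedly adjoining the vertex that best splits the current partition of $V(G)\setminus S$ into $S$-type classes, tracking $\Phi(S)=|V(G)\setminus S|-\#(\text{type classes})$, which starts at $n-1$ and must reach $0$. A vertex drawn from a non-singleton class always drops $\Phi$ by at least $1$, and by at least $2$ as soon as it separates two same-type vertices; twin-freeness guarantees every same-type pair has a separator outside $S$. The delicate point is the ``stuck'' situation in which every such separator sits in a singleton class and no move gains $2$, forcing a step that gains only $1$. I expect stuck configurations to be rigid enough (only a controlled number of surviving same-type pairs, each with a private separator) that the wasteful steps are few, which is precisely what is needed to keep $|S|\le(n+1)/2$; the half graph, where $|S|$ must be about $n/2$, shows no essentially better bound holds and that the additive slack in $(n+5)/2$ is genuine.
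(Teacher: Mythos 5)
Your overall architecture --- pebble a small vertex set that pins $G$ down, then finish in $O(1)$ further moves with a single $\exists^*\forall^*$ alternation --- is the same as the paper's, and your two easy regimes (the sentence $\Psi$ for $v(H)\ge n$, the purely existential strategy for $v(H)<n$) are handled correctly. The gap is exactly where you locate it: the combinatorial lemma. You need a genuine one-stage sieve, i.e.\ a set $S$ with $|S|\le(n+1)/2$ such that $v\mapsto N(v)\cap S$ is already injective on $V(G)\setminus S$, and nothing in your sketch (or in the paper) delivers this. Your own accounting shows how little room there is: $\Phi$ must fall from $n-1$ to $0$ in at most $(n+1)/2$ steps, so all but at most two steps must split a class. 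But when the greedy reaches a $\calC$-maximal set $X$ (no single vertex increases the number of classes), twin-freeness only guarantees that each unresolved same-class pair $u,v$ has \emph{some} separator $s\notin X$; the paper's argument derives a contradiction with maximality only when $u,v$ are still $\sieve(X)$-similar, because only then is $s$ forced into a non-singleton class. Pairs that are separated by a vertex which $X$ has already sifted out into a singleton class cause no contradiction, so the process can legitimately stall with many non-singleton classes remaining, and there is no control on how many further non-splitting additions are needed to reach a true sieve. The assertion that every twin-free graph has such a set of size about $n/2$ is a genuinely stronger statement (closely related to open problems on locating-dominating sets) that the paper neither proves nor needs.

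This is precisely why the paper introduces the \emph{weak} sieve (Definition~\ref{def:weaksieve}): the $\calC$-maximal set $X$, of size at most $(n-1)/2$, is shown only to satisfy $\sieve(\sieve(X))=V(G)$ --- the vertices not identified by $X$ directly are identified by their adjacencies to the vertices that $X$ does identify. The endgame is then modified (Lemma~\ref{lem:weaksieve}) so that Spoiler, after pebbling $X$, wins in three more moves by exploiting this two-stage identification, still with one alternation, giving $|X|+3\le(n+5)/2$. To repair your proof you would either have to establish the strictly stronger one-stage sieve lemma, or rewrite $\Psi$ and the small-$H$ strategy around a weak sieve; the latter replaces your $\forall y\,\forall y'$ tail by the three-move argument of Lemma~\ref{lem:weaksieve} and essentially reproduces the paper's proof.
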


Theorem \ref{thm:irred} cannot be improved to a sublinear bound.
Indeed, consider $mP_4$, the disjoint union of $m$ copies of $P_4$.
As it is easily seen, $mP_4$ is twin-free and $D(mP_4)\ge D(mP_4,(m+1)P_4)>m$
(the reader is welcome to play $\game_m(mP_4,(m+1)P_4)$ on Duplicator's side).
No sublinear improvement is possible even with counting quantifiers:
the graphs constructed by Cai, F\"urer, and Immerman in Theorem \ref{thm:CFI}
are twin-free.

We prove Theorem \ref{thm:irred} based on Lemma \ref{lem:ddd}.1 and
the Ehrenfeucht Theorem (Theorem \ref{thm:games}.1). That is, we design
a strategy allowing Spoiler to win $\game_r(G,H)$ for any $H\not\cong G$, where
$r=\lfloor (n+5)/2\rfloor$. As an important additional feature of the strategy,
Spoiler will alternate between the graphs only once. By Theorem \ref{thm:games}.2,
this shows that our bound holds even for the logic with only one quantifier
alternation (as it is indicated by the subscript in Theorem~\ref{thm:irred}).

\begin{definition}\label{def:weaksieve}\rm
Let $X\subset V(G)$.
Given two vertices $u,v\in V(G)\setminus X$, we call them \emph{$X$-similar}
and write $u\xeq v$ if $u$ and $v$ are inseparable by any vertex in $X$, i.e.,
if $N(u)\cap X=N(v)\cap X$.

Now, let $y\notin X$. We say that $X$ \emph{sifts out} $y$ if for
every $y'\notin X$ the relation $y\xeq y'$ implies $y'=y$
(in other words, the vertex $y$ is uniquely identified by its
adjacencies to $X$).
Let $\sieve(X)$ consist of all $x\in X$ and all $y$ sifted out by $X$.
We call $X$ a \emph{sieve}\footnote{%
Babai \cite{Bab2} uses sieves under the name \emph{distinguishing sets}.} 
if $\sieve(X)=V(G)$.
Furthermore, $X$ is called a \emph{weak sieve} if $\sieve(\sieve(X))=V(G)$.
\end{definition}

Consider the Ehrenfeucht game on non-isomorphic $G$ and $H$ and assume
that $X$ is a sieve in $G$. Let Spoiler pebble all vertices of $X$.
We leave to the reader to verify that Spoiler can win in at most 2 more moves.
We now describe a more advanced \emph{Weak Sieve Strategy}.

\begin{lemma}\label{lem:weaksieve} 
If $X$ is a weak sieve in $G$, then Spoiler 
is able, for any $H\not\cong G$, to win $\game_r(G,H)$ with $r\le|X|+3$.
Moreover, he does not need to jump from one graph
to the other more than once during the game.
\end{lemma}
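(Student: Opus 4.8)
The plan is to design Spoiler's strategy in three phases, matching the three ``extra'' moves in the bound $r\le|X|+3$. Write $Y=\sieve(X)$, so $X\subseteq Y$ and $\sieve(Y)=V(G)$; thus $Y$ is an honest sieve and $X$ sifts out enough vertices to make $Y$ a sieve. In the first phase Spoiler pebbles all $|X|$ vertices of $X$ in $G$. Let $X'\subseteq V(H)$ be Duplicator's responses; if the induced map $X\to X'$ is not a partial isomorphism Spoiler has already won, so assume it is. The point of the sieve machinery is that the adjacency pattern to $X$ pins down each vertex of $Y$ uniquely in $G$: for $y\in Y\setminus X$ the $X$-similarity class of $y$ is the singleton $\{y\}$. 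Consequently, for every $y\in Y$ there is at most one vertex $y^{*}\in V(H)$ with $N(y^{*})\cap X'=\phi(N(y)\cap X)$ (where $\phi$ is the fixed correspondence on the pebbled part); and symmetrically, the role of $Y$ in $H$ depends only on which $X$-patterns are realized. This lets Spoiler treat $Y$ and a corresponding subset $Y^{*}\subseteq V(H)$ as ``recognizable'' without spending further pebbles.

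In the second phase Spoiler exploits that $G\not\cong H$ to locate a discrepancy, using at most one extra move. There are two cases. Case (a): some $X$-similarity pattern is realized in $G$ (by a vertex of $Y$, necessarily uniquely) but not in $H$, or vice versa — or is realized but with a different relationship (e.g. adjacency among those vertices, or to $X$). Then Spoiler pebbles the offending vertex $y$ (in whichever graph realizes the pattern); Duplicator cannot match it, because any response $y'$ either has the wrong adjacency to $X'$ or, if it has the right adjacency, is then forced and still fails on some edge, and Spoiler wins in at most two more moves as in the plain-sieve argument sketched before the lemma — giving total $\le |X|+1+2=|X|+3$. Case (b): the patterns realized by $Y$ in $G$ and by $Y^{*}$ in $H$ agree perfectly, so in fact the already-pebbled map extends to a partial isomorphism $X\cup(\text{recognized part})\to X'\cup\cdots$, yet since $\sieve(Y)=V(G)$, \emph{every} vertex of $G$ is determined by its adjacency to $Y$. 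Because $G\not\cong H$, $H$ must then contain a vertex whose $Y^{*}$-adjacency pattern is not realized in $G$, or is realized with a conflict. Spoiler pebbles that vertex $z$ (in $H$); Duplicator's response $z'\in V(G)$ is either immediately bad, or is forced to the unique vertex of $G$ with the matching $Y$-pattern and then conflicts on an edge — again two more moves, total $\le|X|+3$.

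The only-one-alternation claim is built in by the structure above: Spoiler plays all of phase one inside $G$, then in phase two he jumps to the graph containing the ``extra'' pattern (the single jump), and the finishing two moves of phase three stay in that same graph (he is comparing neighbourhood sizes / hunting a missing edge inside one graph, exactly as in the plain-sieve finish). So Spoiler switches graphs at most once.

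The main obstacle, and where the proof needs care, is phase three: ``the unique vertex with the matching pattern fails on an edge.'' One has to verify that when Duplicator is forced onto that unique vertex, some already-pebbled pair of $Y$ (or $X$) witnesses a non-edge/edge discrepancy within the $\le 2$ remaining moves — this is precisely the content of the parenthetical remark before the lemma (``Spoiler can win in at most 2 more moves'' when $X$ is a sieve), now applied with $Y$ in place of $X$ but with the subtlety that the vertices of $Y\setminus X$ are not themselves pebbled, only \emph{recognizable}. One must argue that recognizability suffices: if the discrepancy is an edge $\{y_1,y_2\}$ with $y_1,y_2\in Y$, Spoiler spends his two remaining moves pebbling $y_1,y_2$ and their forced (wrong) counterparts, so recognizability is exactly enough. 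The bookkeeping to confirm that two moves always suffice — rather than $|Y\setminus X|$ many — is the delicate point, and it is handled by observing that only \emph{one} conflicting pair needs to be exhibited, plus the counting-of-neighbours trick (comparing $|S|$ and $|S'|$ with $v(G)=v(H)$) used in the warm-up to Theorem~\ref{thm:PVV}.
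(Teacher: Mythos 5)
Your overall architecture is the same as the paper's: pebble all of $X$, use the two-level structure $X\subseteq\sieve(X)\subseteq V(G)$ to make first $\sieve(X)$ and then everything else "recognizable", and finish in at most three extra moves with a single switch of graphs. But the endgame has a genuine gap. Write $Y=\sieve(X)$ and $Z=V(G)\setminus Y$. After the preliminary reductions, the hardest case is the one your case (b) fails to cover: the map $f$ sending each $z\in Z$ to the vertex of $V(H)$ with the same $Y$-neighborhood may be a well-defined bijection onto $V(H)\setminus Y'$ that preserves \emph{all} adjacencies to $Y$, so that \emph{no} vertex of $H$ has a $Y$-pattern that is "not realized in $G$ or realized with a conflict" -- the only discrepancy is an edge/non-edge \emph{between two vertices of $Z$}, neither of which is individually detectable. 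Your assertion that $H$ "must then contain a vertex whose $Y^{*}$-adjacency pattern is not realized in $G$, or is realized with a conflict" is therefore a false dichotomy as stated, and pebbling a single vertex $z$ and letting Duplicator be "forced to the unique vertex with the matching $Y$-pattern" cannot produce a contradiction in this case, precisely because the matching vertex has the \emph{correct} adjacencies to $Y$. The paper's proof handles this by pebbling \emph{both} endpoints $y,z\in Z$ of a pair on which $f$ fails to preserve adjacency (two moves), and then, if Duplicator deviates from $(f(y),f(z))$, spending the one remaining move on a witness in $Y$; the injectivity of $f$, which is exactly where the \emph{weak}-sieve hypothesis $\sieve(\sieve(X))=V(G)$ is used, is what makes the deviation detectable. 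Your final paragraph gestures at pebbling a conflicting pair, but places it inside $Y$ (where the conflict is already disposed of by the cheap two-move argument) rather than inside $Z$, and never establishes that $f$ is well-defined and injective.

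Two smaller points. First, your case analysis omits the situation where some $Y$-pattern is realized once in $G$ but twice in $H$ (the set $W'_z$ has two elements); the paper wins there by pebbling both of those vertices of $H$ plus one witness, and this case must be disposed of before one may speak of "the forced counterpart". Second, the appeal to comparing $|S|$ with $|S'|$ using $v(G)=v(H)$ is out of place: the lemma makes no assumption on the order of $H$, and no neighbour-counting is needed anywhere in the argument.
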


\begin{proof}
 First, Spoiler selects all of $X$. Let
$X'\subset V(H)$ be the Duplicator's reply. Assume that Duplicator has
not lost yet. For the notational simplicity let us identify $X$ and
$X'$ so that $V(G)\cap V(H)=X=X'$ and the player's moves coincide on
$X$.  Let $Y=\sieve(X)$ in $G$ and $Y'= \sieve(X')$ in~$H$.

It is not hard to see that Spoiler wins in at most two extra moves
unless the following holds. For any $y\in Y\setminus X$ there is a
$y'\in Y'\setminus X$ (and vice versa) such that $N(y)\cap
X=N(y')\cap X$. Moreover, this bijective correspondence between
$Y$ and $Y'$ establishes an isomorphism between $G[Y]$ and
$G'[Y']$. 

Suppose that this is the case and identify $Y$ with $Y'$. Let
$Z=V\setminus Y$ and $Z'= V'\setminus Y$. Let $z\in Z$ and define
 $$
 W'_{z}=\setdef{z'\in Z'}{N(z')\cap Y = N(z)\cap Y}.
 $$
 
If $W'_z=\emptyset$, Spoiler wins in at most two moves. First, he
selects $z$. Let Duplicator reply with $z'$. Assume that $z'\in Z'$ for
otherwise she has already lost. As the
neighborhoods of $z,z'$ in $Y$ differ, Spoiler can demonstrate this by
picking a vertex of $Y$. If $|W'_z|\ge 2$, then Spoiler selects any
two vertices in $W'_z$ and wins with at most one more move, as
required.

Hence, we can assume that for any $z$ we have $W'_z=\{f(z)\}$ for some
$f(z)\in Z'$. 
Since each vertex in $Z$ is sifted out by $Y$, the function $f$ is injective.
If $f(Z)\ne Z'$, Spoiler easily wins in two moves. Suppose, therefore,
that $f\function Z{Z'}$ is a bijection. As $G\not\cong H$,
the mapping $f$ does not preserve the adjacency relation between some
$y,z\in Z$. Now, Spoiler selects both $y$ and $z$. Duplicator cannot
respond with $f(y)$ and $f(z)$; by the definition of $f$ Spoiler can
win in one extra move.
\end{proof}

Theorem \ref{thm:irred} immediately follows from Lemma \ref{lem:weaksieve}
and the next lemma.

\begin{lemma}
Any twin-free 
graph $G$ on $n$ vertices has a weak sieve $X$ with $|X|\le(n-1)/2$.
\end{lemma}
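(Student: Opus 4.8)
The plan is to build $X$ greedily, adding one vertex at a time so that a potential function drops by at least $2$ with each addition, and to stop the moment $X$ has become a weak sieve. For $X\subseteq V(G)$ write $\Phi(X)$ for $n-|X|$ minus the number of $\xeq$-classes into which $V(G)\setminus X$ splits. Each class contributes at least $1$ to that count, so $\Phi(X)\ge 0$, while $\Phi(\emptyset)=n-1$. A vertex $y\notin X$ is sifted out by $X$ exactly when its $\xeq$-class is $\{y\}$, so $\sieve(X)\setminus X$ is the union of the singleton classes; in particular $X$ is a sieve iff $\Phi(X)=0$.

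The heart of the argument is the following dichotomy: either $X$ is a weak sieve, or there are a non-singleton $\xeq$-class $C$, vertices $u,v\in C$, and a vertex $s$ separating $u$ and $v$ that is not sifted out by $X$. To prove it, assume no such triple exists, i.e.\ every vertex separating two $\xeq$-equivalent vertices is sifted out, hence lies in $\sieve(X)$. Take any $y,y'\notin\sieve(X)$ with $N(y)\cap\sieve(X)=N(y')\cap\sieve(X)$; then $N(y)\cap X=N(y')\cap X$, so $y\xeq y'$, and since $y,y'$ lie outside $\sieve(X)$ we get $y\ne y'$ with a non-singleton class. Any vertex $s$ separating $y$ and $y'$ (one exists since $G$ is twin-free) then lies in $\sieve(X)$ and is adjacent to exactly one of $y,y'$, contradicting $N(y)\cap\sieve(X)=N(y')\cap\sieve(X)$. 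Hence every $y\notin\sieve(X)$ is uniquely determined by $N(y)\cap\sieve(X)$, that is, $\sieve(\sieve(X))=V(G)$ and $X$ is a weak sieve.

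Now iterate: start with $X=\emptyset$ and, while $X$ is not a weak sieve, choose $s$ as in the dichotomy and replace $X$ by $X\cup\{s\}$. Since $s$ separates the $\xeq$-equivalent pair $u,v$, we have $s\notin X$; since $s$ is not sifted out, its $\xeq$-class has size at least $2$. Therefore absorbing $s$ into $X$ lowers $n-|X|$ by $1$, does not lower the number of classes (the class of $s$ keeps another vertex), and raises it by at least $1$ because $C$ gets split by adjacency to $s$; so $\Phi$ drops by at least $2$ per step. As $\Phi$ starts at $n-1$ and never goes negative, the loop stops after at most $(n-1)/2$ steps; at that point $X$ is a weak sieve and $|X|$, being the number of steps, is at most $(n-1)/2$.

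I expect the main difficulty to be the dichotomy, where one has to keep the chain $X\subseteq\sieve(X)\subseteq\sieve(\sieve(X))$ straight and verify that ``all relevant separators are already sifted out'' forces $X$ to be a weak sieve rather than merely a sieve. The potential bookkeeping is then routine, the only point worth noting being that the separator chosen at each step necessarily lies in a non-singleton class, so that no class disappears when it is moved into $X$.
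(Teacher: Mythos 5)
Your proof is correct and takes essentially the same route as the paper's: a greedy construction in which each vertex added to $X$ increases the number of $\xeq$-classes of $V(G)\setminus X$ by at least one, so that $|\calC(X)|\ge|X|+1$ together with $|X|+|\calC(X)|\le n$ forces $|X|\le(n-1)/2$, with twin-freeness supplying the class-splitting separator. The only difference is organizational: the paper runs the greedy until no vertex can increase $|\calC(X)|$ and then verifies that such a $\calC$-maximal set is a weak sieve, whereas you stop as soon as $X$ is a weak sieve and use the contrapositive of that same verification (your ``dichotomy'') as the loop guard, packaging the count as a potential function instead of an invariant.
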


\begin{proof}
Given $X\subset V(G)$, let $\calC(X)$ denote the partition of $\compl X=V(G)\setminus X$ 
into $\xeq$-equivalence classes.
Starting from $X=\emptyset$, we repeat
the following procedure. As long as there exists $u\in\compl X$
such that $|\calC(X\cup\{u\})|>|\calC(X)|$, we move $u$ to $X$.
As soon as there is no such $u$, we arrive at $X$ which is
{\em $\calC$-maximal\/}, that is,
$|\calC(X\cup\{u\})|\le|\calC(X)|$ for any $u\in\compl X$. Note that
$|\calC(X)|\ge|X|+1$ because this inequality is true at the beginning and is
preserved in each construction step. Using also the
inequality $|X|+|\calC(X)|\le n$, we conclude that  
$|X|\le(n-1)/2$.

We now prove that the $X$ is a weak sieve.
Suppose, to the contrary, that $u$ and $v$ are distinct $\sieve(X)$-similar
vertices in $Z=V(G)\setminus\sieve(X)$. Since $G$ has no twins,
these vertices are separated by some $s$. We cannot have $s\in\sieve(X)$
by the definition of $\sieve(X)$-similarity. Thus 
$s\in Z$. Let $C_1$ be the class in $\calC(X)$ including $\{u,v\}$
and $C_2$ be the class in $\calC(X)$ containing $s$. Since $s\notin \sieve(X)\setminus X$,
the class $C_2$ has at least one more element in addition to $s$.
If $C_1\ne C_2$, moving $s$ to $X$ splits up $C_1$ and does not
eliminate $C_2$. If $C_1=C_2$, moving $s$ to $X$ splits up this class
and splits up or does not affect the others. In either case
$|\calC(X)|$ increases, giving a contradiction.
\end{proof}

The proof of Theorem \ref{thm:irred} is complete. This theorem was
significantly extended in \cite{PVV} giving some progress on the
second research problem stated above. In particular, it was shown that
one can efficiently check whether or not $D(G)\le(n+5)/2$ for the
input graph $G$ of order $n$ and, if this is not true, then one can
efficiently compute the exact value of $D(G)$. Also, the same holds for~$W(G)$.

This result is interesting in view of the fact that
algorithmic computability of the logical depth and width of a graph,
even with no efficiency requirements, is unclear.
A reason for this is that the question if a given first-order sentence defines 
some graph is known to be undecidable~\cite{PSV}.

The upper bound of $\frac12n+O(1)$ can be improved if we impose
a restriction on the maximum vertex degree.

\begin{theorem}[Pikhurko, Veith, and Verbitsky~\cite{PVV}]\label{thm:bdeg}
Let $d\ge2$. Let $G$ be a graph of order $n$ with no
isolated vertex and no isolated edge.
If $\Delta(G)\le d$, then
$$
D_1(G) < c_d\, n+d^2+d+4
$$
for a constant $c_d=\frac12-\frac14d^{-2d-5}$.
\end{theorem}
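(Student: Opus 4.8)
The plan is to refine the Weak Sieve Strategy of Lemma~\ref{lem:weaksieve} by exploiting the degree bound $\Delta(G)\le d$, so that a smaller set of vertices suffices to "sift out" the whole graph. The key point is that in a graph of bounded degree, once we pebble a vertex $x$, it can only help to separate vertices that lie in $N(x)$; but more importantly, the $\xeq$-equivalence classes outside $X$ can be controlled combinatorially. As in the previous proof, we want to find a set $X$ that is a weak sieve, but now of size at most $c_d n+O(d^2)$ rather than $(n-1)/2$. Then Theorem~\ref{thm:irred}'s argument via Lemma~\ref{lem:ddd}.1 and the Ehrenfeucht theorem (Theorem~\ref{thm:games}.1,2) gives $D_1(G)\le|X|+3$, and the one-alternation feature is inherited from Lemma~\ref{lem:weaksieve}.

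\textbf{Building the sieve.} First I would run the same greedy construction as in the twin-free lemma: starting from $X=\emptyset$, repeatedly move a vertex $u\in\compl X$ into $X$ whenever doing so increases $|\calC(X\cup\{u\})|$, arriving at a $\calC$-maximal set $X$. The same computation shows $|\calC(X)|\ge|X|+1$ and $|X|+|\calC(X)|\le n$, so $|X|\le(n-1)/2$; and the argument that $X$ is a weak sieve (using twin-freeness, which is implied here by the absence of isolated vertices/edges only after a little care — actually $G$ need not be twin-free, so this step must be redone) goes through. The real work is to do better than $(n-1)/2$. The idea is that in a bounded-degree graph, a large $\xeq$-class $C$ (all of whose vertices have the same neighbourhood in $X$) forces its members to have very restricted neighbourhoods, and one can use a separate, cheaper mechanism — pebbling a bounded number of additional vertices near $C$, or using distances via Lemma~\ref{lem:distance} — to finish once the game is localized to such a class. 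Concretely, I would argue that if $X$ is chosen greedily and $|X|$ is already close to $c_d n$, then every remaining $\xeq$-class either is a singleton (hence sifted out) or is homogeneous and its few internal/external adjacencies can be resolved in $O(d^2)$ extra moves. The savings factor $\tfrac14 d^{-2d-5}$ suggests the argument is: a constant fraction roughly $d^{-2d-5}$ of the vertices can be handled "for free" because they fall into classes that are already determined by local structure of size $O(d)$, so the greedy process terminates with $|X|\le(\tfrac12-\tfrac14 d^{-2d-5})n$.

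\textbf{Finishing the game.} Once such an $X$ of size $<c_d n$ is in hand, Spoiler's play is: pebble all of $X$ (this is where the single graph-switch happens, or rather Spoiler stays in $G$), then follow the Weak Sieve Strategy verbatim through the sets $Y=\sieve(X)$ and $Z=V\setminus Y$. The only modification is at the very end: instead of the bound "$\le 3$ extra moves," the residual discrepancy now lives inside a bounded-degree structure of size $O(d^2)$, and Spoiler needs $d^2+d+O(1)$ further moves — either to exhibit a vertex of the wrong degree, or to walk along a short path using Lemma~\ref{lem:distance} within a component of bounded size. Summing, $D_1(G)<c_d n+d^2+d+4$.

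\textbf{Main obstacle.} The delicate part is the extremal/counting argument that yields the improved constant $c_d=\tfrac12-\tfrac14 d^{-2d-5}$: one must show that the greedy $\calC$-maximal set $X$ is genuinely smaller than $n/2$ by a $d$-dependent amount, which requires a careful analysis of how many vertices can remain in non-singleton $\xeq$-classes when $\Delta(G)\le d$. Equivalently, one needs to count configurations of $O(d)$ vertices that "pin down" a positive fraction of $V(G)$ and to verify that the greedy process cannot stall before reaching size $c_d n$. Handling graphs that are \emph{not} twin-free (here we only forbid isolated vertices and isolated edges) is an extra wrinkle: twins must either be absorbed into the sieve cheaply or shown not to obstruct the final $O(d^2)$-move cleanup. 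I expect the bookkeeping of these boundary cases, rather than any single conceptual leap, to be where most of the effort goes.
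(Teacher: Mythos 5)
The survey does not actually prove Theorem~\ref{thm:bdeg}; it only states it with a citation to \cite{PVV}, so there is no in-paper proof to match your argument against line by line. Judged on its own terms, your proposal correctly identifies the natural framework --- reduce to the Ehrenfeucht game via Lemma~\ref{lem:ddd}.1 and Theorem~\ref{thm:games}.1--2, and aim for a weak-sieve-type strategy in the spirit of Lemma~\ref{lem:weaksieve}, with the one-alternation bound inherited from that lemma --- but it does not contain the actual content of the theorem. The entire point of the result is the quantitative claim that a sieve-like set of size at most $c_d\,n+O(d^2)$ with $c_d=\frac12-\frac14d^{-2d-5}$ exists; your text replaces this with the sentence ``the savings factor suggests the argument is\dots'' and an unverified assertion that a $d^{-2d-5}$-fraction of vertices ``can be handled for free.'' No mechanism is exhibited for why the greedy $\calC$-maximal process, which by itself only yields $|X|\le(n-1)/2$, can be pushed a linear amount below $n/2$; nor is any structure identified whose resolution costs exactly the extra $d^2+d+O(1)$ moves in the bound. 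That is the theorem, and it is missing.

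A second, independent gap is the twin issue, which you flag but do not resolve. Under the hypotheses here (no isolated vertex, no isolated edge, $\Delta(G)\le d$) the graph can certainly have twins --- $P_3$ already does --- and for a graph with twins \emph{no} set $X$ avoiding them can sift them out, so no weak sieve in the sense of Definition~\ref{def:weaksieve} exists unless all twins are placed in $X$. Hence the proof of Theorem~\ref{thm:irred} cannot ``go through'' as you assert; the strategy must be restructured to treat twin classes (which under the degree bound have size at most $d+1$) as units, and Spoiler must pay to compare their multiplicities without counting quantifiers. Since both the improved sieve bound and the twin handling are deferred to ``bookkeeping'' rather than carried out, the proposal is a plausible outline of where a proof might live, not a proof.
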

Theorem \ref{thm:bdeg} aims at showing a constant $c_d$ strictly less
than $1/2$ rather than at attempting to find the optimum $c_d$.
In the case of $d=2$, which is simple and included just for uniformity,
an optimal bound is $D_1(G)\le n/3+O(1)$.
Without the assumption that $G$ has no isolated vertex and edge, the theorem
does not hold for any fixed $c_d<1/2$. A counterexample is provided by
the disjoint union of isolated edges.
Even under the stronger assumption that $G$ is connected, Theorem \ref{thm:bdeg}
still does not admit any sublinear improvement: the Cai-F\"urer-Immerman
graphs in Theorem \ref{thm:CFI} are connected and have maximum degree~3.

\section{Average case bounds}\label{s:average}

In Section \ref{s:worstcase} we investigated the maximum values of
logical parameters over graphs of order $n$.  Now we want to know its
typical values.  A natural setting for this problem is given by the
Erd\H{o}s-R\'enyi model of a random graph $\rpgraph$. The latter is a
random graph on $n$ vertices where every two vertices are connected by an edge
with probability $p$ independently of the other pairs. A particularly
important case is $\rgraph$, when we have the uniform distribution on
all graphs on a fixed set of $n$ vertices.  Whenever we say that for a
random graph of order $n$ something happens \emph{with high
probability} (abbreviated as \emph{whp}), 
we mean a probability approaching $1$ as $n\to\infty$.

\subsection{Bounds for almost all graphs}

\subsubsection{Logic with counting}

We begin with 
a simple but useful observation about the color refinement
algorithm described at the beginning of Section \ref{s:wl}:
If the coloring of a graph stabilizes with all color classes becoming singletons,
it can be considered a canonical vertex ordering.
It turns out that this happens for almost all graphs.
This result can be used to estimate the logical complexity of almost all graphs, 
in particular, to show that
almost surely $\cw\rgraph=2$ (Immerman and Lander~\cite{ILa}).

\begin{theorem}\label{thm:BES}\mbox{}

\begin{bfenumerate}
\item
{\rm (Babai, Erd\H{o}s, and Selkow \cite{BES})}
2 color refinements split a random graph $\rgraph$ into color classes which are 
singletons with probability more than $1-1/\sqrt[7]n$, for all large enough $n$.
Consequently, $\cd2{\rgraph}\le4$ with this probability.
\item
{\rm (Babai and Ku\v{c}era \cite{BKu})}
3 color refinements split a random graph $\rgraph$ into color classes which are 
singletons with probability more than $1-1/2^{cn}$, for a constant $c>0$ and 
all large enough $n$.
Consequently, $\cd2{\rgraph}\le5$ with this probability.
\end{bfenumerate}
\end{theorem}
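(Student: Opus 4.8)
The plan is to separate the logical content of the statement from the combinatorics of $\rgraph$. The logical content reduces both parts to a single injectivity claim about the colouring produced by color refinement; the combinatorial content is then the Babai--Erd\H{o}s--Selkow analysis for Part~1 and its Babai--Ku\v{c}era sharpening for Part~2.

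\emph{Reduction.} Color refinement is the $1$-dimensional Weisfeiler--Lehman algorithm, so $\wl 11 v\equiv\deg v$ and $\wl 12 v\equiv\bigl(\deg v,\msetdef{\deg w}{w\sim v}\bigr)$. I claim that \emph{if the colouring $\wl 1r G$ is injective on $V(G)$, then the $(r+1)$-round $1$-dimensional WL works correctly on every input pair $(G,H)$, whence $\cd 2 G\le r+2$ and $\cw G\le 2$ by Theorem~\ref{thm:WLD}.} Indeed, $\wl 1r G$ injective implies $\wl 1{r+1}G$ injective, and then every level-$(r+1)$ colour occurring in $G$ has a distinct first coordinate, namely a level-$r$ colour; so if $\wl 1{r+1}H=\wl 1{r+1}G$ as multisets, then $\wl 1r H$ is forced to be injective too (its values are the distinct first coordinates of the same colours), and hence $\wl 1{r+1}H$ records, for each vertex, its neighbourhood encoded by level-$r$ colours, so it determines $H$ up to isomorphism; as the same is true of $G$ and the colour-multisets agree, $G\cong H$. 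Therefore $\wl 1{r+1}G\ne\wl 1{r+1}H$ for every $H\not\cong G$ (of equal or of different order), and the algorithm answers correctly. It thus suffices to prove that $\wl 12\rgraph$ is injective with probability $>1-1/\sqrt[7]n$, and that $\wl 13\rgraph$ is injective with probability $>1-2^{-cn}$.

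\emph{Part 1.} Following Babai--Erd\H{o}s--Selkow, I would put $r=\lceil 3\log_2 n\rceil$ and let $W$ be the set of the $r$ vertices of largest degree, then verify that with the stated probability two events hold: (a) the $r$ largest degrees are pairwise distinct and strictly exceed every other degree; (b) the vertices outside $W$ have pairwise distinct adjacency patterns to $W$. By (a) each vertex of $W$ is already a singleton colour class after the first refinement, and then the second-refinement colour of any $v\notin W$ tells, for each $w\in W$, whether $w\sim v$ (since $\deg w$ is attained by no other vertex); together with (b) this forces $\wl 12\rgraph$ to be injective. The probability of $\neg$(a) is controlled by anticoncentration of the binomial degree distribution: the $r$ top order statistics lie in a window of width $O(\sqrt n)$, so a birthday-type estimate bounds the chance of a collision there. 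The event (b) would follow from a union bound over the $\binom n2$ pairs, each failing with probability about $2^{-r}=n^{-3}$, \emph{were} $W$ a fixed set --- and this is the main obstacle, since $W$ is read off the degree sequence, which itself depends on the edges at the pair $(u,v)$ being separated. I would resolve this by first exposing the graph on $V\setminus\{u,v\}$: each degree shifts by at most $2$ when $u$ and $v$ are restored, so $W$ is then confined to a small controlled family of candidate sets while the edges from $u$ and from $v$ to every candidate remain unexposed; a union bound over the candidates recovers an $n^{-3}$-type bound, and adding the exceptional events yields probability $>1-1/\sqrt[7]n$.

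\emph{Part 2.} The reference-set argument cannot give an exponentially small error, because event (a) --- distinctness of the top $\Theta(\log n)$ degrees --- already fails with probability only polynomially small, and no reference set read off the degree sequence can do better. Babai and Ku\v{c}era instead estimate, for each pair $u\ne v$ directly, the probability that $u$ and $v$ receive the same colour, at the cost of one further refinement round. The mechanism I would use: expose $G_0=G-\{u,v\}$ first; then equality of the colours of $u$ and $v$ forces, beyond $\deg u=\deg v$, that the multiset of $G_0$-degrees of $N(u)\setminus N(v)$ coincides with that of $N(v)\setminus N(u)$; but conditionally on $G_0$ the separator set of $u$ and $v$ receives an almost uniform random $2$-colouring into exactly these two parts, and it is exponentially unlikely that this colouring is balanced across every $G_0$-degree class of the separators. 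A union bound over the $\binom n2$ pairs (plus the negligibly rare exceptional graphs) then gives failure probability $2^{-cn}$, the third refinement round being what lets one both certify isomorphism afterwards and push the exponent down to linear in $n$. For the precise robust construction and the value of $c$ I would refer to \cite{BKu}.
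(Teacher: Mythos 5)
Your reduction is precisely the argument the paper itself gives --- and all that it gives: the two probabilistic statements are quoted from \cite{BES} and \cite{BKu} without proof, and the depth bounds are obtained from the observation that once $\wl{1}{r}{G}$ is injective on $V(G)$ one further refinement round certifies non-isomorphism against every $H$, combined with the necessity part of Theorem \ref{thm:WLD} (with $k=1$, so that correctness of the $(r+1)$-round algorithm yields $\cw{G}\le 2$ and $\cd{2}{G}\le r+2$). Your spelled-out version of that observation is correct. Your sketches of the two cited combinatorial facts go beyond what the survey attempts, and since you ultimately defer to the sources I flag only one quantitative point in your Part~2 mechanism: requiring that a near-uniform random bipartition of the separator set $N(u)\triangle N(v)$ be balanced across every $G_0$-degree class gives a failure probability of only about $2^{-\Theta(\sqrt{n}\,\log n)}$, because the roughly $n/2$ separators fall into only $O(\sqrt{n\log n})$ degree classes, each of size about $\sqrt{n/\log n}$. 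To reach $2^{-cn}$ one needs the neighbours' \emph{second}-round colours as the reference partition: these are already singletons outside a negligible set of vertices, so agreement of the multisets of neighbours' round-$2$ colours essentially forces $N(u)$ and $N(v)$ to coincide on $(1-o(1))n$ vertices, an event of probability $2^{-(1-o(1))n}$. This is consistent with your own remark that the third round is what pushes the exponent to linear, but the concrete mechanism you wrote down does not yet realize that exponent.
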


The logical conclusions made in Theorem \ref{thm:BES}
are based on the necessity part of Theorem \ref{thm:WLD}. 
It suffices to notice that, 
once the color refinement splits the vertex set of an input graph $G$
into singletons, one extra round of the algorithm suffices to distinguish
$G$ from any non-isomorphic graph.

Next, we are going to show that the upper bound of
Theorem~\ref{thm:BES}.1 is best possible.
Let $C^r_G$ denote the coloring of the vertex set of a graph $G$
produced by the color refinement procedure in $r$ rounds.

\begin{lemma}\label{lem:1colref}
Whp for $G=\rgraph$ there exists a non-isomorphic
graph $H$ on $V=V(G)$ such that $C^{2}_G(x)=C^{2}_H(x)$ for every
vertex $x\in V$.
\end{lemma}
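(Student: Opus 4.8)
The plan is to obtain $H$ from $G$ by a single \emph{$2$-switch}. Choose four distinct vertices $a,b,c,d$ of $G$ with $ab,cd\in E(G)$ and $ad,cb\notin E(G)$, and put $E(H)=\bigl(E(G)\setminus\{ab,cd\}\bigr)\cup\{ad,cb\}$. Then $H\ne G$, and every vertex has the same degree in $H$ as in $G$, so the first colour‑refinement step gives $C^1_G=C^1_H$ as functions on $V$. For the second step observe that a vertex $x\notin\{a,b,c,d\}$ has the same neighbourhood in $G$ and in $H$, hence the same $C^2$‑colour; a vertex $x\in\{a,c\}$ merely exchanges a neighbour of degree $\deg b$ for one of degree $\deg d$ (or conversely), which leaves its $C^2$‑colour unchanged provided $\deg b=\deg d$; and a vertex $x\in\{b,d\}$ similarly keeps its $C^2$‑colour provided $\deg a=\deg c$. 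Thus $C^2_G(x)=C^2_H(x)$ for every $x\in V$ as soon as $\deg a=\deg c$ and $\deg b=\deg d$, and it remains to show that whp $G=\rgraph$ contains four distinct vertices $a,b,c,d$ with $ab,cd\in E(G)$, $ad,cb\notin E(G)$, $\deg a=\deg c$ and $\deg b=\deg d$.

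To produce this configuration I would first recall the standard fact that whp all degrees of $\rgraph$ lie in an interval of $O(\sqrt{n\log n})$ consecutive integers about $n/2$; by the pigeonhole principle some degree value is then attained by a set $S\subseteq V$ of size $s=\Omega(\sqrt{n/\log n})$, so $s\to\infty$. It now suffices to find four vertices of $S$ inducing exactly two disjoint edges (a $2K_2$): labelling these edges $ab$ and $cd$, we get $ad,cb\notin E(G)$ automatically, and $\deg a=\deg c=\deg b=\deg d$ because all four vertices lie in $S$. That $G[S]$ contains an induced $2K_2$ whp is a routine random‑graph statement, the expected number of such $4$‑sets being of order $s^4\to\infty$ with the usual concentration. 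The one point that needs care is the dependence created by conditioning on exact degrees: conditionally on the set $S$ and on all edges between $S$ and $V\setminus S$, the graph $G[S]$ is uniform among graphs on $S$ with a fixed degree sequence every entry of which equals $(\tfrac12+o(1))s$, and for such near‑regular dense random graphs the whp appearance of a fixed constant‑size subgraph is standard (alternatively one splits the edge exposure into two rounds). I expect this conditioning issue to be the only real obstacle; everything else is bookkeeping.

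Finally, one must check that the resulting $H$ is not merely distinct from, but non‑isomorphic to, $G$. Here I would additionally invoke Theorem~\ref{thm:BES}.1 to assume, as we may whp, that after $2$ rounds of colour refinement every colour class of $G$ is a singleton, i.e.\ $C^2_G$ is injective on $V$; since $C^2_H=C^2_G$ pointwise on $V$, the colouring $C^2_H$ is injective as well. If $\phi\colon G\to H$ were an isomorphism, then $C^2_G(x)=C^2_H(\phi(x))=C^2_G(\phi(x))$ for every $x\in V$, so injectivity of $C^2_G$ would force $\phi=\mathrm{id}_V$ and hence $H=G$, contradicting $H\ne G$. Therefore whp the graph $H$ constructed above witnesses the lemma.
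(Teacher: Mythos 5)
Your overall strategy is exactly the paper's: perform a degree-preserving $2$-switch on four vertices of equal degree so that $C^1$ and then $C^2$ are unchanged pointwise, and then use Theorem~\ref{thm:BES}.1 (singleton $C^2$-classes) to upgrade $H\ne G$ to $H\not\cong G$ via the forced identity isomorphism. That last step, the verification that the switch preserves $C^2$, and the pigeonhole extraction of an equidegree set of size $\Theta(\sqrt{n/\log n})$ all match the paper.

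The one step you flag as ``the only real obstacle'' is, however, a genuine gap as you have left it, and it is precisely where the paper does something different. You propose to condition on the equidegree set $S$ and on the edges leaving $S$, observe that $G[S]$ is then uniform over graphs with a fixed near-regular degree sequence, and invoke as ``standard'' that such a graph whp contains an induced $2K_2$. That statement is true but not routine: it cannot be deterministic (the complete bipartite graph $K_{s/2,s/2}$ is nearly regular of degree $s/2$ and contains no switchable configuration at all), so one genuinely needs subgraph statistics for uniform random graphs with a prescribed dense degree sequence, which rest on nontrivial enumeration or sandwiching results; your alternative of ``splitting the edge exposure into two rounds'' does not obviously work either, since membership in $S$ depends on the edges inside $S$. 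The paper sidesteps all conditioning: it shows that whp \emph{every} set $X\subset V$ of size $u=\lceil n/(2m+1)\rceil$ contains the configuration $wx,yz\in E(G)$, $xy,zw\notin E(G)$, by packing $\binom{u}{2}/5$ edge-disjoint $4$-cycles into $K_X$ (each fails to yield the configuration independently with probability $15/16$) and taking a union bound over the $\binom{n}{u}$ choices of $X$. Since this holds for all $u$-sets simultaneously, it applies in particular to whichever set turns out to be the equidegree class, and no conditional distribution ever needs to be analyzed. Note also that the paper only requires the ``two opposite edges present, two absent'' pattern rather than a full induced $2K_2$; your stronger requirement is harmless but unnecessary. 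Replacing your conditioning argument by this union bound over all $u$-sets closes the gap and recovers the paper's proof.
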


\begin{proof}
Let $G$ be a typical graph of a sufficiently large order $n$.
In particular, we assume that $G$ satisfies Theorem~\ref{thm:BES}.1
and that $|\deg x-n/2|\le m$ for every vertex $x$ of $G$, where we can
take e.g. $m=\sqrt{(n\log n)/2}$ by a simple application of Chernoff's
bound (see also \cite[Corollary~3.4]{Bol}).
By the Pigeonhole Principle,
there is a set $U$ of $u=\lceil n/(2m+1)\rceil$ vertices
all having the same degree.

Another property of the random graph $G$ that we assume is that every set
$X\subset V$ of size $u$ contains distinct vertices $w,x,y,z$ with
$wx,yz\in E(G)$ and $xy,zw\not\in E(G)$. 
Indeed, let us fix a $u$-set $X\subset V$
and estimate the probability that it violates this property. 
One can find at least ${u\choose 2}/5$ edge-disjoint
$4$-cycles inside the complete graph on $X$. 
(For example, picking up
cycles one by one arbitrarily, we get enough of them by the 
well-known fact
that a $C_4$-free graph on $u$ vertices has $O(u^{3/2})$ edges,
see, e.g., \cite[Chapter 25.5]{AZi}.) 
For each $4$-cycle on vertices $x_1,x_2,x_3,x_4$ in this order, at
least one of the relations
$x_1x_2,x_3x_4\in E(G)$ and $x_2x_3,x_4x_1\not\in E(G)$ should be false, this
having probability $15/16$. By the edge-disjointness, these events for
different selected cycles are mutually independent. Hence $X$ 
violates the desired property with probability is at most $(15/16)^{{u\choose 2}/5} =
o({n\choose u}^{-1})$. Since there are ${n\choose u}$ candidates for 
a bad set $X$, the probability that it exists is $o(1)$, giving the required.

Hence, the equidegree set $U$ contains vertices $w,x,y,z$ with
$wx,yz\in E(G)$ and $xy,zw\not\in E(G)$. Let $H$ be obtained from $G$
by removing edges $wx,yz$ and adding edges $xy,zw$. This operation
preserves the degree of every vertex as well as the multiset of degrees
of its neighbors, that is,  $C^{2}_G(v)=C^{2}_H(v)$ for every
vertex $v\in V$. 

Suppose that $G$ and $H$ are isomorphic. Any isomorphism $f$ must
preserve the $C^{2}$-colors. Since $C^{2}$-classes are all
singletons, $f$ has to be the identity map on $V(G)=V(H)$. But then
the adjacency between, e.g., $w$ and $x$ is not preserved, a
contradiction. The lemma is proved.\end{proof}

Given a typical $G=\rgraph$, let $H$ be a
graph satisfying Lemma~\ref{lem:1colref}.
Thus, the 2-round color refinement fails to distinguish between $G$ and $H$.
By the sufficiency part of Theorem \ref{thm:WLD}, we have $\cd2{G}>3$.
As an alternative proof, the reader can design a winning strategy
for Duplicator in the counting game $\game^2_3(G,H)$.
Together with Theorem \ref{thm:BES}.1, this bound gives us the exact value
$\cd2{\rgraph}$.

\begin{theorem}\label{thm:cdrgraph}
Whp $\cd2{\rgraph}=4$.
\end{theorem}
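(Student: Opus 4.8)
The plan is to prove the theorem by combining the upper and lower bounds that have essentially already been assembled in the surrounding text. For the upper bound, Theorem~\ref{thm:BES}.1 states that $2$ color refinements split $\rgraph$ into singleton classes with probability at least $1-1/\sqrt[7]{n}$, and hence $\cd2{\rgraph}\le4$ with this (high) probability. This is exactly the ``$\le$'' half, so almost no new work is needed there; I would simply cite Theorem~\ref{thm:BES}.1 and note that the failure probability $1/\sqrt[7]{n}$ tends to $0$, so whp $\cd2{\rgraph}\le4$.

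For the lower bound I would invoke Lemma~\ref{lem:1colref}: whp $G=\rgraph$ admits a non-isomorphic companion $H$ on the same vertex set with $C^2_G(x)=C^2_H(x)$ for every vertex $x$. In the language of the Weisfeiler-Lehman algorithm this says that the $2$-round (standard, i.e., counting) \WL1\ fails on the pair $(G,H)$: we have $\wl 12G=\wl 12H$ but $G\not\cong H$. By the necessity direction of Theorem~\ref{thm:WLD} (with $k=1$, so $\ell=\cw G=2$ by Theorem~\ref{thm:BES}), correctness of the $r$-round \WL1\ for $G$ requires $r\ge \cd2{G}-1$; since the $2$-round algorithm is \emph{not} correct for $G$, we get $\cd2{G}-1>2$, i.e., $\cd2{G}\ge4$. (Alternatively, as the text suggests, one can bypass the algorithmic machinery and directly exhibit a winning strategy for Duplicator in the counting game $\game^2_3(G,H)$: keeping the single spare pebble and the forced pebble consistent with the common coloring $C^2$ for three rounds, using that $C^2_G=C^2_H$ and the color classes are singletons so Duplicator's responses are forced yet legal for two rounds, with the adjacency clash only surfacing in round $4$. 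This gives $\cd2{G,H}\ge4$, hence $\cd2{G}\ge4$ via the counting analogue of Lemma~\ref{lem:ddd}.1.)

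Finally I would combine the two bounds: both the event ``$\cd2{\rgraph}\le4$'' (from Theorem~\ref{thm:BES}.1) and the event ``$\cd2{\rgraph}\ge4$'' (from Lemma~\ref{lem:1colref} plus Theorem~\ref{thm:WLD}) hold whp, and their intersection also holds whp, yielding $\cd2{\rgraph}=4$ whp.

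I do not expect a genuine obstacle here, since the two nontrivial ingredients---Theorem~\ref{thm:BES}.1 and Lemma~\ref{lem:1colref}---are already established. The only point requiring a little care is the bookkeeping in the reduction from ``color refinement fails in $2$ rounds'' to ``$\cd2{G}\ge4$'': one must apply the necessity part of Theorem~\ref{thm:WLD} in its \emph{contrapositive} form (if the $r$-round \WL1\ is correct for $G$ then $r\ge\cd2G-1$, so failure of the $2$-round algorithm forces $\cd2G\ge4$) and make sure the hypothesis $k\ge\cw G-1$ is met, which it is since $\cw{\rgraph}=2$ whp by Theorem~\ref{thm:BES}. If one prefers the self-contained route, the mild subtlety shifts to verifying that Duplicator's forced moves in $\game^2_3(G,H)$ never create an adjacency mismatch before round $4$, which follows because agreement of the degree-refinement colors $C^2$ across $G$ and $H$ controls exactly the adjacencies of a new pebble to the at-most-one previously pebbled vertex.
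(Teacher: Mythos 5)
Your proposal is correct and follows essentially the same route as the paper: the upper bound is quoted from Theorem~\ref{thm:BES}.1, and the lower bound combines Lemma~\ref{lem:1colref} with Theorem~\ref{thm:WLD}, with the same alternative (a Duplicator strategy in the counting game $\game^2_3(G,H)$) that the paper also mentions in passing. One small correction: the implication you need is the \emph{sufficiency} part of Theorem~\ref{thm:WLD}, taken in contrapositive form --- since the $2$-round algorithm fails for $G$ while $k=1\ge\cw{G}-1$ holds whp, it must be that $2<\cd{2}{G}-1$ --- whereas the inference you write, ``correctness requires $r\ge\cd{2}{G}-1$; the algorithm is not correct; hence $\cd{2}{G}-1>2$,'' is not valid from the necessity direction alone. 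For $k=1$ the sufficient and necessary round bounds happen to coincide, so the conclusion $\cd{2}{\rgraph}\ge4$ is unaffected; just cite the sufficiency part as the paper does.
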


We always have $\cd{}{G}\le\cd2{G}$ and,
on the other hand, $\cd{}{G}\le2$ implies $\cd2{G}\le2$ because any definition with
quantifier depth 2 can be rewritten with using only 2 variables.
It follows from Theorem \ref{thm:cdrgraph} that
$3\le\cd{}{\rgraph}\le4$ whp.
Unfortunately, we could not decide whether the typical value of $\cd{}\rgraph$ is $3$ or
$4$, which seems to be an interesting question.

\newpage

\subsubsection{Logic without counting}

\begin{theorem}[Kim et al.~\cite{KPSV}]\label{thm:wdrgraph}
Fix an arbitrarily slowly increasing function $\omega=\omega(n)$. Then
we have whp that
\begin{multline*}
\log n-2\log\log n+\log\log\e+1-o(1)\le W(\rgraph)\le\\
\le D_1(\rgraph)\le\log n-\log\log n+\omega.
\end{multline*}
\end{theorem}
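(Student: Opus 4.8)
The two outer inequalities $W(G)\le D(G)\le D_1(G)$ hold for every graph $G$: the first because any sentence can be rewritten, with the same quantifier depth, using at most that many distinct variables (the fact quoted right after Theorem~\ref{thm:games}), and the second because the $1$-alternation logic is a fragment of first-order logic, so the minimizing sentence can only get deeper. Hence it suffices to prove the two extreme estimates for $G:=\rgraph$. In both directions the plan is to pass to the Ehrenfeucht game via Lemma~\ref{lem:ddd} and Theorem~\ref{thm:games}: $D_1(G)=\max_{H\not\cong G}D_1(G,H)$ is the worst-case length of the one-alternation game, and $W(G)=\max_{H\not\cong G}W(G,H)$ is the worst-case number of pebbles that Spoiler can be forced to use.

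\emph{Upper bound.} The plan is to show that whp $G$ is ``almost resolved'' by a short pebbling. I would first prove the probabilistic statement that whp there is a set $S\subseteq V(G)$ with $|S|\le\log n-\log\log n+\tfrac12\omega$ such that the partition of $V(G)$ into adjacency-pattern classes over $S$ has only small classes, and such that the refinement data over $S$ (the class-size profile, the subgraphs induced on the classes, and the adjacencies across classes) is rigid enough that any $H\not\cong G$ producing the same data must differ from $G$ somewhere exploitable. Spoiler pebbles $S$ in $G$; if $H$ gives mismatched data he exploits the first discrepancy, and otherwise he recurses inside a non-singleton class, which whp induces a random-like graph on only $O(\log n)$ vertices. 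The single permitted alternation is used to switch to whichever of the two graphs makes the residual difference cheapest to expose; it is precisely this freedom --- already beyond the $0$-alternation logic, where $\log n$ rounds really are needed --- that is meant to buy the $\log\log n$ saving over the naive estimate $D(G,H)\le|S|+O(\log|S|)$ one gets by individualizing a class outright. The two things to verify are (i) the concentration/second-moment estimate on the sizes of the $2^{|S|}$ pattern classes and on the number of ambiguous vertices, calibrated so the leading coefficient stays $1$ and the $\log\log n$ correction is as large as the argument allows, and (ii) that the endgame --- a mismatched profile, or a recursion into a small class under one alternation --- always finishes within $O(\omega)$ further rounds.

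\emph{Lower bound.} The plan is to exhibit, whp, a single graph $H\not\cong G$ against which Duplicator survives about $k:=\log n-2\log\log n+\log\log\e$ pebbles for unboundedly many rounds. Take $H$ to be $G$ altered by a degree-preserving local swap on four vertices $a,b,c,d$ (delete $ab,cd$, insert $ad,bc$, where $ab,cd\in E(G)$ and $ad,bc\notin E(G)$); existence of such a quadruple is a first/second-moment argument in the spirit of Lemma~\ref{lem:1colref}, and $H\not\cong G$ holds whp (discarding a bounded number of swaps if needed, $G$ being rigid and generic). The point is to choose $a,b,c,d$ deeply camouflaged: lying in one common adjacency-pattern class that still has at least four members after any $k-O(1)$ vertices have been pebbled. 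Since a pattern class over a $k$-set has expected size $n/2^k\approx\log^2 n$, a union bound over the $\binom nk$ possible pebble sets should give that whp every such class stays of size $\Omega(\log^2 n)$; it is exactly the comparison $\binom nk\cdot 2^{-\Omega(n/2^k)}=o(1)$ that forces $k$ near $\log n-2\log\log n$, with the $\ln 2$ factors in that estimate producing the additive constant $\log\log\e+1$. Duplicator then plays the standard camouflage strategy: maintain a partial isomorphism equal to the identity off $\{a,b,c,d\}$ and, whenever Spoiler threatens to pebble a vertex of the quadruple, relocate it to another member of its still-large pattern class, so the pebbled neighbourhood never displays the swap; such a relocation exists because $k$ pebbles cannot shrink a pattern class below four.

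\emph{Main obstacle.} The game-theoretic scaffolding is routine; the real difficulty --- and the reason the two bounds stay close rather than coinciding --- is the probabilistic bookkeeping. One must control, uniformly over all candidate pebble sets, the sizes of the exponentially many adjacency-pattern classes of $\rgraph$, and then thread that control through the recursive, one-alternation endgame (for the upper bound) and through the relocation argument (for the lower bound), tracking constants. Pushing the leading term down to $\log n$ rather than the $2\log n$ that a fully individualizing set would give, and extracting the exact constant $\log\log\e+1$ from a local-limit-theorem-grade estimate of how large a pebble set can be while every pattern class still carries a swappable quadruple, are the delicate points.
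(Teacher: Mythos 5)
Your game-theoretic scaffolding and your calibration of $k$ are in the right spirit, but both halves of the argument have genuine gaps. For the lower bound, the camouflage strategy fails. Suppose Spoiler pebbles $a$ in $G$ and you answer with a relocated $a''$ lying in the same adjacency-pattern class over the \emph{currently} pebbled set. In the next round Spoiler pebbles a fresh vertex $w\notin\{a,b,c,d\}$; your strategy answers with $w$ itself, so the partial isomorphism condition forces $a\sim_G w\iff a''\sim_G w$ (the swap only touches pairs inside the quadruple). But $a''$ was matched to $a$ only on the vertices pebbled \emph{before} $a''$ was chosen, and in $\rgraph$ roughly half of all vertices separate any fixed pair $a\ne a''$; Spoiler pebbles a separating $w$ and wins, and you cannot relocate $a''$ because it now carries a pebble. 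This identity-off-a-quadruple device works for the $3$-round counting game of Lemma~\ref{lem:1colref}, but not for a game of unbounded length. The paper avoids committing to any correspondence: it takes $H$ to be an \emph{independent} second copy of $\rgraph$, shows (Lemma~\ref{lem:alice2}) that whp both graphs have the $k$-extension property for $k$ up to $\log n-2\log\log n+\log\log\e-\epsilon$, and lets Duplicator choose every response dynamically from that property (Lemma~\ref{lem:alice1}), which is exactly what survives arbitrarily many rounds with $k+1$ pebbles. Your requirement that every pattern class over every $k$-set stay large is essentially the extension property, so your probabilistic input is right; it is the game-theoretic use of it that breaks.

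For the upper bound, the recursive plan does not close. With $|S|=\log n-\log\log n$ the pattern classes have typical size $n2^{-|S|}\approx\log n$, and ``recursing inside a non-singleton class'' against an adversarially chosen $H$ should be expected to cost on the order of $\log\log n$ further rounds, wiping out exactly the saving you are trying to achieve; your item (ii), that the endgame finishes in $O(\omega)$ rounds, is precisely the unproven step and there is no mechanism in your sketch that delivers it. The paper's route is structurally different: it takes $X$ of size $\log n-\log\ln n+\omega$ and shows (Lemma~\ref{lem:wsievergraph}) that whp $X$ is a \emph{weak sieve} --- the set $\sieve(X)$ of vertices uniquely determined by their adjacencies to $X$ has size $n^{1-o(1)}$, and this large set in turn determines every remaining vertex uniquely. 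Lemma~\ref{lem:weaksieve} then gives Spoiler a win in $|X|+3$ moves with a single alternation: the second sifting level is exploited combinatorially in $O(1)$ extra moves rather than by recursion. Without an analogue of that two-level sifting structure, your upper bound does not reach $\log n-\log\log n+\omega$.
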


We first prove the lower bound.

\begin{definition}\label{def:alice}\rm
For an integer $k\ge 1$, we 
say that a graph $G$ has the {\em $k$-extension property\/} if, 
for every two disjoint $X,Y\subset V(G)$ with $|X\cup Y| \le k$, 
there is a vertex $z\notin X\cup Y$ 
adjacent to all $x\in X$ and non-adjacent to all $y\in Y$.
\end{definition}

\begin{lemma}\label{lem:alice1}
If both $G$ and $H$ have $k$-extension property, then $W(G,H)\ge k+2$.
\end{lemma}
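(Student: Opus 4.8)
The plan is to use the Ehrenfeucht-game characterization of $W(G,H)$ from Theorem \ref{thm:games}.3: it suffices to exhibit, for $k' = k+1$, a winning strategy for Duplicator in $\game_r^{k'}(G,H)$ for \emph{every} number of rounds $r$, since this shows $W(G,H) > k+1$, i.e. $W(G,H) \ge k+2$. Because only $k+1$ pebbles are ever on the board, at any moment after Spoiler's move at most $k$ pebbled pairs are already fixed and Spoiler is about to (re)place one pebble, so Duplicator only ever needs to answer a single new vertex against a configuration of size at most $k$ on each side.

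The strategy is the obvious one: Duplicator maintains the invariant that the current pebbled configuration $(\bar u, \bar v)$ (with at most $k+1$ pebbles, hence at most $k+1$ vertices on each side, but crucially at most $k$ \emph{already-placed} ones when she must respond) induces a partial isomorphism from $G$ to $H$. First I would check the base case: before any move the empty configuration is trivially a partial isomorphism. For the inductive step, suppose the invariant holds and Spoiler picks up a pebble $p_i$ and places it, say, on a vertex $w$ of $G$ (the case of $H$ is symmetric). Remove $p_i$'s old position from consideration; what remains pebbled in $G$ is a set of at most $k$ vertices, partitioned by Spoiler's choice into $X$ = those adjacent to $w$ and $Y$ = those non-adjacent to $w$ (excluding $w$ itself if $w$ already carried a pebble — but then Duplicator can just keep her old answer, or re-derive it). Since $|X \cup Y| \le k$ and $H$ has the $k$-extension property, there is a vertex $z \in V(H)$, outside the images of $X \cup Y$, adjacent to all images of $X$ and to none of the images of $Y$; Duplicator plays $z$ in response. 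This preserves the partial-isomorphism invariant, so Duplicator survives round after round and never loses.

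The one point that needs a little care — and is the only real obstacle — is the bookkeeping around equalities of pebbles: when Spoiler's new vertex $w$ coincides with an existing pebbled vertex in $G$, or when the extension property hands back a $z$ that Duplicator would like to equal an existing pebble, one must make sure the correspondence stays well-defined and injective-where-required so that it remains a \emph{partial isomorphism} (which is a function on the pebbled vertices, respecting $=$ and $\sim$). This is handled by noting that the $k$-extension property, as stated, produces $z \notin X \cup Y$, but we may additionally insist $z$ is distinct from all previously pebbled vertices of $H$ whenever $w$ is distinct from the corresponding vertices of $G$: if finitely many forbidden vertices is a concern one applies the extension property with $Y$ enlarged by those vertices Duplicator wishes to avoid, provided the total stays $\le k$; since at most $k$ pebbled vertices are present this is automatic. (Alternatively, one observes that the $k$-extension property as used only needs $|X \cup Y|$ bounded by the number of \emph{other} pebbled vertices, which is $\le k$.) Once this is pinned down, the induction goes through verbatim and yields $W(G,H) \ge k+2$.
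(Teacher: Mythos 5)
Your proof is correct and follows essentially the same route as the paper: apply Theorem \ref{thm:games}.3, let Duplicator maintain a partial isomorphism, and when Spoiler pebbles a new vertex use the $k$-extension property of the other graph with $X$ and $Y$ being the at most $k$ already-pebbled vertices sorted by adjacency to Spoiler's choice. Your worry about injectivity resolves itself even more simply than you suggest (no enlargement of $Y$ is ever needed, which is good since that would impose unwanted non-adjacencies): every previously pebbled vertex of the responding graph already lies in $X\cup Y$, so the condition $z\notin X\cup Y$ in Definition \ref{def:alice} automatically makes $z$ new.
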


\begin{proof}
By Theorem \ref{thm:games}.3 it suffices to design a strategy allowing
Duplicator to survive in $\game^{k+1}(G,H)$ arbitrarily long.
Suppose that Spoiler puts pebble $p$ on a new position $v$ in one of the graphs,
say, $G$.
Let $X$ (resp.\ $Y$) denote the set of pebbled vertices in $H$
whose counter-parts in $G$ are adjacent (resp.\ non-adjacent) to $v$.
Duplicator moves the other copy of $p$ to a vertex $z$ with the given
adjacencies to $X\cup Y$ whose existence is guaranteed
by the $k$-extension property.
\end{proof}

\begin{lemma}\label{lem:alice2}
Let $\epsilon>0$ be a real constant.
Then the $k$-extension property 
holds for $\rgraph$ whp
for any $k\le\log n-2\log\log n+\log\log\e-\epsilon$.
\end{lemma}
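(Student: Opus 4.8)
The plan is to prove that $\rgraph$ has the $k$-extension property whp for all $k$ up to the stated threshold by a standard first-moment (union bound) argument over the pairs $(X,Y)$ and the ``bad events'' that no suitable extending vertex exists.

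First I would fix disjoint sets $X,Y\subset V(G)$ with $|X\cup Y|=t\le k$. For a single vertex $z\notin X\cup Y$, the probability that $z$ is adjacent to all of $X$ and to none of $Y$ is exactly $2^{-t}$, and these events are independent over the $n-t$ choices of $z$ because they involve disjoint sets of potential edges. Hence the probability that \emph{no} good $z$ exists for this particular pair $(X,Y)$ is $(1-2^{-t})^{n-t}\le \exp\bigl(-(n-t)2^{-t}\bigr)$. The number of pairs $(X,Y)$ with $|X\cup Y|=t$ is at most $\binom{n}{t}2^{t}\le n^{t}$ (choose the $t$-set, then split it into $X$ and $Y$), and summing over $t\le k$ the number of relevant pairs is at most $(k+1)n^{k}\le n^{k+1}$ for large $n$. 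So by the union bound, the probability that the $k$-extension property fails is at most
$$
\sum_{t=1}^{k} n^{t}\exp\bigl(-(n-t)2^{-t}\bigr)\le (k+1)\,n^{k}\exp\bigl(-(n-k)2^{-k}\bigr).
$$
Since the summand is increasing in $t$ in the relevant range (the $n^t$ factor grows geometrically while the other factor, for $t$ well below $\log_2 n$, stays close to $1$), it suffices to control the $t=k$ term.

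The remaining task is purely arithmetic: show that $n^{k}\exp\bigl(-(n-k)2^{-k}\bigr)=o(1)$ whenever $k\le\log n-2\log\log n+\log\log\e-\epsilon$. Taking logarithms (base $\e$), it is enough that $k\ln n - (n-k)2^{-k}\to-\infty$. With $k$ as large as allowed we have $2^{-k}\ge 2^{\epsilon}(\log n)^{2}/(n\log\e)\cdot(1+o(1))$, i.e.\ $2^{-k}n$ is of order $(\ln n)^{2}\cdot 2^{\epsilon}(1+o(1))$ — here I use $2^{\log\log\e}=\log\e=1/\ln 2$ so that the ``$\log\log\e$'' term converts the base-$2$ bookkeeping into the natural-log quantity $(n-k)2^{-k}\approx 2^{\epsilon}(\ln n)^{2}$ up to lower-order corrections. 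Meanwhile $k\ln n\le (\ln n)^{2}$. So $k\ln n-(n-k)2^{-k}\le (\ln n)^{2}\bigl(1-2^{\epsilon}+o(1)\bigr)\to-\infty$ since $2^{\epsilon}>1$. This gives the claimed whp statement, and combined with Lemma~\ref{lem:alice1} (applied with $G=H=\rgraph$, which works because two independent copies both enjoy the property whp) and Theorem~\ref{thm:games}.3 it yields the lower bound on $W(\rgraph)$ in Theorem~\ref{thm:wdrgraph}.

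The main obstacle, such as it is, lies not in any deep idea but in handling the error term carefully: one must verify that the constant $\log\log\e$ is exactly what is needed to make the threshold tight, track the $(1-2^{-t})^{n-t}\le\exp(-(n-t)2^{-t})$ estimate with the $-t$ and $-k$ corrections being genuinely negligible, and confirm that the sum over $t$ is dominated by its last term rather than, say, having a heavy middle. None of this is hard, but it is where the precise form of the bound is won or lost; everything else is a textbook union bound over extension-type events. I would present the counting of pairs and the independence of the single-vertex events first, then isolate the inequality $k\ln n\le(\ln n)^2$ and the lower bound on $2^{-k}n$ as the two facts driving the $o(1)$ conclusion.
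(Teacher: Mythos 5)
Your proof is correct and follows essentially the same route as the paper's: a union bound over the pairs $(X,Y)$, the failure probability $(1-2^{-t})^{n-t}\le\exp\bigl(-(n-t)2^{-t}\bigr)$ for a fixed pair, and the arithmetic showing $k\ln n-n2^{-k}\to-\infty$ at the stated threshold because the $\log\log\e$ term tunes the leading constants of $n2^{-k}$ and $k\ln n$ to differ by exactly the factor $2^{\epsilon}>1$. The only (harmless) quibbles are that $\binom{n}{t}2^{t}\le n^{t}$ actually fails for $t\le 3$ (the paper sidesteps this by noting the $k$-extension property is monotone in $k$, so one may assume $k\ge 4$, and it also restricts attention to $|X\cup Y|=k$ only), and that your two estimates $n2^{-k}\approx 2^{\epsilon}(\ln n)^{2}$ and $k\ln n\le(\ln n)^{2}$ are each off by the same factor $1/\ln 2$, which cancels in the final comparison.
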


\begin{proof} Let $n$ be large.
Any particular $X$ and $Y$ with $|X\cup Y|=k$ falsify the $k$-extension
property with probability $(1-2^{-k})^{n-k}$. Since the number of such
pairs is ${n\choose k}2^{k}$, a random graph $\rgraph$ does not
have the $k$-extension property with probability at most
$$
{n\choose k}2^{k}(1-2^{-k})^{n-k}\le n^{k}(1-2^{-k})^n\le
\exp\ofc{k\ln n-n2^{-k}}.
$$
The former inequality is true only if $k\ge4$ but this makes no problem
because the $k$-extension property implies itself for all smaller
values of parameter $k$. Since the function $f(x)=x\ln n-n2^{-x}$
is monotone, the $k$-extension property fails with the probability 
bounded from above by
\begin{multline*}
\exp\ofc{f(\log n-2\log\log n+\log\log\e-\epsilon)}=\\
=\exp\ofc{(\ln 2)\, (-2^{\epsilon}+1+o(1))\log^2 n}=o(1),
\end{multline*}
as it was claimed.
\end{proof}

Fix $\epsilon>0$. Let $n$ be sufficiently large and set $k=\lfloor\log n-2\log\log n+\log\log\e-\epsilon\rfloor$.
By Lemma \ref{lem:alice2}, $G=\rgraph$ has the $k$-extension property
whp. Let $H$ be a graph which also possesses the $k$-extension
property and is non-isomorphic to $G$. The existence of such a graph follows
also from Lemma \ref{lem:alice2}: Given $G$, let $H=\rgraph$ be another, independent copy
of a random graph. It should be only noticed that $H\cong G$ with probability
at most $n!2^{-{n\choose 2}}=o(1)$. By Lemma \ref{lem:alice1}, we have
$$
W(G)\ge W(G,H)\ge k+2>\log n-2\log\log n+\log\log\e+1-\epsilon,
$$
thereby proving the lower bound of Theorem~\ref{thm:wdrgraph}.

To prove the upper bound, we employ the \emph{Weak Sieve Strategy}
that was designed in Section \ref{ss:generaldw}. Lemma \ref{lem:weaksieve}
allows us to estimate the parameter $D_1(G)$ by the size of a
weak sieve existing in $G$.
The upper bound of Theorem \ref{thm:wdrgraph} follows from
Lemmas \ref{lem:wsievergraph} below, that gives us a good enough bound
for the size of a weak sieve in a random graph.
The paper~\cite{KPSV}
states a slightly weaker upper bound than that in
Theorem~\ref{thm:wdrgraph} (namely, $\omega=C\log\log
\log n$ there). The current more precise estimate is due to 
Joel Spencer (unpublished).

\begin{lemma}\label{lem:wsievergraph}
Fix an arbitrarily slowly increasing function $\omega=\omega(n)$.
Then whp $\rgraph$ has a weak sieve of size at most
$\log n-\log\ln n+\omega$.
\end{lemma}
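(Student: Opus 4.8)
The plan is to show that a single fixed set $X$ of the prescribed size is a weak sieve whp; since $\rgraph$ is the uniform random graph, by symmetry we may take $X=\{v_1,\dots,v_k\}$ with $k=\lceil\log n-\log\ln n+\omega-1\rceil$, so that $k\le\log n-\log\ln n+\omega$ and $2^{-k}=\Theta(\ln n/(n\,2^{\omega}))$ for all large $n$. Call $N(w)\cap X$ the \emph{$X$-pattern} of a vertex $w\notin X$; then $u\xeq u'$ holds exactly when $u,u'$ have the same $X$-pattern. Writing $U$ for the set of vertices sifted out by $X$ — those whose $X$-pattern is shared by no other vertex outside $X$ — we have $\sieve(X)=X\cup U$. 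Unwinding the definition, $X$ is a weak sieve iff every pair of distinct vertices outside $\sieve(X)$ is separated by some vertex of $\sieve(X)$; a pair with distinct $X$-patterns is already separated inside $X$, while a pair sharing an $X$-pattern necessarily lies outside $\sieve(X)$ and can be separated only from within $U$. So it suffices to prove that whp every pair $\{u,u'\}$ of distinct vertices outside $X$ with $N(u)\cap X=N(u')\cap X$ has some $s\in U$ adjacent to exactly one of $u,u'$.

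First I would expose all edges between $X$ and $V\setminus X$. This determines the $X$-patterns, hence the set $U$ and the (random) collection $\mathcal{P}$ of ``collision pairs'' $\{u,u'\}$ outside $X$ with a common $X$-pattern; note $|\mathcal{P}|<n^2/2$ deterministically. The remaining edges, all lying inside $V\setminus X$, are still i.i.d.\ fair coins; in particular, for any collision pair $\{u,u'\}$ and any $s\in U$ the two edges $\{s,u\}$ and $\{s,u'\}$ are as yet unexposed, so $s$ separates $\{u,u'\}$ with conditional probability $\tfrac12$, and these events are independent over $s\in U$ since distinct $s$ use disjoint pairs of edges. Hence, conditioned on the first round of exposure, a fixed collision pair is separated by no vertex of $U$ with probability $2^{-|U|}$, and a union bound gives that $X$ fails to be a weak sieve with conditional probability at most $\tfrac{n^2}{2}\,2^{-|U|}$.

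It remains to show $|U|$ is large whp. The $n-k$ vertices outside $X$ receive i.i.d.\ uniform $X$-patterns in $\{0,1\}^k$, so a given vertex lies in $U$ with probability $(1-2^{-k})^{n-k-1}=n^{-\Theta(1/2^{\omega})}$, whence $\expect{|U|}=n^{1-\Theta(1/2^{\omega})}=n^{1-o(1)}$; this exceeds $3\log n$ for large $n$ precisely because $\omega\to\infty$ (for bounded $\omega$ one gets $\expect{|U|}=\Theta(1)$ and the method collapses, consistently with the lower bound in Theorem~\ref{thm:wdrgraph}). A bounded-differences estimate — resampling one vertex's $X$-pattern changes $|U|$ by at most $3$ — then yields $|U|\ge 3\log n$ whp. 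Combining, $\prob{X\text{ is not a weak sieve}}\le\prob{|U|<3\log n}+\tfrac{n^2}{2}\,2^{-3\log n}=o(1)$. Feeding the resulting weak sieve into Lemma~\ref{lem:weaksieve} (together with Theorem~\ref{thm:games}.2 and Lemma~\ref{lem:ddd}.1) then yields the upper bound on $D_1(\rgraph)$ in Theorem~\ref{thm:wdrgraph}, as the surrounding text indicates. The only genuinely delicate points are the decoupling afforded by the two-round exposure — which turns the separation events into independent fair coins over a determined set $U$ — and checking that the prescribed size of $X$ keeps $\expect{|U|}$, and hence after concentration $|U|$ itself, comfortably above $\log n$; both become routine once the exposure is organized in this order.
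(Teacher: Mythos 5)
Your proposal is correct and follows essentially the same route as the paper: expose the $X$-to-$(V\setminus X)$ edges first, show $\expect{|\sieve(X)\setminus X|}=n^{1-o(1)}$ and concentrate it by a bounded-differences/Azuma argument, then use the still-unexposed edges inside $V\setminus X$ for a union bound of the form $n^2 2^{-|U|}=o(1)$. Your phrasing of the second stage (each collision pair is separated by some $s\in U$, each $s$ an independent fair coin) is just an equivalent reformulation of the paper's claim that no two vertices outside $\sieve(X)$ share their adjacencies to $\sieve(X)\setminus X$, and the arithmetic is identical.
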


\begin{proof}
We will consider a random graph $\rgraph$ on an $n$-vertex set $V$.
Fix $X\subset V$ with $|X|=\log n-s$, where $s=\log\ln n-\omega$.
We generate $\rgraph$ in two stages. 

\smallskip

{\it Stage 1:} reveal the edges between $X$ and $V\setminus X$
(needless to say, each such edge appears with probability 1/2 independently
of the others). Our goal at this stage is to show that $\sieve(X)$ is large
whp.

A fixed $y\in V\setminus X$ is sifted out by $X$ with probability
$$
(1-2^{-|X|})^{n-|X|-1}=\exp\ofc{-2^s(1+o(1))}=n^{-2^{-\omega}(1+o(1))}=n^{-o(1)}.
$$
By linearity of expectation
$$
\expect{|\sieve(X)\setminus X|}=(n-|X|)n^{-o(1)}=n^{1-o(1)}.
$$
We can now apply the martingale techniques to show that whp
$|\sieve(X)\setminus X|$ is concentrated near its mean value.
More precisely, we need the following estimate:
\begin{equation}\label{eq:sieve1}
\prob{|\sieve(X)\setminus X|<\expect{|\sieve(X)\setminus X|}-2\lambda\sqrt{n-|X|}}<
\e^{-\lambda^2/2}
\end{equation}
for any $\lambda>0$, where $\prob A$ denotes the probability of an
event $A$.

To prove it, consider the probability space consisting
of all functions $g\function{V\setminus X}{2^X}$. Define a random variable $L$
on this space by setting $L(g)$ to be equal to the number of values in $2^X$
taken on by $g$ exactly once. Note that, if $g$ and $g'$ differ only at one
point, then $|L(g)-L(g')|\le2$. Construct an appropriate martingale as explained
in the Alon-Spencer book \cite[Chapter 7.4]{ASp}. 
Namely, let $V\setminus X=\{y_1,\ldots,y_m\}$
and define a sequence of auxiliary random variables $X_0,X_1,\ldots,X_m$ 
by $X_i=\condexpect{\frac12L(g)}{g(y_j)=h(y_j)\mathrm{\ for\ all\ }j\le i}$.
By Azuma's inequality (see \cite[Theorems 7.2.1 and 7.4.2]{ASp}),
for all $\lambda>0$ we have
$$
\prob{L(g)<\expect{L(g)}-2\lambda\sqrt m}<\e^{-\lambda^2/2},
$$
which is exactly what is claimed by~\refeq{eq:sieve1}.

By \refeq{eq:sieve1} we have whp that
$$
|\sieve(X)\setminus X|\ge n^{1-o(1)}.
$$
 Conditioning on $\sieve(X)$ satisfying this bound, 
we go to the next stage of generating~$\rgraph$.

\smallskip

{\it Stage 2:} reveal the edges inside $V\setminus X$.
It is enough to show that $V\setminus \sieve(X)\subset \sieve(\sieve(X)\setminus
X)$ whp. 
If the last claim is false, then there are $z,z^\prime\in V\setminus \sieve(X)$ 
having the same adjacencies to $\sieve(X)\setminus X$. This happens with
probability no more than
$$
{n\choose 2}2^{-|\sieve(X)\setminus X|}<n^22^{-n^{1-o(1)}}=o(1).
$$
The proof is complete.
\end{proof}

Theorem \ref{thm:wdrgraph} shows rather close lower and upper bounds for the logical
width and depth of a random graph $\rgraph$. Surprisingly, even this can be improved.

\begin{theorem}[Kim et al.\ \cite{KPSV}]\label{thm:precise}
For infinitely many $n$ we have whp
$$
D_2(\rgraph)\le\log n-2\log\log n+\log\log\e+6+o(1).
$$
\end{theorem}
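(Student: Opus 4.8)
The plan is to work entirely with the Ehrenfeucht game. By Theorem~\ref{thm:games}.2 (Pezzoli) and the $2$-alternation analogue of Lemma~\ref{lem:ddd}.1, it suffices to show that for the values of $n$ in question and for a typical $G=\rgraph$, Spoiler wins the $2$-alternation version of $\game_r(G,H)$ against every $H\not\cong G$ with $r\le\log n-2\log\log n+\log\log\e+6+o(1)$; as in Lemma~\ref{lem:weaksieve}, Spoiler's strategy will not depend on $v(H)$, so the case $v(H)\ne n$ needs no separate treatment. The phrase ``for infinitely many $n$'' together with the $o(1)$ term will refer to the fact that the pebble budget is an integer $m+O(1)$ with $m=m(n)$ equal to $\log n-2\log\log n+\log\log\e$ rounded appropriately; the point is that this value is only one unit above the $k$-extension threshold isolated in Lemmas~\ref{lem:alice1}--\ref{lem:alice2}, so that below it Duplicator still wins by the extension strategy and the bound we are proving is best possible up to the additive constant.

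The probabilistic input is a structural statement about $\rgraph$ proved by a union bound in the style of the Babai--Erd\H os--Selkow argument (Theorem~\ref{thm:BES}) and of Lemma~\ref{lem:alice2}: whp $G$ contains a set $X$ with $|X|=m+O(1)$ whose partition of $V(G)\setminus X$ by adjacency to $X$ into $2^{|X|}$ colour classes $C_\sigma$ has all classes of the same polylogarithmic size $\ell=(1+o(1))\,n2^{-|X|}=O(\log^2 n)$, and is, in a bounded number of refinement steps using these colours, a canonical labelling of $G$; moreover this ``quotient'' structure is rigid, in the sense that any graph whose $X$-coloured quotient agrees with that of $G$ is already isomorphic to $G$. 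The verification of rigidity reduces to a first-moment computation: for a fixed pair of distinct vertices in a common class, the conditional probability that they are not separated by one refinement step is at most $\prod_\tau|C_\tau|^{-1/2}$, doubly exponentially small in $|X|$ and hence negligible against the $\binom n2$ union bound, while the class-size estimates are Chernoff bounds exactly as in Lemma~\ref{lem:alice2}.

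Now the game. Spoiler first pebbles all of $X$ inside $G$; this costs $|X|$ moves and uses neither of the two permitted alternations. Writing $X'$ for Duplicator's reply, we may assume $G[X]\cong H[X']$ (otherwise Spoiler wins in two further moves) and identify $X$ with $X'$. Spoiler then spends his two switches to transport a discrepancy between the graphs: he jumps to $H$, uses $O(1)$ probes to expose a place where the $X'$-coloured quotient of $H$ fails to match that of $G$ --- a colour class of the wrong size, a class inducing a non-isomorphic subgraph, or two classes joined by the wrong bipartite pattern --- records a bounded certificate of this failure inside $H$, and then jumps back to $G$ where, using the rigidity above, the certificate is turned into a violation of the partial-isomorphism condition. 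Two alternations rather than the single one of Lemma~\ref{lem:weaksieve} are genuinely needed: one switch to read off Duplicator's bijection between the quotient classes, a second to produce the contradiction back in $G$.

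The main obstacle is this last step: showing that whenever $G\not\cong H$ and $G$ carries the structure above, the discrepancy can always be exhibited with only $O(1)$ pebbles beyond $X$ and $X'$, uniformly over Duplicator's play, and with the total constant no larger than $6$. Because the colour classes have polylogarithmic size Spoiler cannot enumerate one, so the argument must localise the discrepancy to a bounded certificate; this seems to require a careful case analysis of the ways the quotient of $H$ can deviate from that of $G$, each case being closed by a short sub-strategy of the sieve type from Section~\ref{ss:generaldw} or by the halving strategy of Lemma~\ref{lem:distance}, together with a bookkeeping of constants. I expect the delicacy of this step, rather than the probabilistic part, to be the reason the theorem is stated only for infinitely many $n$.
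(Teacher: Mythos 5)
The survey does not prove Theorem~\ref{thm:precise}: it is stated with a pointer to \cite[Section~4.3]{KPSV}, so there is no in-paper argument to measure your proposal against, and I have to judge it on its own terms. You locate the right threshold and the right overall shape (a $\E^*\A^*\E^*$ sentence whose main block pebbles a set of size just above the extension-property threshold), but what you submit is a plan whose decisive step you yourself label ``the main obstacle'' and leave open; worse, the mechanism you sketch for closing it would not work even if completed.

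Here is the concrete problem. With $|X|=\log n-2\log\log n+\log\log\e+O(1)$, each adjacency class $C_\sigma$ of $V(G)\setminus X$ has $(1+o(1))\,n2^{-|X|}=\Theta(\log^2 n)$ vertices, so whp \emph{no} vertex outside $X$ is determined by its adjacencies to $X$; this is exactly why Lemma~\ref{lem:wsievergraph} must take $|X|\ge\log n-\log\ln n+\omega$, about $\log\log n$ larger, before $\sieve(X)$ becomes nonempty. Your rigidity claim is certified by a degree-collision computation (failure probability $\prod_\tau|C_\tau|^{-1/2}$, i.e.\ two vertices agreeing on the \emph{number} of neighbours in every class): that is an argument in the counting logic, whereas the theorem concerns $D_2$, and ``$u$ and $v$ have different numbers of neighbours in a class of size $\Theta(\log^2 n)$'' is not expressible with $O(1)$ counting-free quantifiers. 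To see that the ``bounded certificate'' step fails, take $H$ to be a $2$-switch of $G$ as in Lemma~\ref{lem:1colref}: the classes $C_\sigma$, their sizes, and the collection of realized bounded-size profiles relative to $X$ are unchanged, yet $H\not\cong G$; exhibiting the discrepancy then requires singling out individual vertices inside classes none of whose members are identified, and the extension-property argument behind the lower bound of Theorem~\ref{thm:wdrgraph} is precisely the statement that this cannot be done in $O(1)$ further counting-free moves. In short, the template ``pebble one set, then finish in $O(1)$ moves'' saturates at the weak-sieve bound $\log n-\log\log n+\omega$ of Theorem~\ref{thm:wdrgraph}; the extra $\log\log n$ claimed by Theorem~\ref{thm:precise} has to come from a different mechanism (in \cite{KPSV} it comes from exceptional configurations that exist only just above the extension threshold, which is also the real source of the restriction to infinitely many $n$ --- those $n$ for which the integer threshold falls favourably --- rather than any bookkeeping in the endgame).
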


This upper bound is at most by
$5+o(1)$ larger than the lower bound of Theorem \ref{thm:wdrgraph}. It follows that,
for infinitely many $n$, the parameters $D_i(G)$
with $i\ge 2$, $D(G)$, and
$W(G)$ are all concentrated on at most $6$ possible values
(while some extra work, see~\cite[Section~4.3]{KPSV}, gives a 5-point
concentration).

\subsubsection{Bounds for trees}

\begin{theorem}[Bohman et al.~\cite{BFL*}]\label{thm:rtree}
Let $T_n$ denote a tree on the vertex set $\{1,2,\ldots,n\}$ selected uniformly at 
random among all $n^{n-2}$ such trees. Whp we have
$W(T_n)=(1+o(1))\frac{\log n}{\log\log n}$ and $D(T_n)=(1+o(1))\frac{\log n}{\log\log n}$.
\end{theorem}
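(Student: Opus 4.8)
The plan is to estimate $W(T_n)$ and $D(T_n)$ separately from above and below, and to use the tight relation $W(T)\le D(T)$ together with the structural results for trees already available in the survey (Theorems~\ref{thm:wtrees} and~\ref{thm:dtrees}) to pin down the common asymptotics $(1+o(1))\frac{\log n}{\log\log n}$. For the \emph{upper bound} on $D(T_n)$, I would invoke Theorem~\ref{thm:dtrees}.1, which gives $D(T)=O\!\bigl(\frac{\Delta(T)}{\log(\Delta(T)/2)}\log n\bigr)+O(\Delta(T))$ for any tree of order $n$ and max degree $\Delta(T)\ge 9$. The one probabilistic input needed is the well-known fact that the maximum degree of a uniformly random labelled tree $T_n$ is whp $\Theta\!\bigl(\frac{\log n}{\log\log n}\bigr)$; in fact $\Delta(T_n)=(1+o(1))\frac{\log n}{\log\log n}$ whp, since the degree sequence of a random labelled tree is (by the Pr\"ufer-sequence bijection) distributed like i.i.d.\ truncated/conditioned Poisson$(1)$ counts, and the maximum of $n$ such variables concentrates at $\frac{\log n}{\log\log n}$. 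Plugging $\Delta=\Delta(T_n)$ into Theorem~\ref{thm:dtrees}.1, the dominant term is $\frac{\Delta}{2\log(\Delta/2)}\log n$, but since $\Delta=\Theta(\log n/\log\log n)$ we have $\log(\Delta/2)=(1+o(1))\log\log n$, so the bound becomes $D(T_n)\le\bigl(\tfrac12+o(1)\bigr)\frac{\Delta\log n}{\log\log n}\cdot\frac{1}{\cdots}$ — wait, one must be careful: the factor is $\frac{\Delta}{2\log(\Delta/2)}\log n = \frac{(1+o(1))\log n/\log\log n}{2(1+o(1))\log\log n}\cdot\log n$, which is of order $\frac{\log^2 n}{\log^2\log n}$, too big. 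So Theorem~\ref{thm:dtrees}.1 alone does \emph{not} give the claimed bound; instead I would go through Theorem~\ref{thm:dtrees}.2 with $d=d(n)$ chosen as (an upper bound for) $\Delta(T_n)$, noting that $D(T_n)\le D(n,d)=(\tfrac12+o(1))\,d\,\frac{\log n}{\log d}$, and then observe $d=\Theta(\log n/\log\log n)$ forces $\log d=(1+o(1))\log\log n$, giving $D(T_n)\le(\tfrac12+o(1))\frac{\log n}{\log\log n}\cdot\frac{\log n}{\log\log n}$ — still quadratic. This indicates the upper bound must come from a \emph{finer} argument tailored to the random tree rather than a black-box worst-case-over-$\Delta$ bound, because a random tree is much more ``spread out'' than the extremal tree in Theorem~\ref{thm:dtrees}.

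Consequently, the real work on the upper side is an Ehrenfeucht-game (separator) argument run directly on $T_n$, in the spirit of the proof of Theorem~\ref{thm:cdtrees} but \emph{without} counting quantifiers. The key point is that in a random labelled tree, the separator $v$ of any large subtree has the property that the branches of $T-v$ are whp pairwise non-isomorphic \emph{or} the isomorphism multiplicities differ in a way Spoiler can exploit cheaply — more precisely, whp no isomorphism type of rooted subtree of size $m$ occurs more than, say, $O(\log n/\log\log n)$ times among the branches hanging off any vertex, and small subtrees (of bounded size) are the only ones that repeat many times. So when Spoiler pebbles a separator $v$ of $T$ and the counterpart $v'$ in $T'$, to restrict the game to a pair of non-isomorphic branches he needs at most $O(\max_t(\text{mult of }t))=O(\log n/\log\log n)$ moves in the worst round, but — and this is the crucial gain — he only needs to pay this price $O(\log n)$ times over the whole recursion (each separator halves the order). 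That still gives $O(\log^2 n/\log\log n)$. To get down to $(1+o(1))\log n/\log\log n$ one must instead use a \emph{degree-weighted} recursion: charge the cost of resolving the branches at a node of degree $\delta$ to roughly $\delta/2$ moves while the order drops by a factor $\approx\delta$, so the total is $\sum (\delta_i/2)$ over the separator path where $\prod \delta_i\approx n$, and by AM/GM-type reasoning this sum is minimized, hence the worst case is maximized, when all $\delta_i$ are equal — but in the \emph{random} tree the relevant $\delta_i$ along a root-to-separator chain are whp $O(1)$ on average with only isolated large values, forcing the sum to be $(1+o(1))\frac{\log n}{\log\log n}$ rather than $\Theta(\log^2 n/\log\log n)$. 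Making this precise — controlling the joint distribution of the branch sizes and multiplicities along the recursion, and verifying that the ``expensive'' high-degree vertices are rare enough — is the main technical content; I would lift it essentially verbatim from \cite{BFL*}, which is cited as the source.

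For the \emph{lower bound} on $W(T_n)$ (which then transfers to $D(T_n)$ via $W\le D$), I would use the game characterization of Theorem~\ref{thm:games}.3 together with Lemma~\ref{lem:ddd}.2: it suffices to exhibit, for $k=(1-o(1))\frac{\log n}{\log\log n}$, a tree $T'\not\cong T_n$ against which Duplicator survives $\game^{k}(T_n,T')$ forever. The natural choice is $T'=T_n'$, an independent copy of the random labelled tree (non-isomorphic whp, since the number of labelled trees on $n$ vertices isomorphic to a fixed one is $\le n!$, far smaller than $n^{n-2}$). Duplicator's survival then reduces to a local-isomorphism / extension-type statement: with only $k$ pebbles on the board, whenever Spoiler plays a new vertex, the configuration of pebbled vertices spans a forest of bounded radius, and Duplicator needs a vertex in the other tree with a prescribed bounded-radius neighborhood type and prescribed distances to the previously pebbled vertices. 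The relevant combinatorial fact is that a random tree $T_n$ whp contains \emph{many} vertices $x$ whose $k$-neighborhood has any prescribed ``generic'' isomorphism type (e.g.\ $x$ has degree $\ge k$ with all $k$ subtrees around it large and, up to the horizon the game can see, structurally free), because the expected number of such vertices is $n^{1-o(1)}$ for $k=(1-o(1))\log n/\log\log n$ and this concentrates by a second-moment / Azuma argument exactly as in Lemma~\ref{lem:wsievergraph}. With such a rich supply of ``free'' vertices at every scale, Duplicator maintains a partial isomorphism between the $k$-neighborhoods of the pebbled sets by always answering in the free part, never getting stuck. The delicate part is that $k$ grows with $n$, so the ``bounded radius'' the game can probe is itself a slowly growing function — one must check that the number of pebbled vertices ($\le k$), the radius, and the required multiplicities all stay in the regime where the first-moment count $n^{1-o(1)}$ (and its concentration) survives; this is the mirror image of Lemma~\ref{lem:alice2}/Lemma~\ref{lem:alice1} for random graphs and is where the constant in front of $\log n/\log\log n$ (rather than $\log n/\log\log\log n$ or similar) gets fixed. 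I expect the lower bound to be the more delicate of the two halves, since one must rule out \emph{every} Spoiler strategy and track a growing-dimension extension property, whereas the upper bound, though calculation-heavy, follows a single well-trodden separator recursion.
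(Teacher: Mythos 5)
Your lower bound has a genuine gap: the choice $T'=T_n'$, an independent copy of the random tree, cannot work. Two independent uniform random labelled trees are whp distinguishable with only $3$ variables: for instance, their diameters whp differ (the diameter of $T_n$ is spread over $\Theta(\sqrt n)$ values, so two independent copies collide with probability $o(1)$), and ``the diameter is at least $d$'' is a $3$-variable sentence via the formulas $\Delta'_d$ of \refeq{ex:deltaa}; equivalently, Spoiler wins the $3$-pebble game in $O(\log n)$ rounds by Lemma~\ref{lem:distance}. Hence $W(T_n,T_n')\le 3$ whp, and no extension-property argument against an independent copy can yield $W(T_n)\ge(1-o(1))\log n/\log\log n$. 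The homogeneity that makes the extension property work for $\rgraph$ (diameter $2$, locally indistinguishable vertices) is exactly what trees lack: Spoiler can anchor on globally special vertices (endpoints of a longest path, the center) and exploit distance discrepancies cheaply. The adversary tree must instead be a \emph{local modification} of $T_n$ itself, and this is the paper's one-line argument: whp $T_n$ has a vertex $v$ adjacent to $m=(1+o(1))\log n/\log\log n$ leaves; taking $T'$ to be $T_n$ with one extra pendant leaf attached at $v$, all invariants that are cheap to express coincide and Duplicator survives the $m$-pebble game exactly as in the star example $W(K_{1,m})=m+1$, giving $W(T_n)\ge W(T_n,T')\ge m+1$.

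On the upper bound your account is essentially in line with the survey, which itself only remarks that Theorem~\ref{thm:dtrees} cannot be applied as a black box (precisely because $\Delta(T_n)=(1+o(1))\log n/\log\log n$ whp, by Moon~\cite{Moon}, so the worst-case bound gives only $\Theta(\log^2 n/(\log\log n)^2)$) and defers the genuine random-tree-specific separator recursion to \cite{BFL*}. Your diagnosis of why the black-box route fails and your sketch of a degree-weighted recursion are in the right spirit, with the technical content delegated to the cited source, as in the paper.
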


The lower bound for $W(T_n)$ immediately follows
from the following property of a random tree: whp $T_n$
has a vertex adjacent to $(1+o(1))\frac{\log n}{\log\log n}$ leaves.
Note that the upper bound for $D(T_n)$ does not follow directly from Theorem \ref{thm:dtrees}
because whp $\Delta(T_n)=(1+o(1))\frac{\log n}{\log\log n}$ (see Moon \cite{Moon}).

\subsection{An application: The convergency rate in the zero-one law}

We will write $G\models\Phi$ to say that a sentence $\Phi$ is true
on a graph $G$.
Let $p_n(\Phi)=\prob{\rgraph\models\Phi}$. The \emph{0-1 law} established
by Glebskii et al.\ \cite{GKLT} and, independently, by Fagin \cite{Fagin} says 
that, for each $\Phi$, $p_n(\Phi)$ approaches 0 or 1 as $n\to\infty$.
Denote the limit by $p(\Phi)$.

Define the \emph{convergency rate function} for the 0-1 law by
$$
R(k,n)=\max_\Phi\setdef{|p_n(\Phi)-p(\Phi)|}{D(\Phi)\le k}.
$$
Note that the maximization here can be restricted to a finite set by 
Theorem~\ref{thm:inequi2}.1.
Therefore, the standard version of the 0-1 law implies that
$R(k,n)\to 0$ as $n\to\infty$ for any fixed $k$. 
Naor, Nussboim, and Tromer~\cite{NNT}
showed that $R(\log n-2\log\log n, n)\to 0$.
Another result in~\cite{NNT} states that one can choose $p(n)=1/2+o(1)$ and
$k(n)=(2+o(1))\log n$ such that the probability that $\rpgraph$ has a
$k(n)$-clique is bounded away from 0 and 1. 
Thus for this probability $p(n)$ the 0-1 law
does not hold with respect to formulas of depth~$k(n)$.

The following theorem sharpens slightly the first part of the above
result and improves on the second part in two aspects: we do not need
to change the probability $p=1/2$ and we get an almost best possible
upper bound.

\begin{theorem}
Let $g(n)=\log n-2\log\log n+\log\log\e+c$ with constant $c$.

\begin{bfenumerate}
\item
If $c<1$, then $R(g(n),n)\to 0$ as $n\to\infty$.
\item
 If $c>6$,  then $R(g(n),n)$ does not tend to 0 as
$n\to\infty$. More strongly, for every $\gamma\in [0,1]$ there is a
sequence of formulas $\Phi_{n_1},\Phi_{n_2},\dots$ (where $n_i<n_{i+1}$) 
with $D(\Phi_{n_i})\le g(n_i)$
such that $p_{n_i}(\Phi_{n_i})\to \gamma$ as $i\to\infty$.
\end{bfenumerate}
\end{theorem}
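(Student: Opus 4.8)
The plan is to derive Part~1 from the extension-property estimates behind Lemmas~\ref{lem:alice1} and~\ref{lem:alice2}, and Part~2 from the sharp definability bound of Theorem~\ref{thm:precise}.

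For Part~1, since quantifier depth is an integer I would bound $|p_n(\Phi)-p(\Phi)|$ uniformly over all sentences $\Phi$ with $D(\Phi)\le k_n:=\lfloor g(n)\rfloor$. Running the game argument from the proof of Lemma~\ref{lem:alice1} with a bounded number of rounds, I first record the standard fact that any two graphs with the $(k_n-1)$-extension property are $\equiv_{k_n}$-equivalent: in $\game_{k_n}(G,H)$ fewer than $k_n$ pebbles are on the board before each move, so the $(k_n-1)$-extension property lets Duplicator always answer, and Theorem~\ref{thm:games}.1 gives $G\equiv_{k_n}H$. Hence all $n$-vertex graphs with the $(k_n-1)$-extension property form a single $\equiv_{k_n}$-class $\theta_n$, and since $k_n-1\le g(n)-1=\log n-2\log\log n+\log\log\e-\epsilon$ with $\epsilon:=1-c>0$ (this is where $c<1$ is used), Lemma~\ref{lem:alice2} gives $\prob{\rgraph\in\theta_n}\ge1-o(1)$. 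Because $D(\Phi)\le k_n$, the truth value of $\Phi$ is constant on $\theta_n$; and because every graph of $\theta_n$ also has the $(D(\Phi)-1)$-extension property, that constant equals $p(\Phi)\in\{0,1\}$ by the usual proof of the 0-1 law, which for the fixed depth $D(\Phi)$ uses that $\rgraph$ has the $(D(\Phi)-1)$-extension property whp together with the completeness of the extension-axiom theory. Therefore $|p_n(\Phi)-p(\Phi)|\le\prob{\rgraph\notin\theta_n}=o(1)$ uniformly, so $R(g(n),n)\to0$.

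For Part~2 I would start from Theorem~\ref{thm:precise}: there is an infinite set $N$ with $D(\rgraph)\le D_2(\rgraph)\le\log n-2\log\log n+\log\log\e+6+o(1)$ whp for $n\in N$, and the right-hand side is at most $g(n)$ for all large $n\in N$ because $c>6$. Writing $k=\lfloor g(n)\rfloor$, let $\mathcal A_n$ be the family of isomorphism classes of $n$-vertex graphs $G$ with $D(G)\le k$; each such class is a single $\equiv_k$-class, definable by a sentence of quantifier depth $k$ by Lemma~\ref{lem:keq}.2, and a finite disjunction of such sentences again has depth $k$ and defines the union of the classes. By the choice of $N$, $\sum_{\alpha\in\mathcal A_n}\prob{\rgraph\in\alpha}=\prob{D(\rgraph)\le k}\ge1-o(1)$, while each class has $\prob{\rgraph\in\alpha}\le n!\,2^{-\binom{n}{2}}=o(1)$. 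Given $\gamma\in[0,1]$, I enumerate $\mathcal A_n=\{\alpha_1,\dots,\alpha_M\}$, set $S_j=\sum_{i\le j}\prob{\rgraph\in\alpha_i}$, and choose $j^\ast$ maximal with $S_{j^\ast}\le\gamma$ (or $j^\ast=M$ when $\gamma\ge S_M$); consecutive partial sums differ by $o(1)$ and $S_M\ge1-o(1)$, hence $|S_{j^\ast}-\gamma|=o(1)$. Then $\Phi_n:=\bigvee_{i\le j^\ast}\Phi_{\alpha_i}$ satisfies $D(\Phi_n)\le g(n)$ and $p_n(\Phi_n)=S_{j^\ast}\to\gamma$ along $n\in N$; specialising to $\gamma=1/2$ and using $p(\Phi_n)\in\{0,1\}$ gives $R(g(n),n)\ge|p_n(\Phi_n)-p(\Phi_n)|\to1/2$, so $R(g(n),n)\not\to0$.

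The step that needs genuine care is the arithmetic in Part~1: one must check that the growing depth $k_n=\lfloor g(n)\rfloor$ still fits under the extension property, which Lemma~\ref{lem:alice2} supplies only up to roughly $\log n-2\log\log n+\log\log\e$, and that the hypothesis $c<1$ leaves exactly the margin needed after one accounts for the floor and for the loss of one pebble relative to the number of rounds. The other delicate point is to see that the common truth value of $\Phi$ on $\theta_n$ is genuinely $p(\Phi)$ rather than merely some value---this is precisely where the completeness of the extension axioms (equivalently, the Rado graph as a common $\equiv_{k_n}$-partner) enters; everything else is routine use of the Ehrenfeucht game, Lemma~\ref{lem:keq}, Lemma~\ref{lem:alice2}, and Theorem~\ref{thm:precise}.
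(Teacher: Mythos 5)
Your proposal is correct, and Part~2 is essentially the paper's argument (Theorem~\ref{thm:precise} plus disjunctions of defining sentences; your partial-sum selection just makes the choice of the approximated property explicit, and the $n!\,2^{-\binom{n}{2}}$ bound on each class correctly gives the $o(1)$ granularity needed to hit any $\gamma$). For Part~1 you take a genuinely different route at one key point. Both you and the paper rest on Lemma~\ref{lem:alice2} with $\epsilon=1-c$, and your bookkeeping of the floor and of the one-pebble offset between the $(k-1)$-extension property and depth-$k$ indistinguishability is right. But to identify the common truth value of $\Phi$ on the extension-property class with $p(\Phi)$, you invoke the completeness of the extension-axiom theory (the classical Glebskii--Fagin mechanism), whereas the paper avoids this entirely: it first proves a crude bound $|p_n(\Phi)-p(\Phi)|\le 1/2$ by a two-scale argument (comparing independent random graphs of consecutive orders $N$ and $N+1$ straddling the last point where $p_N(\Phi)>1/2$), and then runs a second-moment-style computation with two independent copies of $\rgraph$ to get $2\,p_n(\Phi)(1-p_n(\Phi))\le 1-p_n(E_k)^2$, which combined with the crude bound yields $|p_n(\Phi)-p(\Phi)|\le 2(1-p_n(E_k))$. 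Your approach is shorter and gives the slightly cleaner bound $|p_n(\Phi)-p(\Phi)|\le\prob{\rgraph\notin\theta_n}$, at the cost of importing the standard 0--1 law machinery; the paper's approach is self-contained modulo Lemma~\ref{lem:alice1} and needs only the existence of the limit $p(\Phi)$, not its characterization. Both are sound.
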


Part 2 follows from Theorem \ref{thm:precise}. The latter implies
that (for infinitely many $n$)
actually \emph{any} property $\mathcal P$ of graphs on $n$ vertices can
be ``approximated'' by a first-order sentence of depth at most $g(n)$. 
Indeed, take the conjunction of defining
formulas over all graphs in $\mathcal P$ of order $n$ and depth at most
$g(n)$. The omitted graphs constitute negligible proportion of all
graphs by  Theorem~\ref{thm:precise}. 

We now prove Part~1. Like the
proof in~\cite{NNT} we use the extension property, but we argue 
in a slightly different way. 

\begin{proofof}{Part~1}
Let $E_k$ denote a first-order statement of quantifier depth $k$
expressing the $(k-1)$-extension property. 
Lemma~\ref{lem:alice2} 
provides us with an infinitesimal $\alpha(n)$
such that
\begin{equation}\label{eq:convergence}
1-p_n(E_k)\le\alpha(n)\text{\ \ as long as\ \ }k\le g(n).
\end{equation}

We will consider $g(x)$ on the range $x\ge2$. This function is decreasing for $x\le\e^2$
and increasing for $x\ge\e^2$. Since $g(2)<3$, for any $k_0\ge3$
there is some $n_0$ such that the conditions $g(n)\ge k_0$ and
$n\ge n_0$ are equivalent. We fix a value $k_0\ge3$ so that
$1-\alpha(n)\ge\sqrt3/2$ whenever $g(n)\ge k_0$.
Note that
\begin{equation}\label{eq:sqrt3}
p_n(E_k)\ge\frac{\sqrt3}2\text{\ \ whenever\ \ }g(n)\ge k\ge k_0.
\end{equation}

The result readily follows from the following fact.

\begin{claim}
If $k_0\le k\le g(n)$, then for every first-order statement $\Phi$ with $D(\Phi)=k$
we have 
\begin{equation}\label{eq:twoalpha}
|p_n(\Phi)-p(\Phi)|\le2\,\alpha(n).
\end{equation}
\end{claim}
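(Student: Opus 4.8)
The plan is to show that every graph with the $(k-1)$-extension property lies in a single $\keq$-class, so that no sentence of quantifier depth $k$ can distinguish two such graphs, and then to combine this with the fact that $\rgraph$ has this property with probability at least $1-\alpha(n)$.

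First I would establish the quantifier-depth counterpart of Lemma~\ref{lem:alice1}: \emph{if $G$ and $H$ both have the $(k-1)$-extension property of Definition~\ref{def:alice}, then $G\keq H$, i.e.\ $D(G,H)>k$.} By the Ehrenfeucht Theorem (Theorem~\ref{thm:games}.1) it suffices to let Duplicator win $\game_k(G,H)$. In round $j\le k$ Spoiler places the $j$-th pebble on a vertex $v$; at that moment at most $j-1\le k-1$ vertices are pebbled in each graph. If $v$ repeats an already pebbled vertex, Duplicator repeats the corresponding response; otherwise she lets $X$ (resp.\ $Y$) be the set of counterparts, in the opponent's graph, of the previously pebbled vertices adjacent to (resp.\ non-adjacent to) $v$, and uses the $(k-1)$-extension property to answer with a vertex $z\notin X\cup Y$ adjacent to all of $X$ and to none of $Y$. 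Since the previously pebbled vertices of the opponent's graph are exactly the elements of $X\cup Y$, such a $z$ is automatically new, and the configuration stays a partial isomorphism through all $k$ rounds.

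Next I would finish as follows. Fix $\Phi$ with $D(\Phi)=k$ and $k_0\le k\le g(n)$. By~\refeq{eq:sqrt3} together with~\refeq{eq:convergence}, the set $A_n$ of order-$n$ graphs satisfying $E_k$ is nonempty and $p_n(A_n)=p_n(E_k)\ge 1-\alpha(n)$. By the previous step all graphs satisfying $E_k$ (of whatever order) are pairwise $\keq$-equivalent, hence lie in one $\keq$-class, on which the depth-$k$ sentence $\Phi$ is constant. If $\Phi$ is true throughout that class, then $A_n\subseteq\{G: G\models\Phi\}$, so $p_n(\Phi)\ge p_n(E_k)\ge 1-\alpha(n)$; as this holds for every $n$ with $g(n)\ge k$ and $\alpha(n)\to0$, the $0$-$1$ law forces $p(\Phi)=1$, whence $|p_n(\Phi)-p(\Phi)|=1-p_n(\Phi)\le\alpha(n)\le 2\,\alpha(n)$. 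If $\Phi$ is false throughout the class, the same argument applied to $\neg\Phi$, which also has quantifier depth $k$, gives $p_n(\Phi)\le\alpha(n)$ and $p(\Phi)=0$, so~\refeq{eq:twoalpha} holds again.

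The only genuinely delicate point is the bookkeeping in the second paragraph: one must check that the vertices already pebbled in the opponent's graph are precisely $X\cup Y$, so that the vertex supplied by the extension property — which a priori only avoids $X\cup Y$ — is automatically distinct from all of them, and that $|X\cup Y|\le k-1$, matching the parameter of the $(k-1)$-extension property expressed by $E_k$. Everything else is routine, including the verification that $E_k$ holds whp for $k\le g(n)$, which is exactly~\refeq{eq:convergence} with the choice $\epsilon=1-c>0$ in Lemma~\ref{lem:alice2}.
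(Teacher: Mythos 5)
Your proof is correct, but it follows a genuinely different route from the paper's. You prove the depth analogue of Lemma~\ref{lem:alice1} directly (any two graphs with the $(k-1)$-extension property satisfy $G\keq H$, by the standard Duplicator strategy in $\game_k(G,H)$ — your bookkeeping that the previously pebbled vertices of the opponent's graph are exactly $X\cup Y$, with $|X\cup Y|\le k-1$, is the right thing to check and is fine), and then exploit the resulting \emph{deterministic} fact that $\Phi$ is constant on the entire $\keq$-class containing all models of $E_k$; since $p_n(E_k)\ge1-\alpha(n)$ for every $n$ with $g(n)\ge k$ and $\alpha(n)\to0$, this pins down $p(\Phi)\in\{0,1\}$ and gives $|p_n(\Phi)-p(\Phi)|\le\alpha(n)$. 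The paper instead works purely probabilistically: it first establishes the crude bound $|p_n(\Phi)-p(\Phi)|\le1/2$ (Claim B, via a comparison of two independent random graphs of consecutive orders $N$ and $N+1$), and then takes two independent copies $G',G''$ of $\rgraph$ and sandwiches $\prob{D(G',G'')>k}$ between $p_n(E_k)^2$ (from Lemma~\ref{lem:alice1}) and $p_n(\Phi)^2+(1-p_n(\Phi))^2$, obtaining $2\,p_n(\Phi)(1-p_n(\Phi))\le1-p_n(E_k)^2$, which Claim B converts into \refeq{eq:twoalpha}. Your argument is shorter, dispenses with Claim B entirely, and yields the slightly sharper constant $\alpha(n)$ in place of $2\,\alpha(n)$; the paper's argument has the mild advantage of using Lemma~\ref{lem:alice1} only as a black box about the pebble game, without re-proving an extension-property lemma for quantifier depth.
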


We will prove first a more modest bound.

\begin{claim}
If $k_0\le k\le g(n)$, then for every first-order statement $\Phi$ with $D(\Phi)=k$
we have 
\begin{equation}\label{eq:ahalf}
|p_n(\Phi)-p(\Phi)|\le1/2.
\end{equation}
\end{claim}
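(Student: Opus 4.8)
\begin{subproof}
The plan is to use the $(k-1)$-extension property as a bridge: once $k_0\le k\le g(n)$, by~\refeq{eq:sqrt3} it is possessed by all but a $(1-\sqrt3/2)$-fraction of the order-$n$ graphs, while at the same time it already pins down the truth value of every sentence of quantifier depth~$k$. Combining these two facts forces $p_n(\Phi)$ to sit within $1-\sqrt3/2<1/2$ of its limit.

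First I would verify the structural fact behind the zero-one law: if two graphs $G$ and $H$ both have the $(k-1)$-extension property, then $G\keq H$. This runs exactly as in Lemma~\ref{lem:alice1}, except that the relevant game is the $k$-round game $\game_k(G,H)$ rather than a pebble game. In the $j$-th round ($1\le j\le k$) at most $j-1\le k-1$ pebbles stand on the board, so whenever Spoiler occupies a new vertex of one graph, Duplicator answers --- using the $(k-1)$-extension property of the other graph --- with a vertex realizing the same adjacencies to the already pebbled vertices (and copies an existing pebble if Spoiler repeats a vertex); she survives all $k$ rounds, Spoiler has no winning strategy in $\game_k(G,H)$, and Theorem~\ref{thm:games}.1 gives $G\keq H$. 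Since $\keq$ is an equivalence relation, all graphs with the $(k-1)$-extension property lie in a single $\keq$-class, and because $D(\Phi)=k$ the sentence $\Phi$ takes one common value $\tau\in\{0,1\}$ on every such graph; such graphs exist, e.g.\ of order $n$, since $p_n(E_k)\ge\sqrt3/2$ by~\refeq{eq:sqrt3}.

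Next I would identify $\tau$ with $p(\Phi)$. By the zero-one law $p(\Phi)\in\{0,1\}$, and since passing from $\Phi$ to $\neg\Phi$ changes neither $D(\Phi)$ nor the quantity $|p_n(\Phi)-p(\Phi)|$, I may assume $p(\Phi)=1$. Were $\tau=0$, then for every $m$ large enough that $k_0\le k\le g(m)$ every order-$m$ model of $\Phi$ would fail the $(k-1)$-extension property, so $p_m(\Phi)\le 1-p_m(E_k)\le 1-\sqrt3/2$ by~\refeq{eq:sqrt3}; this contradicts $p_m(\Phi)\to p(\Phi)=1$. Hence $\tau=1$.

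Finally, since $\tau=1$, every order-$n$ graph with the $(k-1)$-extension property satisfies $\Phi$, so $p_n(\Phi)\ge p_n(E_k)\ge\sqrt3/2$ by~\refeq{eq:sqrt3}, and therefore $|p_n(\Phi)-p(\Phi)|=1-p_n(\Phi)\le 1-\sqrt3/2<1/2$, which is~\refeq{eq:ahalf}, with room to spare. The only point that really needs care is the first step: one must check that the extension property Duplicator actually invokes over the course of the $k$-round game is precisely the $(k-1)$-extension property delivered by~\refeq{eq:sqrt3}, and not one of larger index; everything after that is routine bookkeeping with~\refeq{eq:sqrt3} and the zero-one law.
\end{subproof}
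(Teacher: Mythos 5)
Your proof is correct, but it takes a genuinely different route from the paper's. You argue directly: all graphs possessing the $(k-1)$-extension property are pairwise indistinguishable at quantifier depth $k$ (Duplicator survives the $k$-round game, exactly as in Lemma~\ref{lem:alice1}), hence they form a single $\keq$-class on which $\Phi$ has a constant truth value $\tau$; convergence of $p_m(\Phi)$ together with \refeq{eq:sqrt3} forces $\tau=p(\Phi)$; and then $|p_n(\Phi)-p(\Phi)|\le 1-p_n(E_k)\le 1-\sqrt3/2$. The paper instead argues indirectly: assuming w.l.o.g.\ $p(\Phi)=0$, it takes the \emph{largest} $N$ with $p_N(\Phi)>1/2$, compares two independent random graphs of consecutive orders $N$ and $N+1$, and observes that the event that $\Phi$ distinguishes them (probability $>1/4$) is disjoint from the event that both satisfy $E_k$ (probability $p_N(E_k)\,p_{N+1}(E_k)$), whence $p_N(E_k)\,p_{N+1}(E_k)<3/4$ and, by \refeq{eq:sqrt3}, $N<n$. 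Both arguments rest on the same core fact that the extension property lets Duplicator survive $k$ rounds; the paper's version never needs to identify the limiting truth value and only manipulates probabilities of graphs of two adjacent orders, which is why it yields only the weak bound $1/2$ and must be supplemented by the second claim. Your version is quantitatively stronger: it already gives $|p_n(\Phi)-p(\Phi)|\le 1-p_n(E_k)\le\alpha(n)$, which subsumes the $2\alpha(n)$ bound of the other claim and would make the paper's two-step structure unnecessary. The one point you rightly flag --- that in round $j$ only $j-1\le k-1$ pebbles are on the board, so the $(k-1)$-extension property delivered by \refeq{eq:sqrt3} suffices --- is handled correctly.
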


\begin{subproof}
Consider a pair of integers $k$ and $n$ such that $k_0\le k\le g(n)$.
Let $\Phi$ be a first-order statement with $D(\Phi)=k$.
Without loss of generality, suppose that $p(\Phi)=0$.
If $p_N(\Phi)>1/2$ for some $N$, let $N$ denote the largest such number.
For $M=N+1$ we have $p_M(\Phi)\le1/2$.
Let $G_N$ and $G_M$ be independent random graphs with, respectively,
$N$ and $M$ vertices. Note that 
$$
\prob{D(G_N,G_M)\le k}\ge\prob{G_N\models\Phi\,\,\&\,\,G_M\not\models\Phi}>\frac14.
$$
On the other hand, Lemma \ref{lem:alice1} implies that
$$
\prob{D(G_N,G_M) > k}\ge\prob{G_N\models E_k\,\,\&\,\,G_M\models E_k}=
p_N(E_k)p_M(E_k).
$$
It follows that $p_N(E_k)p_M(E_k)<3/4$
and, therefore, $p_N(E_k)<\sqrt3/2$ or $p_M(E_k)<\sqrt3/2$.
Comparing this with \refeq{eq:sqrt3},
we conclude that $g(N)<k$ or $g(M)<k$,
which implies that $n>N$.
The desired bound \refeq{eq:ahalf} follows now by the definition of~$N$.
\end{subproof}

\setcounter{claim}{1}
\begin{subproof}
Consider a pair of integers $k$ and $n$ such that $k_0\le k\le g(n)$.
Let $\Phi$ be a first-order statement with $D(\Phi)=k$.
Let $G'$ and $G''$ be two independent copies of $\rgraph$.
By Lemma \ref{lem:alice1},
$$
\prob{D(G',G'')>k}\ge\prob{G'\models E_k\,\,\&\,\,G''\models E_k}=p_n(E_k)^2.
$$
On the other hand,
$$
\prob{D(G',G'')>k}\le\prob{G'\mathrm{\ and\ }G''\mathrm{\ are\ not\ distinguished\ by\ }
\Phi}=p_n(\Phi)^2+(1-p_n(\Phi))^2.
$$
Combining the two bounds, we obtain
$$
2\,p_n(\Phi)(1-p_n(\Phi))\le1-p_n(E_k)^2.
$$
Using Claim B, we immediately infer from here that
$$
|p_n(\Phi)-p(\Phi)|\le1-p_n(E_k)^2\le2(1-p_n(E_k)).
$$
The desired bound \refeq{eq:twoalpha} follows now from~\refeq{eq:convergence}.
\end{subproof}
\end{proofof}

\subsection{The evolution of a random graph}

We now take a dynamical view on a random graph $\rpgraph$ by letting
the edge probability $p$ vary. With $p$ varying from 0 to 1, $\rpgraph$
evolves from empty to complete. We want to trace the changes of
its logical complexity during the evolution.
Since the definability parameters do not change when we
pass to the complement of a graph, we can restrict ourselves to case $p\le1/2$.

 When $p$ is a constant, one can estimate $D(G)$ within additive
error $O(\log \log n)$.

\begin{theorem}[Kim et al.~\cite{KPSV}]\label{thm:Gnp}  
If $0<p\le1/2$ is constant, then whp
$$
\log_{1/p}n-c_1\ln\ln n-O(1)\le W(\rpgraph)
\le D(\rpgraph)\le
\log_{1/p}n + c_2\ln\ln n,
$$
where $c_1=2\ln^{-1}(1/p)$ and $c_2=(2+o(1))\of{-p\ln p-(1-p)\ln(1-p)}^{-1}-c_1$.
 \end{theorem}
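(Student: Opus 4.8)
The plan is to prove the lower bound via the $k$-extension property and the upper bound via the Weak Sieve Strategy, exactly paralleling the treatment of the $p=1/2$ case in Theorems~\ref{thm:wdrgraph} and~\ref{thm:precise}, but carrying the parameter $p$ through the probability estimates. For the lower bound on $W(\rpgraph)$, I would first re-examine Lemma~\ref{lem:alice1}: its proof only uses the existence of suitable extension vertices and does not care what $p$ is, so $W(G,H)\ge k+2$ holds whenever both $G$ and $H$ have the $k$-extension property. Then I would redo the computation of Lemma~\ref{lem:alice2} with general $p$. A fixed pair $(X,Y)$ with $|X\cup Y|=k$ is falsified with probability $(1-p^{|X|}(1-p)^{|Y|})^{n-k}$, which is at most $(1-(\min\{p,1-p\})^k)^{n-k}=(1-p^k)^{n-k}$ since $p\le 1/2$. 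Summing over the at most $\binom nk 2^k\le (2n)^k$ pairs gives failure probability at most $\exp\{k\ln(2n)-np^k(1+o(1))\}$, which is $o(1)$ as long as $np^k\to\infty$ faster than $k\ln n$, i.e. for $k\le\log_{1/p}n-(1+\varepsilon)\log_{1/p}\ln n-O(1)$; tracking constants carefully yields the stated threshold with $c_1=2\ln^{-1}(1/p)$. Feeding an independent second copy $H=\rpgraph$ (which is non-isomorphic to $G$ whp, since $\prob{H\cong G}\le n!\,\max\{p,1-p\}^{\binom n2}=o(1)$) into Lemma~\ref{lem:alice1} completes the lower bound.

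For the upper bound I would invoke Lemma~\ref{lem:weaksieve} (the Weak Sieve Strategy), which reduces $D_1(G)$ — hence $D(G)$ and $W(G)$ — to bounding the size of a weak sieve in $\rpgraph$. So the task is a $p$-parametrized version of Lemma~\ref{lem:wsievergraph}. Fix $X\subset V$ with $|X|=\log_{1/p}n-s$ for an appropriate slowly-growing slack $s$, and generate $\rpgraph$ in two stages as in that proof. In Stage~1 (edges between $X$ and $V\setminus X$), a fixed $y\notin X$ is sifted out by $X$ with probability roughly $\prod_{Z\subseteq X,\,Z\ne N(y)\cap X}(1-p^{|Z|}(1-p)^{|X\setminus Z|})^{\,\#\text{of such }y'}$; more cleanly, $\Pr[y\text{ not sifted out}]\le \sum_{y'\ne y}\Pr[N(y')\cap X=N(y)\cap X]\le n\cdot\max_Z(p^{|Z|}(1-p)^{|X|-|Z|})$, and the dominant term is $((\max\{p,1-p\})^{|X|})$ — wait, one must be slightly more careful, since the relevant quantity is the probability that two vertices agree on all of $X$, which is $\sum_{Z\subseteq X}p^{2|Z|}(1-p)^{2(|X|-|Z|)}=(p^2+(1-p)^2)^{|X|}$. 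With $|X|=\log_{1/p}n - s$ one checks $(p^2+(1-p)^2)^{|X|}\cdot n\to 0$ provided $s$ is chosen so that $(p^2+(1-p)^2)^{-s}$ dominates appropriately; this is where the $c_2$ constant, involving the entropy-like quantity $-p\ln p-(1-p)\ln(1-p)$, enters, because $\log_{1/p}\big(1/(p^2+(1-p)^2)\big)$ and related logarithms must be converted to a common base. One shows $\mathbb E[|\sieve(X)\setminus X|]=n^{1-o(1)}$, then applies the Azuma/Hoeffding martingale concentration exactly as in Lemma~\ref{lem:wsievergraph} to get $|\sieve(X)\setminus X|\ge n^{1-o(1)}$ whp.

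In Stage~2 (edges inside $V\setminus X$), I would show $V\setminus\sieve(X)\subseteq\sieve(\sieve(X)\setminus X)$ whp: if not, two vertices $z,z'\in V\setminus\sieve(X)$ agree on all of $\sieve(X)\setminus X$, an event of probability at most $\binom n2 (p^2+(1-p)^2)^{|\sieve(X)\setminus X|}\le n^2(p^2+(1-p)^2)^{n^{1-o(1)}}=o(1)$. Hence $X$ is a weak sieve of size $\log_{1/p}n-s$, and Lemma~\ref{lem:weaksieve} gives $D_1(\rpgraph)\le\log_{1/p}n-s+3$; the bookkeeping between the natural-log rates appearing in the Chernoff/union bounds and the base-$1/p$ logarithm in the statement is what produces the precise coefficient $c_2$. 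I expect the main obstacle to be exactly this constant-chasing: unlike the symmetric $p=1/2$ case, the "collision probability" governing sifting is $p^2+(1-p)^2$ rather than $1/2$, while the "degree spread" and extension-property failure rates are governed by $p$ itself, so reconciling $\log_{1/p}$, $\ln(1/p)$, $\log(1/(p^2+(1-p)^2))$, and the binary entropy of $p$ into the clean form $c_1=2\ln^{-1}(1/p)$, $c_2=(2+o(1))(-p\ln p-(1-p)\ln(1-p))^{-1}-c_1$ requires care; the probabilistic skeleton, by contrast, is a routine adaptation of the $p=1/2$ proofs.
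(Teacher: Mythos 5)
Your lower bound is fine and is exactly the paper's route: the $k$-extension property argument of Lemmas~\ref{lem:alice1} and~\ref{lem:alice2} carries over with failure probability $(1-p^k)^{n-k}$ for a fixed pair $(X,Y)$, and the union bound closes precisely for $k\le\log_{1/p}n-c_1\ln\ln n-O(1)$ with $c_1=2\ln^{-1}(1/p)$.

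The upper bound is where the proposal has a genuine gap. The paper deliberately does \emph{not} use the Weak Sieve Strategy for $\rpgraph$ with $p<1/2$; it uses a recursive neighborhood-restriction strategy (Spoiler pebbles $x$ and confines the game to $G[N(x)]$, of order about $pn$, and iterates), which is why the statement carries no control on the alternation number --- a sieve argument would automatically give a $D_1$ bound. The obstruction to your route is quantitative. The probability that two fixed vertices agree on all of $X$ is $(p^2+(1-p)^2)^{|X|}$, and $p^2+(1-p)^2\ge\frac12\ge p$ with equality only at $p=\frac12$. Hence with $|X|=\log_{1/p}n-s$ and $s=O(\ln\ln n)$ one gets $n\,(p^2+(1-p)^2)^{|X|}= n^{1-\gamma}(\ln n)^{O(1)}\to\infty$, where $\gamma=\ln\bigl(1/(p^2+(1-p)^2)\bigr)/\ln(1/p)<1$ for $p<\frac12$; your assertion that this quantity tends to $0$ is false, and no admissible slack $s$ repairs it. Replacing the pairwise union bound by the exact sifting computation does not help: for a typical $y$ (with about $p|X|$ neighbors in $X$) one has $\prob{y\text{ sifted out}}\approx\exp\bigl\{-n\,\e^{-H(p)|X|}\bigr\}$ where $H(p)=-p\ln p-(1-p)\ln(1-p)$, so forcing $|\sieve(X)\setminus X|=n^{1-o(1)}$ as in Lemma~\ref{lem:wsievergraph} requires $|X|\gtrsim\ln n/H(p)$, and since $H(p)<\ln(1/p)$ for $p<\frac12$ this exceeds $\log_{1/p}n$ by a constant \emph{factor}, not an additive $O(\ln\ln n)$. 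The only vertices sifted out at scale $|X|\approx\log_{1/p}n$ are the atypical ones adjacent to almost all of $X$, of which there are only polylogarithmically many in expectation, so the Azuma step (whose fluctuation window is $\Theta(\sqrt n)$) becomes useless. Finally, the entropy constant $c_2$ does not arise from reconciling logarithm bases in a sieve computation: in the actual proof it comes from first-moment estimates in the neighborhood-reduction strategy (a fixed graph on $m$ vertices occurs as a given induced subgraph with probability typically $\e^{-(1+o(1))H(p)\binom m2}$, which governs when distinct vertices can have isomorphic reduced neighborhoods and when the recursion must terminate and an endgame takes over).
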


\begin{sketch}
Similarly to Theorem \ref{thm:wdrgraph}, the lower bound is based
on the $k$-extension property. However, the proof of the upper bound is quite
different. 
In particular, we have hardly any control on the alternation number
in this result.
The argument is rather
complicated so we give only a brief sketch, concentrating more on its
logical rather than probabilistic component. 

Let $G=\rpgraph$ be typical and $G'\not\cong G$ be
arbitrary. Let $V=V(G)$ and $V'=V(G')$. For a sequence $X$ of
vertices, let $V_X=\{y\in V: \forall\, x\in X\ xy\in E(G)\}$ and
$G_X=G[V_X]$. Let the analogous notation (with primes) apply to
$G'$. If there is $x\in V$ such that for every $x'\in V'$ we have
$G_x\not\cong G_{x'}'$, then Spoiler selects $x$. Whatever Duplicator's
reply $x'\in V'$ is, Spoiler reduces the game to non-isomorphic graph
$G_x$ and $G_{x'}'$. We expect that $|V_x|=(p+o(1))n$ and $G_x$ is
also `typical'. Thus Spoiler used one move to reduce the order of the random
graph by a factor of $p$, which should lead to the upper bound  $D(G)\le (1+o(1))\,
\log_{1/p}n$. 

Suppose now that there are $x\in V$ and distinct $y',z'\in V'$
such that
 $G_x\cong G'_{y'}\cong G'_{z'}$.
 Spoiler selects $y'\in V'$. Assume that Duplicator replies with
$y=x$, for otherwise $G_y\not\cong G'_{y'}$ and
Spoiler proceeds as above. Now Spoiler selects $z'$; let $z\in V$
be the Duplicator's reply. We can assume that
 $G_{y,z}\cong G'_{y',z'}$,
 for otherwise Spoiler applies the inductive strategy to the
$(G_{y,z},G_{y',z'})$-game, where the order of the random graph is
reduced by factor $(1+o(1))\, p^2$. Let
$U=V_{y,z}$ and $U'=V'_{y',z'}$. A
first moment calculation shows that there is vertex $v\in
V_{y}\setminus U$ such that no vertex of $V_z\setminus U$ has
the same neighborhood in $U$ as $v$. Let Spoiler select $v$ and let $v'\in
V'_{y'}\setminus U'$ be the Duplicator's reply. Two copies
$G_{y'}'$ and $G_{z'}'$ of a `typical' graph
$G_{x}$ have a large vertex intersection. Another first moment
calculation shows that whp there is only one way to achieve this, namely
that the (unique) isomorphism $f: V_{y'}'\to V_{z'}'$ between
$G_{y'}'$ and $G_{z'}'$ is in fact the
identity on $U'$. But then $f(v')$ has the same adjacencies to
$U'$ as $v'$. Spoiler 
selects $f(v')$ and wins the game in at most one extra move.

Finally, up to a symmetry it remains to consider the case that 
there is a bijection $g:V\to V'$ such that for any
$x\in V$ we have $G_x\cong G_{g(x)}'$.

As $G\not\cong G'$, there are $y,z\in V$ such that $g$ does not
preserve the adjacency between $y$ and $z$. Spoiler selects $y$. We
can assume that Duplicator replies with $y'=g(y)$ for otherwise
Spoiler reduces the game to $G_y$. Now, Spoiler selects $z$ to which
Duplicator is forced to reply with $z'\not=g(z)$.  Let
$w=g^{-1}(z')$. Assume that
$G_{y,z}\cong G_{y',z'}$ for otherwise Spoiler applies the inductive
strategy to these graphs. But then $G_{y,z}$ is an induced subgraph of
$G_w\cong G_{z'}'$, a property that we do not expect to see in a random graph. 

In order to convert this rough idea into a rigorous proof one has to show
that whp as long as the subgraphs $G_{x_1,x_2,\dots}$ that can appear
in the game are sufficiently large, they have all required properties.
Also, one has to design Spoiler's strategy to deals
small subgraphs of $\rpgraph$ at the end of the game. All details can be found
in~\cite[Section~3]{KPSV}.
\end{sketch}

It is interesting to investigate the behavior, e.g., of
$D(\rpgraph)$ when $p=p(n)$ tends to zero. In particular, it
is open whether, for every constant $\delta\in (0,1)$ and 
$n^{-\delta}\le p(n)\le 1/2$ we have whp $D(\rpgraph)=O(\log n)$. 

Some restriction on $p(n)$ from
below is necessary here. Indeed, let $G$ be an arbitrary non-empty
graph (i.e., $G$ has at least one edge) and let $G'$ be obtained from
$G$ by adding one more isolated vertex.  It is easy to see that
$W(G,G') > d_0(G)$ and $D(G,G') > d_0(G)+1$, where $d_0(G)$ denotes
the number of isolated vertices of $G$. It follows that 
\begin{equation}\label{eq:d0}
W(G) \ge d_0(G)+1\mathrm{\ \ and\ \ }D(G) \ge d_0(G)+2. 
\end{equation}
It is well known (see,
e.g., \cite{Bol}) that 
\begin{equation}\label{eq:d0distr}
d_0(\rpgraph)=(\e^{-pn}+o(1))\, n 
\end{equation}
whp as long as $p=O(n^{-1})$. In particular, we have $W(\rpgraph)=(1-o(1))n$
whenever $p=o(n^{-1})$.

In some cases, the lower bounds \refeq{eq:d0} are sharp.  

\begin{lemma}\label{lem:comps} 
Let $c_F(G)$ denote the number of connected components in a graph $G$ 
isomorphic to a graph $F$. Suppose that a non-empty graph $G$ satisfies
\begin{equation}\label{eq:comps}
 c_F(G)+v(F)\le d_0(G)+1,\quad\mbox{for every component $F$ of $G$.}
\end{equation}
Then  $W(G)+1=D(G)=D_1(G)=d_0(G)+2$.
\end{lemma}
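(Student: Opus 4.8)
The plan is to split the claimed chain of (in)equalities into a lower and an upper half. The lower inequalities are free: \eqref{eq:d0} gives $W(G)\ge d_0(G)+1$ and $D(G)\ge d_0(G)+2$, and since trivially $W(G)\le D(G)\le D_1(G)$, it remains only to prove the two upper bounds $W(G)\le d_0(G)+1$ and $D_1(G)\le d_0(G)+2$; combined with the lower bounds these pin down $W(G)=d_0(G)+1$ and $D(G)=D_1(G)=d_0(G)+2$, which is the assertion. Write $d=d_0(G)$; applying the hypothesis to a component $F$ of $G$ with $v(F)\ge 2$ (one exists because $G$ has an edge) gives $3\le c_F(G)+v(F)\le d+1$, so $d\ge 2$, and in particular $G$ really does have isolated vertices --- these will be Spoiler's ``spare'' resource.

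Both upper bounds I would obtain through the Ehrenfeucht game. By Lemma~\ref{lem:ddd} and Theorem~\ref{thm:games} (parts~2 and~3) it suffices to show that, for every $H\not\cong G$, Spoiler wins $\game_{d+2}(G,H)$ with at most one switch between the graphs (this gives $D_1(G)\le d+2$) and wins $\game^{d+1}_r(G,H)$ for some $r$ (this gives $W(G)\le d+1$). Fix $H\not\cong G$. First dispose of $d_0(H)\neq d$: if, say, $d_0(H)<d$ (the other case symmetric, with Spoiler playing in $H$), Spoiler pebbles $d_0(H)+1$ distinct isolated vertices of $G$, forcing a reply at a non-isolated vertex of $H$ (or a collision), and pebbling one of its neighbours wins within $d_0(H)+2\le d+1$ moves and one switch. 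So assume $d_0(H)=d$; then $G$ and $H$ have equally many isolated vertices but differ in the multiset of non-trivial components, and we may fix a non-trivial graph $F$ with $c_F(G)\neq c_F(H)$.

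The heart of the matter is a strategy run in whichever of $G,H$ has the most copies of $F$ --- the \emph{rich} graph, the other being \emph{poor} --- in the principal case that $F$ is a component of $G$, so that the hypothesis supplies the budget inequality $c_F(G)+v(F)\le d+1$. Put $n=v(F)$ and $p=\min(c_F(G),c_F(H))+1$, so that $p\le d$ and $p+n\le d+2$. Spoiler marks $p$ distinct $F$-components of the rich graph, one vertex each. Duplicator must answer with $p$ pairwise non-adjacent distinct vertices of the poor graph, which has only $p-1$ components isomorphic to $F$; the pigeonhole principle forces her into one of three situations, in each of which Spoiler finishes in $\le n$ more moves and one further switch, for a total $\le p+n\le d+2$: (a) a reply is isolated --- pebble a neighbour of the corresponding marked vertex, win in one move; (b) two replies $c_i,c_j$ share a component $E$ of the poor graph --- if $v(E)\le n$ then $\dist(c_i,c_j)\le n-1$ while their rich-graph counterparts lie in distinct components, so Lemma~\ref{lem:distance} finishes in $\le\lceil\log(n-1)\rceil$ moves, while if $v(E)>n$ Spoiler develops the $i$-th marked $F$-component along a connected order, which drives Duplicator's answers into $E$ and forces a copy of $F$ there possessing an outside neighbour (since $E$ is connected on more than $n$ vertices) that Spoiler now pebbles; (c) a reply $c_j$ lies in a component $E_j\not\cong F$ --- Spoiler wins the remaining game between the $n$-vertex $F$-component and $E_j$ in $\le n$ moves, using the alternation-free generic definition \eqref{eq:def} of $F$ when $v(E_j)\le n$ and the ``develop-then-outside-neighbour'' move of case~(b) when $v(E_j)>n$. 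For $W(G)\le d+1$ the same bookkeeping applies, with the bonus that in the $(d+1)$-pebble game pebbles may be moved and $r$ is unbounded: once the $p$ marks expose which of (a)--(c) holds, Spoiler frees all but one marker and runs the finishing routine with the recycled pebbles, so the peak number of pebbles simultaneously on the board is $\max(p,n+1)\le d+1$.

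It remains to handle the case that $F$ cannot be chosen to be a component of $G$ --- i.e.\ $F$ occurs in $H$ but not in $G$ --- where $v(F)$ is \emph{not} controlled by the hypothesis and ``developing a copy of $F$'' is unaffordable. Here one uses instead that every component of $G$ has at most $N_0:=\max_i v(F_i)\le d$ vertices (the maximum over $G$'s component types, each of order $\le d+1-c_{F_i}(G)\le d$): if $v(F)>N_0$, pebbling a connected $(N_0+1)$-set of an $F$-component of $H$ wins in $N_0+1\le d+1$ moves and no switch; and if $v(F)\le N_0\le d$, Spoiler pebbles one vertex of an $F$-component of $H$ and finishes either by a neighbour move (case~(a)) or by a sub-game of cost $\le v(F)+1\le d+1$ on a pair of non-isomorphic components (via \eqref{eq:def} and $D_0(F)\le v(F)+1$), in $\le d+2$ moves in all. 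The step I expect to be the real obstacle is precisely this bookkeeping across the whole case split: one must organise the strategy so that in \emph{every} branch the marks plus the finishing cost never exceed $d+2$ (resp.\ the pebble peak never exceeds $d+1$) and at most one switch occurs, which forces one to use the hypothesis to the hilt --- absorbing the $\lceil\log(\cdot)\rceil$ term of Lemma~\ref{lem:distance} (which applies only to distances $\le\max(v(F),N_0)\le d$), routing the sub-game between non-isomorphic components always through the cheaper one so that an $n$-vertex component never costs more than $n+1$ moves, and keeping the number of marks at $\min(c_F(G),c_F(H))+1$ rather than the larger multiplicity. Checking that all these demands are simultaneously satisfiable is where the work lies.
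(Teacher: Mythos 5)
Your proposal is correct and follows essentially the same route as the paper's proof: pebble $\min(c_F(G),c_F(H))+1$ copies of a discrepant component $F$ in the richer graph, let the pigeonhole force Duplicator into a repeated or a non-isomorphic component, and finish by a path/full-development-plus-outside-neighbour argument, with the move budget controlled by \refeq{eq:comps} and the degenerate case $c_F(G)=0$ handled via the maximum component order of $G$. The minor variations (treating $d_0(H)\ne d_0(G)$ as a separate preliminary case, invoking Lemma~\ref{lem:distance} where the paper simply pebbles the connecting path, and recycling pebbles more aggressively for the width bound) do not change the substance.
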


\begin{proof}
Let us show that $D_1(G,H)\le d_0(G)+2$ for any $H\not\cong G$.
Let $F$ be such that $c_F(H)\ne c_F(G)$. For definiteness suppose that
$c_F(H) > c_F(G)$. Spoiler marks $c_F(G)+1$ components of $H$ which are isomorphic
to $F$ by pebbling one vertex in each of them. Duplicator is forced either
to mark one of the $F$-components of $G$ twice (by pebbling two vertices, say, 
$u$ and $v$ in it) or to mark a component $F'$ of $G$ which is not isomorphic to $F$.
In the former case Spoiler wins by pebbling a path from $u$ to $v$.
In the latter case Spoiler pebbles completely the $F$-component of $H$
corresponding to $F'$. Duplicator is forced to pebble a connected part $F''$ of $F'$.
If she has not lost yet, then $F''\cong F$ and hence $F''$ is a proper subgraph
of $F'$. Spoiler wins by pebbling another vertex in $F'$ which is adjacent
to a vertex in~$F''$. Altogether at most $d_0(G)+2$ moves are made.

It remains to prove the upper bound on the width. 
The last move may require using the $(d_0(G)+2)$-th pebble.
However, for this purpose Spoiler can reuse a pebble placed earlier
in a component different from $F'$. This trick is unavailable only
if $c_F(G)=1$ and $c_F(H)=0$ or if $c_F(G)=0$ and $c_F(H)=1$. In both
cases Spoiler can win in at most $v(F)+1$ rounds (and at most one
alternation). Moreover, if this number is at least $d_0(G)+2$, then
$c_F(G)=0$ and $G$ has no component with $v(F)$ or more
vertices by~\refeq{eq:comps}. In this case, Spoiler can win in at most
$v(F)$ moves.
\end{proof}

\begin{theorem}[Kim et al.~\cite{KPSV}, Bohman et al.~\cite{BFL*}]\label{thm:invn}
If $p=c/n$ with $c=c(n)\ge 0$ being an arbitrary bounded
function of $n$,
then $D(\rpgraph)=(\e^{-c}+o(1))n$ whp.
\end{theorem}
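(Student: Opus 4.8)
For the lower bound I would simply combine \refeq{eq:d0} with \refeq{eq:d0distr}: since $p=c/n=O(n^{-1})$ and $pn=c$, whp $d_0(\rpgraph)=(\e^{-c}+o(1))n$, and then $D(\rpgraph)\ge d_0(\rpgraph)+2=(\e^{-c}+o(1))n$ whp. So the work is in the matching upper bound, and the plan is to feed the component structure of $G=\rpgraph$, $p=c/n$, into Lemma~\ref{lem:comps}. Write $\beta=\beta(c)\in[0,1)$ for the survival probability of the $\mathrm{Poisson}(c)$ Galton--Watson process, so $\beta=0$ for $c\le 1$ and $\beta>0$, bounded away from $1$, for $c>1$ (as $c$ is bounded).

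The first step is to record, for $G=\rpgraph$ with $p=c/n$, a handful of properties holding whp: (i)~$d_0(G)=(\e^{-c}+o(1))n$; (ii)~$c_F(G)+v(F)\le d_0(G)+1$ for every component $F$ of $G$ with $v(F)\ge2$ --- for $v(F)$ below a suitable constant $k_0=k_0(c)$ there are finitely many types and each $c_F(G)$ concentrates near its mean $\frac{c^{v(F)-1}}{|\mathrm{Aut}(F)|}\e^{-cv(F)}n\le\frac1e\e^{-c}n+O(1)$ (using $c\e^{-c}\le1/e$), which is bounded away from $d_0(G)$, while for $v(F)\ge k_0$ the trivial bound $c_F(G)\le n/v(F)$ already gives $c_F(G)+v(F)<\e^{-c}n-o(n)\le d_0(G)+1$; and (iii)~every component of $G$ has order at most $n^{2/3}$ except, when $c>1$, a unique giant $F^{*}$ with $v(F^{*})=(\beta+o(1))n$. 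These are standard consequences of the classical theory of $\rpgraph$.

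If $\beta\le\e^{-c}$ (in particular whenever $c\le1$), properties (i)--(iii) show that $G$ satisfies hypothesis \refeq{eq:comps} of Lemma~\ref{lem:comps} whp: the possible large component either has order $n^{2/3}=o(n)$ or is the giant with $1+v(F^{*})=1+(\beta+o(1))n\le d_0(G)+1$ (since $\beta<\e^{-c}$), and every other component obeys (ii); hence $D(G)=d_0(G)+2=(\e^{-c}+o(1))n$. If instead $\beta>\e^{-c}$ --- which forces $c$ bounded away from $1$ --- I would split $G=F^{*}\sqcup R$ with $R=G\setminus V(F^{*})$; then $R$ has all components of order $o(n)$, $d_0(R)=d_0(G)$, and $R$ meets \refeq{eq:comps} by (i)--(ii), so $D(R)=d_0(R)+2=(\e^{-c}+o(1))n$. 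The extra ingredient needed is that whp $D(F^{*})=n^{o(1)}$, after which the two halves are glued by a standard game argument: in $\game_r(G,H)$ for $H\not\cong G$, Spoiler pebbles a vertex $v_0\in F^{*}$; if Duplicator's reply lies in a component $H_1\not\cong F^{*}$ he finishes inside $F^{*}$ and $H_1$ in $D(F^{*})=n^{o(1)}$ further moves (she cannot leave $H_1$, as $F^{*}$ is connected), and if $H_1\cong F^{*}$ then $H\setminus H_1\not\cong R$ and he wins by running his $R$-versus-$(H\setminus H_1)$ strategy --- Duplicator cannot confuse the two parts because $F^{*}$ is connected and strictly the largest component, so no component of $R$ embeds into $H_1$ as a whole component. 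This yields $D(G)\le D(R)+D(F^{*})+O(\log n)=(\e^{-c}+o(1))n$ whp.

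The main obstacle will be establishing $D(F^{*})=n^{o(1)}$ whp (indeed anything $o(n)$ would do): bounding the logical depth of the giant component of a sparse random graph requires a fairly precise handle on its automorphism group --- generated by permutations of the $O(\log n/\log\log n)$ mutually isomorphic pendant subtrees rooted at each vertex, once one shows the $2$-core is rigid --- together with a separator/descent strategy for Spoiler in the spirit of Lemma~\ref{lem:distance}, Theorem~\ref{thm:cdtrees}, and the upper bound of Theorem~\ref{thm:Gnp}, now complicated by the $\Theta(n)$-vertex mantle of pendant trees. The bookkeeping in the combination step, keeping Duplicator from mixing $F^{*}$ with the small components, is routine but somewhat fiddly; the near-critical range $c\to1$ causes no trouble, since there the giant is absent and Lemma~\ref{lem:comps} applies directly.
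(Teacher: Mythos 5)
Your proposal follows essentially the same route as the paper's sketch: the lower bound from isolated vertices via \refeq{eq:d0} and \refeq{eq:d0distr}, an application of Lemma~\ref{lem:comps} in the regime where the giant component does not outgrow the set of isolated vertices (your threshold $\beta(c)=\e^{-c}$ is exactly the paper's constant $\alpha=1.1918\ldots$), and for larger $c$ a gluing argument that reduces everything to bounding the logical depth of the giant component. The single ingredient you flag as the main obstacle, that the giant $F^{*}$ satisfies $D(F^{*})=o(n)$, is precisely the result of \cite{BFL*} that the paper invokes (whp $D(M)=O(\log n/\log\log n)$, proved via rigidity of the kernel together with Theorem~\ref{thm:rtree}), so your outline matches the intended proof.
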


\begin{sketch}
It is well known that, observing the evolution process in the scale $p=c/n$,
at the point $c=1$ we encounter the phase transition.
If $c<1-\epsilon$, whp all components of $\rpgraph$ have $O(\log n)$ vertices 
each; if $c>1+\epsilon$, there appears a unique exception, the so-called \emph{giant} component
with a linear number of vertices. 

One can check that, for any
$c<\alpha-\epsilon$, 
Condition \refeq{eq:comps} holds whp (even for the giant 
component if it exists), where
$\alpha=1.1918...$ is a root of some explicit equation,
see~\cite[Theorem 19]{KPSV}. Then, by Lemma \ref{lem:comps} and Equality \refeq{eq:d0distr},
$W(\rpgraph)$ and $D_1(\rpgraph)$
(and all parameters in between) are $(\e^{-c}+o(1))\, n$. 
When $c$ is larger than $\alpha+\epsilon$, then whp the
giant component of $\rpgraph$ violates~\refeq{eq:comps}: Its order exceeds the
number of isolated vertices. This case
is handled in~\cite{BFL*} as follows.

Denote the giant component of $G=\rpgraph$ by $M$. Given $H\not\cong G$, we have
to design a strategy allowing Spoiler to fast enough win the Ehrenfeucht
game on $G$ and $H$. The strategy in the proof of Lemma \ref{lem:comps}
does not work only if $c_M(H)=0$ or $c_M(H)\ge2$. We adapt it 
for these cases so that Spoiler, instead of
selecting all vertices of $M$, 
plays an optimal strategy for $M$ using at most $D(M)+\log n+1$ moves (instead of
$v(M)+1$ moves as earlier).

First, we can assume that no component of $H$ has diameter $n$ or more.
Otherwise Spoiler pebbles $u$ and $v$ at distance $n$ in $H$.
For Duplicator's responses $u'$ and $v'$ in $G$ we have either $\dist(u',v')<n$
or $\dist(u',v')=\infty$. Hence Spoiler wins in less than $\log n+1$ moves.

Second, we can assume that Duplicator always respects the connectivity
relation (two vertices are in the relation if they are connectable by a path). 
Indeed, suppose that $u$ and $v$ belong to the same connected component $F$
in one of the graphs while their counter-parts $u'$ and $v'$ are in different
components of the other graph. Then Spoiler wins in less than $\log\diam(F)+1$ moves.

Under this assumption, Spoiler easily forces that, starting from the 2nd round,
the play goes on components of $G$ and $H$, of which exactly one is isomorphic
to $M$. One of the main results of~\cite{BFL*} states that whp
\begin{equation}\label{eq:giant}
D(M)=O\of{\frac{\ln n}{\ln\ln n}},
\end{equation}
which implies that Spoiler is able to win quickly and proves the theorem.

The upper bound \refeq{eq:giant} is obtained roughly as follows. By
iteratively removing vertices of degree 1 from the giant component
$M$, one obtains the \emph{core} $C$ of $M$ (that is, $C$ is a maximum
subgraph with minimum degree at least $2$). The \emph{kernel} $K$ of
$G$ is the \emph{serial reduction} of $C$, that is, we iterate the
following to obtain $K$:
If there is a vertex $x$ of degree $2$, then we remove $x$ but add edge
$\{y,z\}$, where $y$ and $z$ are the two neighbors of $x$. The kernel may
have loops and multiple edges and has to be modeled as a \emph{colored graph}. 
The original graph $G$ can be encoded by
specifying its kernel $K$ and the structure of rooted trees that correspond to
each vertex or edge of $K$, the latter being viewed as a total
coloring of $K$. It happens that whp every vertex $x$ of $K$ can be
\emph{identified} by a small-depth formula $\Phi_x$ with one free
variable (that is $K,x\models \Phi_x$ while $K,y\not\models \Phi_x$
for every other vertex $y\in V(K)$) in the first-order language of colored
graphs. Thus one can define $K$ succinctly
by stating that for every $x\in V(K)$ there is a unique vertex
satisfying $\Phi_x$, that every vertex satisfies $\Phi_x$ for some $x\in
V(K)$, and by listing the adjacencies between vertices identified by
$\Phi_x$ and $\Phi_y$ for every $x,y\in V(K)$. The core $C$ can now be defined by specifying the
length of the path corresponding to each edge of $K$, while the giant component $M$ can be
defined by specifying the
random rooted trees hanging on the vertices of $C$ using
Theorem~\ref{thm:rtree} (which relies in part on
Theorem~\ref{thm:dtrees}). 
\end{sketch}

The bound \refeq{eq:giant} is optimal up to a constant factor.
This follows from the fact that whp the giant component $M$
has a vertex $v$ adjacent to at least $(1-\epsilon)\log n/\log\log n$
leaves. (Indeed, consider the graph $M'\not\cong M$ that is 
obtained from $M$ by attaching an extra leaf at $v$.)
We believe that the lower bound is sharp, that is, whp $D(M)=(1+o(1))
\log n/\log\log n$, but we were not able to settle this question.

Finally, we consider edge probabilities $p=n^{-\alpha}$ with
rational $\alpha\in(0,1)$. Such $p$ occur as threshold functions for (non-)appearance
of particular graphs as induced subgraphs in $\rpgraph$. What is relevant to
our subject is that such $p$ show an irregular behavior of $\rpgraph$
with respect to first-order properties. 

Since the treatment of the general case of rational $\alpha$
would require a considerable amount of technical work, the
paper~\cite{KPSV} focuses on a sample
value $\alpha=1/4$, when $D(\rpgraph)$ falls down and becomes
so small as it is essentially possible (cf.\ Section~\ref{s:best}).

\begin{theorem}[Kim et al.~\cite{KPSV}]\label{thm:arithm}
If $p=n^{-1/4}$, then whp
$$
\log^*n-\log^*\log^*n-1\le D(\rpgraph)\le D_3(\rpgraph)\le\log^*n+O(1).
$$
\end{theorem}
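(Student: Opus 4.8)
The plan is to analyze the random graph $\rpgraph$ at $p=n^{-1/4}$ and exploit the fact that at this density the local structure around any vertex stabilizes extremely fast, so that a short chain of nested quantifiers already pins down every vertex. First I would set up the probabilistic backbone: with $p=n^{-1/4}$, for a fixed vertex $v$ the number of its neighbours is whp $(1+o(1))n^{3/4}$, and more importantly the ``type'' of a vertex in terms of its adjacency pattern to a cleverly chosen reference set becomes unique very quickly. The key observation (this is the heart of the matter and where the arithmetic of $1/4$ enters) is that there is a constant-size — or very slowly growing — set of vertices $A$ such that whp every vertex of $\rpgraph$ is uniquely determined by its neighbourhood within $A$; one counts that the probability two vertices agree on all of $A$ is $2^{-|A|}$, and we need $\binom n2 2^{-|A|} = o(1)$, i.e.\ $|A|$ only slightly more than $2\log n$. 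But the exponent $1/4$ lets us build such distinguishing sets \emph{recursively and shallowly}: first a set distinguishing a large fraction, then refine, and the tower of refinements has height roughly $\log^* n$ rather than $\log n$.

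For the upper bound $D_3(\rpgraph)\le\log^*n+O(1)$, the cleanest route is via the Ehrenfeucht game with $3$ pebbles and Lemma \ref{lem:ddd}.1: I would design a Spoiler strategy in $\game_r^3(\rpgraph, H)$ for every $H\not\cong\rpgraph$ of the same order, with $r\le\log^*n+O(1)$. Spoiler's strategy is a ``coordinatization'' in the spirit of Lemma \ref{lem:distance} and of the succinctness constructions of Section \ref{s:best}: he pebbles a vertex $v$, and the adjacency pattern of $v$ to the already-fixed pebbles, together with one counting-free probe, halves (or rather, takes the logarithm of) the ambiguity of the remaining structure. Iterating, the number of rounds telescopes like $\log^* n$. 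The $D_3$ bound, rather than $D$, comes from the fact that Spoiler only ever needs to keep two ``reference'' pebbles live while moving a third, exactly as in the proof of Theorem \ref{thm:cdtrees} but now in the much more rigid setting of a dense-ish random graph; one also checks the alternation count stays bounded, though here we only claim the variable count is $3$.

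For the lower bound $D(\rpgraph)\ge\log^*n-\log^*\log^*n-1$ I would again use Lemma \ref{lem:ddd}.1 and produce, whp, a companion graph $H\not\cong\rpgraph$ with $D(\rpgraph,H)$ large, i.e.\ a Duplicator strategy in $\game_r(\rpgraph,H)$ for $r$ up to $\log^*n-\log^*\log^*n-1$. The natural candidate is a second independent copy of $G_{n',n^{-1/4}}$ on a slightly different number of vertices, or a local perturbation as in Lemma \ref{lem:1colref}; the point is that the ``$k$-extension-property''-style genericity of $\rpgraph$ at this density survives for $k$ roughly $\log^*n$ rounds in the following sense: after $t$ rounds the pebbled configuration constrains only a $\tower(t)$-bounded amount of structure, so Duplicator can always extend. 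The counting of how fast Spoiler can shrink the ``unresolved'' part is the mirror image of the upper-bound telescoping, and matching the two gives the $\log^*n\pm(\text{lower order})$ window. I expect the main obstacle to be the lower bound: one must verify that the relevant extension/genericity properties of $\rpgraph$ at $p=n^{-1/4}$ hold whp \emph{uniformly} over all the polynomially- or quasipolynomially-many small configurations that can arise in the first $\log^*n$ rounds, which requires a careful first-moment calculation tuned to the exponent $1/4$ (this is exactly the technical work that \cite{KPSV} carries out in detail, and which is why the survey only treats the sample value $\alpha=1/4$ rather than general rational $\alpha$).
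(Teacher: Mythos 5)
Your proposal misses the two ideas that actually drive this theorem. For the upper bound, the engine is \emph{arithmetization}, not iterated refinement of distinguishing sets. At $p=n^{-1/4}$ the predicate $C(x_1,\dots,x_4)$ saying that four distinct vertices have no common neighbour holds with probability $\e^{-1}+o(1)$ (this is exactly where the exponent $1/4$ enters: $p^4=1/n$), and its values over different $4$-tuples are weakly correlated. This lets one find a set $A$ with $|A|=\lfloor\ln^{0.3}n\rfloor$ (a common neighbourhood of four vertices) and vertices $a,m$ such that the $3$-uniform hypergraphs $H_a(A)$ and $H_m(A)$ encode addition and multiplication tables on an initial segment of the integers. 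The $\log^*n$ then comes from the fact that, once arithmetic is available, an integer $j$ can be defined by listing its binary digits and defining each bit-index recursively by the same binary-expansion trick; that recursion, not a ``telescoping refinement,'' is the source of the tower inverse. Your sketch offers no mechanism by which one round of the game would reduce the ambiguity from $N$ to $\log N$ -- halving arguments of the Lemma \ref{lem:distance} type only yield $\log n$, and a distinguishing set of size ``slightly more than $2\log n$'' pebbled vertex by vertex costs $\Theta(\log n)$ rounds, not $\log^* n$. Note also that $D_3$ in the statement bounds the number of quantifier \emph{alternations} (subscript), not the number of variables (superscript), so your closing remark that ``we only claim the variable count is $3$'' misreads what must be proved.

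For the lower bound, your game-theoretic plan is the wrong approach, and you have yourself identified why it would be painful: one would need genericity uniformly over all configurations arising in $\log^*n$ rounds, and it is far from clear that extension properties at density $n^{-1/4}$ persist in any useful form. The paper's argument is a short counting bound that never mentions Duplicator: any \emph{fixed} unlabelled graph is the value of $\rpgraph$ with probability at most $n!\,(1-p)^{{n\choose2}}\le\exp\bigl(-(1/2-o(1))\,n^{7/4}\bigr)$, while by Theorem \ref{thm:inequi} at most $\tower(k+\log^*k+2)$ graphs have logical depth at most $k$. Taking $k=\log^*n-\log^*\log^*n-2$ makes this count at most $2^n$, so a union bound gives $\prob{D(\rpgraph)\le k}=o(1)$. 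This is both simpler than what you propose and, as far as anyone knows, the only route that works at this level of precision.
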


\begin{sketch}
The upper bound is based on the following ideas. Let the predicate
$C(x_1,x_2,x_3,x_4)$ state that these 4 distinct vertices have no
common neighbor. Its probability is $(1-p)^{n-4}=\e^{-1}+o(1)$ and
its values over different 4-tuples are rather weakly correlated. Thus,
if for a set $A$ and a vertex $v\not\in A$, we define $H_v(A)$ be the
3-uniform hypergraph on $A$ with $x_1,x_2,x_3\in A$ being a hyperedge if and
only if $C(v,x_1,x_2,x_3)$ holds, then $H_v(A)$ behaves somewhat
like a random hypergraph. As it is shown in \cite[Lemma~21]{KPSV}, one
can find 4 vertices such that their common neighborhood $A$ is
relatively large (namely, $|A|=\lfloor \ln^{0.3}n\rfloor$) and yet
there are vertices $a,m$ such that hypergraphs $H_a(A)$ and $H_m(A)$
encode in some way the multiplication and addition tables for an
initial interval of integers. Also, any integer can be succinctly defined in first-order logic with arithmetic
operations. Roughly speaking, in order to define an integer $j$, one
can write it in binary $j=b_k\dots b_1$ and specify for every $i\le k$
the $i$-th bit $b_i$; crucially, the same binary expansion trick can
be used recursively to specify the index $i$, and so on. This allows us
to identify vertices $A$ with very small depth. Next, we consider the
set $B$ of vertices of $G$ that have exactly 4 neighbors in $A$ and
are uniquely determined by this. Again, the vertices of $B$ are easy
to identify (just list the 4 neighbors in $A$). Finally, if $A$ was
chosen carefully, then each vertex $w$ of $G$ is uniquely identified by the
hypergraph $H_w(B)$. (The reason that we need an intermediate set
$B$ is that the number of possible 3-uniform hypergraphs $H_w(A)$ is at most $2^{{|A|\choose
3}}< n-|A|$, that is, too small.)
Of course, many technical difficulties arise when
one tries to realize this approach.

The lower bound in Theorem~\ref{thm:arithm} is very general. We use only the simple fact that
any particular unlabeled graph with $m$ edges is the value of $\rpgraph$, 
where $p=n^{\scriptscriptstyle-1/4}$,
with probability at most 
 $$
 n!p^m(1-p)^{{n\choose2}-m}\le n! (1-p)^{{n\choose 2}}\le \exp \left(
   -(1/2-o(1))n^{7/4} \right).
 $$
Let $F(k)$ be the number of non-isomorphic graphs definable with
depth at most $k$.  Then
$\prob{D(G)\leq k}\le F(k)\exp \ofc{ -(1/2-o(1))n^{7/4} }$. By
Theorem \ref{thm:inequi},
$F(k)\le\tower(k+2+\log^*k)$. If $k=\log^*n-\log^*\log^*n-2$, 
we have $F(k)\le 2^n$ and hence $\prob{D(G)\leq k}=o(1)$.
\end{sketch}

The above idea (arithmetization of certain vertex sets in graphs) has been
previously used by Spencer~\cite[Section~8]{Spe} to obtain
non-convergence and non-separability results on the example of
$\rpgraph$ with $p=n^{-1/3}$.

So far we have considered the evolution of the logical complexity
of a random graph in the standard logic with no counting.
We conclude this section with an extension of Theorem \ref{thm:BES}.

\begin{theorem}[Czajka and Pandurangan~\cite{CzPa}]\label{thm:CzPa} 
Let $p(n)$ be any function of $n$ such that 
$\frac{\omega(n)\,\log^4n}{n\,\log\log n}\le p(n)\le 1/2$ where $\omega(n)\to\infty$
as $n\to\infty$. Then 2 color refinements split a random graph $\rpgraph$ 
into color classes which are singletons with probability that is higher 
than $1-n^{-c}$ for each constant $c>0$ and all large enough $n$.
Consequently, $\cd2{\rpgraph}\le4$ with this probability.
\end{theorem}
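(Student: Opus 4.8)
The plan is to follow the pattern of Theorem~\ref{thm:BES}: reduce the logical claim to a statement about color refinement and then run a union bound over pairs of vertices. Write $G=\rpgraph$. By the necessity part of Theorem~\ref{thm:WLD}, if the $3$-round \WL1\ works correctly for $G$ then $\cw G\le2$ and $\cd2 G\le4$; and, exactly as in the discussion following Theorem~\ref{thm:BES}, once two color-refinement rounds split $V(G)$ into singleton classes, one further round of \WL1\ distinguishes $G$ from every non-isomorphic $H$ (the stable coloring of $G$ is then a complete isomorphism invariant), so the $3$-round \WL1\ indeed works correctly for $G$. Hence everything reduces to the combinatorial statement: for $G=\rpgraph$ with $p$ as in the theorem, with probability at least $1-n^{-c'}$, for every constant $c'$ and all large $n$, the map $v\mapsto C^{2}_G(v)=\bigl(\deg v,\ \msetdef{\deg w}{w\sim v}\bigr)$ is injective on $V(G)$.

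To prove this, fix distinct vertices $u,v$ and estimate $\prob{C^{2}_G(u)=C^{2}_G(v)}$. Since $C^{2}$ refines $C^{1}=\deg$, this event forces both $\deg u=\deg v$ and $\msetdef{\deg w}{w\sim u}=\msetdef{\deg w}{w\sim v}$. First reveal the edges incident with $u$ or $v$ and condition on an outcome with $\deg u=\deg v$ (otherwise the event has probability $0$). This fixes $C=N(u)\cap N(v)$ and the disjoint sets $A=N(u)\setminus(N(v)\cup\{v\})$, $B=N(v)\setminus(N(u)\cup\{u\})$, which avoid $u,v$ and satisfy $|A|=|B|$; moreover the required multiset equality is equivalent to $\msetdef{\deg w}{w\in A}=\msetdef{\deg w}{w\in B}$ (the $C$-part is common to both sides, and each of $u,v$, if it lies in the other's neighborhood, contributes the same value $\deg u$). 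The hypothesis $p\ge\omega(n)\log^{4}n/(n\log\log n)$ forces $pn$ to be polylogarithmically large, in particular $pn\gg\log n$; so whp $|\deg u-pn|=O(\sqrt{pn\log n})$ and $|C|\le p^{2}n+O(\sqrt{p^{2}n\log n})$, whence $|A|=|B|=\Theta(pn)$. The configurations violating these typicality bounds have total probability at most $n^{-K}$ for any prescribed constant $K$ (choose the implied constants in the Chernoff estimates accordingly), negligible after the eventual union bound over the $\binom{n}{2}$ pairs.

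The heart of the argument is a bound $\prob{\msetdef{\deg w}{w\in A}=\msetdef{\deg w}{w\in B}}\le\exp\bigl(-\Omega(\sqrt{pn})\bigr)$, valid for every fixed typical configuration at $u,v$. To obtain it, condition further on all edges inside $A\cup B$; then for $w\in A\cup B$ we have $\deg w=c(w)+d(w)$, where $c(w)$ (one plus the number of $(A\cup B)$-neighbors of $w$) is now fixed and the $d(w)$ (numbers of neighbors in $R=V\setminus(A\cup B\cup\{u,v\})$) are mutually independent, each $\mathrm{Bin}(|R|,p)$ with $|R|\ge n/2$. Equality of the two size-$M$ multisets ($M=|A|=\Theta(pn)$) forces, for every integer $t$, equality of the bin counts $N^{A}_{t}=|\{w\in A:\deg w=t\}|$ and $N^{B}_{t}=|\{w\in B:\deg w=t\}|$; for fixed $t$ these counts are sums over the two disjoint families and hence independent of each other, and a local-limit/anticoncentration estimate for $\mathrm{Bin}(|R|,p)$ shows that for any $t$ within $O(\sqrt{pn})$ of the mode $pn$ each of $N^{A}_{t},N^{B}_{t}$ has all its atoms of size $O\bigl((pn)^{-1/4}\bigr)$, so that $\prob{N^{A}_{t}=N^{B}_{t}}=O\bigl((pn)^{-1/4}\bigr)$. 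Taking $\Theta(\sqrt{pn})$ such values of $t$, whose coincidence events can be decoupled by a Poissonization of the bin counts (at a cost polynomial in $n$, absorbed in the exponent), multiplies these out to the claimed $\exp(-\Omega(\sqrt{pn}))$. Since $pn\ge\omega(n)\log^{4}n/\log\log n$, this is at most $\exp\bigl(-\Omega(\sqrt{\omega(n)}\,\log^{2}n/\sqrt{\log\log n})\bigr)=n^{-\omega(1)}$, super-polynomially small; summing over the $\binom n2$ pairs and over the atypical-configuration events (with $K$ chosen in terms of the target $c$) makes the total failure probability at most $n^{-c}$ for every constant $c$ and $n$ large, as the theorem requires.

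The main obstacle is making the anticoncentration rigorous: one must argue that the two ``degree histograms'' of $A$ and of $B$ behave like two essentially independent multinomial-type vectors with $\Theta(\sqrt{pn})$ anticoncentrated coordinates, and extract an $\exp(-\Omega(\sqrt{pn}))$ collision bound in the face of (i) the negative correlations among the coordinates of a single histogram (handled by the Poissonization), (ii) the data-dependent shifts $c(w)$ inherited from conditioning on the edges inside $A\cup B$, so that the ``balls'' $d(w)$ are independent but not identically distributed, and (iii) having conditioned on only a \emph{typical} pair of edge-configurations at $u$ and $v$. This is exactly where the hypothesis on $p$ is spent: it makes $pn$ large enough that both the width of the degree window and the resulting collision bound comfortably beat the $\binom n2$ union bound. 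Everything else --- the degree concentration, the bound on $|C|$, and the reduction to \WL1\ via Theorem~\ref{thm:WLD} --- is routine.
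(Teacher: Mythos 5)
The first thing to note is that the paper contains no proof of this statement: Theorem~\ref{thm:CzPa} is quoted from Czajka and Pandurangan~\cite{CzPa}, so there is no in-paper argument to measure yours against. Judged on its own terms, your reduction is fine: the passage from ``two refinement rounds give singleton classes'' to ``the $3$-round \WL1\ works correctly for $G$'' to $\cd2{\rpgraph}\le4$ is exactly the paper's discussion following Theorem~\ref{thm:BES} together with the necessity part of Theorem~\ref{thm:WLD}, and your setup for the union bound --- conditioning on the edges at $u,v$, peeling off the common part $C=N(u)\cap N(v)$ and the mutual adjacency, and reducing to the coincidence of the degree multisets of $A$ and $B$ --- is correct and almost certainly in the spirit of the original argument.

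The genuine gap is the central estimate $\prob{\msetdef{\deg w}{w\in A}=\msetdef{\deg w}{w\in B}}\le\exp\bigl(-\Omega(\sqrt{pn})\bigr)$, which you assert but do not prove (and say so yourself). The single-bin bound $\prob{N^A_t=N^B_t}=O\bigl((pn)^{-1/4}\bigr)$ is unproblematic: Lévy-type anticoncentration for a sum of independent indicators of total variance $\Theta(\sqrt{pn})$, plus independence of the $A$-side from the $B$-side. But multiplying $\Theta(\sqrt{pn})$ such bounds is precisely where all the work lies --- the bin counts of one histogram are dependent, the shifts $c(w)$ move the relevant degree window around from vertex to vertex, and the estimate must hold uniformly over every typical conditioning --- and ``decoupled by a Poissonization at a cost polynomial in $n$'' is a placeholder, not an argument. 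Moreover, the hypothesis of the theorem is itself evidence that your clean bound is too strong: $\exp(-\Omega(\sqrt{pn}))$ is already $n^{-\omega(1)}$ once $pn\gg\log^2 n$, so if your estimate held as stated the conclusion would follow under a much weaker assumption than the threshold $pn\ge\omega(n)\log^4 n/\log\log n$ that Czajka and Pandurangan impose; the extra $\log^2 n/\log\log n$ in their hypothesis is presumably spent on exactly the logarithmic losses in the decoupling step you leave open. So the proposal is a sensible plan whose core probabilistic lemma remains unestablished.
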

Note that, in the case of $p=1/2$, this result improves the probability
bound in Part 1 of Theorem \ref{thm:BES}, while the probability
bound in Part 2 is still better.

By elaborating on the argument of Lemma \ref{lem:1colref},
we are able to supply Theorem \ref{thm:CzPa}
with the matching lower bound (that is, $\cd2{\rpgraph}\ge4$ whp)
within the range $4\sqrt{\log n/n} \le p\le 1/2$.

\section{Best-case bounds: Succinct definitions}\label{s:best}

As in the preceding sections, we consider the logical depth
of graphs with a given number of vertices $n$.
We know that the maximum value $D(G)=n+1$ is
attained by the complete and empty graphs (and only by them)
and that the typical values lie around $\log n$ (see Theorem \ref{thm:wdrgraph}).
Now we are going to look at the minimum.
We already have a good starting point: By Theorem \ref{thm:arithm},
there are graphs with
$$
D_3(G)\le\log^*n+O(1).
$$
In order to get such examples, we have to generate a random graph with
the edge probability $n^{\scriptscriptstyle-1/4}$.
In Section \ref{ss:3constr} we give three explicit constructions achieving
the same bound. In Section \ref{ss:succfunc} we introduce the \emph{succinctness function}
$q(n)=\min\setdef{D(G)}{v(G)=n}$ and give an account of what is known
about it. Section \ref{ss:zeroalt} is devoted to the question of how succinctly
we can define graphs if we are not allowed to make quantifier alternations.
In Section \ref{ss:2appl} the bounds on the succinctness function are applied to
proving separations results for logical parameters of graphs,
in particular, for $D(G)$ and~$L(G)$.

\subsection{Three constructions}\label{ss:3constr}

\subsubsection{First method: Padding}\label{sss:padding}

We describe a ``padding'' operation that was invented by Joel Spencer
(unpublished). It converts any graph $G$
to an exponentially larger graph $G^*$ with the
logical depth larger just by 1. $G^*$ includes $G$ as an induced subgraph.
In addition, for every subset $X$ of $V=V(G)$, the graph $G^*$ contains
a vertex $v_X$. Denote the set of these vertices by $V'$.
There is no edge inside $V'$ but there are some edges between
$V$ and $V'$. Specifically, $v\in V$ is adjacent to $v_X$ iff
$v\in X$. In particular, $v_\emptyset$ is isolated and $N(v_V)=V$.

Vertex $v_V$ will play a special role in our first-order definition
of $G^*$. First of all, we will say that there is a vertex $c$
(assuming $c=v_V$) whose neighborhood spans in $G^*$ a subgraph
isomorphic to $G$. This can be done by relativizing a formula
$\Phi_G$ defining $G$ to $N(c)$. That is, each universal quantification
$\A x(\Psi)$ in $\Phi_G$ has to be modified to
$$
\A_{x\in N(c)}(\Psi)\feq\A x(x\sim c\to\Psi)
$$
and each existential quantification to
$$
\E_{x\in N(c)}(\Psi)\feq\E x(x\sim c\und\Psi).
$$
Denote the relativized version of $\Phi_G$ by $\Phi_G|_{N(c)}$.
Note that relativization does not change the quantifier depth.
A sentence defining $G^*$ can now look as follows:
\begin{multline*}
\Phi_{G^*}\feq
\E c\Big(\Phi_G|_{N(c)}\und\A_{x\notin N(c)}(N(x)\subset N(c))
\und\A_{x_1\notin N(c)}\A_{x_2\notin N(c)}(N(x_1)\ne N(x_2))\\\und
\A_{x_1\notin N(c)}\A_{y\in N(x_1)}\E_{x_2\notin N(c)}(N(x_2)=N(x_1)\setminus\{y\})\Big),
\end{multline*}
where we use harmless shorthands for simple first-order expressions.

It is easy to see that
$$
D(\Phi_{G^*})=\max\ofc{D(\Phi_{G}),4}+1
$$
and that, if $\Phi_{G}$ 
is a \emph{$\E^*\A^*\E^*\A^*$-formula} (that is, every chain of nested
quantifiers is a string of this form), 
then $\Phi_{G^*}$ stays in this class as well.
Consider now a sequence of graphs $G_k$ where $G_1=P_1$, the single-vertex graph,
and $G_{k+1}=(G_k)^*$. Since $v(G^*)=v(G)+2^{v(G)}$, we have $v(G_k)\ge\tower(k-1)$.
It follows that $D_3(G_k)\le\log^*v(G_k)+3$.

\subsubsection{Second method: Unite and conquer}\label{sss:unite}

Suppose that we have a set $C$ of $n$-vertex graphs, each of 
logical depth at most $d$. 
Our goal is to construct a much larger set $C^*$
of graphs with a much larger number of vertices $n^*$ and logical depth
bounded by $d+3$. An additional technical condition is that all the graphs
have diameter 2. We know from Theorem \ref{thm:wdrgraph} that almost all graphs 
on $n$ vertices have logical depth less than $\log n$ and it is well known
that they have diameter 2. Choosing a sufficiently large $n$, we can 
start with $C$ being the class of all such graphs. Since almost all graphs
are asymmetric, we have $|C|=(1-o(1))2^{n\choose2}$. 
Just for the notational simplicity, we prefer that $|C|$ is even.

For each $S\subset C$ such that $|S|=|C|/2$, the set $C^*$ contains graph
$$
G_S=\compl{\bigsqcup_{G\in S}G},
$$
that is, we take the vertex disjoint union of all graphs in $S$ and
complement it. For convenience, we bound the logical depth of the
complement $\compl{G_S}=\bigsqcup_{G\in C}G$ rather than that of
$G$. Given an arbitrary $H\not\cong\compl{G_S}$, we analyze the
Ehrenfeucht game on the two graphs.

If $H$ has a connected component of diameter at least 3, Spoiler pebbles
vertices $u$ and $v$ in $H$ at the distance exactly 3 from one another.
For Duplicator's responses $u'$ and $v'$ in $\compl{G_S}$,
either $\dist(u',v')\le2$ or $\dist(u',v')=\infty$. In any case, Spoiler wins within
the next 2 moves. Suppose from now on that all components of $H$ have diameter
at most 2. This condition allows us to assume that Duplicator respects
the connectivity relation for otherwise Spoiler wins with one extra
move (which will be added to the total count of rounds).

If one of the graphs, $\compl{G_S}$ or $H$, has a connected component $A$ non-isomorphic to
any component of the other graph, Spoiler pebbles a vertex in $A$. Let $B$
be the component of the other graph where Duplicator responds. Starting from the
second round, Spoiler plays the Ehrenfeucht game on non-isomorphic graphs $A$ and $B$
and wins in at most $d$ moves. 

If such a component does not exist, 
$\compl{G_S}$ must have a component $A$
with at least two isomorphic copies in $H$. Then in the first two rounds Spoiler 
pebbles vertices in these two. Duplicator is forced at least once to respond
in a component $B$ of $\compl{G_S}$ non-isomorphic to $A$, which is an already
familiar configuration.

Thus, $D(G_S)$ can be at most 3 larger than the maximum logical depth
of graphs in $C$. At the same time $G_S$ has the much larger number of vertices,
namely $n^*=n|C|/2$. It follows that $D(G)<\log\log v(G)$ for any $G$ in $C^*$.

Note that any graph in $C^*$ is the complement of a disconnected graph
and hence has diameter 2. This allows us to iterate the construction.
Say, for any $G\in(C^*)^*$ we get $D(G)<\log\log\log v(G)$ and so on.
If we fix the initial class $C$, 
the iteration procedure gives us
graphs with $D(G)<3\log^* v(G)+O(1)$. This bound is worse than in the
preceding section but the extra factor of 3 can be eliminated if 
Spoiler plays more smartly (see~\cite{PSV2}).

\subsubsection{Third method: Asymmetric trees}\label{sss:asymtrees}

The two previous examples were artificially constructed
with the aim to ensure low quantifier depth. Now we present
a natural class of graphs admitting succinct definability.

The {\em radius\/} of a graph $G$ is
defined by $r(G)=\min_{v\in V(G)}e(v)$, where $e(v)$ denotes the eccentricity
of a vertex $v$.
A vertex $v$ is {\em central\/} if $e(v)=r(G)$.
Any tree has either one or two central vertices (see, e.g., \cite[Chapter 4.2]{Ore}).

\begin{lemma}\label{lem:treeDr}
Let $T$ be an asymmetric tree with $r(T)\ge6$.
Then $D(T)\le r(T)+2$.
\end{lemma}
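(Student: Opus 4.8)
The plan is to use the Ehrenfeucht game characterization from Theorem~\ref{thm:games}.1 together with Lemma~\ref{lem:ddd}.1: it suffices to show that for every graph $H\not\cong T$, Spoiler wins $\game_r(T,H)$ with $r\le r(T)+2$. First I would dispose of easy cases. If $H$ is not connected, or $H$ has diameter different from $\diam(T)$, or more generally some distance discrepancy appears early, then by Lemma~\ref{lem:distance} Spoiler wins in $\lceil\log n\rceil$ extra moves after revealing a pair of vertices at distinct distances; since $r(T)\ge6$ and the diameter of a tree is at most $2r(T)$, one checks that $\lceil\log(2r(T))\rceil$ plus the few setup moves stays within the budget $r(T)+2$ (this is where the hypothesis $r(T)\ge6$ is used, to make the logarithmic distance-game cost smaller than the linear budget). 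So we may assume $H$ is a tree with the same radius and the same diameter as $T$.

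The heart of the argument is the following strategy. Spoiler pebbles a central vertex $v$ of $T$; since $T$ is a tree it has one or two central vertices, and I would handle the two-central-vertex case by pebbling one of them (or, if needed, recording the edge between the two). Duplicator must answer with some $v'\in V(H)$; if $e_H(v')\ne r(T)$ then $v'$ is not central and Spoiler can exhibit a vertex of $H$ too far from $v'$, winning quickly via Lemma~\ref{lem:distance}. So $v'$ is central in $H$ too. Now the two rooted trees $(T,v)$ and $(H,v')$ are non-isomorphic (here asymmetry of $T$ matters: if they were isomorphic as rooted trees then $T\cong H$), both of depth $r(T)$. Spoiler then walks down toward a point of difference: among the branches of $T$ at $v$ and the branches of $H$ at $v'$, there is a branch-isomorphism-type occurring with different multiplicity; Spoiler pebbles a child $u$ realizing such a discrepancy in, say, $T$, forcing Duplicator into a branch $(H_{v'u'},u')$ with $(T_{vu},u)\not\cong (H_{v'u'},u')$ — or else Duplicator violates adjacency or a distance constraint and loses. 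Iterating this descent, after each round the ``depth budget'' of the current non-isomorphic rooted subtree drops by at least one, because we move strictly further from the center; since the depth of $T_{vu}$ rooted at $u$ is at most $r(T)-1$, and we need to reach a difference that is detectable by adjacency (depth $0$ or $1$), Spoiler wins within about $r(T)$ descent moves. Counting: one move to pebble the center, then at most $r(T)$ descent moves, then at most one extra move to exhibit the final discrepancy, for a total of at most $r(T)+2$.

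The main obstacle I expect is making the ``descent'' bookkeeping rigorous: at each step Spoiler has only three pebbles' worth of memory in the unrestricted game he actually uses all $r$ pebbles, but the subtlety is ensuring that the rooted-tree non-isomorphism is genuinely witnessed by something Duplicator cannot fake — one must track not just the subtree being played in but also the images of the already-pebbled ancestors, so that an isomorphism of the current rooted branch would have to extend to an isomorphism of all of $T$ and $H$. This is exactly the kind of argument appearing in the proof of Theorem~\ref{thm:cdtrees} (the separator strategy), except here we descend through the radial/BFS structure rather than through separators, and crucially we have no counting quantifiers available, so the multiplicity discrepancy among branches must be converted into Spoiler pebbling one extra branch-vertex and then forcing Duplicator to duplicate a branch type, after which a distance argument (two vertices in the same branch of $T$ at bounded distance versus their images landing in different branches of $H$, hence at a larger distance through the center) lets Spoiler finish — and one has to verify this clean-up costs at most the one spare move allotted. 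Routine but careful case analysis (two central vertices; discrepancy appearing at the leaf level versus one level up; Duplicator responding outside the ``expected'' branch) fills in the rest.
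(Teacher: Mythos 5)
Your skeleton (game characterization via Lemma~\ref{lem:ddd}.1 and Theorem~\ref{thm:games}.1, the distance lemma for degenerate opponents, pebbling a central vertex of $T$, then descending through branches) matches the paper's, but the central difficulty you flag at the end --- branch types occurring with different positive multiplicities, with no counting quantifiers available --- is not actually resolved by your argument, and the ``one spare move'' you allot for it does not suffice as stated. If at some level of the descent a branch type occurs once among the branches at the current $T$-vertex and twice among those at the current $H$-vertex, pebbling one child ``realizing the discrepancy'' forces nothing: Duplicator answers inside an isomorphic copy and the descent stalls. Spoiler must instead pebble a vertex in \emph{each} of the two isomorphic $H$-branches, costing an extra move at that level, and nothing in your write-up prevents this from recurring at several levels, which would blow the budget $r(T)+2$. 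The paper's missing idea is the notion of a \emph{diverging} tree (every vertex splits it into pairwise non-isomorphic branches). Asymmetry makes $T$ diverging, so every branch multiplicity in $T$ equals $1$ at every vertex; if $H$ is also diverging, a multiset discrepancy is automatically a set discrepancy --- some branch of one tree is isomorphic to no branch of the other --- and Spoiler descends at a cost of exactly one pebble per level, winning by the time the $T$-side path reaches a leaf, i.e.\ after at most $r(T)$ steps. If $H$ is \emph{not} diverging, the extra move is paid exactly once: Spoiler pebbles the path from a central vertex of $H$ to the \emph{deepest} vertex $u'$ having two isomorphic branches $B'$, $B''$, plus one vertex in each of $B'$ and $B''$; below $u'$ everything is diverging again, and since the corresponding branches of $T$ are pairwise non-isomorphic, at least one of $B'$, $B''$ is mismatched.

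Two smaller gaps. First, you pass from ``$H$ connected with the right diameter'' directly to ``$H$ is a tree,'' skipping a connected $H$ that contains a cycle; the paper pebbles a vertex $v'$ on a cycle, notes that the corresponding deletion disconnects $T$ unless Duplicator answered with a leaf, while $H-v'$ stays connected with diameter at most $3\,\diam(H)\le 6\,r(T)$, and finishes with Lemma~\ref{lem:distance}. Second, your termination bookkeeping should be anchored to the depth of $T$ from its centre rather than to $H$: Duplicator's $v'$ need not be central in $H$ (and the paper does not force it to be), so it is the pebbled path in $T$ reaching a leaf after at most $r(T)$ steps that caps the length of the game.
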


\begin{proof}
We will design a strategy for Spoiler in the Ehrenfeucht
game on $T$ and a non-isomorphic graph $T'$. 
The reader that took the effort to
reconstruct the proof of Theorem \ref{thm:wtrees} will now
definitely benefit.

We can assume that $T$
and $T'$ have equal diameters 
(in particular, $T'$ is connected)
for else Spoiler wins in less than $\log r(T)+4$ moves by Lemma \ref{lem:distance}.
If $T'$ is a non-tree, let Spoiler pebble a vertex $v'$ on
a cycle in $T'$.
By this move Spoiler forces the game on $T\setminus v$ and $T'\setminus v'$,
where $v$ is Duplicator's response in $T$.
If $v$ is a leaf, Spoiler wins in two moves. Otherwise $T\setminus v$
is disconnected, while $\diam(T'\setminus v')\le3\,\diam(T')\le6\,r(T)$.
Lemma \ref{lem:distance} applies again and Spoiler wins in less than $\log r(T)+6$ moves.
Assume, therefore, that $T'$ is a tree too.

Call a tree \emph{diverging} if every vertex $w$ splits it into pairwise
non-isomorphic branches, where each branch is considered rooted
at the respective neighbor of $w$ (an isomorphism of rooted trees has
to match their roots). Any asymmetric tree is obviously
diverging. On the other hand, if a tree is diverging, it is either
asymmetric or has a single nontrivial automorphism and the latter
transposes two central vertices.

Suppose that $T'$ is diverging.
In the first round Spoiler pebbles a central vertex $v$ of $T$
and Duplicator responds with a vertex $v'$ in $T'$.
As it is easily seen, at least one of $T\setminus v$ and $T'\setminus v'$ has a branch $B$
non-isomorphic to any branch in the other tree. Spoiler restricts
further play to $B$ by pebbling its root. Continuing in this fashion,
that is, each time finding a matchless subbranch, Spoiler forces
pebbling two paths in $T$ and $T'$ emanating from $v$ and $v'$ respectively.
Spoiler wins at latest when the path in $T$ reaches a leaf.

So suppose that $T'$ is not diverging. Let $v'$ be
a central vertex of $T'$ and $u'$ be a vertex at the maximum possible distance
from $v'$ with the property that $T'\setminus u'$ has two isomorphic branches
$B'$ and $B''$. Spoiler pebbles the path from $v'$ to $u'$ and the two
neighbors of $u'$ in $B'$ and $B''$. From this point Spoiler can play as before
because $B'$ and $B''$ are diverging and only one of them can be isomorphic
to the corresponding branch pebbled by Duplicator in $T$.
\end{proof}

Lemma \ref{lem:treeDr} shows that asymmetric trees are definable with
quantifier depth not much larger than their radius.
On the other hand, asymmetric trees can grow in breadth,
having a huge number of vertices. 
More precisely, there are asymmetric trees with $v(T)\ge\tower(r(T)-1)$.
Indeed, let $r_k$ denote
the number of asymmetric rooted trees of height at most $k$.
A simple recurrence 
$$
r_0=1,\quad
r_k=2^{r_{k-1}}
$$
shows that $r_k=\tower(k)$.
Let $k\ge3$ and $T_k$ be the (unrooted) tree of radius $k$ with a single central
vertex $c$ such that the set of branches growing from $c$
consists of all $r_{k-1}$ pairwise non-isomorphic asymmetric rooted trees
of height less than $k$. (The reader will now surely recognize
another instance of the unite-and-conquer method!)
Since $T_k$ has even diameter, the central vertex $c$ is fixed under all automorphisms.
It easily follows that $T_k$ is asymmetric.
This graph will be referred to as the \emph{universal asymmetric tree of radius $k$}.
Note that $v(T_k)\ge r_{k-1}+1>\tower(k-1)$.
Combining it with Lemma \ref{lem:treeDr}, we obtain
$D(T_k)\le k+2\le\log^*v(T_k)+2$.

With a little extra work, trees with low logical depth can be constructed
on any given number of vertices. It turns out that the log-star bound
is essentially the best what can be achieved for trees.

\begin{theorem}[Pikhurko, Spencer, and
  Verbitsky~\cite{PSV}]\label{thm:BestTree}
For every $n$ there is a tree $T$ on $n$ vertices with $D(T)\le\log^*n+4$.
On the other hand, for all trees $T$ on $n$ vertices we have
$D(T)\ge\log^*n-\log^*\log^*n-4$.
\end{theorem}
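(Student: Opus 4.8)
The statement has two independent halves, and I would prove them separately.

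\emph{Existence of a shallow tree (the upper bound).} The plan is to reuse the \emph{universal asymmetric tree} of Section~\ref{sss:asymtrees}. Recall that attaching to a single central vertex the full family of $\tower(k-1)$ pairwise non-isomorphic asymmetric rooted trees of height less than $k$ produces an asymmetric tree $T_k$ of radius $k$ with $v(T_k)\ge\tower(k-1)$, and that Lemma~\ref{lem:treeDr} gives $D(T_k)\le r(T_k)+2=k+2$. To reach an \emph{arbitrary} prescribed $n$ I would attach not the whole family but a selected sub-collection: keeping at least one branch of height exactly $k-1$ guarantees radius $k$ and keeping all chosen branches pairwise non-isomorphic keeps the tree asymmetric, so Lemma~\ref{lem:treeDr} still yields $D(T)\le k+2$ with $k=\log^* n+O(1)$. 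Because there are $\tower(k-1)\gg n$ admissible branches of widely varying sizes (including very small ones), selecting a sub-collection whose total size is exactly $n$ is a routine subset-sum adjustment. This gives $D(T)\le\log^* n+2$ for large $n$; the slack up to $+4$ absorbs the small cases, where the claimed bound is in any case vacuous, and the radius-$6$ hypothesis of Lemma~\ref{lem:treeDr}.

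\emph{Every tree is deep (the universal lower bound).} Here the counting estimate of Theorem~\ref{thm:inequi} is \emph{not} enough by itself: it only shows that \emph{some} $n$-vertex tree is deep. To obtain the bound for \emph{all} trees I would instead prove the contrapositive in the form of a size bound,
\[
D(T)\le k\ \Longrightarrow\ v(T)\le\tower(k+\log^* k+2),
\]
i.e.\ a tree definable at quantifier depth $k$ cannot have too many vertices. Taking $\log^*$ of both sides gives $\log^* n\le k+\log^* k+2$; since the target inequality is trivial once $k\ge\log^* n$, one may assume $k\le\log^* n$, whence $\log^* k\le\log^*\log^* n+O(1)$ and therefore $k\ge\log^* n-\log^*\log^* n-O(1)$, the constant $4$ coming from the routine estimate of the error terms.

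The size bound I would establish by showing that a \emph{large} tree always admits a non-isomorphic $\keq$-twin, and hence cannot be definable at depth $k$. The number of $\keq$-equivalence types of rooted branches is bounded by a tower function of $k$, exactly by the backward recursion of Lemma~\ref{lem:keq}. So if $v(T)$ exceeds $\tower(k+\log^* k+2)$, a pigeonhole argument should produce two branches of $T$ that are $\keq$-equivalent as rooted trees but \emph{not} isomorphic (differing only below game-depth $k$). Replacing one occurrence by a copy of the other, in the spirit of the edge-swap of Lemma~\ref{lem:1colref}, yields a tree $T'\not\cong T$ for which Duplicator survives $\game_k(T,T')$ by playing the two twin branches against each other; then $D(T,T')>k$, and by Lemma~\ref{lem:ddd}.1 together with the Ehrenfeucht theorem (Theorem~\ref{thm:games}.1) we get $D(T)>k$, contradicting $D(T)\le k$.

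The hard part is precisely this twin lemma. One must (i) arrange the branch-replacement so that the result is genuinely non-isomorphic to $T$, while (ii) still exhibiting a winning Duplicator strategy in the $k$-round game, and (iii) do this already when $v(T)$ only barely exceeds the tower bound — for the universal asymmetric tree $T_k$ shows the size bound is essentially tight: it has $\approx\tower(k)$ vertices, no two branches of equal type, and hence no twin to exploit. Controlling the recursion that counts rooted-branch game-types at depth $k$, and verifying that ``too many vertices'' forces a \emph{same-type-but-non-isomorphic} pair rather than merely a same-type-and-isomorphic one, is where the genuine effort lies.
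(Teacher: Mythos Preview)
The survey does not actually prove Theorem~\ref{thm:BestTree}; it cites~\cite{PSV} and only sketches the upper bound in the surrounding text via Lemma~\ref{lem:treeDr} and the universal asymmetric trees $T_k$. Your plan for the upper bound matches that sketch and is correct: the subset-sum adjustment to hit an arbitrary $n$ is precisely the ``little extra work'' the text alludes to.

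For the lower bound, your overall shape---prove that $D(T)\le k$ forces $v(T)\le\tower(k+\log^*k+O(1))$ and then invert---is right, and the arithmetic extracting $D(T)\ge\log^*n-\log^*\log^*n-O(1)$ from it is fine. But the mechanism you propose for exhibiting a $\keq$-twin $T'\not\cong T$ has a genuine gap, which you honestly flag. Pigeonhole on $\keq$-types of rooted subtrees only gives you two subtrees of the same type; nothing forces them to be non-isomorphic, and when they are isomorphic your replacement is a no-op. The clean way around this is not to hunt for a non-isomorphic same-type pair at all. Instead one shows directly that every rooted tree is $\keq$-equivalent, as a rooted structure, to one of bounded size by the following recursion: whenever some branch-type at the root occurs with multiplicity exceeding $k$, delete copies until exactly $k$ remain (Duplicator survives $k$ rounds by maintaining a partial matching between the isomorphic copies, which never overflows since at most $k$ pebbles are in play), and then apply the same reduction inside each surviving branch. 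The resulting canonical representative $T^*$ satisfies $T\keq T^*$ and has at every vertex at most $k$ branches of each type, so a tower bound on $v(T^*)$ follows from the recursion on the number of rooted-tree types. Whenever $v(T)$ exceeds that bound, $T^*$ is strictly smaller than $T$, hence $T^*\not\cong T$, and so $D(T)>k$. This dissolves your difficulties (i)--(iii) simultaneously: non-isomorphism of $T'$ comes for free from the drop in size, the Duplicator strategy is just copy-matching rather than a delicate branch-for-branch simulation, and the size threshold is exactly the tower bound produced by the type recursion.
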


We will see in the next section that the lower bound of Theorem \ref{thm:BestTree} 
cannot be extended to the class of all graphs. 

Universal asymmetric trees have been proved to be a useful technical tool
in complexity theory and finite model theory since a long time, see the
references in Dawar et al.\ \cite{DGKS}. Lemma 3.4(e) in the latter paper
readily implies a succinctness result for the logical \emph{length}.

\begin{theorem}[Dawar et al.~\cite{DGKS}]\label{thm:UnivTree}
For the universal asymmetric tree of radius $k$ we have
$L(T_k)=O((\log^*v(T_k))^4)$.
\end{theorem}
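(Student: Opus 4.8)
The plan is to construct a short defining sentence for $T_k$ fairly explicitly; this is essentially the content of \cite[Lemma~3.4]{DGKS}, rephrased for the length measure. First I would observe that $\tower(k-1)<v(T_k)$, while unrolling the crude bound $v(T_j)\le 1+\tower(j-1)\,v(T_{j-1})$ gives $v(T_k)\le\tower(k+1)$; hence $\log^*v(T_k)=k+O(1)$, and it suffices to exhibit a defining sentence of length $O(k^4)$.

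Next I would set up a canonical binary code: to an asymmetric rooted tree $S$ of height $\le j$ assign $\mathrm{code}(S)=\sum_{S'}2^{\,\mathrm{code}(S')}$, summed over the (pairwise non-isomorphic) subtrees $S'$ rooted at the children of the root of $S$, with the one-vertex tree getting code $0$; this is a bijection onto $\ofc{0,\dots,\tower(j)-1}$. The key feature is self-similarity: the $1$-bits of $\mathrm{code}(S)$ are exactly the codes of the children subtrees of $S$, and each bit position is again the code of an asymmetric rooted tree of height $\le j-1$ — so ``reading off the binary expansion of the code of the subtree at a vertex $u$'' is literally ``inspecting the children of $u$'', and this recursion bottoms out at leaves after $\le k$ steps (the device from the sketch of Theorem~\ref{thm:arithm}). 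Recall that $T_k$ is exactly the tree obtained from a central vertex $c$ by attaching, for \emph{every} $a\in\ofc{0,\dots,\tower(k-1)-1}$, one branch of code $a$; in particular it contains the nested ``scaffold'' of universal subtrees $U_0\subset U_1\subset\dots\subset U_{k-1}$, where $U_j$ has all asymmetric rooted trees of height $\le j-1$ as children.

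The defining sentence would have the form $\E c\,\Phi(c)$, with $\Phi$ a conjunction of: (a) distance clauses built from the succinct formulas $\Delta'_m$ of \refeq{ex:deltaa} (for $m=O(k)$) saying that every vertex lies within distance $k$ of $c$, that $c$ has no two neighbours joined by a short path, that every vertex $\ne c$ has a unique neighbour closer to $c$, and that no edge joins two vertices equidistant from $c$ — together these force any model to be a rooted tree at $c$ of height $\le k$, at cost $O(k^2)$; (b) ``every branch of $c$ is diverging'' and ``the branches of $c$ are pairwise non-isomorphic''; (c) ``$c$ has a leaf neighbour''; and (d) a closure clause: ``for all branches $b,b'$ of $c$ with $b'$ of height $\le k-2$, some branch of $c$ is isomorphic to $b$ with the child $b'$ toggled''. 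An induction on height — precisely the argument that single-bit toggles reach every code from $0$ — then shows that in any model of $\Phi$ the branches of $c$ realise all of $\ofc{0,\dots,\tower(k-1)-1}$, each once; with (a) this forces the model to be $\cong T_k$, and conversely $T_k$ satisfies $\Phi$ with $c$ its centre, since its family of branches is exactly all diverging trees of height $\le k-1$ and is closed under toggling.

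The only substantial work, and the main obstacle, is writing (b) and (d): both need a formula $\mathrm{eq}(u,v)$ testing isomorphism of the bounded-height diverging subtrees hanging below $u$ and $v$, and written naively by recursion on height $\mathrm{eq}_j$ mentions $\mathrm{eq}_{j-1}$ \emph{twice} (one mention per direction of the child-bijection), so expanding it out gives length exponential in $k$. Avoiding this blow-up is exactly the point of \cite[Lemma~3.4]{DGKS}: rather than substituting, one performs the height-recursion along the scaffold $U_0\subset\dots\subset U_{k-1}$ already sitting inside $T_k$, writing a \emph{single} formula that quantifies over the $\le k$ scaffold levels (each identifiable by a short formula) and over candidate witnesses at each level, and merely checks local consistency of the recursive clause for $\mathrm{eq}$. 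This keeps the quantifier depth $O(k)$ and, after a routine but tedious count of the per-level cost, the length $O(k^4)$. Alternatively — since the passage from their statement to $L(T_k)=O((\log^*v(T_k))^4)$ is immediate — one may simply quote \cite[Lemma~3.4(e)]{DGKS} as a black box and be done.
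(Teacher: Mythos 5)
The survey gives no proof of this theorem: it is imported directly from \cite{DGKS}, the only added ingredient being the bound $v(T_k)>\tower(k-1)$ established in Section~\ref{sss:asymtrees}, which turns the $O(k^4)$ formula-length bound of \cite[Lemma~3.4(e)]{DGKS} into $O((\log^*v(T_k))^4)$. Your closing alternative --- quoting Lemma~3.4(e) as a black box --- is therefore exactly the paper's treatment, and your remark that only the lower bound $k\le\log^*v(T_k)$ is needed for that conversion is correct. The rest of your proposal reconstructs the cited lemma rather than anything in the survey; it is a plausible account of the standard machinery (canonical codes of asymmetric rooted trees, the spine $U_0\subset\dots\subset U_{k-1}$ of universal subtrees, generation of all codes by single-bit toggles from the leaf branch), and the supporting claims I can check --- the code bijection onto $\ofc{0,\dots,\tower(j)-1}$, the presence of the spine in $T_k$, and the induction showing that the closure clause forces every code to be realized exactly once --- are sound. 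The one place where all the difficulty is concentrated, namely writing the height-$j$ subtree-isomorphism test $\mathrm{eq}_j$ with polynomial total length, is correctly identified but only gestured at; to close it one must rewrite the clause ``every child of $u$ matches a child of $v$ and vice versa'' as a single guarded quantification over the union of the two child sets (exploiting the symmetry of $\mathrm{eq}_{j-1}$) so that $\mathrm{eq}_j$ contains exactly one occurrence of $\mathrm{eq}_{j-1}$, giving $L(\mathrm{eq}_j)\le L(\mathrm{eq}_{j-1})+O(k)$ and hence a polynomial total. Since the survey does not carry out this bookkeeping either, this is not a defect relative to the paper; but as a self-contained argument your proof still leans on \cite{DGKS} at its decisive step.
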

The theorem shows that, for infinitely many $n$, there is a tree $T$ on $n$
vertices with $L(T)=O((\log^*n)^4)$. Unlike Theorem \ref{thm:BestTree},
this result cannot be extended to all $n$ because there are infinitely many $n$
such that all graphs on $n$ vertices have logical length $\Omega\of{\frac{\log n}{\log\log n}}$
(see \refeq{eq:gnlow} in the proof of Theorem~\ref{thm:tight}).

\subsection{The succinctness function}\label{ss:succfunc}

Define the \emph{succinctness function} by
$$
q(n)=\min\setdef{D(G)}{v(G)=n}.
$$
Since only finitely many graphs are definable with a fixed
quantifier depth (see Theorem \ref{thm:inequi}), we have
$q(n)\to\infty$ as $n\to\infty$. The examples collected in Section \ref{ss:3constr}
show that $q(n)$ increases rather slowly. Let $q_a(n)$ denote the version
of $q(n)$ for definitions with at most $a$ quantifier alternations.
The padding construction from Section \ref{sss:padding} gives us
\begin{equation}\label{eq:q3logstar}
q_3(n)\le\log^*n+3
\end{equation}
for infinitely many $n$ and, by Theorem \ref{thm:arithm},
this bound holds actually for all $n$, perhaps with a worst
additive constant.

Is the log-star bound best possible? The answer is surprising enough:
in some strong sense it is but, at the same time, it is very far from
being tight. First, let us elaborate on the latter claim.

A {\em prenex formula\/} is a formula with all its quantifiers being in front.
In this case there is a single sequence of nested quantifiers and
the quantifier rank is just the number of quantifiers
occurring in a formula.
The superscript {\it prenex} will mean that we allow defining sentences
only in prenex form. Thus, $q_a^\prenex(n)$ is equal to the minimum quantifier
depth of a prenex formula with at most $a$ quantifier alternations
that defines a graph on $n$ vertices. We obviously have 
$D(G)\le D_a(G)\le D_a^\prenex(G)$.
Recall that $L_a(G)$ denotes the minimum length of a
sentence defining $G$ with at most $a$ quantifier alternations.
Since a quantifier-free formulas with $k$ variables
is equivalent to a disjunctive normal form over $2{k\choose2}$ relations
between the variables, we obtain also relation
\begin{equation}\label{eq:LDprenex}
L_a(G)=O(h(D_a^\prenex(G)))\mathrm{\ \ where\ \ }h(k)=k^22^{k^2}.
\end{equation}

Recall that a {\em total recursive function\/} is 
an everywhere defined recursive function.

\begin{theorem}[Pikhurko, Spencer, and Verbitsky~\cite{PSV}]\label{thm:superrec}
There is no total recursive function $f$ such that
$f(q_3^\prenex(n))\ge n$ for all~$n$.
\end{theorem}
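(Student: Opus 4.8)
The plan is to derive a contradiction with the undecidability of the halting problem. Assume there were a total recursive $f$ with $f(q_3^\prenex(n))\ge n$ for all $n$; replacing $f$ by $k\mapsto\max_{j\le k}f(j)$ we may assume $f$ is monotone nondecreasing. The heart of the argument is a uniform \emph{recursive} construction assigning to (the code of) a Turing machine $M$ a sentence $\Phi_M$ such that: $\Phi_M$ is in prenex form with at most three quantifier alternations; $D(\Phi_M)\le g(|M|)$ for a fixed recursive function $g$, where $|M|$ is the length of the code of $M$ (in fact $g(|M|)=|M|+O(1)$ will do, by arithmetizing the transition table of $M$ as in the sketch of Theorem~\ref{thm:arithm}); and \emph{if} $M$ halts on the empty input in exactly $t$ steps, \emph{then} $\Phi_M$ defines (up to isomorphism) a graph $G_M$ with $v(G_M)\ge\tower(t-c_0)$, $c_0$ an absolute constant.

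To build $\Phi_M$ I would let $G_M$ consist of two linked parts. The first is a rigid \emph{computation gadget}: essentially a path whose $i$-th edge carries, through a small rigid local attachment, the $i$-th configuration of $M$'s run, so that ``this is a valid halting computation of the hard-wired machine $M$'' becomes a conjunction of purely local constraints, each checkable with $O(1)$ quantifiers; hard-wiring the transition table is what costs up to $g(|M|)$ quantifier depth. The second is a self-similar \emph{padding}, a tower-like structure in the spirit of Section~\ref{sss:padding} and the universal asymmetric trees of Section~\ref{sss:asymtrees}, whose depth of nesting is \emph{forced to equal} the length of the computation path. The crucial point is that a bounded-depth sentence can pin the nesting depth to the computation length \emph{without knowing $t$}: it only has to assert, with a constant number of quantifiers, that the levels of the padding and the steps of the computation are in one-to-one correspondence. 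Since each additional level pushes the vertex count into the exponent, this yields $v(G_M)\ge\tower(t-c_0)$, while the relativization trick $\A_{x\in N(c)}\Psi\feq\A x(x\sim c\to\Psi)$ of Section~\ref{sss:padding} lets all the existential ``level'' quantifiers and all the universal ``check'' quantifiers be grouped, keeping $\Phi_M$ prenex in the class $\E^*\A^*\E^*\A^*$. Rigidity of the path, of the attachments, and of the padding makes $\Phi_M$ categorical among finite graphs, so it genuinely \emph{defines} $G_M$.

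Granting the construction, the contradiction is immediate. Suppose $M$ halts in $t$ steps and put $n=v(G_M)$. Then $q_3^\prenex(n)\le D(\Phi_M)\le g(|M|)$, so by monotonicity of $f$,
\[
\tower(t-c_0)\ \le\ n\ \le\ f\big(q_3^\prenex(n)\big)\ \le\ f\big(g(|M|)\big).
\]
Hence $t\le c_0+\log^*\!\big(f(g(|M|))\big)=:B(M)$, and $M\mapsto B(M)$ is total recursive. Now decide halting thus: given $M$, compute $B(M)$ and simulate $M$ on the empty input for $B(M)$ steps, accepting iff it has halted by then. By the displayed inequality, if $M$ halts at all it halts within $B(M)$ steps, so the procedure is correct, contradicting the undecidability of the halting problem. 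This proves Theorem~\ref{thm:superrec}.

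The main obstacle is the construction of $\Phi_M$: one must simultaneously (i) encode a verifiable halting computation by purely local constraints and keep the underlying graph rigid; (ii) realize a padding whose nesting depth is pinned to the a priori unbounded computation length while spending only $O(1)$ extra quantifiers; and (iii) fit everything into prenex $\E^*\A^*\E^*\A^*$ form. Parts (ii) and (iii) together are the delicate point; (i) is routine once a suitable gadget is fixed, and bounding the depth of the hard-wired transition table is separate, purely syntactic bookkeeping.
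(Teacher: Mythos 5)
Your overall strategy is the same as the paper's: encode the computation of a Turing machine $M$ by a prenex $\E^*\A^*\E^*\A^*$ sentence $\Phi_M$ of quantifier depth recursive in $|M|$ whose finite models are exactly the (rigid) tableaux of halting computations, and then observe that a total recursive $f$ with $f(q_3^\prenex(n))\ge n$ would yield a computable bound on halting times, contradicting the undecidability of the halting problem. The final diagonalization step in your write-up is correct (including the harmless monotonization of $f$).

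The gap is in the construction you propose for $\Phi_M$, specifically the claim that a \emph{constant} number of additional quantifiers can force a tower-like padding whose nesting depth equals the a priori unbounded computation length $t$, so that $v(G_M)\ge\tower(t-c_0)$. This is unsupported, and it runs directly against the paper's own padding construction in Section \ref{sss:padding}, where each level of nesting costs one additional quantifier ($D(\Phi_{G^*})=\max\{D(\Phi_G),4\}+1$); likewise the universal asymmetric trees of Section \ref{sss:asymtrees} need depth about equal to their radius. Asserting ``the levels are in bijection with the computation steps'' does not by itself pin the structure down categorically: to exclude non-isomorphic models one must compare level-$i$ substructures up to isomorphism, and that is exactly what costs quantifier depth growing with $i$. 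Fortunately, this entire component is unnecessary. The superrecursive gap does not come from the tower function (which is recursive) but from the busy-beaver phenomenon: the halting time $t$ is not recursively bounded in $|M|$. The bare computation tableau already has $v(G_M)\ge t$ vertices, and then
$$
t\le v(G_M)\le f\bigl(q_3^\prenex(v(G_M))\bigr)\le f(g(|M|))
$$
(using monotonicity of $f$) gives the computable bound on halting time directly. So you should delete the padding, keep only the rigid tableau gadget, and the proof goes through --- with the construction of that gadget (rigidity, locality of the transition constraints, and the prenex $\E^*\A^*\E^*\A^*$ form) being the real content, exactly as in the Trakhtenbrot-style simulations the paper cites.
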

The theorem implies a superrecursive gap between $v(G)$ and $D_3(G)$
or even $L_3(G)$. In particular, the values of $q_3(n)$ are infinitely often
inconceivably smaller even than the values of $\log^*n$.
More generally, if a total recursive function $l(n)$
is monotone nondecreasing and tends to infinity, then
\begin{equation}\label{eq:intro0}
q(n)<l(n)\mbox{\ \ for infinitely many\ \ }n,
\end{equation} 
which actually means that the succinctness function admits 
no reasonable lower bound.

The proof of Theorem \ref{thm:superrec} is based on simulation of 
a Turing machine $M$ by a prenex formula $\Phi_M$ in which a computation of $M$
determines a graph satisfying $\Phi_M$ and vice versa.
Such techniques were developed in the classical research on Hilbert's
{\em Entscheidungsproblem\/} by Turing, Trakhtenbrot, B\"uchi and
other researchers (see \cite{BGG} for survey and references).
An important feature of our simulation is that it works if we restrict
the class of structures to graphs. 
As a by-product, we obtain another proof of Lavrov's version
of the Trakhtenbrot theorem \cite{Lav} (see also \cite[Theorem 3.3.3]{ELTT}) saying
that the first-order theory of finite graphs is undecidable.
The proof actually shows the undecidability of the
$\forall^*\exists^p\forall^s\exists^t$-fragment of this theory
for some $p$, $s$, and $t$.

We now have to explain why bound \refeq{eq:q3logstar}, though not sharp,
is best possible in some sense.
Let us define the {\em smoothed succinctness function\/} $q^*(n)$
to be the least monotone nondecreasing
integer function bounding $q(n)$ from above, that is,
\begin{equation}\label{eq:smoothed}
q^*(n)=\max_{m\le n} q(m).
\end{equation}
The following theorem shows that $q^*(n)=(1+o(1))\log^*n$ and, therefore,
the log-star function is a nearly optimal \emph{monotone} upper bound
for the succinctness function~$q(n)$.

\begin{theorem}[Pikhurko, Spencer, and Verbitsky~\cite{PSV}]\label{thm:ssf}
 $$
 \log^*n-\log^*\log^*n-2\le q^*(n)\le\log^*n+4. 
 $$
\end{theorem}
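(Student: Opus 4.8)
The plan is to prove the two inequalities in Theorem~\ref{thm:ssf} separately. The upper bound $q^*(n)\le\log^*n+4$ is the easier half. First I would recall that by Theorem~\ref{thm:BestTree} (or, alternatively, by the padding and unite-and-conquer constructions of Section~\ref{ss:3constr}, sharpened as in \cite{PSV2}), for \emph{every} $n$ there exists a tree $T$ on $n$ vertices with $D(T)\le\log^*n+4$; hence $q(n)\le\log^*n+4$ for every $n$. Since $\log^*n+4$ is already monotone nondecreasing in $n$, taking the maximum over $m\le n$ in the definition \refeq{eq:smoothed} changes nothing, and we get $q^*(n)=\max_{m\le n}q(m)\le\log^*n+4$ directly. (Strictly speaking one should check that the additive constant in Theorem~\ref{thm:BestTree} is compatible; if the ``all $n$'' version carries a slightly worse constant one may need to absorb it, but the essential content is that an explicit family of trees attains the log-star bound at every order, which is exactly what Theorem~\ref{thm:BestTree} provides.)

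The lower bound $q^*(n)\ge\log^*n-\log^*\log^*n-2$ is where the real work lies, and here I would use the counting argument underlying Theorem~\ref{thm:inequi}. The key point is that, while $q(n)$ itself can dip arbitrarily low for sporadic $n$ (that is the superrecursive phenomenon of Theorem~\ref{thm:superrec}), the \emph{smoothed} function cannot, because a low value of $q^*(n)$ would force \emph{too many} graphs to be definable with small quantifier depth. Concretely: if $q^*(n)\le k$, then by definition $q(m)\le k$ for every $m\le n$, i.e.\ for each $m\in\{1,\dots,n\}$ there is at least one graph of order $m$ with logical depth at most $k$. These graphs are pairwise non-isomorphic (they have different orders), so there are at least $n$ non-isomorphic graphs with logical depth at most $k$. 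But Theorem~\ref{thm:inequi} says the number of graphs with logical depth at most $k$ is at most $\tower(k+\log^*k+2)$. Therefore $n\le\tower(k+\log^*k+2)$, i.e.\ with $k=q^*(n)$ we get
$$
n\le\tower\of{q^*(n)+\log^*(q^*(n))+2}.
$$
Applying $\log^*$ to both sides and using that $\log^*$ is monotone, $\log^*n\le q^*(n)+\log^*(q^*(n))+2$, hence
$$
q^*(n)\ge\log^*n-\log^*(q^*(n))-2.
$$
Since trivially $q^*(n)\le n$ (indeed $q^*(n)\le\log^*n+4\le n$ for all but tiny $n$), we have $\log^*(q^*(n))\le\log^*n$, giving the slightly cleaner but weaker $q^*(n)\ge\log^*n-\log^*\log^*n-2$ once one checks the bootstrapping is legitimate. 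A little care is needed here: to replace $\log^*(q^*(n))$ by $\log^*\log^*n$ one wants $q^*(n)\le\log^*n$ roughly, which follows from the already-proved upper bound $q^*(n)\le\log^*n+4$ together with the monotonicity of $\log^*$ (the additive $4$ costs at most one more unit inside a $\log^*$ for large $n$, and is harmless). I would phrase this as: by the upper bound, $\log^*(q^*(n))\le\log^*(\log^*n+4)\le\log^*\log^*n$ for $n$ large enough, and then the inequality $q^*(n)\ge\log^*n-\log^*\log^*n-2$ follows for all large $n$, with small cases checked directly if needed.

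The main obstacle — really the only subtlety — is the circular-looking step where $q^*(n)$ appears inside its own lower bound via the $\log^*(q^*(n))$ term; this is resolved cleanly by first establishing the upper bound $q^*(n)\le\log^*n+4$ and feeding it back in, so the two halves of the theorem are genuinely proved in the order stated (upper bound first, then lower bound using it). The other thing worth double-checking is the monotonicity claim that makes the upper bound for $q$ immediately an upper bound for $q^*$: this is automatic from $\log^*n+4$ being nondecreasing, since $q^*(n)=\max_{m\le n}q(m)\le\max_{m\le n}(\log^*m+4)=\log^*n+4$. Everything else is bookkeeping with the tower and log-star functions, using only Theorems~\ref{thm:inequi} and~\ref{thm:BestTree}.
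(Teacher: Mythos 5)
Your proof is correct and follows essentially the same route as the paper: the upper bound is Theorem~\ref{thm:BestTree} plus monotonicity of $\log^*n+4$, and the lower bound is the same counting-plus-pigeonhole argument against Theorem~\ref{thm:inequi} (you state it in contrapositive form, but it is the identical idea). One small arithmetic caveat: the paper avoids feeding the upper bound back in by choosing $k$ with $\tower(k+2+\log^*k)<n\le\tower(k+3+\log^*(k+1))$, whence $k<\log^*n$ and so $\log^*(k+1)\le\log^*\log^*n$ exactly; your bootstrap through $q^*(n)\le\log^*n+4$ gives only $\log^*(q^*(n))\le\log^*(\log^*n+4)$, which exceeds $\log^*\log^*n$ by $1$ for the infinitely many $n$ with $\log^*n$ lying within $4$ below a tower value, so as written you get $-3$ rather than $-2$ for those $n$ and would need the paper's choice of $k$ (or a slightly sharper tree bound) to recover the stated constant.
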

Though the lower bound contains a nonconstant lower order term,
it can hardly be distinguished from a constant: for example,
$\log^*\log^*n=3$ for $n=10^{80}$, which is a rough estimate of
the number of elementary particles in the observable universe.
\begin{proof}
Theorem \ref{thm:BestTree} implies that $q(n)\le\log^*n+4$ for all $n$.
Since this bound is monotone, it is a bound on $q^*(n)$ as well.
The lower bound for $q^*(n)$ can be derived from Theorem \ref{thm:inequi}.
According to it, at most $\tower(k+\log^*k+2)$
graphs are definable with quantifier depth $k$.
Given $n>\tower(3)$, let $k$ be such that
$\tower(k+2+\log^*k)<n\le\tower(k+3+\log^*(k+1))$.
It follows that
$k>\log^*n-\log^*\log^*n-4$.
By the Pigeonhole Principle, there will be some
$m\le n$ for which no graph of order precisely $m$ is defined
with quantifier depth at most $k$. We conclude that $q^*(n)\ge q(m)>k$
and hence $q^*(n)\ge \log^*n-\log^*\log^*n-2$.
\end{proof}

We defined $q^*(n)$ to be the ``closest'' to $q(n)$
monotone function. Notice that $q(n)$ itself lacks the monotonicity,
deviating from $q^*(n)$ infinitely often (set $l(n)$ to be the lower
bound in Theorem \ref{thm:ssf} and apply~\refeq{eq:intro0}).

\subsection{Definitions with no quantifier alternation}\label{ss:zeroalt}

It is interesting to observe how the succinctness function changes
when we put restrictions on the logic.
Note that all what we have stated about the succinctness function
for first-order logic actually holds true for its fragment
with 3 quantifier alternations.
 Now we consider
the first-order logic with no
quantifier alternation, consisting of purely existential and purely universal formulas
and their monotone Boolean combinations (of course, all negations
are supposed to stay in front of relation symbols).
It is easy to see that any sentence with no quantifier alternation is equivalent to
a sentence in the \emph{Bernays-Sch\"onfinkel class}. The latter consists of prenex formulas
in which the existential quantifiers all precede the universal quantifiers, as in
\begin{equation}\label{eq:BS}
\Phi\feq\E x_1\ldots\E x_k\A y_1\ldots \A y_l \Psi(\bar x,\bar y),
\end{equation}
where $\Psi$ is quantifier-free.
This fragment of first-order logic is provably weak.

To substantiate this claim, consider the \emph{finite satisfiability problem}:
Given a first-order sentence $\Phi$ about graphs, one has to
decide whether or not there is a finite graph satisfying $\Phi$.
More generally, let $\mathrm{Spectrum}(\Phi)$ consist of all those $n$ such that
there is a graph on $n$ vertices satisfying $\Phi$. Thus, the problem
is to decide whether $\mathrm{Spectrum}(\Phi)$ is nonempty.

Lavrov \cite{Lav} proved
that this problem is unsolvable even for sentences without equality
(for directed graphs this is a classical result on Hilbert's {\em Entscheidungsproblem},
known as the Trakhtenbrot-Vaught theorem, see \cite{BGG}).
However, if we consider only sentences in the Bernays-Sch\"onfinkel class,
the finite satisfiability problem becomes decidable. 
This directly follows from the following simple observation
showing that a nonempty spectrum always contains a certain small number.

\begin{lemma}\label{lem:BS}
Suppose that a first-order sentence $\Phi$ is of the form \refeq{eq:BS}.
If $\Phi$ is satisfiable, then $\mathrm{Spectrum}(\Phi)$ contains $k$
or a smaller number.
\end{lemma}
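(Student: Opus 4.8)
The plan is to show that if $\Phi \feq \E x_1\ldots\E x_k\A y_1\ldots\A y_l\,\Psi(\bar x,\bar y)$ is satisfied by some finite graph $G$, then one can extract from $G$ an induced subgraph on at most $k$ vertices that still satisfies $\Phi$; since the vertex set of that subgraph has size between $1$ and $k$, it witnesses that $\mathrm{Spectrum}(\Phi)$ contains a number $\le k$.

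First I would fix a finite graph $G$ with $G\models\Phi$. Since $\Phi$ begins with $k$ existential quantifiers, there are vertices $a_1,\ldots,a_k\in V(G)$ (not necessarily distinct) such that $G\models \A y_1\ldots\A y_l\,\Psi(a_1,\ldots,a_k,\bar y)$. Let $A=\{a_1,\ldots,a_k\}$ and let $G'=G[A]$ be the induced subgraph on this set, so $1\le v(G')=|A|\le k$. The claim is that $G'\models\Phi$. To see this, it suffices to exhibit a witness for the existential block in $G'$ and then verify the universal part. The obvious witness is again $(a_1,\ldots,a_k)$, which is a tuple of vertices of $G'$. So I must check that $G'\models \A y_1\ldots\A y_l\,\Psi(a_1,\ldots,a_k,\bar y)$, i.e.\ that for every choice of $b_1,\ldots,b_l\in V(G')=A$ we have $G'\models\Psi(\bar a,\bar b)$.

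The key step is the observation that $\Psi$ is quantifier-free, hence a Boolean combination of atomic formulas of the form $t=t'$ and $t\sim t'$ where $t,t'$ are among the variables $x_1,\ldots,x_k,y_1,\ldots,y_l$. Its truth value on an assignment of vertices depends only on the equality type and adjacency type of the tuple of assigned vertices, i.e.\ only on which pairs are equal and which pairs are adjacent. Now given $b_1,\ldots,b_l\in A$, the same tuple $(a_1,\ldots,a_k,b_1,\ldots,b_l)$ is a valid assignment in $G$ as well, since $A\subseteq V(G)$; and because $G'$ is an \emph{induced} subgraph, equality and adjacency among vertices of $A$ are identical in $G'$ and in $G$. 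Therefore $G'\models\Psi(\bar a,\bar b)$ if and only if $G\models\Psi(\bar a,\bar b)$, and the latter holds because $G\models\A y_1\ldots\A y_l\,\Psi(\bar a,\bar y)$. This establishes $G'\models\A\bar y\,\Psi(\bar a,\bar y)$ and hence $G'\models\Phi$.

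I do not expect a serious obstacle here; the only point requiring care is the boundary behaviour when the $a_i$ are not all distinct, so that $|A|$ may be strictly less than $k$ (still $\ge 1$ since $G$ is nonempty and the $x_i$ range over vertices) — but this only makes the conclusion stronger, since then $\mathrm{Spectrum}(\Phi)$ contains a number strictly smaller than $k$. One should also note that the argument used nothing about $l$; the universal quantifiers are harmless precisely because shrinking the domain only removes instances one has to verify. If desired, one can remark afterwards that this immediately yields decidability of finite satisfiability for the Bernays--Sch\"onfinkel class: to test whether $\mathrm{Spectrum}(\Phi)\ne\emptyset$ it suffices to check all graphs on at most $k$ vertices.
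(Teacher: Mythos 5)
Your proposal is correct and is essentially the paper's own argument: the paper likewise takes the set $U$ of existential witnesses and observes that the induced subgraph $G[U]$, having at most $k$ vertices, still satisfies $\Phi$ because the witnesses survive and the universal part only becomes easier to verify on a smaller induced substructure. Your write-up merely spells out the preservation of quantifier-free formulas under induced subgraphs, which the paper leaves implicit.
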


\begin{proof}
Assume that $\Phi$ is true on a graph $G$ with more than $k$ vertices
and let $U\subset V(G)$ be the set of vertices $x_1,\ldots,x_k$ whose existence 
is claimed by $\Phi$. Note that the induced subgraph $G[U]$
satisfies $\Phi$ as well.
\end{proof}

The solvability of the finite satisfiability problem for the Bernays-Sch\"onfinkel class
was observed by Ramsey in \cite{Ram}. Ramsey showed that the spectrum of a Bernays-Sch\"onfinkel
formula can be completely determined. This follows from the following result where
his famous combinatorial theorem appeared as a technical tool.
Recall that a set is \emph{cofinite} if it has finite complement.

\begin{theorem}[Ramsey \cite{Ram}]\label{thm:ramsey}
Any sentence about graphs $\Phi$ in the Bernays-Sch\"onfinkel class 
has either finite or cofinite
spectrum. More specifically, if $\Phi$ is of the form \refeq{eq:BS}, then
either $\mathrm{Spectrum}(\Phi)$ contains no number equal to or greater than
$2^k4^l$ or it contains all numbers starting from $k+l$.
\end{theorem}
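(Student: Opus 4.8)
The plan is to analyze the structure of a Bernays-Sch\"onfinkel sentence $\Phi = \exists x_1\ldots\exists x_k\,\forall y_1\ldots\forall y_l\,\Psi(\bar x,\bar y)$ by focusing on the \emph{adjacency types} that the witness vertices $x_1,\ldots,x_k$ can have relative to an arbitrary new vertex. First I would set up the following notion: given a graph $G$ satisfying $\Phi$ with witnesses $u_1,\ldots,u_k$, and given any vertex $w \in V(G)$, the \emph{profile} of $w$ is the pair describing (i) which $u_i$ equal $w$ (if any) and (ii) for the remaining $i$, whether $u_i \sim w$. There are at most $2^k \cdot$(a few) possible profiles; more carefully, either $w$ is one of the $u_i$ (at most $k$ choices) or $w$ is adjacent/non-adjacent to each $u_i$ in one of $2^k$ ways. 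The key observation is that the truth of $\forall\bar y\,\Psi$ depends, for tuples $\bar y$ drawn from $V(G)$, only on the multiset of profiles of the entries of $\bar y$ together with the induced subgraph on $\{y_1,\ldots,y_l\}$ — but actually one must be slightly more careful, since $\Psi$ can also refer to adjacencies \emph{among} the $y_j$. So the right invariant is: the isomorphism type of the labeled graph on $\{x_1,\ldots,x_k\}\cup\{y_1,\ldots,y_l\}$.

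The heart of the argument is a Ramsey-type pigeonhole. Suppose $G \models \Phi$ with $v(G) \ge 2^k 4^l$. Among the $v(G) - k \ge 2^k 4^l - k$ vertices outside the witness set (or among all of them), at least $2^k 4^l / 2^k = 4^l$ have the same profile with respect to $\bar u$ — call this common ``outside'' color class $W$, with $|W|$ large. Now I would invoke Ramsey's theorem: within $W$, color each pair by whether it is an edge; if $|W|$ is at least the Ramsey number $R(l,l)$ (and $4^l \ge R(l,l)$ is exactly the Erd\H os--Szekeres-style bound $\binom{2l-2}{l-1} \le 4^l$, which is why the constant $2^k 4^l$ appears), we find a homogeneous subset $W_0 \subseteq W$ of size $l$. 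Then the set $U' = \{u_1,\ldots,u_k\} \cup W_0$ induces a graph on at most $k+l$ vertices. I would argue that $G[U']$ still satisfies $\Phi$: the same $u_i$ serve as witnesses, and for any choice of $\bar y$ inside $U'$, the labeled-graph isomorphism type of $\bar u\,\bar y$ is realizable by some $\bar y$ inside $G$ (because the profiles and the homogeneity pin down all adjacencies), so $\Psi(\bar u,\bar y)$ holds by the truth of $\Phi$ on $G$. Hence $v(G[U']) \in \mathrm{Spectrum}(\Phi)$ and $v(G[U']) \le k+l$; combined with Lemma~\ref{lem:BS} this shows the spectrum, if nonempty, contains a number below $k+l$ (or $\le k$, even better).

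For the cofiniteness half, the plan is the reverse padding direction: start from a graph $H$ on $m \ge k+l$ vertices with $H \models \Phi$, witnesses $u_1,\ldots,u_k$; I want to produce, for every $m' > m$, a graph on $m'$ vertices satisfying $\Phi$. The idea is to add new vertices cloned from an existing vertex $w$ whose profile, together with the homogeneity structure obtained above, makes duplication safe. More concretely, after passing to the homogeneous picture, the set $W$ of vertices of a fixed profile is either a clique or an independent set (we can arrange this by the Ramsey step, possibly shrinking but we only need one surviving copy to clone from), and adding one more vertex to $W$ with exactly the same adjacencies to everything — i.e., a twin of $w$ within $W$ — does not change which labeled-graph types on $\bar u \cup \bar y$ are realizable, so $\Psi(\bar u,\bar y)$ continues to hold for all $\bar y$. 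Iterating, every $m' \ge m$ lies in the spectrum, so the spectrum is cofinite, and chasing the bounds shows it contains all $n \ge k+l$ once it is nonempty.

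The main obstacle I anticipate is getting the bookkeeping of \emph{labeled-graph isomorphism types} exactly right in the step ``$G[U']$ still satisfies $\Phi$'': one must check that for every tuple $\bar y \in U'$ there genuinely is a tuple $\bar y^\ast$ in the original $G$ inducing the same type over the witnesses, which requires that the profile classes be rich enough and that repeated coordinates in $\bar y$ (allowed, since the $y_j$ need not be distinct) and coordinates equal to some $x_i$ are handled. This is why one needs a homogeneous set of size exactly $l$ (so any length-$l$ tuple, with repetitions, from $W_0$ together with $\bar u$ realizes a type already present in $G$) rather than merely a large one. The constants $2^k$ (number of profiles) and $4^l \approx \binom{2l-2}{l-1}$ (Erd\H os--Szekeres bound for the two-coloring Ramsey number $R(l,l)$) multiply to give precisely the threshold $2^k 4^l$ in the statement, and the lower threshold $k+l$ falls out as the size of the witness set plus the homogeneous set. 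I would organize the write-up as: (1) define profiles and labeled types; (2) prove the downward step via Ramsey, yielding $\le k+l$; (3) prove the upward/twin step, yielding cofiniteness and the ``all $n \ge k+l$'' conclusion; (4) combine.
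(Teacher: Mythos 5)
Your argument is correct and is essentially the paper's proof: pigeonhole over the $2^k$ adjacency profiles to the witness set, Ramsey (with $R(l)<4^l$) to extract a homogeneous set $X$ of $l$ vertices of a common profile, pass to $G[U\cup X]$ (which satisfies $\Phi$ automatically as an induced subgraph containing the witnesses --- no type bookkeeping is needed for that step), and then clone the $l$ twins in $X$ to hit every order $\ge k+l$. The homogeneity and the size-$l$ choice are genuinely needed only in the cloning step, to map any $l$-tuple of the enlarged graph injectively back onto the original twins, exactly as you note at the end.
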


\begin{proof}
Assume that $\Phi$ is true on a graph $G$ with at least $2^k4^l$ vertices
and let $U\subset V(G)$ consist of vertices $x_1,\ldots,x_k$ whose existence is claimed
by $\Phi$. Recall that \emph{Ramsey number} $R(l)$ is equal to the minimum $R$
such that every graph with $R$ or more vertices contains a homogeneous
set of $l$ vertices. As it is well known, $R(l)<4^l$. By the Pigeonhole
Principle, 
$V(G)\setminus U$ contains a subset $W$ of $R(l)$ vertices with the same
neighborhood within $U$. Let $X$ be a homogeneous set of $l$
vertices in $G[W]$. 
Note that $G[U\cup X]$ satisfies $\Phi$ and that $X$ is a set of $l$ twins
in this graph. Cloning the twins, we can obtain a graph that satisfies 
$\Phi$ and has any number of vertices larger than $k+l$.
\end{proof}

After this small historical excursion, let us turn back to the definability
with no quantifier alternation. First of all, note that
even without quantifier alternation all graphs remain definable 
(see \refeq{eq:def}) and, hence, the parameter $D_0(G)$ is well defined.

\begin{theorem}[Pikhurko, Spencer, and
  Verbitsky~\cite{PSV}]\label{thm:D0comp}
$D_0(G)$ is a computable parameter of a graph.
\end{theorem}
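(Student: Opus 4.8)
The plan is to compute $D_0(G)$ by reducing it, through the distinguishability-vs.-definability dictionary of Section~\ref{s:prelim}, to a search over non-isomorphic graphs, and then to cut that search down to a finite, effectively bounded one using Ramsey's theorem (Theorem~\ref{thm:ramsey}). First I would record the alternation-$0$ analog of Lemma~\ref{lem:ddd}.1: conjunctions and negations of alternation-$0$ sentences are again alternation-$0$, and by Theorem~\ref{thm:inequi2}.1 only finitely many sentences of a given quantifier depth are pairwise inequivalent, so the infinite conjunction in the proof of Lemma~\ref{lem:ddd}.1 can be pruned to a finite alternation-$0$ one; hence $D_0(G)=\max_{H\not\cong G}D_0(G,H)$. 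Since the defining sentence~\refeq{eq:def} has alternation number $0$, also $D_0(G)\le v(G)+1$. Writing $G\equiv^0_d H$ for ``$D_0(G,H)>d$'', i.e.\ ``no alternation-$0$ sentence of quantifier depth $\le d$ distinguishes $G$ from $H$'', this gives $D_0(G)=\min\{\,d\le v(G)+1:\ \text{no }H\not\cong G\text{ satisfies }G\equiv^0_d H\,\}$. By Pezzoli's theorem (Theorem~\ref{thm:games}.2), $G\equiv^0_d H$ holds iff Duplicator wins the $0$-alternation $d$-round Ehrenfeucht game on $(G,H)$, a finite game, so $\equiv^0_d$ is decidable on given finite graphs. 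It therefore remains only to decide, for each $d\le v(G)+1$, whether some $H\not\cong G$ lies in the $\equiv^0_d$-class of $G$, and for that it is enough to have an a-priori bound $N=N(d)$, computable from $d$, on the order of such an $H$.

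The heart of the proof is the claim that the $\equiv^0_d$-class of $G$ is exactly the class of finite models of an alternation-$0$ sentence whose prenex Bernays--Sch\"onfinkel form~\refeq{eq:BS} has parameters $k,l$ bounded by a function of $d$ computable from $d$. Indeed, whether $G\equiv^0_d H$ depends only on the finite ``$d$-round $0$-alternation game type'' of a graph: the type of a vertex records, down to nesting depth $d$, which induced configurations built around it can be realised, but only in \emph{monotone} form --- at each level one records the \emph{set} of sub-types that occur, never the ones that do not. Having a prescribed game type $\tau$ is then expressible as a Boolean combination of purely-existential and purely-universal formulas of quantifier (nesting) depth $\le d$: the existential conjuncts assert that vertices realising each sub-type demanded by $\tau$ exist, and the universal conjuncts forbid vertices whose sub-types stray outside what $\tau$ allows. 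Such a Boolean combination is an alternation-$0$ sentence, hence equivalent to one in the Bernays--Sch\"onfinkel class (cf.\ the remark preceding Lemma~\ref{lem:BS}); and since there are only boundedly (tower-in-$d$) many game types, this sentence --- and with it $k$ and $l$ --- has length bounded computably in $d$.

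With this in hand, Ramsey's theorem (Theorem~\ref{thm:ramsey}) applies: the spectrum of that sentence, which is precisely the set of orders of graphs in the $\equiv^0_d$-class of $G$, is either finite with all elements below $2^k4^l$, or cofinite, containing every integer $\ge k+l$. Hence if the $\equiv^0_d$-class of $G$ contains a graph not isomorphic to $G$ at all, it contains one of order at most $N:=\max(2^k4^l,\,v(G))+1$: in the cofinite case it contains a graph of order exactly $N>v(G)$ (automatically non-isomorphic to $G$), and in the finite case every member has order below $2^k4^l\le N$. Since $N$ is computable from $d$, the test is now decidable: enumerate all $H$ with $v(H)\le N$ and $H\not\cong G$, and check whether $G\equiv^0_d H$ by the game from the first step. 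Running this for $d=1,2,\dots,v(G)+1$ and returning the first $d$ for which no such $H$ exists computes $D_0(G)$.

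The hard part will be the middle step. The reduction, Pezzoli's characterisation, the bound $D_0(G)\le v(G)+1$, and the Ramsey dichotomy are all used essentially off the shelf, and the finiteness of the set of game types of a given depth is a routine induction. What needs care is isolating the \emph{correct} monotone notion of ``$d$-round $0$-alternation game type'' and proving it captures $\equiv^0_d$ \emph{exactly}: the naive invariant ``$G$ and $H$ have the same induced subgraphs of order $\le d$'' is strictly too coarse, because an alternation-$0$ sentence of depth $d$ can, by conjoining overlapping small patterns around a common vertex, ``see'' configurations much larger than $d$, whereas the full non-monotone nested type is strictly too fine, because e.g.\ ``some vertex has a neighbour but no non-neighbour'' is not alternation-free. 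One must also verify that ``having a given game type'' is genuinely an alternation-$0$ sentence with Bernays--Sch\"onfinkel parameters bounded computably in $d$. Once this is established, Ramsey's theorem does the rest.
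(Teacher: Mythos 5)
Your argument is correct in outline, but it takes a genuinely different and noticeably heavier route than the paper's. The paper does not pass through $D_0(G,H)$, games, or Ramsey's theorem at all: it effectively enumerates a finite set $U_m$ of representatives of all $0$-alternation sentences of quantifier depth $m$ up to equivalence, and for each $\Upsilon\in U_m$ satisfied by $G$ it decides whether $\Upsilon$ defines $G$ by rewriting $\Upsilon$ in Bernays--Sch\"onfinkel form with $k$ existential quantifiers and invoking only the induced-subgraph argument of Lemma~\ref{lem:BS}: any model on more than $k+1$ vertices yields models of orders $k$ and $k+1$ (keep the existential witnesses; the universal part survives passage to induced subgraphs), one of which is non-isomorphic to $G$, so it suffices to test all $G'\not\cong G$ on at most $k+1$ vertices. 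Your route is sound --- the reduction to $\max_{H\not\cong G}D_0(G,H)$, the Pezzoli game test, and the case analysis with the bound $N$ all fit together --- but the step you single out as the heart of the proof is both harder than it needs to be and not actually carried out. You do not need any notion of monotone game type: the $0$-alternation sentences of depth at most $d$ are closed under negation (in negation normal form, negation swaps purely existential and purely universal blocks and dualizes the monotone Boolean combination) and there are only finitely many of them up to equivalence, computably enumerable; hence the $\equiv^0_d$-class of $G$ is defined outright by the conjunction of all those true in $G$, with Bernays--Sch\"onfinkel parameters computable from $d$. With that observation your proof closes, though you are then invoking the full finite-or-cofinite spectrum theorem (Theorem~\ref{thm:ramsey}) where the downward closure of Lemma~\ref{lem:BS} already suffices. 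What each approach buys: the paper's is shorter and purely syntactic; yours isolates the semantic equivalence $\equiv^0_d$ and would adapt to any fragment admitting a game characterization together with a small-model property.
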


\begin{proof}
Given $m\ge0$, one can algorithmically construct a finite set $U_m$
consisting of 0-alternating sentences of quantifier depth
$m$ so that every 0-alternating sentence of quantifier depth $m$
has an equivalent in $U_m$. To decide if $D_0(G)\le m$, for each
sentence $\Upsilon\in U_m$ satisfied by $G$ we have to check
if $\Upsilon$ can be satisfied by another graph $G'$. We first reduce $\Upsilon$
to an equivalent statement $\Psi$ in the Bernays-Sch\"onfinkel class.
Suppose that $\Psi$ has $k$ existential quantifiers. It suffices to
test all $G'\not\cong G$ with at most $k+1$ vertices. Indeed, if
$\Phi$ is true on a graph with more than $k+1$ vertices then,
by the argument used to prove Lemma \ref{lem:BS}, $\Phi$ is as well true on 
its induced subgraphs with $k+1$ and $k$ vertices (one of which
is not isomorphic to $G$).
\end{proof}

We cannot prove anything similar for $D(G)$ or even $D_1(G)$.
The proof of Theorem \ref{thm:D0comp} is essentially based
on the decidability of whether or not a 0-alternating sentence
is defining for some graph. However, in general this problem 
is undecidable (see~\cite{PSV}).

For the logic with no quantifier alternation, the succinctness
function has much more regular behavior.

\begin{theorem}[Spencer, Pikhurko, and Verbitsky~\cite{PSV2}]\label{thm:q0}
 $$ 
 \log^*n-\log^*\log^*n-2\le q_0(n)\le \log^*n+22.
 $$
\end{theorem}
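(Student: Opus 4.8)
The plan is to establish the two bounds separately. The lower bound $q_0(n)\ge\log^*n-\log^*\log^*n-2$ is immediate from the general machinery already in place: by Theorem \ref{thm:inequi}, at most $\tower(k+\log^*k+2)$ graphs have logical depth (in full first-order logic, hence \emph{a fortiori} in the $0$-alternation fragment) at most $k$; running the same Pigeonhole argument as in the proof of Theorem \ref{thm:ssf} — but now without needing to smooth, since we only want a statement about some bad order $m\le n$ being inherited — gives the claimed inequality verbatim. In fact $q_0(n)\ge q(n)\ge q^*(\cdot)$-type reasoning shows the lower bound is nothing new; the content of the theorem is entirely in the matching upper bound $q_0(n)\le\log^*n+22$.

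For the upper bound, the strategy is to exhibit, for \emph{every} $n$, a graph $G$ on $n$ vertices with a defining sentence that has \emph{no quantifier alternation} and quantifier depth at most $\log^*n+22$. The natural candidates are refinements of the three constructions of Section \ref{ss:3constr}: the padding operation of Section \ref{sss:padding} and the unite-and-conquer operation of Section \ref{sss:unite} both produce, from a $\E^*\A^*$-defined (or $\E^*\A^*\E^*\A^*$-defined) graph, a larger graph of the same syntactic shape with depth larger by a bounded additive constant. The first task is therefore to check that one can arrange the iteration so that the defining sentences stay in the $0$-alternation fragment (equivalently, after Lemma \ref{lem:BS}, in the Bernays–Schönfinkel class $\E^*\A^*$) rather than merely in $\E^*\A^*\E^*\A^*$. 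This requires reworking the relativization trick of Section \ref{sss:padding}: the sentence $\Phi_{G^*}$ written there uses nested $\E$ inside $\A$ inside $\E$, so one must instead encode the padding data by universally quantified clauses only, after the initial existential block that guesses the ``center'' $c$ and a copy of $G$ inside $N(c)$. The key observation to make this work is that the target graphs can be taken to be \emph{rigid} (asymmetric) and of diameter $2$ (as in Section \ref{sss:unite}), so that every vertex is uniquely determined by a first-order property of bounded depth, and membership in the ``padded'' part, the ``core'' part, etc., can be pinned down by universal axioms once the few existentially-guessed anchor vertices are fixed. Then the depth increment per iteration step is an absolute constant, and since $v(G^*)$ grows at least like $2^{v(G)}$, after $\log^*n+O(1)$ iterations one covers all $n$ (the extra work of getting \emph{every} $n$, not just tower values, is handled as in Theorem \ref{thm:BestTree}, by padding with a controlled number of pendant or universal vertices that can be described by $\A$-axioms).

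The second task, which I expect to be the main obstacle, is the bookkeeping of the additive constant: getting it down to $22$ rather than just $O(1)$. Each iteration of the modified construction costs some small number of quantifiers for the anchor-guessing existential block and for the universal verification clauses, and one must also pay for (i) converting the ``base'' class of small rigid diameter-$2$ graphs into $0$-alternation form, (ii) the final adjustment to hit an arbitrary $n$, and (iii) the overhead of expressing ``$N(c)$ induces a copy of the previous graph'' purely by relativized universal clauses referring to the already-anchored vertices. One has to show these costs telescope: the per-step increment must be an \emph{absolute} constant (not growing with the iteration count), which is why rigidity and the diameter-$2$ property are essential — they let the verification be ``local'' and of fixed depth. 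Carefully tallying these contributions (and absorbing small cases by hand) is what pins the constant at $22$; this is the routine-but-delicate part, and I would follow the accounting in \cite{PSV2} to carry it out rather than re-deriving each increment from scratch. Finally, one checks that the resulting sentences genuinely have \emph{zero} quantifier alternations — i.e. are monotone Boolean combinations of purely existential and purely universal sentences — by noting that the existential anchor-guessing and the universal verification can be split into a conjunction of a $\E^*$-sentence (asserting existence of the anchors with the right adjacencies among themselves) and an $\A^*$-sentence (asserting every vertex fits one of the finitely many described roles), which is exactly the Bernays–Schönfinkel shape of Lemma \ref{lem:BS} and hence $0$-alternation.
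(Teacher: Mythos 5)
There is a genuine gap, and it is in the part you dismiss as ``immediate'': the lower bound. The Pigeonhole argument from the proof of Theorem \ref{thm:ssf} cannot give $q_0(n)\ge\log^*n-\log^*\log^*n-2$ for \emph{every} $n$. That argument only shows that, since fewer than $n$ graphs are definable with depth $\le k$, \emph{some} order $m\le n$ carries no such graph, i.e.\ it lower-bounds the smoothed function $q^*(n)=\max_{m\le n}q(m)$; it says nothing about the particular order $n$. Indeed the survey stresses (Theorem \ref{thm:superrec} and \refeq{eq:intro0}) that $q(n)$, and even $q_3(n)$, drops below any recursive monotone function infinitely often, so a pointwise log-star lower bound is emphatically \emph{not} ``nothing new'' --- it is precisely the quantitative separation between the $0$-alternation fragment and the full logic, and it is the part the survey proves in detail. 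The correct argument is specific to zero alternations: a $0$-alternating sentence of depth $k$ defining $G$ has, by the $L_0$-vs-$D_0$ analogue of Theorem \ref{thm:dvsl}, an equivalent of length less than $\tower(k+\log^*k+2)$; converting it to Bernays--Sch\"onfinkel form and applying Lemma \ref{lem:BS} (a satisfiable $\E^*\A^*$ sentence has a model of size at most its number of existential quantifiers, hence at most its length) forces $n<\tower(k+\log^*k+2)$, because the sentence is true \emph{only} on $G$. This pins down $n$ itself and yields the stated bound by a short calculation. Your reasoning ``$q_0(n)\ge q(n)\ge q^*(\cdot)$-type'' is not valid: $q(n)\ge q^*(n)$ is false in general (the inequality goes the other way), which is exactly why the smoothing was needed in Theorem \ref{thm:ssf}.

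On the upper bound your outline is broadly consistent with the paper, which itself only states that the bound comes from a more delicate $0$-alternation version of the unite-and-conquer construction and defers all details (including the constant $22$) to \cite{PSV2}. Your discussion of why the padding sentence $\Phi_{G^*}$ as written is not $0$-alternating, and of the need to recast the verification as a conjunction of a purely existential and a purely universal sentence, identifies the right obstacle; but since you also ultimately ``follow the accounting in \cite{PSV2}'', this half is a plausible sketch rather than a proof, which matches the survey's own level of detail.
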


The lower bound has to be contrasted to Theorem \ref{thm:superrec}.
It gives us a kind of a quantitative confirmation of the fact that
the 0-alternation fragment of first-order logic is strictly less powerful. 
The upper bound improves upon the alternation number in \refeq{eq:q3logstar}
attaining the optimum. The proof of this bound is based on the 
unite-and-conquer construction in Section \ref{sss:unite},
where more subtle analysis is needed in order to achieve the zero alternation
number. All the details can be found in~\cite{PSV2}.

\begin{proofof}{Theorem \ref{thm:q0} (lower bound)}
Given $n$, denote $k=q_0(n)$ and fix a graph $G$ on $n$ vertices such that
$D_0(G)=k$. The same relation between $L_a(G)$
and $D_a(G)$ as in Theorem \ref{thm:dvsl} is proved in \cite{PSV2}. 
By this result,
$G$ is definable by a 0-alternating sentence
$\Upsilon$ of length less than $\tower(k+\log^*k+2)$. Convert
$\Upsilon$ to an equivalent sentence $\Phi$ in the Bernays-Sch\"onfinkel class
and note that $D(\Phi)\le L(\Upsilon)$. 
By Lemma \ref{lem:BS}, $\Phi$ must be true on some graph with
at most $D(\Phi)$ vertices. Since $\Phi$ is true only on $G$, we have 
 $$
 n\le D(\Phi)\le L(\Upsilon)< \tower(k+\log^*k+2).
 $$
 This implies that 
 \begin{equation}\label{n1}
 \log^* n\le k+\log^*k+2.
 \end{equation}
 Suppose on the contrary to our claim that 
$k\le\log^*n-\log^*\log^*n-3$. Then $\log^*k\le \log^*\log^*n$ and \refeq{n1} implies
that 
 $$
 \log^* n \le (\log^*n-\log^*\log^*n-3) + \log^*\log^*n+2,
 $$
 which is a contradiction, proving the claimed bound.
\end{proofof}

Using the lower bound of Theorem \ref{thm:q0} and the absence 
of any recursive linkage between $q_3(n)$ and $n$, we are able to
show a superrecursive gap between two parameters in the logical
depth hierarchy
$$
D(G)\le D_3(G)\le D_2(G)\le D_1(G)\le D_0(G).
$$

\begin{theorem}[Pikhurko, Spencer, and
  Verbitsky~\cite{PSV}]\label{thm:dvsd0}
There is no total recursive function $f$ such that
$D_0(G)\le f(D_3(G))$ for all graphs~$G$.
\end{theorem}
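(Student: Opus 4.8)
The plan is to combine the superrecursive gap between $q_3^{\prenex}(n)$ and $n$ (Theorem~\ref{thm:superrec}) with the recursive lower bound $q_0(n)\ge\log^*n-\log^*\log^*n-2$ (Theorem~\ref{thm:q0}). The key observation is that a recursive function controlling $D_0(G)$ in terms of $D_3(G)$ would, via these two facts, yield a recursive function controlling $n=v(G)$ in terms of $D_3(G)$ on the graphs witnessing Theorem~\ref{thm:superrec}, contradicting that theorem.

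First I would argue for contradiction: suppose there is a total recursive $f$ with $D_0(G)\le f(D_3(G))$ for all graphs $G$. For each $n$, fix a graph $G_n$ on $n$ vertices achieving the minimum in the definition of $q_3(n)$ (or, to stay in prenex form, of $q_3^{\prenex}(n)$), so $D_3(G_n)\le D_3^{\prenex}(G_n)=q_3^{\prenex}(n)$. Since $D_0(G_n)\ge q_0(n)$ by definition of $q_0$, and $D_0(G_n)\le f(D_3(G_n))\le f(q_3^{\prenex}(n))$ (using that $f$ is nondecreasing — or replacing $f$ by $n\mapsto\max_{m\le n}f(m)$, still total recursive), Theorem~\ref{thm:q0} gives
$$
\log^*n-\log^*\log^*n-2\le q_0(n)\le f(q_3^{\prenex}(n)).
$$
Now apply $\tower$ to both sides. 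Since $\log^*n-\log^*\log^*n-2\to\infty$ and in fact $\tower(\log^*n-\log^*\log^*n-2)\ge n^{1-o(1)}$ (more crudely, $\tower(\log^* n - \log^*\log^* n - 2)\to\infty$ faster than any recursive rescaling can be undone — one makes this precise by noting $\log^* m\le \ell$ implies $m\le\tower(\ell)$), we obtain a total recursive function, namely $g(k)=\tower(f(k)+\log^*\log^*(\text{something})+2)$ suitably defined, with $g(q_3^{\prenex}(n))\ge n$ for all $n$. This contradicts Theorem~\ref{thm:superrec}.

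The only delicate point is bookkeeping: Theorem~\ref{thm:superrec} is about $q_3^{\prenex}$, whereas the hypothesis of the present theorem is about $D_3(G)$ (non-prenex). This is harmless because $D_3(G)\le D_3^{\prenex}(G)$ always, so the minimizer $G_n$ for $q_3^{\prenex}$ satisfies $D_3(G_n)\le q_3^{\prenex}(n)$, which is all the argument needs; one never needs the reverse inequality. The other routine point is converting the chain $\log^*n-\log^*\log^*n-2\le f(q_3^{\prenex}(n))$ into $n\le g(q_3^{\prenex}(n))$: solving $\log^*n - \log^*\log^* n \le t$ for $n$ gives $n\le\tower(t+\log^*\log^*n+2)$, and since $\log^*\log^* n\le\log^* n\le t+\log^*\log^* n+2$ one bounds $\log^*\log^* n$ recursively in $t$ by a fixed-point / monotonicity argument, so $n$ is bounded by a total recursive function of $t=f(q_3^{\prenex}(n))$, hence of $q_3^{\prenex}(n)$. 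I expect this last estimate to be the only spot requiring care; everything else is a direct substitution. In fact the cleanest route is to avoid inverting $\log^*$ altogether: from $\log^* n \le f(q_3^{\prenex}(n)) + \log^*\log^* n + 2$ and $\log^*\log^* n\le \log^* n$, if one had $n$ growing we could derive a contradiction only asymptotically, so instead one directly defines $g(t)=\tower(2t+3)$ and checks $g(q_3^{\prenex}(n))\ge n$ for all but finitely many $n$ (absorbing the finitely many exceptions into the constant), which suffices to contradict Theorem~\ref{thm:superrec}.
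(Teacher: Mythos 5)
Your proposal is correct and follows essentially the same route as the paper: assume $f$ exists, take a minimizer $G_n$ for the $3$-alternation succinctness function, chain $f(q_3(n))=f(D_3(G_n))\ge D_0(G_n)\ge q_0(n)\ge\log^*n-\log^*\log^*n-2$ via the lower bound of Theorem~\ref{thm:q0}, and invert to get $\tower(2f(q_3(n)))\ge n$, contradicting Theorem~\ref{thm:superrec}. The bookkeeping you flag (monotonizing $f$, the $q_3$ versus $q_3^{\prenex}$ distinction, and absorbing $\log^*\log^*n$ into the constant $2t+O(1)$) is exactly what the paper's one-line estimate $\tower(2f(q_3(n)))\ge n$ silently handles.
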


\begin{proof}
Assume that such an $f$ exists.
Let $G_n$ be a graph for which $D_3(G_n)=q_3(n)$. Then
$$
f(q_3(n))=f(D_3(G_n))\ge D_0(G_n)\ge q_0(n)\ge \log^*n-\log^*\log^*n-2.
$$
This implies that $\tower(2f(q_3(n)))\ge n$, contradictory to Theorem~\ref{thm:superrec}.
\end{proof}

We have seen weighty evidences that the 0-alternating sentences are strictly
less expressive than the sentences of the same quantifier depth with
quantifier alternations. It is quite surprising that, nevertheless, sometimes
we can prove for $D_0(G)$ upper bounds which are just a little
worse than the best known bounds for $D(G)$. The following results should be
compared with Theorems \ref{thm:dtrees}, \ref{thm:PVV}, and~\ref{thm:wdrgraph}.

\begin{theorem}\label{thm:allzero}\mbox{}
\begin{bfenumerate}
\item
{\rm(Bohman et al.~\cite{BFL*})}
Let $D_0(n,d)$ denote the maximum of $D_0(T)$ over all
trees with $n$ vertices and maximum degree at most $d=d(n)$.
If both $d$ and $\log n/\log d$ tend to infinity, then
$
D_0(n,d)\le (1+o(1)) \frac{d \log n}{\log d}
$.
\item
{\rm(Pikhurko, Veith, and Verbitsky~\cite{PVV})}
$D_0(G,H)\le\frac{n+5}2$ for all non-isomorphic graphs
$G$ and $H$ with the same number of vertices~$n$.
\item
{\rm(Kim et al.~\cite{KPSV})}
$D_0(\rgraph)\le(2+o(1))\log n$ with high probability.
\end{bfenumerate}
\end{theorem}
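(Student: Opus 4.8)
The plan is to deduce each of the three upper bounds from the corresponding bound in the less restrictive logic --- Theorem~\ref{thm:dtrees} for Part~1, the Weak Sieve Strategy behind Theorem~\ref{thm:irred} (cf.\ Theorem~\ref{thm:PVV}) for Part~2, and Theorem~\ref{thm:wdrgraph} for Part~3 --- by rearranging the underlying Spoiler strategy so that Spoiler never switches between the two graphs. Indeed, by Lemma~\ref{lem:ddd}.1 in the $0$-alternation logic and Theorem~\ref{thm:games}.2 with $k=0$, each statement reduces to showing that for every $H\not\cong G$ (of any order) Spoiler has a winning strategy in $\game_r(G,H)$, with $r$ as claimed, in which he commits in the first round to one of the two graphs and makes all of his moves there. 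Distinguishing $G$ from an $H$ of a different order is cheap in each case, so the real content lies in handling $H$ with $v(H)=v(G)$.

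For Part~2 I would start from the Weak Sieve Strategy of Lemma~\ref{lem:weaksieve}. A preliminary reduction removes twins: if contracting twin classes produces non-isomorphic reduced graphs we pass to these (they are twin-free), and if the reduced graphs coincide then some twin class has different sizes in $G$ and $H$ and Spoiler wins by a purely existential or purely universal count in $\le n/2+O(1)$ moves. So assume $G$ is twin-free, hence has a weak sieve $X$ with $|X|\le(n-1)/2$. Spoiler pebbles all of $X$; the sieve property forces Duplicator's reply on any subsequently pebbled vertex of $\sieve(X)$ to be unique, so every discrepancy that Lemma~\ref{lem:weaksieve} exploits can be brought out by naming one or two further vertices of $G$ --- with the one caveat that the single graph-switch in that proof (naming excess vertices of $H$ in one adjacency class) must instead be pre-empted by having Spoiler start on that side, which for a single pair he is free to do after inspecting both weak sieves. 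The round count remains $|X|+O(1)\le(n+5)/2$.

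For Parts~1 and~3 the same conversion costs a factor of about $2$. For Part~3, $\rgraph$ has whp a weak sieve of size $\log n-\log\ln n+\omega$ by Lemma~\ref{lem:wsievergraph}; feeding this into the $0$-alternation form of the Weak Sieve Strategy yields $D_0(\rgraph)\le 2(\log n-\log\ln n+\omega)+O(1)=(2+o(1))\log n$ whp, the doubling arising because Spoiler can no longer move to $H$ to exploit a surplus of $H$-vertices sharing an adjacency pattern and must instead force the discrepancy out on one side at roughly twice the cost. For Part~1 I would run the separator strategy behind Theorem~\ref{thm:dtrees}: Spoiler pebbles a separator of $T$ and zooms into a branch on which $T$ and $H$ disagree, recursing. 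In the unrestricted game one such step, reducing the order by a factor of about $d$, costs about $d/2$ rounds because the extremal configuration has two branch types each of multiplicity about $d/2$; disallowing graph switches forces about $d$ rounds for the same step, and iterating $O(\log n/\log d)$ times gives $D_0(n,d)\le(1+o(1))\,d\log n/\log d$.

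The main obstacle in all three parts is precisely the removal of the graph switch: one must check, case by case, that wherever the known strategy has Spoiler jump to the other graph he can instead either expose the same local difference (after a bounded number of extra moves) on the side he started on, or, for a single opponent $H$, safely have committed to the other side from the outset. For the tree bound this bookkeeping is the delicate step, since one has to verify that exactly a factor of $2$ --- and no more --- is lost when the branch-multiplicity argument of Theorem~\ref{thm:dtrees} is carried out within one graph.
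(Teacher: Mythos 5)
The first thing to note is that the survey states Theorem~\ref{thm:allzero} without proof: all three parts are quoted from the cited papers \cite{BFL*}, \cite{PVV}, \cite{KPSV}, so there is no internal argument to match your proposal against. Judged on its own terms, your proposal correctly identifies the source results, the game-theoretic reduction (Lemma~\ref{lem:ddd} in the $0$-alternation logic together with Theorem~\ref{thm:games}.2 at $k=0$), and the right quantitative accounting --- in particular the factor of~$2$ separating Parts~1 and~3 from Theorems~\ref{thm:dtrees}.2 and~\ref{thm:wdrgraph}. But the single step that carries all the content, namely eliminating the quantifier alternations, is asserted rather than proved in each part. This cannot be waved through as a ``rearrangement'': the survey's own Theorem~\ref{thm:dvsd0} shows a superrecursive gap between $D_3(G)$ and $D_0(G)$, so there is no generic way to trade an alternating Spoiler strategy for a non-alternating one at bounded cost, and each of the three parts requires a genuinely new argument (which is why the survey flags these bounds as ``quite surprising'').

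Concretely, in Part~2 your claim that Spoiler ``is free to [commit to one side] after inspecting both weak sieves'' does not address the actual difficulty: in Lemma~\ref{lem:weaksieve} the jump to $H$ (selecting two vertices of $W'_z$) occurs \emph{adaptively}, at a point determined by Duplicator's replies to the sieve $X$, so whether and where a switch is needed is not known to Spoiler in advance; a winning $1$-alternation strategy for a fixed pair $(G,H)$ does not yield a winning $0$-alternation strategy by choosing the starting graph cleverly. Your twin-contraction preprocessing is also unjustified on two counts: a strategy on the contracted graphs does not automatically lift to the originals (Duplicator may answer inside twin classes inconsistently), and when the contractions coincide the discrepant twin class can have size far exceeding $n/2$, so the claimed $n/2+O(1)$ cost for that case needs an argument (the all-twins examples such as disjoint unions of cliques are exactly the extremal ones for Theorem~\ref{thm:PVV}). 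Parts~1 and~3 are, as you concede, only heuristics: the statement that forbidding switches doubles the cost of the branch-multiplicity step (trees) or of exposing a surplus adjacency class (the weak sieve in $\rgraph$), and loses nothing more, is precisely what must be verified and is where the proofs in \cite{BFL*} and \cite{KPSV} do their work.
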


We conclude this subsection with a demonstration of somewhat surprising strength
of the Bernays-Sch\"onfinkel class. We say that a sentence $\Phi$ \emph{identifies}
a graph $G$ if it distinguishes $G$ from any non-isomorphic graph of the same order.
Let $\bs G$ denote the minimum quantifier depth of $\Phi$ in the Bernays-Sch\"onfinkel 
class identifying $G$. We already discussed the identification problem in 
Section \ref{sss:identif}. Note, however, a striking difference. While
in Section \ref{sss:identif} we could make the conjunction of all sentences $\Phi_H$
distinguishing $G$ from another graph $H$ of the same order, now we have to
distinguish $G$ from all such $H$ by a single prenex sentence!

\begin{theorem}[Pikhurko and Verbitsky~\cite{PVe}]\mbox{}
\begin{bfenumerate}
\item
For any graph $G$ of order $n$,
we have $\bs G\le\frac34\,n+\frac32$.
\item
With high probability we have $\bs{\rgraph}\le (2+o(1))\log n$.
Moreover, the latter bound holds true even if the number of universal
quantifiers in an identifying formula is restricted to~2.
\end{bfenumerate}
\end{theorem}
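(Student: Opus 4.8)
The plan is to reduce both parts to the single combinatorial notion of a \emph{sieve}. Call $S\subseteq V(G)$ a sieve of $G$ if distinct vertices $u,v\in V(G)\setminus S$ always satisfy $N(u)\cap S\ne N(v)\cap S$. I would first establish the generic estimate $\bs G\le\min\{|S|:S\text{ is a sieve of }G\}+2$, using only two universal quantifiers, by the following construction. Given a sieve $S$ with $|S|=k$, list the $n-k$ traces $P_1,\dots,P_{n-k}$ that the vertices of $V(G)\setminus S$ induce on $S$, together with the adjacency in $G$ between the two vertices carrying $P_i$ and $P_j$ for every $i,j$; then form the sentence $\E x_1\dots\E x_k\,\A y_1\A y_2\,\Psi(\bar x,\bar y)$ whose matrix says that $x_1,\dots,x_k$ are distinct and induce $G[S]$, that every $y_1$ is some $x_i$ or realises exactly one $P_j$, that two unpebbled vertices realising the same $P_j$ coincide, and that two unpebbled vertices realising $P_i$ and $P_j$ are adjacent exactly as recorded. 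A graph $H$ of order $n$ satisfying this has $k$ distinct vertices inducing $G[S]$ and, by the count, exactly $n-k$ further vertices, one per trace, with all remaining adjacencies forced, so $H\cong G$. Since $\bs G=\bs{\compl G}$ (complement the adjacency atoms of the matrix), the bound holds with $\compl G$ in place of $G$ as well.

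For Part~2 it then suffices to show that $\rgraph$ has a sieve of size $(2+o(1))\log n$ whp. I would fix $\omega=\omega(n)\to\infty$ with $\omega=o(\log n)$, take $S$ to be an arbitrary set of $\lceil 2\log_2 n+\omega\rceil$ vertices, and note that for a fixed pair $u,v\notin S$ one has $N(u)\cap S=N(v)\cap S$ with probability $2^{-|S|}$; a union bound over the $\binom n2$ pairs gives failure probability at most $\binom n2 2^{-|S|}=O(2^{-\omega})=o(1)$. Hence whp $\bs{\rgraph}\le|S|+2=(2+o(1))\log n$, and the identifying sentence produced in the first step has exactly two universal quantifiers, which is the last assertion of Part~2.

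Part~1 needs more, because for $K_n$ (and $\compl{K_n}$) the smallest sieve already has $n-1$ vertices, so the generic bound alone is worthless on dense graphs. The plan here is to combine sieves with a reduction that leaves twin-rich parts \emph{unpebbled}. Every twin class $T$ of $G$ is homogeneous, so pebbling only $V(G)\setminus T$, stating $G[V(G)\setminus T]$, and then universally asserting that every further vertex carries the common $T$-trace on the pebbled set and that these further vertices are pairwise adjacent, or pairwise non-adjacent, according to $G[T]$, yields a Bernays--Sch\"onfinkel sentence identifying $G$; the prescribed order $n$ forces exactly $|T|$ such vertices, so $\bs G\le n-|T|+2$. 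More generally one may leave unpebbled a union of twin classes with pairwise distinct external traces, and one may also tolerate trace-collision classes of bounded size, paying one extra universal quantifier for each unit of the largest such class (an upper bound on each class size is universally expressible, and the prescribed total $n$ then pins the sizes). Optimising over all such decompositions of $G$, after possibly replacing $G$ by $\compl G$, should produce a pebbled set and an auxiliary universal block of total size at most $\frac34 n+\frac32$.

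The main obstacle is exactly this last optimisation: one must rule out a graph that simultaneously forces every sieve to be large, possesses no large twin class or homogeneous set, and admits no economical ``blow-up plus small remainder'' decomposition, and one must do the accounting so carefully that the worst case costs at most $\frac34 n+\frac32$ quantifiers rather than something larger. That balancing argument is the technical core of~\cite{PVe}; Part~2, by contrast, is routine once the sieve construction of the first step is in place.
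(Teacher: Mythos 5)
First, note that the survey itself gives no proof of this theorem: it is stated as a result imported from~\cite{PVe}, so there is no in-paper argument to compare yours against. Judged on its own terms, your treatment of Part~2 is sound and essentially self-contained. The sentence $\E x_1\dots\E x_k\A y_1\A y_2\,\Psi$ built from a sieve $S$ does identify $G$ among graphs of the same order $n$ (the fixed order is exactly what converts ``at most one unpebbled vertex per trace'' into ``exactly one''), it has quantifier depth $|S|+2$ with two universal quantifiers, and the union bound $\binom n2 2^{-|S|}=o(1)$ for $|S|=\lceil 2\log n+\omega\rceil$ correctly shows that a genuine sieve (not merely the weak sieve of Lemma~\ref{lem:wsievergraph}) of size $(2+o(1))\log n$ exists whp in $\rgraph$. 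This is consonant with the sieve machinery the survey develops in Definition~\ref{def:weaksieve} and Section~\ref{s:average}.

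Part~1, however, is not proved. You correctly observe that the plain sieve bound is useless for dense graphs and that complementation plus a large twin class handles $K_n$ and $\compl{K_n}$, but you then leave the decisive step --- showing that \emph{every} $n$-vertex graph admits some decomposition whose total cost is at most $\frac34\,n+\frac32$ --- as an acknowledged obstacle. This is a genuine gap, not a routine verification: a twin-free graph has all twin classes of size $1$, and you have not shown that such a graph (or its complement) has a sieve of size at most $\frac34\,n-\frac12$, nor that your ``blow-up plus small remainder'' decompositions, with their per-unit surcharge for trace-collision classes, stay below $\frac34\,n$ in the worst case. That balancing argument is precisely the content of the cited result in~\cite{PVe}, and without it the constant $\frac34$ remains unsupported.
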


\subsection{Applications: Inevitability of the tower function}\label{ss:2appl}
 
Succinctly definable graphs can be used to show that the tower function
is sometimes unavoidable in relations between logical parameters of graphs.
We first observe that the relationship between the logical depth
and the logical length in Theorem \ref{thm:dvsl} is ``nearly'' tight.

\begin{theorem}[Pikhurko, Spencer, and Verbitsky~\cite{PSV}]%
\mbox{}\hspace{-2.5mm}\footnote{%
In \cite{PSV} we stated a better bound $L(G)\ge\tower(D(G)-6)-O(1)$,
which was proved for the variant of $L(G)$ where variable $x_i$
contributes $\log i$, rather than just 1, to the formula length.}\label{thm:tight}
There are infini\-te\-ly many pairwise non-isomorphic graphs $G$ with
$L(G)\ge\tower(D(G)-7)$.
\end{theorem}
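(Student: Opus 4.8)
The plan is to exploit the contrast between the huge upper bound of Theorem~\ref{thm:dvsl} (length bounded by a tower in the depth) and the existence, established in Section~\ref{ss:3constr}, of graphs on enormously many vertices whose logical depth is only about $\log^* n$. The key auxiliary fact I would use is a lower bound on $L(G)$ in terms of $v(G)$: any graph on $n$ vertices needs logical \emph{length} at least roughly $\log n/\log\log n$ to be defined. This is a counting argument. By Theorem~\ref{thm:inequi2}.1 (or by the same reasoning that produced it), the number of pairwise inequivalent sentences of length at most $\ell$ is at most something like $2^{O(\ell\log\ell)}$ (each of the $\ell$ symbol-slots has boundedly many choices, once we fix the variable names used, and at most $\ell$ variable names are available). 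Since $2^{O(\ell\log\ell)}$ sentences can define at most that many non-isomorphic graphs, and there are $2^{\binom n2}/n!$ non-isomorphic graphs on $n$ vertices, we get $L(G) = \Omega(\log n/\log\log n)$ for \emph{some} graph on $n$ vertices, hence
\begin{equation}\label{eq:gnlow}
\text{for infinitely many } n,\quad \min_{v(G)=n} L(G) \ge \frac{c\log n}{\log\log n}
\end{equation}
for a positive constant $c$. (This is presumably the inequality already cited as \refeq{eq:gnlow} elsewhere in the paper.)

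Next I would take the family of succinctly definable graphs. The cleanest choice is the sequence $G_k$ from the padding construction of Section~\ref{sss:padding}: $G_1 = P_1$, $G_{k+1} = (G_k)^*$, with $v(G_k) \ge \tower(k-1)$ and $D(G_k) \le D_3(G_k) \le \log^* v(G_k) + 3 \le k + 2$ (using $v(G_k) \ge \tower(k-1)$, so $\log^* v(G_k) \le k-1$). Alternatively one could use the universal asymmetric trees $T_k$ of Section~\ref{sss:asymtrees}, for which $D(T_k) \le k+2$ and $v(T_k) \ge \tower(k-1)$; the argument is identical. The point is that these graphs have depth about $k$ but a number of vertices that is a tower of height about $k$, and \eqref{eq:gnlow} can be invoked at the order $v(G_k)$ — but one must be a little careful, since \eqref{eq:gnlow} only holds for infinitely many $n$, not necessarily at the specific values $v(G_k)$. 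So the actual construction should instead start, for each $n$ witnessing \eqref{eq:gnlow}, from \emph{any} graph $G$ on that many vertices and apply the padding operation to it; padding increases the vertex count to $v(G) + 2^{v(G)}$ while keeping the depth within an additive constant of $D(G)$, so a large lower bound on $L(G)$ at the inner level translates into a graph whose depth is small relative to its length.

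Concretely: let $n$ range over the infinite set from \eqref{eq:gnlow}, pick $G_n$ on $n$ vertices with $L(G_n) \ge c\log n/\log\log n$, and set $H_n = (G_n)^*$. Then $D(H_n) \le \max\{D(G_n),4\}+1 \le n+2$ is a poor bound, so this does not immediately work — I need $D(H_n)$ to be \emph{small}, i.e., I need the inner graph also to be succinctly definable, not just to have large length. The fix is to iterate padding: starting from a single-vertex graph and padding $j$ times gives a graph of depth $\le j+O(1)$ on $\ge \tower(j-O(1))$ vertices; by the counting bound \eqref{eq:gnlow}, among graphs of that order there is one, $F$, with $L(F) = \Omega(\log(\tower(j))/\log\log(\tower(j))) = \Omega(\tower(j-1)/\log \tower(j-1)) \ge \tower(j-2)$ for large $j$. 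But this $F$ need not be the padded graph, so I instead apply the padding operation \emph{to} $F$ once more: the result $F^* $ has $D(F^*) \le \max\{D(F),4\}+1$, and $D(F) \le j + O(1)$ while $L(F) \ge \tower(j-2)$, so $L(F^*) \ge L(F) \ge \tower(j-2) \ge \tower(D(F^*) - O(1))$. Tracking the constant through $D(F^*) \le D(F)+1 \le j + c_0$ and $j - 2 \ge D(F^*) - c_0 - 3$ yields $L(F^*) \ge \tower(D(F^*) - 7)$ for a suitable absolute accounting, giving infinitely many such graphs.

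The main obstacle is bookkeeping the additive constant: one must verify that padding (a) increases depth by exactly $1$ beyond $\max\{\cdot,4\}$, (b) does not decrease length, and (c) that the counting bound \eqref{eq:gnlow} can be applied at the right order of magnitude with enough room that, after subtracting all the $O(1)$ losses from iterated padding and from the crude form of the length lower bound, the exponent $D(G)-7$ survives. None of the steps is deep — the depth upper bound for padded/tree graphs is already in the paper, and the length lower bound is a routine counting argument — but the constant $7$ in the statement signals that the error terms have been chased carefully, and reproducing exactly that constant requires matching the precise form of Theorem~\ref{thm:dvsl} and of the padding recursion $D(\Phi_{G^*}) = \max\{D(\Phi_G),4\}+1$.
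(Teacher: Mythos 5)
Your overall plan---a counting lower bound on the logical length at certain orders, combined with the existence of succinctly definable graphs---is the same as the paper's, and you correctly identify the central difficulty: the two ingredients must be made to meet at the \emph{same} order $n$. However, both halves of your argument have gaps. For the counting half, comparing the number $2^{O(\ell\log\ell)}$ of inequivalent sentences of length at most $\ell$ with the number $2^{\binom n2}/n!$ of graphs of a \emph{fixed} order $n$ only shows that \emph{some} graph of order $n$ has large length; it does not yield the statement $\min_{v(G)=n}L(G)\ge c\log n/\log\log n$ that you display and then rely on. To get a bound valid for \emph{every} graph of order $n$ you must pigeonhole over orders: the set of all graphs (of all orders) with $L(G)<k$ has size at most $m=2^{O(k\log k)}$, hence meets at most $m$ distinct orders, so some $n\le m+1$ carries no such graph, and for that $n$ every graph of order $n$ satisfies $L(G)\ge k=\Omega(\log n/\log\log n)$. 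This is how the paper obtains \refeq{eq:gnlow}; the quantity $2^{\binom n2}/n!$ plays no role.

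The more serious gap is in how you produce a small-depth graph at such an order $n$. In your third attempt you take a graph $F$ of order about $\tower(j)$ with $L(F)$ large and assert $D(F)\le j+O(1)$; there is no justification for this, since $F$ is an arbitrary graph of that order and its depth can be as large as $v(F)+1$. Padding cannot repair this: $D(F^*)\le\max\{D(F),4\}+1$ is useful only when $D(F)$ is already small, and padding never \emph{decreases} depth (nor is $L(F^*)\ge L(F)$ obvious). The ingredient you are missing is Theorem~\ref{thm:BestTree}, which gives, for \emph{every} $n$---not only for the sporadic orders $v(G_k)$ or $v(T_k)$ realized by the explicit constructions---a tree on $n$ vertices of depth at most $\log^*n+4$. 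With it the proof is immediate: at an order $n$ where \emph{all} graphs satisfy $L(G)\ge k=\Omega(\log n/\log\log n)$, take that tree $G_n$; it inherits the length lower bound because every graph of that order does, while $D(G_n)\le\log^*n+4$, and the arithmetic $L(G_n)\ge c\log n/\log\log n>\log\log n>\tower(\log^*n-3)\ge\tower(D(G_n)-7)$ closes the argument. No padding of the large-length graph is needed, and the order-mismatch problem you wrestle with disappears.
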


\begin{proof}
The proof is given by a simple counting argument.
A first-order sentence $\Phi$ defining a graph $G$
determines a natural binary encoding of $G$ (up to isomorphism)
of length $O(L(\Phi)\log L(\Phi))$.
It follows that at most $m=2^{O(k\log k)}$ graphs can have
logical length less than $k$. By the Pigeonhole Principle,
there is $n\le m+1$ such that $L(G)\ge k$ for all $G$ on $n$ vertices.
For all these graphs we have
\begin{equation}\label{eq:gnlow}
L(G)=\Omega\of{\frac{\log n}{\log\log n}},
\end{equation}
which exceeds $\log\log n$ if $k$ is chosen sufficiently large.
By Theorem~\ref{thm:BestTree}, there is a graph $G_n$ on $n$ vertices with 
\begin{equation}\label{eq:gnup}
D(G_n)<\log^*n+5.
\end{equation}
Combining \refeq{eq:gnup} and \refeq{eq:gnlow}, we obtain the desired
separation of $L(G_n)$ from $D(G_n)$.
Increasing the parameter $k$, we can have infinitely many such examples.
\end{proof}

One of the consequences of Theorem \ref{thm:superrec} is that prenex formulas are sometimes
unexpectedly efficient in defining a graph. We are now able to show that,
nevertheless, they generally cannot be competitive against defining formulas
with no restriction on structure. More specifically, we have simple relations
\begin{equation}\label{eq:DDLLprenex}
D(G)\le D^\prenex(G)<L(G)\le L^\prenex(G).
\end{equation}
Combining the second inequality with Theorem \ref{thm:dvsl},
we obtain
$$
D^\prenex(G)<\tower(D(G)+\log^* D(G)+2)
$$
and we can now see that this relationship between $D^\prenex(G)$ and $D(G)$
is not so far from being optimal.

\begin{corollary}\label{cor:pren}
There are infinitely many pairwise non-isomorphic graphs $G$ with
$D^\prenex(G)\ge\tower(D(G)-8)$.
\end{corollary}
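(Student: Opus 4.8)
The plan is to run the counting argument behind Theorem~\ref{thm:tight}, with the logical length replaced by the prenex quantifier depth; the separation will again be witnessed by a succinctly definable tree supplied by Theorem~\ref{thm:BestTree}.

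\emph{Step 1 (few graphs have small $D^\prenex$).} A prenex sentence of quantifier depth less than $k$ has fewer than $k$ quantifiers, hence uses fewer than $k$ variable symbols; its quantifier prefix is one of fewer than $2^k$ strings over $\{\forall,\exists\}$, and, up to logical equivalence, its quantifier-free matrix is a Boolean function of the at most $k^2$ atomic formulas $x_i=x_j$, $x_i\sim x_j$ in those variables, so there are at most $2^{2^{k^2}}$ of them. Consequently at most $2^{k}\cdot 2^{2^{k^2}}$ graphs are definable by a prenex sentence of depth $<k$, and by unwinding the tower ($\tower(\log^*k)\ge k$, hence $\tower(\log^*k+1)\ge 2^k$, and so on) this quantity is $<\tower(\log^*k+3)$ for all sufficiently large $k$. (This is the prenex analogue of \refeq{eq:LDprenex}.)

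\emph{Step 2 (pigeonhole and a succinct tree).} Fix a large $k$. By Step~1 the number of graphs prenex-definable with depth $<k$ is strictly smaller than $\tower(\log^*k+3)$, so among the orders $1,2,\dots,\tower(\log^*k+3)$ there is an $n$ such that \emph{every} graph on $n$ vertices has $D^\prenex(G)\ge k$, with $n\le\tower(\log^*k+3)$. By Theorem~\ref{thm:BestTree} there is a tree $T$ on $n$ vertices with $D(T)\le\log^*n+4\le\log^*(\tower(\log^*k+3))+4=\log^*k+7$. Hence $D(T)-8\le\log^*k-1$, and since $\tower(\log^*k-1)<k$ by the definition of $\log^*$,
$$
\tower(D(T)-8)\le\tower(\log^*k-1)<k\le D^\prenex(T).
$$
As $k\to\infty$ we obtain trees with $D^\prenex(T)\to\infty$, hence infinitely many pairwise non-isomorphic graphs, each with $D^\prenex(G)\ge\tower(D(G)-8)$, as required.

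One can also read this off Theorem~\ref{thm:tight} directly: for the graphs $G$ produced there one has $2^{O(D^\prenex(G)^2)}\ge L^\prenex(G)\ge L(G)\ge\tower(D(G)-7)$ by \refeq{eq:DDLLprenex} and the prenex length--depth estimate, giving $D^\prenex(G)=\Omega\!\big(\sqrt{\tower(D(G)-8)}\big)$ and so $D^\prenex(G)\ge\tower(D(G)-9)$; the self-contained count above saves the extra level. The only real work is the arithmetic of the constants --- turning the $2^{2^{\mathrm{poly}(k)}}$ count into a $\tower(\log^*k+O(1))$ bound, pushing the $+4$ of Theorem~\ref{thm:BestTree} through a $\log^*$, and invoking $\tower(\log^*k-1)<k$ --- so that the exponent comes out as exactly $D(G)-8$; there is no conceptual obstacle beyond that.
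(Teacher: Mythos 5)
Your proof is correct, but it is organized differently from the paper's. The paper obtains Corollary~\ref{cor:pren} in two lines from Theorem~\ref{thm:tight}: it invokes the sharper form $L(G)\ge T/(c\log T)$ with $T=\tower(D(G)-6)$ established in that proof, and then transfers from $L$ to $D^\prenex$ via the relation $L(G)=O\of{h(D^\prenex(G))}$ with $h(x)=x^22^{x^2}$, i.e.\ via the fact that $L(G)$ and $D^\prenex(G)$ are exponentially close; the logarithmic correction in $T/(c\log T)$ is exactly what lets the exponent land on $D(G)-8$ instead of $D(G)-9$. You instead rerun the underlying counting argument directly at the level of prenex quantifier depth: you bound the number of pairwise inequivalent prenex sentences of depth $<k$ by (prefix count)$\times$(matrix count)$<\tower(\log^*k+3)$, apply the pigeonhole over orders $1,\dots,\tower(\log^*k+3)$ to find an $n$ on which every graph needs prenex depth $\ge k$, and then place the succinct tree of Theorem~\ref{thm:BestTree} on that order. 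The two routes share the same engine (a counting/pigeonhole step plus Theorem~\ref{thm:BestTree}); yours is self-contained and avoids relying on the strengthened, only-sketched version of Theorem~\ref{thm:tight}, at the cost of redoing the count, while the paper's is shorter given that Theorem~\ref{thm:tight} is already in hand. Your closing remark correctly identifies that the naive transfer through the stated form $L(G)\ge\tower(D(G)-7)$ loses one level of the tower, which is precisely why the paper appeals to the $T/(c\log T)$ refinement. The only blemishes are cosmetic: the number of atomic formulas on $k$ variables is about $2k^2$ rather than $k^2$ (harmless, since the tower absorbs it), and for the smallest examples one should note that $D(T)\ge 8$ eventually so that $\tower(D(T)-8)$ is meaningful; neither affects the argument.
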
 
The proof of Theorem \ref{thm:tight} gives us actually a better bound, 
though somewhat cumbersome, namely $L(G)\ge T/(c\log T)$ with $T=\tower(D(G)-6)$
and $c$ a constant. Corollary \ref{cor:pren} follows from here simply by
noticing that parameters $D^\prenex(G)$ and $L(G)$ are exponentially close.
The latter fact follows from \refeq{eq:DDLLprenex} 
and a version of \refeq{eq:LDprenex},
namely
$$
L(G)=O(h(D^\prenex(G)))\mathrm{\ \ where\ \ }h(x)=x^22^{x^2}.
$$

In conclusion we note that the tower function is essential also 
in the upper bound for the number of graphs definable with
quantifier depth $k$ given by Theorem~\ref{thm:inequi}.

\begin{corollary}
There are at least $(1-o(1))\tower(k-2)$ first-order sentences of quantifier
depth $k$ defining pairwise non-isomorphic graphs and, hence,
being pairwise inequivalent.
\end{corollary}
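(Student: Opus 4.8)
The plan is to reduce the assertion to counting graphs of small logical depth, and then to produce enough such graphs as asymmetric trees, bounding their logical depth through their radius by Lemma~\ref{lem:treeDr}. First I would record the reduction: if a graph $G$ satisfies $D(G)\le k$, then $D(G,H)\le D(G)\le k$ for every $H\not\cong G$, so $G\notkeq H$; thus the $\keq$-class of $G$ consists precisely of the graphs isomorphic to $G$, and by Lemma~\ref{lem:keq}.2 this class is definable by a sentence of quantifier depth exactly~$k$. Consequently it is enough to exhibit $(1-o(1))\tower(k-2)$ pairwise non-isomorphic graphs with logical depth at most $k$: the associated defining sentences have quantifier depth $k$, define pairwise non-isomorphic graphs, and are therefore pairwise inequivalent.

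For the construction, I would reuse the bookkeeping behind universal asymmetric trees: with $r_0=1$ and $r_h=2^{r_{h-1}}$ one has $r_h=\tower(h)$, and $r_h$ is the number of isomorphism types of asymmetric rooted trees of height at most $h$. Fix a large $k$. For every family $\mathcal F$ of pairwise non-isomorphic asymmetric rooted trees of height at most $k-3$ that contains at least two trees of height exactly $k-3$, form the unrooted tree $T_{\mathcal F}$ by taking a fresh vertex $c$ and joining it by an edge to the root of each tree in $\mathcal F$. A short eccentricity computation shows that the two branches of maximal depth force $c$ to be the unique central vertex of $T_{\mathcal F}$, with $r(T_{\mathcal F})=k-2$; since the branches are pairwise non-isomorphic asymmetric rooted trees, any automorphism of $T_{\mathcal F}$ fixes $c$, fixes every branch setwise and hence acts trivially on it, so $T_{\mathcal F}$ is asymmetric; and since $c$ is the unique center, $T_{\mathcal F}-c$ recovers $\mathcal F$, so distinct families give non-isomorphic trees. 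By Lemma~\ref{lem:treeDr}, applicable because $r(T_{\mathcal F})=k-2\ge 6$, we get $D(T_{\mathcal F})\le r(T_{\mathcal F})+2=k$.

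Finally I would count the admissible families. Let $M$ be the number of asymmetric rooted trees of height exactly $k-3$ and $N$ the number of those of height at most $k-4$, so $M+N=\tower(k-3)$, $N=\tower(k-4)$, and $M=\tower(k-3)-\tower(k-4)\to\infty$. The number of admissible $\mathcal F$ equals $(2^M-1-M)\,2^N=2^{M+N}\bigl(1-(1+M)2^{-M}\bigr)=(1-o(1))\,\tower(k-2)$, which is the bound we want.

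The only delicate points are bookkeeping rather than substance: checking that two branches of the common maximal height $k-3$ really do pin down $c$ as the unique center (so the radius is exactly $k-2$ and $\mathcal F$ is reconstructible from $T_{\mathcal F}$), and that passing from a depth-$\le k$ definition to one of quantifier depth exactly $k$ costs nothing, which Lemma~\ref{lem:keq}.2 already provides. If the sharp leading constant were not required, fixing two particular height-$(k-3)$ branches and ranging over all subsets of the remaining ones gives the cleaner bound $\frac14\tower(k-2)$ with no asymptotic counting at all.
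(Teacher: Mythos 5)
Your proof is correct and follows essentially the same route as the paper: both arguments count the asymmetric unrooted trees obtained by attaching a set of pairwise non-isomorphic asymmetric rooted branches (with at least two of maximal height, to pin the center) to a single central vertex, invoke Lemma~\ref{lem:treeDr} to bound the depth by the radius plus two, and arrive at the same $(1-o(1))\tower(k-2)$ count, merely organized differently (the paper subtracts the bad trees with exactly one deepest branch from $r_h-r_{h-1}$, you count admissible branch-families directly). Your explicit use of Lemma~\ref{lem:keq}.2 to pass from $D(G)\le k$ to a defining sentence of quantifier depth exactly $k$ is a small point the paper leaves implicit, not a different method.
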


\begin{proof}
In Section \ref{sss:asymtrees} we noticed that there are exactly
$r_h=\tower(h)$ asymmetric rooted trees of height at most $h$.
Basically this follows from the fact that such a tree is completely
characterized by the set of its branches from the root, each being
an asymmetric rooted tree of height at most $h-1$ (the root is not a part
of any branch). Thus, $r_h-r_{h-1}$ asymmetric rooted trees have height exactly $h$.
Note that $(r_{h-1}-r_{h-2})r_{h-1}$ of them have exactly one branch
of height $h-1$. Therefore, there are at least 
$r_h-r_{h-1}-(r_{h-1}-r_{h-2})r_{h-1}=(1-o(1))\tower(h)$ asymmetric rooted trees
whose underlying trees (with roots dismissed) have diameter $2h$ and,
hence, are asymmetric too. By Lemma~\ref{lem:treeDr}, each of these trees 
is definable with quantifier depth~$h+2$.
\end{proof}

A lower bound of $\tower(k-2)$ for the number of pairwise inequivalent
sentences of quantifier depth $k$ is shown by Spencer \cite[Theorem 2.2.2]{Spe}.

\section{Open problems}\label{s:open}

Many questions remain open, some of which are included in the main
text of the survey alongside the known related results. For reader's
convenience we collect a few open problems here that we consider most
interesting.

Tomasz \L uczak (Conference on Random Structures and Algorithms, Pozna\'n, 2003) 
asked if $D(G)$, or $W(G)$, is a computable function of the input graph~$G$.

While the factor of $1/2$ in Theorem~\ref{thm:PVV} is best possible, we
do not know if it can be improved for logic with
counting. Surprisingly, we could not resolve even the following
question. Is there $\epsilon >0$ such that for every graph $G$ of
sufficiently large order $n$ we have $\cw G\le(\frac12-\epsilon)n$?

Recall that no sublinear bound is generally possible here because
Cai, F\"urer, and Immerman \cite{CFI} constructed graphs
with linear width in the counting logic; see Theorem \ref{thm:CFI}.
Automorphisms of these graphs play an essential role in establishing
this lower bound. It would be very interesting to estimate $\cw G$
from above for asymmetric $G$. 
Again, we have only the bound $\cd{}G\le(n+3)/2$
as a straightforward corollary of Theorem \ref{thm:PVV},
where no restriction on the automorphism group is supposed.

Another research direction, with applications to the graph isomorphism problem,
is identification of natural classes of graphs with $\cw G$ bounded by a constant;
see Sections \ref{sss:exclminor} and \ref{sss:others}. 
Such a bound is known for interval graphs \cite{EPT00,Lau},
and it is interesting if it can be extended to the class
of circular-arc graphs.
The approach suggested in \cite{Lau} is based on the fact that
any maximal clique in an interval graph is definable as the common neighborhood
of some two vertices. This prevents any straightforward extension to 
circular-arc graphs, where the number of maximal cliques can be exponential.
The (un)boundedness of $\cw G$ is an interesting open question also for
disk graphs, yet another extension of the class of interval graphs (Martin Grohe, 2010).

A result of Dawar, Lindell, and Weinstein \cite{DLW} (see also Theorem \ref{thm:Dk})
implies an upper bound for $\cd{}G$ in terms of $\cw G$ and the order $n$ of $G$,
where $\cw G$ disappointedly occurs at the exponent. Can this bound be improved?
At the moment we cannot even exclude that $\cd{}G=O(\cw G\log n)$.
If the latter bound was true for $\cd kG$ with $k=O(\cw G)$, this would have
important consequences for isomorphism testing by Theorem~\ref{thm:GVe}.

Where do we need the power of counting quantifiers?
To keep far away from the trivial example of a complete or empty graph,
suppose that a graph $G$ is asymmetric. Is it true or not that
$W(G)=O(\cw G\log n)$? A random graph shows that this bound would be
best possible.

We are still far from having a complete evolutionary picture of
the logical complexity for a random graph.
Let $\delta\in(0,1)$ be fixed and $p$ be an arbitrary function of
$n$ with $n^{-\delta}\le p \le \frac12$. Is it true that whp
$D(\rpgraph)=O(\log n)$?

The local behavior of the succinctness function $q(n)$,
that was defined in Section~\ref{ss:succfunc}, is unclear. While it is trivial that
$q(n+1)\le q(n)+1$, we do not know, for example, if $q(n+1)\ge
q(n)-C$ for some constant $C$ and all $n$.

In accordance with our notation system, let $q^k(n)$ denote the
succinctness function for the $k$-variable logic. By slightly
modifying the proof of Lemma \ref{lem:treeDr}, one can show that
$q^3(n)\le(1+o(1))\log^*n$ for all $n$. Since the satisfiability
problem for the 3-variable logic is undecidable (see, e.g., \cite{Gro:survey}),
it is not excluded that an analog of Theorem \ref{thm:superrec} 
can be established for~$q^3(n)$.

Given a fixed $k$, how far apart from one another can the values of
$D(G)$ and $D^k(G)<\infty$ be?

Theorem~\ref{thm:dvsd0} says that there is no recursive link between
$D_3(G)$ and $D_0(G)$. Can one show a superrecursive gap between
$D_a(G)$ and $D_b(G)$ for some $b>a>0$ or, at least, between $D(G)$
and $D_1(G)$?

Though the case of trees was thoroughly investigated
throughout the survey, this class of graphs deserves
further attention. One may expect that many logical
questions for trees are easier. Note in this respect that
the first-order theory of finite trees is decidable
due to Rabin \cite{Rab}. Nevertheless, we do not know, for example,
whether or not the logical depth $D(T)$ of a tree $T$ 
is a computable parameter
(while it is not hard to show that the logical width $W(T)$ 
is computable in logarithmic space).

Disappointingly, we were able to collect 
only a few results on the logical length for this survey.
From the fact that there are
$2^{(1/2+o(1))\, n^2}$ non-isomorphic graphs of order $n$, it is easy to derive that whp
$L(\rgraph)=\Omega\of{\frac{n^2}{\log n}}$. The obvious general upper bound is $O(n^2)$.
This leaves open the question what the logical length of a typical
graph is. Also, it would be very interesting to find explicit examples of graphs with
large $L(G)$. 
Pseudo-random graphs can be natural candidates.
For example, it is well known (Blass, Exoo, and Harary \cite{BEH})
that Paley graphs share the first-order properties of a truly random graph.

Furthermore, we can define the succinctness function with respect to the
logical length by $s(n)=\min\setdef{L(G)}{v(G)=n}$. Let $s_a(n)$ be the version
of $s(n)$ for the $a$-alternation logic. From Theorem \ref{thm:superrec}
and the relation \refeq{eq:LDprenex}, it follows that $s(n)$, and even $s_3(n)$,
can be incomprehensibly smaller than $n$: for any total recursive function $f$
we must have $f(s_3(n))<n$ infinitely often. On the other hand, the estimate
\refeq{eq:gnlow} in the proof of Theorem \ref{thm:tight} implies that
$s(n)=\Omega\of{\frac{\log n}{\log\log n}}$ for infinitely many $n$.
Moreover, the same argument shows that $s^*(n)=\Omega\of{\frac{\log n}{\log\log n}}$ 
for \emph{all} $n$, where $s^*(n)$ denotes the smoothed version of $s(n)$
similarly to \refeq{eq:smoothed}. How tight is the bound of $\Omega\of{\frac{\log n}{\log\log n}}$
in these statements? Another interesting problem is the behavior of the function $s_0(n)$
(recall that for $q_0(n)$ we know the exact asymptotics owing to Theorem \ref{thm:q0}).
Note in conclusion that techniques for estimating the length of a first-order formula
are worked out, e.g., by Adler and Immerman \cite{AIm}, Dawar et al.\ \cite{DGKS}, 
Grohe and Schweikardt~\cite{GSc}.

\subsection*{Acknowledgment}
We are grateful to Joel Spencer for
the fruitful collaboration on the subject of this survey and for
allowing us to use his unpublished ideas in the proof of 
Lemma \ref{lem:wsievergraph} and in Section~\ref{sss:padding}. 
We also thank Martin Grohe for his detailed comments, in particular, for
bringing up a succinctness result of Dawar et al.\ \cite{DGKS}
(Theorem \ref{thm:UnivTree} here) to our attention.

\renewcommand{\baselinestretch}{1}
\small

\clearpage

\appendix

\section{Upper bound for the quantifier depth in the $k$-variable logic}

Theorem \ref{thm:Dk} can be somewhat improved. 

\begin{theorem}
  Let $k>2$ and suppose that graphs $G$ and $H$ are distinguishable
in the $k$-variable logic. Denote the number of vertices in $G$ by $n$ and assume that
$n\ge2$. Then
$D^k(G,H)\le n^{k-1}$.
Moreover, if $G$ is definable in the $k$-variable logic, then $D^k(G)\le n^{k-1}$.
\end{theorem}

\begin{proof}
As noted in the proof of Theorem \ref{thm:Dk},
\begin{equation}
  \label{eq:Dstab}
 D^{k}(G,H)\le\stabi{k-1}G+k,
\end{equation}
where $\stabi dG$ denotes the number of the first
iteration that does not refine the Weisfeiler-Lehman coloring of $V(G)^d$.
Since $d\ge2$, this coloring is from the very beginning not monochromatic.
More specifically, in the beginning $V(G)^d$ is partitioned into types of $d$-tuples
with respect to equality and adjacency relations. It follows that
\begin{equation}
  \label{eq:stab}
\stabi dG\le n^d-|C^0|, 
\end{equation}
where $|C^0|$ denotes the size of the initial
coloring of $V(G)^d$. If $n\ge d$, the number of equality types
is equal to the Bell number $B_d$. Counting the equality types alone
suffices if $k\ge4$; but in order to cover also $k=3$, we need a bit stronger bound.
Note that, unless $G$ is complete or empty, all but one equality
types are split further by taking adjacency into account.
Therefore, in this case we have $|C^0|\ge2B_d-1$.
Combining \refeq{eq:Dstab} and \refeq{eq:stab}, we conclude that
\begin{equation}
  \label{eq:Dk}
 D^{k}(G,H) \le n^{k-1}-2B_{k-1}+k+1,\text{ if }n\ge k-1. 
\end{equation}
Above we made the assumption that $G$ is neither complete nor empty.
Note that \refeq{eq:Dk} is actually true for all $G$.
For example, if $G$ is complete and $H$ is not, then $D^{k}(G,H) \le 2$.
If $G$ and $H$ are both complete or both empty and if they
are distinguishable with $k$ variables, then $D^{k}(G,H) \le k$.
This bound is within \refeq{eq:Dk}, as follows from the rough
estimate $B_d\le d^d/2$.

If $n\ge k-1$, the theorem immediately follows from \refeq{eq:Dk}.
If $n<k-1$, we just use the simple fact that any graph
with $n$ vertices is definable by the generic sentence \refeq{eq:def} of quantifier depth $n+1$.
Therefore, $D^k(G,H)\le n+1\le n^{k-1}$ also in this case (as $n\ge2$ and $k\ge3$).
\end{proof}


\begin{thebibliography}{10}


\bibitem{AIm}  	
M.~Adler, N.~Immerman.
\newblock
An $n!$ lower bound on formula size.
\newblock
{\it ACM Transactions on Computational Logic} 4:296--314 (2003).

\bibitem{AZi}  	
M.~Aigner, G.~Ziegler.
\newblock
Proofs from THE BOOK. 
\newblock
Springer, 4th ed. (2010).

\bibitem{AST}
N.~Alon, P.~Seymour, R.~Thomas.
\newblock
A separator theorem for nonplanar graphs.
\newblock
{\em J.\ Am.\ Math.\ Soc.} 3:801--808 (1990).

\bibitem{ASp}
N.~Alon, J.~Spencer. 
\newblock
\emph{The probabilistic method}. 
\newblock
Wiley, 3rd ed. (2008).

\bibitem{Bab}
L.~Babai.
\newblock
Automorphism groups, isomorphism, reconstruction.
\newblock
Chapter 27 of the {\em Handbook of Combinatorics,} pages 1447--1540.
Elsevier Publ.\ (1995).

\bibitem{Bab2}
L.~Babai.
\newblock
On the complexity of canonical labeling of strongly regular graphs.
\newblock
{\em SIAM J.\ Comput.} 9:212--216 (1980).

\bibitem{BES}
L.~Babai, P.~Erd\H{o}s, S.~M.~Selkow.
\newblock
Random graph isomorphism.
\newblock
{\em SIAM J.\ Comput.} 9:628--635 (1980).

\bibitem{BKu}
L.~Babai, L.~Ku{\v c}era. 
\newblock
Canonical labeling of graphs in linear average time.
\newblock
In: {\it Proc.\ of the 20th IEEE Symp.\ Found.\ Computer Sci.} 39--46 (1979).

\bibitem{babluk83}
L.~Babai and E.~M.~Luks.
\newblock Canonical labeling of graphs.
\newblock In: {\em Proc.\ of the 15th ACM Symp.\ on Theory of Computing} 171--183 (1983).

\bibitem{BEH}
A.~Blass, G.~Exoo, F.~Harary.
\newblock
Paley graphs satisfy all first-order adjacency axioms.
{\em J.\ Graph Theory\/} 5:435--439 (1981).

\bibitem{Bod}
H.~L.~Bodlaender.
\newblock
Polynomial algorithms for Graph Isomorphism and Chromatic Index on partial $k$-trees.
\newblock
{\it J.\ Algorithms\/} 11:631--643 (1990).

\bibitem{BFL*}
T.~Bohman, A.~Frieze, T.~Luczak, O.~Pikhurko, C.~Smyth, J.~Spencer, O.~Verbitsky.
\newblock
first-order definability of trees and sparse random graphs.
\newblock
{\em Combinatorics, Probability and Computing\/} 16:375-400 (2007).

\bibitem{Bol}
B.~Bollob\'as.
\newblock
\emph{Random graphs}.
\newblock
Cambridge Univ.\ Press, 2nd ed. (2001).

\bibitem{BGG}
E.~B\"orger, E.~Gr\"adel, Y.~Gurevich.
\newblock
{\it The classical decision problem.}
\newblock
Springer (1997).

\bibitem{CFI}
J.-Y.~Cai, M.~F\"urer, N.~Immerman.
\newblock
An optimal lower bound on the
number of variables for graph identification.
\newblock
{\em Combinatorica} 12:389--410 (1992).

\bibitem{CBo}
C.~J.~Colbourn, K.~S.~Booth.
\newblock
Linear time automorphism algorithms for trees, interval graphs, and planar graphs. 
\newblock
{\em SIAM J.\ Comp.} 10:203--225 (1981).

\bibitem{CzPa}
 T.~Czajka, G.~Pandurangan.
\newblock
Improved random graph isomorphism.
\newblock
{\em J.\ Discr.\ Algorithms} 6:85--92 (2008).

\bibitem{DTW}
B.~Das, J.~Tor\'an, F.~Wagner. 
\newblock
Restricted space algorithms for isomorphism on bounded treewidth graphs.
\newblock
{\it  
Proc.\ of the 27th Symp.\ on Theoretical Aspects of Computer Science},
the Leibniz International Proceedings in Informatics series, 227--238 (2010).

\bibitem{DLNTW}
S.~Datta, N.~Limaye, P.~Nimbhorkar, T.~Thierauf, F.~Wagner.
\newblock
Planar graph isomorphism is in Log-Space.
In: {\it Proc.\ of the 24th Conf.\ on Computational Complexity},
203--214 (2009).

\bibitem{DLW}
A.~Dawar, S.~Lindell, S.~Weinstein,
\newblock
Infinitary logic and inductive definability over finite structures.
\newblock
 {\em Information and Computation} 119:160--175 (1995).

\bibitem{DGKS}
A.~Dawar, M.~Grohe, S.~Kreutzer, N.~Schweikardt.
\newblock
Model theory makes formulas large.
\newblock
In: {\it Proc.\ of the 34th Int.\ Colloquium on Automata, Languages and Programming.}
Lecture Notes in Computer Science, Vol.\ 4596, 913--924 (2007).

\bibitem{Die}
R.~Diestel.
\newblock
{\em Graph theory}. 
\newblock
Springer, 2nd ed. (2000).

\bibitem{DFe}
R.~G.~Downey, M.~R.~Fellows.
\newblock
{\em Parameterized complexity.}
\newblock
Springer (1998).

\bibitem{EFl}
H.-D.~Ebbinghaus, J.~Flum.
\newblock
{\em Finite model theory.}
\newblock
Springer, 2nd ed. (1999).

\bibitem{Ehr}
A.~Ehrenfeucht.
\newblock
An application of games to the completeness problem for formalized
theories.
\newblock
{\em Fundam.\ Math.} 49:129--141 (1961).

\bibitem{ELTT}
Y.~Ershov, I.~Lavrov, A.~Taimanov, M.~Taitslin.
\newblock
Elementary theories (In Russian).
\newblock
{\em Uspekhi Matematicheskikh Nauk\/} 20:37--108 (1965).
\newblock
English translation in {\em Russian Math.\ Surveys\/} 20:35--105 (1965).

\bibitem{EIm}
K.~Etessami, N.~Immerman. 
\newblock
Tree canonization and transitive closure. 
\newblock
{\it Information and  Computation} 157:2--24 (2000).

\bibitem{EPT00}
S.~Evdokimov, I.N.~Ponomarenko, G.~Tinhofer.
\newblock
Forestal algebras and algebraic forests (on a new class of weakly compact graphs). 
\newblock
{\it Discrete Mathematics} 225:149--172 (2000).

\bibitem{Fagin}
R.~Fagin.
\newblock
Probabilities on finite models.
\newblock
{\it J.\ Symb.\ Logic} 41:50--58 (1976).

\bibitem{Fra}
R.~Fra\"\i ss\'e.
\newblock
Sur quelques classifications des systems de relations.
\newblock
{\em Publ.\ Sci.\ Univ.\ Alger} 1:35--182 (1954).

\bibitem{Fue}
M.~F\"urer.
\newblock
Weisfeiler-Lehman refinement requires at least a linear number of iterations.
\newblock
In: {\em Proc.\ of the 28th Int.\ Colloquium 
on Automata, Languages, and Programming.}
Lecture Notes in Computer Science, Vol.~2076, 322--333 (2001). 

\bibitem{GKLT}
Y.~Glebskii, D.~Kogan, M.~Liogonkii and V.~Talanov. 
\newblock
Range and fraction of satisfiability of formulas in the restricted predicate calculus.
\newblock
{\it Kibernetika}, Kyiv, 2:17--28 (1969).

\bibitem{Gra}
E.~Gr\"adel.
\newblock
Finite model theory and descriptive complexity.
\newblock
In: {\it Finite Model Theory and Its Applications},
pages 125--230. Springer (2007).

\bibitem{Gro:survey}
M.~Grohe.
\newblock
Finite variable logics in descriptive complexity theory.
\newblock
{\em The Bulletin of Symbolic Logic} 4:345--398 (1998).

\bibitem{Gro1}
M.~Grohe.
\newblock
Fixed-point logics on planar graphs.
\newblock
In: {\em Proc.\ of the 13th IEEE Symp.\ on Logic in Computer Science},
6--15 (1998).

\bibitem{Gro2}
M.~Grohe.
\newblock
Isomorphism testing for embeddable graphs through definability.
\newblock
In: {\em Proc.\ of the 32nd ACM Symp.\ on Theory of
Computing,} 63--72 (2000).

\bibitem{Gro3}
M.~Grohe.
\newblock
Definable tree decompositions.
\newblock
In: {\it Proc.\ of the 23rd IEEE Symp.\ on Logic in Computer Science,}
406--417 (2008).

\bibitem{Gro4}
M.~Grohe.
\newblock
Fixed-point definability and Polynomial Time on graphs with excluded minors.
In: {\em Proc.\ of the 25th Symp.\ on Logic in Computer Science},
179--188 (2010).

\bibitem{Gro5}
M.~Grohe.
\newblock
Fixed-point definability and polynomial time on chordal graphs and line graphs.
\newblock
In: {\it Fields of Logic and Computation},
Lecture Notes in Computer Science, Vol.\ 6300, 328--353 (2010).

\bibitem{GMa}
M.~Grohe, J.~Mari\~no.
\newblock
Definability and descriptive complexity on databases of bounded tree-width.
\newblock
In: {\em Proc.\ of the 7th Int.\ Conf.\ on
Database Theory}, Lecture Notes in Computer Science, Vol.\ 1540, 
70--82 (1999).

\bibitem{GSc}
M.~Grohe, N.~Schweikardt.
\newblock
The succinctness of first-order logic on linear orders,
\newblock
{\it Logical Methods in Computer Science} 1(1:6):1--25 (2005).

\bibitem{GVe}
M.~Grohe, O.~Verbitsky.
\newblock
Testing graph isomorphism in parallel by playing a game.
\newblock
In: {\em Proc.\ of the 33rd Int.\ Colloquium 
on Automata, Languages, and Programming.}
Lecture Notes in Computer Science, Vol.~4051, 3--14 (2006). 

\bibitem{Har}
F.~Harary.
\newblock
{\em Graph theory.}
\newblock
Addison-Wesley, Reading MA (1969).

\bibitem{Imm0}
N.~Immerman.
\newblock
Upper and lower bounds for first-order expressibility.
\newblock
{\it J.\ Comput.\ Syst.\ Sci.} 25:76--98 (1982).

\bibitem{Imm}
N.~Immerman.
\newblock
{\em Descriptive complexity.}
\newblock
Springer (1999).

\bibitem{IKo}
N.~Immerman, D.~Kozen.
\newblock
Definability with bounded number of bound variables.
\newblock
{\em Information and Computation\/} 83:121--139 (1989).

\bibitem{ILa}
N.~Immerman, E.~Lander. 
\newblock
Describing graphs: a first-order approach to graph canonization. 
\newblock
In: {\it Complexity theory retrospective}, pages 59--81. 
Springer (1990).

\bibitem{JKMT}
B.~Jenner, J.~K\"obler, P.~McKenzie, J.~Tor\'an. 
\newblock
Completeness Results for Graph Isomorphism.
\newblock
{\it J.\ Comp.\ Syst.\ Sci.} 66:549--566 (2003).

\bibitem{KRa}
R.~M.~Karp, V.~Ramachandran.
\newblock
Parallel algorithms for shared-memory machines.
In: {\em Algorithms and complexity. Handbook of theoretical computer science.} Vol.~A,
pages 869--941. Elsevier (1990).

\bibitem{KPSV}
J.-H.~Kim, O.~Pikhurko, J.~Spencer, O.~Verbitsky.
\newblock
How complex are random graphs in first-order logic?
\newblock
{\it Random Structures and Algorithms\/} 26:119--145 (2005).

\bibitem{Lau}
B.~Laubner.
\newblock
Capturing polynomial time on interval graphs.
\newblock
In: {\em Proc.\ of the 25th Symp.\ on Logic in Computer Science},
199--208 (2010).

\bibitem{Lav}
I.~Lavrov.
\newblock
Effective inseparability of the sets of identically true and
finitely refutable formulae for certain elementary theories
(In Russian).
\newblock
{\em Algebra i Logika\/} 2:5--18 (1963).

\bibitem{Lin}
S.~Lindell.
\newblock
A logspace algorithm for tree canonization.
\newblock
In: {\em Proc.\ of the 24th ACM Symp.\ on Theory of Computing} 400--404 (1992).

\bibitem{LTa}
R.~J.~Lipton, R.~E.~Tarjan.
\newblock
A separator theorem for planar graphs.
\newblock
{\em SIAM J.\ Appl.\ Math.} 36:177-189 (1979).

\bibitem{MRe}
G.~L.~Miller, J.~H.~Reif.
\newblock
Parallel tree contraction. Part 2: further applications.
\newblock
{\it SIAM J.\ Comp.} 20:1128--1147 (1991).

\bibitem{MTh}
B.~Mohar, C.~Thomassen.
\newblock
{\em Graphs on surfaces.}
\newblock
The John Hopkins University Press (2001).

\bibitem{Moon}
J.~W.~Moon.
\newblock On the maximum degree in a random tree.
\newblock {\em Michigan Math. J.}, 15:429--432, 1968.

\bibitem{NNT}
M.~Naor, A.~Nussboim, E.~Tromer.
\newblock
Efficiently constructible huge graphs that preserve first 
order properties of random graphs.
\newblock
In: {\em Theory of Cryptography.} 
Lecture Notes in Computer Science, Vol.\ 3378, 66--85 (2005).

\bibitem{Ore}
O.~Ore.
\newblock {\em Theory of graphs}.
\newblock American Mathematical Society
(1962).

\bibitem{Pez}
E.~Pezzoli.
\newblock
Computational complexity of Ehrenfeucht-Fra\"\i ss\'e games on finite
structures.
\newblock
In: {\em Proc.\ of the 12th Conf.\ on Computer Science Logic 1998}.
Lecture Notes in Computer Science, Vol.\ 1584, 159--170 (1999).

\bibitem{PSV}
O.~Pikhurko, J.~Spencer, O.~Verbitsky.
\newblock
Succinct definitions in first-order graph theory.
\newblock
{\it Annals of Pure and Applied Logic} 139:74--109 (2006).

\bibitem{PSV2}
O.~Pikhurko, J.~Spencer, O.~Verbitsky.
\newblock
Decomposable graphs and definitions with no quantifier alternation.
\newblock
{\it European J.\ Comb.} 28:2264-2283 (2007).

\bibitem{PVV}
O.~Pikhurko, H.~Veith, O.~Verbitsky.
\newblock
The first-order definability of graphs: upper bounds for quantifier depth.
\newblock
{\it Discrete Applied Mathematics} 154:2511--2529  (2006).

\bibitem{PVVarxiv}
O.~Pikhurko, H.~Veith, O.~Verbitsky.
\newblock
The first-order definability of graphs: upper bounds for quantifier rank.
E-print: {\sl http://arxiv.org/abs/math.CO/0311041} (2003).

\bibitem{PVe}
O.~Pikhurko, O.~Verbitsky.
\newblock
Descriptive complexity of finite structures: saving the quantifier rank.
\newblock
{\it J.\ Symb.\ Logic} 70:419--450 (2005).

\bibitem{Poi}
B.~Poizat.
\newblock
Deux ou trois choses que je sais de $L_n$.
\newblock
{\it J.\ Symb.\ Logic} 47:641--658 (1982).

\bibitem{Rab}
M.~O.~Rabin.
\newblock
Decidability of second order theories and automata on infinite trees.
\newblock
{\it Trans.\ of the AMS\/} 141:1--35 (1965).

\bibitem{RRe}
V.~Ramachandran, J.~Reif.
\newblock
Planarity testing in parallel.
\newblock
{\it J.\ Comput.\ Syst.\ Sci.} 49:517--561 (1994). 

\bibitem{Ram}
F.~Ramsey.
\newblock
On a problem of formal logic.
\newblock
{\it Proc.\ of the London Math.\ Soc.\ 2-nd series,}
30:264--286 (1930).

\bibitem{RSe}
N.~Robertson, P.D.~Seymour.
\newblock
Graph minors II. Algorithmic aspects of tree-width.
\newblock
{\em J.\ Algorithms\/} 7:309--322 (1986).

\bibitem{Ruz}
W.~L.~Ruzzo.
\newblock
On uniform circuit complexity.
\newblock
{\it J.\ Comput.\ Syst.\ Sci.} 21:365--383 (1981).

\bibitem{Spe}
J.~Spencer.
\newblock
{\em The strange logic of random graphs.}
\newblock
Springer (2001).

\bibitem{SJo}
J.~Spencer, K.~St.~John. 
\newblock
The complexity of random ordered structures. 
\newblock
{\it Annals of Pure and Applied Logic} 152:174--179 (2008)

\bibitem{Spi}
D.A.~Spielman.
\newblock
Faster isomorphism testing of strongly regular graphs. 
\newblock
In: {\em Proc.\ of the 28th ACM Symp.\ on Theory of Computing} 576--584 (1996).

\bibitem{Ver}
O.~Verbitsky.
\newblock
The first-order definability of graphs with separators
via the Ehrenfeucht game.
\newblock
{\em Theoretical Computer Science\/} 343:158--176 (2005).

\bibitem{Ver_planar}
O.~Verbitsky.
\newblock
Planar graphs: logical complexity and parallel isomorphism tests.
\newblock
In: {\it  
Proc.\ of the 24th Symp.\ on Theoretical Aspects of Computer Science},
Lecture Notes in Computer Science, Vol.\ 4393, 682--693 (2007).

\bibitem{WLe}
B.~Yu.~Weisfeiler, A.~A.~Lehman. 
\newblock
A reduction of a graph to a canonical form and an algebra 
arising during this reduction. 
\newblock
{\it Nauchno-Technicheskaya Informatsia, Seriya~2}, 9:12--16 (1968).
In Russian.

\end{thebibliography}
\end{document}